\def\dfrac#1#2{\lower0.15ex\hbox{\large$\frac{#1}{#2}$}}
\def\non{\nonumber}
\def\cB{\mathcal{B}}
\def\cF{\mathcal{F}}
\def\cM{\mathcal{M}}
\def\eps{\varepsilon}
\def\N{\mathbb{N}}
\def\R{\mathbb{R}}
\def\C{\mathbb{C}}
\def\Q{\mathbb{Q}}
\def\Z{\mathbb{Z}}
\def\g{\mathfrak{g}}
\def\E{\mathbb{E}}
\def\h{\mathfrak{h}}
\def\cX{\mathcal{X}}
\def\cY{\mathcal{Y}}
\def\cZ{\mathcal{Z}}
\def\nil{\mathrm{nil}}
\def\unf{\mathrm{unf}}
\def\sml{\mathrm{sml}}
\def\mal{\mathrm{Mal}}
\DeclareMathOperator\spa{span}
\DeclareMathOperator\Hom{Hom}
\DeclareMathOperator\poly{poly}
\newtheorem{firstthm}{Proposition}[section]
\newtheorem{thm}[firstthm]{Theorem}
\newtheorem{prop}[firstthm]{Proposition}
\newtheorem{lemma}[firstthm]{Lemma}
\newtheorem{cor}[firstthm]{Corollary}
\theoremstyle{definition}
\newtheorem{example}[firstthm]{Example}
\newtheorem{definition}[firstthm]{Definition}
\newtheorem{remark}[firstthm]{Remark}
\newtheorem{thd}[firstthm]{Definition/Theorem}
\newcommand{\vect}[1]{\boldsymbol{#1}}
\title{A non-flag arithmetic regularity lemma and counting lemma}
\author{Daniel Altman}
\address{University of Oxford, Mathematical Institute, Radcliffe Observatory Quarter, Woodstock Rd, Oxford, OX2 6GG, United Kingdom}
\email{daniel.h.altman@gmail.com}
\begin{document}

\maketitle

\begin{abstract}
Green and Tao's arithmetic regularity lemma and counting lemma together apply to systems of linear forms which satisfy a particular algebraic criterion known as the `flag condition'. We give an arithmetic regularity lemma and counting lemma which applies to all systems of linear forms.
\end{abstract}

\setcounter{tocdepth}{1}
\tableofcontents

\section{Introduction}

\subsection{Context and summary}
The use of `regularity lemmas' in combinatorics dates back to Szem\'eredi's proof of his eponymous theorem on arithmetic progressions in dense sets of integers \cite{Sz75}. Szemer\'edi's regularity lemma and developments thereof have since become powerful tools in combinatorics which have been brought to bear on a far wider range of problems than the study of arithmetic progressions in sets of integers. Where Szemer\'edi's regularity lemma is a structural result about graphs, Green introduced an arithmetic analogue of this brand of regularity results in \cite{G05}, and used it in that paper to solve some problems in additive combinatorics. From the perspective of applications to additive patterns in the integers, the state of the art is Green and Tao's arithmetic regularity lemma \cite{GT10}.

Green and Tao's arithmetic regularity lemma is phrased as a decomposition result for bounded functions which consequently may be written as the sum of a nilsequence, a Gowers uniform function, and an error. \cite{GT10} demonstrates a number of applications for which the contributions of the uniform and error components are negligible and so the problem is essentially reduced to an analysis of nilsequences. Complementing the arithmetic regularity lemma in \cite{GT10} is an arithmetic counting lemma which gives a general formula (via equidistribution on nilmanifolds) for computing averages on linear patterns in the nilsequence component. 

It has come to light (see \cite{GT20}, \cite{T20}) that this counting lemma \cite[Theorem 1.11]{GT10} applies only to systems of linear forms which satisfy the so-called \textit{flag condition} (see Subsection \ref{ss:flag} for definitions), whence the arithmetic regularity and counting lemma strategy may be exercised only on this particular set of linear patterns. Further to this, there are examples (see Subsection \ref{ss:examples}) which demonstrate that the output of the arithmetic regularity lemma \cite[Theorem 1.2]{GT10}  is too coarse to determine the distribution of non-flag linear patterns in any prospective counting lemma.

The goal of this paper is to give an arithmetic regularity lemma and complementary counting lemma which are able to handle all linear patterns. The main results are a stronger arithmetic regularity lemma Theorem \ref{t:strong-arl}, which feeds into a new counting lemma Theorem \ref{t:main-quant}, which itself is able to handle all sets of linear forms. We remark that our description of the nilmanifold on which we obtain equidistribution in the counting lemma is less explicit than what is worked out for the Leibman group in \cite[Section 3]{GT10} (though this is partially remedied in Section \ref{s:gPsiSbullet} below). Nonetheless, the broadstrokes `arithmetic regularity and counting' strategy for solving problems in additive combinatorics can now be applied to all linear patterns. 

We also give a couple of applications; these are discussed in the next subsection. 

\subsection{Results}

Given a flag system of linear forms $\Psi$, a filtered Lie group $G_\bullet$ and a polynomial sequence $g:\Z \to G$ which is suitably `irrational' with respect to this filtration, Green and Tao's counting lemma determines (quantitatively) the distribution of the tuple $g^\Psi(\vect n):=((g(\psi_1(\vect n)), \ldots, g(\psi_t(\vect n)))\Gamma^t)_{\vect n \in \Z^D}$ in $G^t/\Gamma^t$ from the data $\{G_\bullet, \Psi\}$. Unfortunately, this data is insufficient to determine the distribution of $g^\Psi$ in the general, non-flag setting in the sense that there exist a non-flag system of linear forms $\Psi$ and distinct polynomial sequences $g$, $\tilde g$, both of which are irrational in the same $G_\bullet$, such that $g^\Psi$ distributes differently to $\tilde g^\Psi$; see Examples \ref{ex:flag-counter} and \ref{ex:heis-counter}. Instead of dealing with a filtered Lie group $G_\bullet$, we will consider a sequence of subspaces $S_\bullet$ of the Lie algebra $\g$ of $G$. One may define a notion of irrationality -- which we will call `linear irrationality' --  with respect to a sequence of subspaces such that for all linearly irrational polynomial sequences $g$ with respect to $S_\bullet$, one may determine the distribution of $g^\Psi$ from the data $\{S_\bullet, \Psi\}$. It will prove convenient to describe the nilmanifold on which $g^\Psi$ equidistributes as a Lie subalgebra of $\g^t$. We will denote this Lie subalgebra by $\g^\Psi(S_\bullet)$, define it in terms of $\Psi$ and $S_\bullet$ in Definition \ref{d:gPsiBullet} and then compute it more explicitly in Section \ref{s:gPsiSbullet}. 

Working in the Lie algebra, we will denote group multiplication by $\ast$. 

\begin{definition}
Define the operation $\ast:\g \times \g \to \g$ by $x\ast y := \log(\exp(x)\exp(y))$.
\end{definition}

We denote group multiplication in this way partially to avoid the temptation to accidentally distribute over addition. By Baker-Campbell-Hausdorff, we have explicitly that
\[x \ast y = x + y + \frac{1}{2}[x,y] + \frac{1}{12}[x,[x,y]] +  \ldots. \]

We will now state our counting lemma. Although there are a number of requisite definitions which have not yet been provided (rational basis: Definition \ref{d:rat-basis}, rational subspace: Definition \ref{d:rat-subspace}, complexity of a rational subspace: Definition \ref{d:comp-subspace}, linear irrationality: Definitions \ref{d:li} and \ref{d:lin-irrat-poly}, quantitative equidistribution: Definitions \ref{d:quant-equid-grp} and \ref{d:quant-equid-alg}, Hermite basis: Definition \ref{d:hermite-basis}), we hope that the reader may obtain a rough appreciation of the content of the result. It is proven as Theorem \ref{t:pr-main-quant} in Section \ref{s:cl}.  

\begin{thm}[Counting lemma]\label{t:main-quant}
Let $A,M\geq 2$ and $d,s,t\geq 1$ be integers. Let $\Psi = (\psi_1, \ldots, \psi_t)$ be a collection of linear forms each mapping $\Z^D$ to $\Z$. Let $\g$ be a real nilpotent Lie algebra with rational basis $\cX$, dimension $d$ and rational structure constants of height at most $M$. Let $S_\bullet$ be a sequence of rational subspaces of complexity  $M$ in $\g$. Suppose that $p(n)$ is an $(A,N)$-linearly irrational polynomial sequence  of degree $s$ in $S_\bullet$. Then there are constants $0<c_1,c_2 = O_{d,s,D,t,\Psi}(1)$ such that the polynomial sequence $p^\Psi(\vect x):= (p(\psi_1(\vect x)), \ldots, p(\psi_t(\vect x))$ is $O(M^{c_1}/A^{c_2})$-equidistributed on $\g^\Psi(S_\bullet)$ with respect to $\cX^\Psi$, the Hermite basis for $\g^\Psi(S_\bullet)$ in $(\g^t, \cX^t)$.
\end{thm}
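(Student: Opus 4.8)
The plan is to reduce the multi-parameter equidistribution of $p^\Psi$ on the subgroup (subalgebra) $\g^\Psi(S_\bullet)$ to a single-parameter equidistribution statement about $p$ itself, and then invoke the quantitative Leibman-type equidistribution theory together with the hypothesis of linear irrationality. The first step is to rewrite $p^\Psi(\vect x)$ as $p$ evaluated along the $t$-tuple of linear forms $\psi_i(\vect x)$; since each $\psi_i$ is linear, $n \mapsto p(\psi_i(\vect n))$ is again a polynomial sequence (of the same degree $s$) in the vector variable $\vect n \in \Z^D$, and the joint object $p^\Psi$ is a polynomial map $\Z^D \to G^t$. By the van der Corput / multidimensional equidistribution machinery (in the form packaged for nilsequences), it suffices to understand the behaviour of the horizontal (abelianised) components and feed this into the induction-on-degree scheme that underlies the quantitative Leibman theorem.

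Second, I would identify the correct target group. The point of the definition of $\g^\Psi(S_\bullet)$ (Definition \ref{d:gPsiBullet}) should be precisely that it is the Zariski/Lie closure of the orbit: the smallest rational Lie subalgebra of $\g^t$ through which every linearly irrational $p$ (with respect to $S_\bullet$) is forced to move. Thus one direction is automatic — $p^\Psi$ takes values in $\exp(\g^\Psi(S_\bullet))\Gamma^t$ up to the usual bounded ambiguity — and the real content is the quantitative converse: $p^\Psi$ does not concentrate in any proper rational subgroup of $\g^\Psi(S_\bullet)$ of bounded complexity. For this I would argue by contradiction in the standard way: if $p^\Psi$ were $\delta$-non-equidistributed on $\g^\Psi(S_\bullet)$, then by the inverse theorem for quantitative equidistribution on nilmanifolds there is a nontrivial horizontal character $\eta$ of $\g^\Psi(S_\bullet)$, of complexity $O(\delta^{-O(1)})$, such that $\eta \circ p^\Psi$ is $O(\delta^{-O(1)})$-close to a polynomial of bounded degree on $\Z^D$. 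Restricting $\vect x$ to suitable one-dimensional sub-progressions (choosing directions so that the composite forms $\psi_i$ along that line are in "general position") converts this into a linear relation, with bounded-height integer coefficients, among the top-degree Taylor coefficients of $p$ in the various directions $e_j \in \g/[\g,\g]$ and their higher analogues in $S_k/S_{k+1}$. The definition of linear irrationality — that the relevant coefficients of $p$, read in the Hermite/rational basis adapted to $S_\bullet$, have no small-height linear relations beyond the "tautological" ones already built into $\g^\Psi(S_\bullet)$ — is exactly what rules this out once $A$ is large compared to a power of $M$. Tracking the complexity of $\eta$, the degree $s$, the dimension $d$, the number of forms $t$ and the ambient $D$ through this chain yields the claimed bound $O(M^{c_1}/A^{c_2})$ with $c_1, c_2$ depending only on $d,s,D,t,\Psi$.

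Third, I would take care of two bookkeeping points that make the above rigorous. (i) The subalgebra $\g^\Psi(S_\bullet)$ must be rational of bounded complexity so that "bounded-complexity horizontal characters of $\g^\Psi(S_\bullet)$" makes sense and matches up, via the Hermite basis $\cX^\Psi$ of $\g^\Psi(S_\bullet)$ inside $(\g^t,\cX^t)$, with bounded-complexity characters of $\g^t$; this should follow from the explicit description promised in Section \ref{s:gPsiSbullet}, since $\g^\Psi(S_\bullet)$ is cut out from $\g^t$ by finitely many linear conditions with coefficients of height polynomial in $M$. (ii) One must pass between the Lie-algebra formulation (equidistribution with respect to $\ast$ and the Hermite basis, Definition \ref{d:quant-equid-alg}) and the group formulation (Definition \ref{d:quant-equid-grp}); this is the routine $\exp/\log$ dictionary, using that the structure constants have height $\le M$ so that $\ast$ is polynomial with bounded-height rational coefficients, hence distorts complexities and non-equidistribution parameters only by bounded powers of $M$.

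The main obstacle I anticipate is the converse (non-concentration) direction, specifically \emph{extracting a contradiction with linear irrationality of the correct, single-parameter flavour from a horizontal character of the multi-parameter orbit closure}. The subtlety is that a horizontal character $\eta$ of $\g^\Psi(S_\bullet) \subseteq \g^t$ pulls back to a tuple $(\eta_1,\dots,\eta_t)$ of functionals on $\g$, and $\eta \circ p^\Psi(\vect x) = \sum_i \eta_i(p(\psi_i(\vect x)))$; one must show that this being a "low-complexity" polynomial forces a small-height linear relation on the \emph{individual} top-order coefficients of $p$ living in the successive quotients $S_k/S_{k+1}$ (not merely on some average of them twisted by the $\psi_i$). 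Arranging the directions of restriction and untangling the contributions of the different linear forms $\psi_i$ — especially when the $\psi_i$ are not in "general position" and some partial sums of them vanish, which is exactly the non-flag phenomenon that the examples in Subsection \ref{ss:examples} exploit — is where the definition of $\g^\Psi(S_\bullet)$ has to be used in an essential way: by construction $\eta$ annihilates precisely the tautological relations, so any surviving relation is genuinely new and contradicts linear irrationality. Making this step quantitative, with all complexity losses polynomial in $M$ and all gains a fixed power of $A$, is the crux of the argument.
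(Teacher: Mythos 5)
Your overall skeleton (contradiction via a low-complexity horizontal character of $\g^\Psi(S_\bullet)$, rationality/complexity bookkeeping for $\g^\Psi(S_\bullet)$ via its explicit description, and the $\exp/\log$ transfer) matches the paper, but the step you yourself single out as the crux is precisely the step you have not supplied, and the mechanism you sketch for it is not the one that works. You propose to restrict $\vect x$ to one-dimensional subprogressions in ``general position'' and extract a relation among top-order coefficients of $p$ in successive quotients $S_k/S_{k+1}$; nothing of this sort is needed, and it is unclear it can be pushed through (linear irrationality in the paper is a statement about integer linear maps on $\g$ restricted to $S_i$, not about quotients $S_k/S_{k+1}$, and the non-flag difficulty is not a ``general position of the $\psi_i$'' issue). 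The actual resolution is purely algebraic and multi-parameter: identify $\g^t\cong\g\otimes\R^t$, so that $p^\Psi(\vect x)=\sum_i a_i\otimes\Psi^i(\vect x)$ and, for a horizontal character $\eta$ of $\g^\Psi(S_\bullet)$, one has the exact identity $\eta(p^\Psi(\vect x))=\sum_{m\in\cM}\sum_i \eta(a_i\otimes v_{m,i})\,m(\vect x)$. The multidimensional smoothness norm produced by the Green--Tao equidistribution criterion (which already handles multi-parameter polynomial sequences, so no van der Corput or reduction to single-parameter is required) then controls \emph{every} monomial coefficient at once, giving $\|s!\,\eta(a_i\otimes v_{m,i})\|_{\R/\Z}\le M^{O(1)}\delta^{-O(1)}/N^i$ for each $m,i$ with no mixing between the forms. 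The key observation — the one your sketch is missing — is that after lifting $\eta$ to $\tilde\eta$ on $(\g\otimes\R^t,\cX\otimes\cZ)$ with polynomial complexity loss, each pullback $\tilde\eta(\,\cdot\,\otimes v_{m,i})$ is an honest integer linear map on $\g$ of complexity $M^{O(1)}\delta^{-O(1)}$, so the $(A,A/N^i)$-linear irrationality of $a_i$ in $S_i$ forces $\tilde\eta(S_i\otimes v_{m,i})=0$; since the $v_{m,i}$ span $V^i$ and $\eta$ is a homomorphism into an abelian algebra, $\eta$ vanishes on the subalgebra generated by the $S_i\otimes V^i$, i.e.\ on all of $\g^\Psi(S_\bullet)$, a contradiction.

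Two smaller corrections. First, the character criterion does not say $\eta\circ p^\Psi$ is ``close to a polynomial of bounded degree'': it \emph{is} a polynomial, and the conclusion is that its $C^\infty[\vect N]$ smoothness norm is small, which after a change to the monomial basis is exactly the coefficient-by-coefficient bound used above. Second, no induction on degree or analysis of abelianised components needs to be rebuilt; the quantitative multiparameter equidistribution theorem is used as a black box, and all the new content of the counting lemma is the tensor-product pullback argument just described. Your bookkeeping points (i) and (ii) are fine and correspond to Proposition \ref{p:gPsi-Sbullet} with Lemmas \ref{l:comp-operations}--\ref{l:comp-lift-restrict}, and to Theorem \ref{t:gt-algebra}, respectively.
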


The next task is to obtain a factorisation result for the notion of linear irrationality. The following theorem says that any polynomial sequence may be written as the product of an `error' sequence, a sequence which is linearly irrational with respect to some sequence of subspaces $S_\bullet$, and a rational sequence. Furthermore, the factorisation process is such that if we begin with a polynomial sequence which is irrational in the Green-Tao sense (we will henceforth call this `filtration irrational'), then the linearly irrational factorisation is still (though slightly less) Green-Tao (i.e. filtration) irrational. There are a number of additional definitions (filtration: Definition \ref{d:filtration}, polynomial sequence adapted to a filtration: Definition \ref{d:adapted},  quantitative rationality: Definition \ref{d:quant-rat}, quantitative smallness: Definition \ref{d:quant-small}, quantitative filtration irrationality: Definitions \ref{d:fi} and \ref{d:fi-poly}) required for a rigorous interpretation.

\begin{thm}[Factorisation of polynomial sequences]\label{t:state-factorise}
Let $A,M\geq 2$ and $d,s,t\geq 1$ be integers such that $AM> d$. Let $\g$ be a real nilpotent Lie algebra with filtration $\g_\bullet$, rational basis $\cX$, dimension $d$ and rational structure constants of height at most $M$. Let $p$ be a polynomial sequence adapted to $\g_\bullet$ and sequence of subspaces $S_\bullet\leq \g_\bullet$. Finally, insist that $S_\bullet, \g_\bullet$ are of complexity $A^{O(1)}$ in $\g$, where here and henceforth $O(1)$ terms may depend on $d,s$. Then we may write $p = e \ast p' \ast r$ where $e,p',r$ are polynomial sequences in $\g$ adapted to $\g_\bullet$, $p'$ is $(A,N)$-linearly irrational in a sequence of subspaces $S'_\bullet$ of complexity $A^{O(1)}$, $r$ is $(AM)^{O(1)}$-rational, and $e$ is $((AM)^{O(1)},N)$-small. 
\end{thm}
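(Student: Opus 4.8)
The plan is to mimic the iterative factorisation argument familiar from Green--Tao (the proof of their factorisation theorem \cite[Theorem 1.19]{GT12}), but run the iteration against the data of the subspace sequence $S_\bullet$ rather than a filtration. Recall that $(A,N)$-linear irrationality of $p'$ in $S'_\bullet$ should say, roughly, that for each degree $j$ the relevant ``leading coefficient'' of $p'$ in the quotient $\g/S'_j$ (or wherever the condition lives) is not close to rational with small height: there is no nonzero integer combination of bounded size of these coefficients which is $A$-close to a point of $\log\Gamma$ modulo the lower-order data. Thus the proof is a standard ``either we are already irrational, or we can peel off a rational/error piece and induct'' dichotomy, with the subtlety that the induction has to be organised by degree (top degree first) and, within a degree, that the peeling operations must not damage irrationality that has already been established at higher degrees. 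I would set up a quantitative induction on the degree $s$, with the inductive hypothesis phrased so that at stage $j$ we have already arranged linear irrationality for the part of $p$ of degree $>j$, we have accumulated rational and small factors of controlled complexity, and we are now processing degree $j$.

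The key steps, in order: (1) Fix the top degree and examine the Taylor/Mal'cev coefficients of $p$ of that degree relative to the chosen basis $\cX$ and the subspace $S_\bullet$. If the appropriate non-rationality condition holds with parameter $A$, move on; if not, there is a bounded-height integer relation witnessing failure, and I use it — exactly as in the pigeonhole step of \cite[Lemma 4.1 etc.]{GT12} — to produce a rational polynomial sequence $r_0$ of degree equal to that top degree with height $(AM)^{O(1)}$, and an error sequence $e_0$ that is $((AM)^{O(1)},N)$-small, such that $e_0^{-1} \ast p \ast r_0^{-1}$ has strictly ``smaller'' degree-$s$ coefficient data (e.g. one fewer nonzero coordinate, or a coordinate that has become rational and can be absorbed). (2) Iterate within the top degree: since each step reduces a finite integer ``complexity'' bounded in terms of $d$ and $M$, after $O_{d}(M^{O(1)})$—or, more carefully, after boundedly many—steps the degree-$s$ data is linearly irrational; collect the product of all the $r_i$ into a single rational sequence and the product of all the $e_i$ into a single small sequence, using the BCH formula (the $\ast$ operation) together with the fact that products of $O(1)$ many $(AM)^{O(1)}$-rational (resp. small) sequences of bounded degree are again $(AM)^{O(1)}$-rational (resp. small), after adjusting the implied constants. (3) Pass to degree $s-1$ and repeat, noting that the operations used at degree $s-1$ are of degree $\le s-1$ and hence fix the (already irrational) degree-$s$ part — this is the crucial point that makes the top-down induction work. (4) At the end, relabel: the accumulated errors, multiplied together, give $e$; the accumulated rationals give $r$; what remains is $p'$, linearly irrational in the subspace sequence $S'_\bullet$ that the process has produced (obtained from $S_\bullet$ by the bounded-complexity modifications forced at each step), and the ``furthermore'' clause about filtration irrationality follows because all peeled factors live in $\g_\bullet$ and the quantitative bounds degrade only polynomially.

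The main obstacle I expect is step (3)/(4): ensuring that the subspace sequence $S'_\bullet$ output by the procedure is genuinely well-defined, of complexity $A^{O(1)}$, still nested inside $\g_\bullet$, and — most delicately — that linear irrationality established at higher degrees is literally preserved (not merely approximately) under the lower-degree peeling operations. In the Green--Tao filtration setting this is clean because the filtration is fixed throughout; here $S_\bullet$ itself is being modified, so I need a monotonicity/stability lemma saying that the modifications at degree $j$ only touch $S'_k$ for $k \le j$, or that they enlarge the subspaces in a way that can only help irrationality at higher degrees. A secondary technical nuisance is bookkeeping the quantitative losses: each of the boundedly many peeling steps costs a polynomial factor in $AM$, and one must check the total number of steps is bounded purely in terms of $d$ (and $s$), not $A$ or $M$, so that the final height of $r$ and size of $e$ remain $(AM)^{O(1)}$ rather than, say, $M^{A}$. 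I would handle this by a dimension/degree count: the ``amount of rationality extractable'' at each degree is controlled by $\dim \g = d$ and the number of basis vectors, giving an absolute bound $O_{d,s}(1)$ on the iteration length.
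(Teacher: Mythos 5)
There is a genuine gap, and it sits exactly where you flag your ``main obstacle'': step (3) of your plan asserts that the peeling operations performed at degree $s-1$ fix the already-established irrationality of the degree-$s$ data, and your top-degree-first induction rests entirely on this. It is false. The corrections enter multiplicatively, and by Baker--Campbell--Hausdorff
\[
e_0^{-1}\ast p\ast r_0^{-1} \;=\; p - e_0 - r_0 - \tfrac12\bigl([e_0,p]+[e_0,r_0]+[p,r_0]\bigr) - \cdots,
\]
so a correction of degree $k<s$ bracketed against the degree-$\geq 1$ part of $p$ contaminates every higher degree up to $s$; these commutator terms involve the genuine (irrational, unbounded) coefficients of $p$, hence are neither rational nor small, and cannot simply be absorbed. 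Consequently no ``monotonicity/stability lemma'' of the kind you hope for is available: lower-degree peeling does damage higher-degree coefficients. What \emph{is} stable is the opposite end: if the corrections have all terms of degree $\geq j$ (with $j$ the smallest degree at which irrationality fails), the coefficients of degree $\leq j$ are untouched, while the contamination lies in $[\g,\g]\subseteq\g_2$.

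The paper's proof is built around precisely this observation, via two ingredients absent from your proposal. First, an additive decomposition \emph{relative to a subspace}: if $a$ is sufficiently linearly irrational in $U\bmod T$, then $a=a_p+a_r+a_s$ with $a_r,a_s\in U\cap T$ rational/small and $a_p$ linearly irrational in some $U'\leq U$ (Proposition \ref{p:linear-decomp}, Corollary \ref{c:poly-linear-decomp}); the iteration there terminates in $O_d(1)$ steps because the witnessing functionals are kept linearly independent on $(U\cap T)^\ast$ (this is the rigorous version of your ``dimension count''). Second, the multiplicative factorisation is obtained by induction along the \emph{filtration depth}, not the polynomial degree: after the first correction the BCH junk lies in $\g_2$, so one re-decomposes the new sequence relative to $T=\g_2$ (using relative irrationality $\bmod\ \g_2$, which the junk does not affect), producing corrections inside $\g_2$ whose own junk lies in $[\g,\g_2]\subseteq\g_3$, and so on; after $s$ steps the junk lies in $\g_{s+1}=0$ and $p=e\ast p'\ast r$ holds exactly, with each stage costing only a polynomial loss in the irrationality parameter (hence the initial parameter $A^{b}$ with $b=O_{d,s}(1)$, keeping $r$ and $e$ of size $(AM)^{O(1)}$). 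Your proposal captures the ``peel off rational and small pieces'' spirit and the need for a bounded iteration, but it lacks both the relative ($U\bmod T$) decomposition and the filtration-depth mechanism that converts additive corrections into an exact multiplicative factorisation, and the degree-descending scheme offered in their place does not work.
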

This Theorem is proven in Section \ref{s:fact} as Theorem \ref{t:quant-factorise} (where we actually prove a bit more about the factorisation -- see Theorem \ref{t:quant-factorise}). Our final goal is to provide an arithmetic regularity lemma which produces a virtual nilsequence which is both filtration irrational and linearly irrational; we call such a nilsequence `strongly irrational' (cf. Definition \ref{d:strong-irrat}). Though filtration irrationality is not strictly necessary to determine distribution, it is useful in applications and furthermore means that our arithmetic regularity lemma is more compatible with, and explicitly strengthens, what is done in \cite{GT10}. 

\begin{thm}[Strongly irrational arithmetic regularity lemma]\label{t:strong-arl}
Let $f:[N]\to [0,1]$, let $s \geq 1$, let $\eps > 0$, and let $\cF: \R^+ \to \R^+$ be a growth function. Then there exists a quantity $M=O_{s,\eps,\cF}(1)$ and a decomposition
\[f = f_\nil + f_\sml + f_\unf \]
of $f$ into functions $f_\nil, f_\unf: [N] \to [-1,1]$ such that
\begin{enumerate}
\item($f_\nil$ structured) $f_\nil$ is a degree $\leq s$, $(\cF(M),N)$-strongly irrational virtual polynomial nilsequence of complexity $\leq M$,
\item($f_\sml$ small) $||f_\sml||_{L^2[N]} \leq \eps$,
\item($f_\unf$ very uniform) $||f_\nil||_{U^{s+1}[N]} \leq 1/\cF(M)$,
\item(Nonnegativity) $f_\nil$ and $f_\sml$ take values in $[0,1]$.
\end{enumerate}
\end{thm}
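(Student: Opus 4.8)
The plan is to derive Theorem \ref{t:strong-arl} by bootstrapping the Green--Tao arithmetic regularity lemma \cite[Theorem 1.2]{GT10} through the factorisation result Theorem \ref{t:state-factorise}. First I would invoke the Green--Tao decomposition $f = f_{\nil}' + f_{\sml}' + f_{\unf}'$ with respect to a suitably fast-growing auxiliary growth function $\cF_0$ (to be chosen as a function of the target $\cF$), obtaining a degree $\leq s$ virtual nilsequence $f_{\nil}'$ of complexity $\leq M_0$ which is $(\cF_0(M_0), N)$-\emph{filtration} irrational. The point is that this gives us filtration irrationality but not linear irrationality, which is exactly the gap that Theorem \ref{t:state-factorise} is designed to close: the nilsequence $f_{\nil}'$ is built from a polynomial sequence $g$ adapted to some filtration $\g_\bullet$, and we may feed $g$ into the factorisation to write $g = e \ast p' \ast r$ with $p'$ linearly irrational in some $S'_\bullet$, $r$ rational, and $e$ small.

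The second step is to absorb the factors $e, r$ appropriately. The rational part $r$ has bounded denominator, so (as in the standard manipulations of \cite[Section 2]{GT10}) passing to a finite-index subgroup / rescaling the lattice turns $p' \ast r$ into a genuinely linearly irrational polynomial sequence on a related nilmanifold with controlled complexity; this keeps the nilsequence structure intact and, crucially, because the factorisation in Theorem \ref{t:state-factorise} preserves filtration irrationality (with a slightly degraded constant), the resulting sequence is now \emph{strongly} irrational in the sense of Definition \ref{d:strong-irrat}. The small part $e$ contributes a perturbation to $f_{\nil}'$ which is $((A M)^{O(1)}, N)$-small; this perturbation should be moved into the $f_{\sml}$ bucket, after checking that its $L^2$ norm is $\leq \eps/2$ say, which is arranged by taking $\cF_0$ large enough relative to $\eps$ and to the complexity bounds that emerge. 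One must also re-verify the uniformity estimate: the new $f_{\unf}$ differs from $f_{\unf}'$ only by the small correction, and since $U^{s+1}$ is dominated (up to constants, on $[N]$) by a quantity controlled by the $L^2$ and $L^\infty$ norms, one can still guarantee $\|f_{\unf}\|_{U^{s+1}[N]} \leq 1/\cF(M)$ for the final complexity bound $M$, again by choosing $\cF_0$ suitably. Finally, the nonnegativity clause (4) is handled exactly as in \cite{GT10}: one takes conditional expectations onto the relevant $\sigma$-algebra, or equivalently replaces $f_{\nil}$ by $\max(0,\min(1,f_{\nil}))$ and pushes the (small) defect into $f_{\sml}$, noting that truncation does not increase $L^2$ distances.

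The main obstacle I anticipate is the \emph{bookkeeping of the growth function and the complexity parameters}: one needs a single choice of auxiliary growth function $\cF_0$ (and of the intermediate parameter $A$ in Theorem \ref{t:state-factorise}, taken as a function of $M_0$) for which, \emph{simultaneously}, (i) the smallness of $e$ beats $\eps$, (ii) the degraded filtration-irrationality constant still beats $\cF(M)$ at the final complexity $M$, and (iii) the uniformity of $f_{\unf}$ still beats $1/\cF(M)$. Because the final complexity $M$ depends on $M_0$ which depends on $\cF_0$ which we are trying to choose in terms of $\cF$ and $M$, this is the usual circular-looking but routine ``diagonalisation'' argument: one defines $\cF_0$ to grow fast enough that $\cF_0(m)$ dominates all the relevant $(A M)^{O(1)}$-type expressions and $\cF$ evaluated at the worst-case complexity that can arise from complexity $m$ input, and then the self-consistency is automatic. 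A secondary technical point is ensuring that the factorisation of Theorem \ref{t:state-factorise}, which is stated for polynomial sequences into a Lie algebra with a fixed basis, can be applied to the particular $g$ produced by \cite{GT10} with compatible notions of complexity and rationality; this is a matter of translating between the group and Lie-algebra pictures via $\exp/\log$, which the earlier sections of the paper set up, and verifying that heights of structure constants and complexities of the subspaces $S_\bullet, \g_\bullet$ transform polynomially.
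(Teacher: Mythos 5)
There are two genuine gaps. The central one is that a \emph{single} application of Theorem \ref{t:state-factorise} cannot produce the amplified irrationality that clause (1) demands. That theorem, run with parameter $A$, outputs $p'$ which is only $(A,N)$-linearly irrational in a sequence $S'_\bullet$ whose complexity is $A^{O(1)}$; since the final complexity $M$ must dominate the complexity of $S'_\bullet$, you would need $A \geq \cF(M) \geq \cF(A^{c})$ for some $c\geq 1$, which no choice of auxiliary growth function $\cF_0$ or ``diagonalisation'' can arrange once $\cF$ grows faster than a power. The circularity here is not the routine kind you describe, because the complexity of the structured data grows with the irrationality parameter you are trying to amplify. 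The paper resolves this by iterating the factorisation (Proposition \ref{p:amp-strong-irrat}): at stage $i$ one tests whether $p_i$ is $(\cF(M_i),N)$-linearly irrational in $S^{(i)}_\bullet$, and if not one refactorises with parameter $\cF(M_i)$, using the first bullet point of Theorem \ref{t:quant-factorise} to force $\dim(S^{(i)}_\bullet)$ to decrease strictly in the lexicographic order; termination after $O_{M_0}(1)$ steps is what makes the growth-function bookkeeping close up, and the second bullet point is what preserves filtration irrationality through the iterates. Your proposal contains no trace of this descent, and without it the proof does not go through.

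The second gap is your treatment of the factors $e$ and $r$. The factor $e$ is $((AM)^{O(1)},N)$-\emph{small} in the sense of Definition \ref{d:quant-small}: its polynomial coefficients are small relative to powers of $N$, i.e.\ $e$ is slowly varying --- it is emphatically not the case that the induced perturbation of the nilsequence, $f(p(0)\ast e(n)\ast p'(n)\ast r(n)\ast\log\Gamma)-f(p(0)\ast p'(n)\ast r(n)\ast\log\Gamma)$, has small $L^2[N]$ norm, so it cannot be dumped into $f_\sml$ (nor can truncation arguments rescue clause (4) afterwards). Likewise $r$ is not removed by passing to a finite-index subgroup in the paper. Instead, the paper (following the template of \cite[Theorem 1.2]{GT10}) absorbs both factors into the \emph{structure} of the virtual nilsequence: Proposition \ref{p:nil-is-virtual} defines $\tilde f(x,s,y)=f(p(0)\ast e(Ny)\ast x\ast r(s)\ast\log\Gamma)$, feeding $e$ through the $n/N$ coordinate and $r$ through the $n \bmod q$ coordinate (where $q=O_M(1)$ comes from the periodicity of $r(n)\ast\log\Gamma$, Lemma \ref{l:rat-period}), together with a rescaled lattice $l\langle\cX\rangle$ to keep automorphy. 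Consequently $f_\nil$, $f_\sml$, $f_\unf$ are exactly those produced by the non-irrational regularity lemma (Theorem \ref{t:non-irrat-arl}); only the representation of $f_\nil$ changes, so clauses (2)--(4) and the uniformity bound never need to be re-verified, and your proposed re-estimation of $\|f_\unf\|_{U^{s+1}}$ is both unnecessary and, as written, unjustified. Your starting point (the irrational lemma \cite[Theorem 1.2]{GT10} rather than \cite[Proposition 2.7]{GT10} plus Proposition \ref{p:irrational-factorise}) is a harmless variation by comparison, modulo the group-to-algebra translation you correctly flag.
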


We also provide a couple of applications. The first is the modest task of recovering (with polynomial bounds) the flag counting lemma for strongly irrational nilsequences.

\begin{thm}[Equidistribution in the Leibman group for flag systems]\label{t:flag-cl}
Let $A,M \geq 2$. Let $\Psi = (\psi_1, \ldots, \psi_t)$ be a flag system of linear forms, each mapping $\Z^D$ to $\Z$. Let $\g$ be a real nilpotent Lie algebra of dimension $d$ and step $s$. Let $\cX$ be a rational basis for $\g$ which passes through $\g_\bullet$ and with respect to which $\g$ has rational structure constants of height at most $M$. Let $S_\bullet\leq \g_\bullet$ be a sequence of subspaces of complexity at most $M$. If $p$ is $(A,N)$-strongly irrational in $(\g_\bullet,S_\bullet)$ then there are constants $0<c_1,c_2 = O_{d,s,D,t,\Psi}(1)$ such that  $p^\Psi$ is $O(M^{c_1}/A^{c_2})$-equidistributed in $\log G^\Psi$, where $G^\Psi$ is the Leibman group for $G_\bullet, \Psi$.
\end{thm}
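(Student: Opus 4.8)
The plan is to deduce Theorem~\ref{t:flag-cl} from the general counting lemma Theorem~\ref{t:main-quant} by showing that, in the flag case, the Lie algebra $\g^\Psi(S_\bullet)$ produced by Definition~\ref{d:gPsiBullet} coincides (as a subalgebra of $\g^t$) with $\log G^\Psi$, the logarithm of the Leibman group associated to $\g_\bullet$ and $\Psi$. First I would apply Theorem~\ref{t:main-quant}: since $p$ is $(A,N)$-strongly irrational in $(\g_\bullet,S_\bullet)$, it is in particular $(A,N)$-linearly irrational in $S_\bullet$, and $S_\bullet$ has complexity at most $M$, so $p^\Psi$ is $O(M^{c_1}/A^{c_2})$-equidistributed on $\g^\Psi(S_\bullet)$ with respect to the Hermite basis $\cX^\Psi$, for constants depending only on $d,s,D,t,\Psi$. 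Thus it suffices to identify $\g^\Psi(S_\bullet)$ with $\log G^\Psi$; once that identification is in hand, equidistribution on $\g^\Psi(S_\bullet)$ with respect to a Hermite basis transfers to equidistribution in $\log G^\Psi$ (the Hermite basis with respect to $\cX^t$ of a fixed rational subspace changes the metric only by an $M^{O(1)}$ factor, which is absorbed into the bound).

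The core of the argument is therefore the algebraic identity $\g^\Psi(S_\bullet) = \log G^\Psi$ under the flag hypothesis. I would unwind Definition~\ref{d:gPsiBullet}: $\g^\Psi(S_\bullet)$ is built as a span (and Lie closure) of elements of $\g^t$ of the form obtained by placing, in the $i$-th coordinate, a contribution weighted by the value of the degree-$k$ part of $\psi_i$ evaluated appropriately, drawn from the $k$-th subspace $S_k$. The Leibman group $G^\Psi$, by contrast (as in \cite[Section~3]{GT10}), is generated by elements indexed by the $Q$-cubes / the $\psi_i$ raised to successive powers, with the $k$-th filtration piece $\g_k$ in the role of $S_k$. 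Under the flag condition the spans generated by the $\psi_i^{\otimes k}$-type vectors over the different subspaces are ``upper-triangularly compatible'' with the filtration, so the two constructions produce the same subalgebra. Concretely I would show two inclusions: the generators of $\log G^\Psi$ lie in $\g^\Psi(S_\bullet)$ because each filtration piece appears among the $S_k$ and the flag condition guarantees the relevant linear-form data is already captured; conversely each generator of $\g^\Psi(S_\bullet)$ lies in $\log G^\Psi$ because under the flag condition the subspace $S_k$ contributes nothing beyond what the $k$-th (and higher) filtration pieces contribute to the Leibman group. This two-sided span comparison, followed by taking Lie closures on both sides, gives the identity.

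The main obstacle I anticipate is precisely making the flag condition do its work in the span comparison: one must check that the combinatorial data encoded in $\g^\Psi(S_\bullet)$ (which a priori records more refined information than the Leibman group, since this extra refinement is exactly what is needed in the non-flag case) collapses to the Leibman data when $\Psi$ is a flag. This amounts to verifying that, for a flag system, the ``new'' generators that distinguish $\g^\Psi(S_\bullet)$ from $\log G^\Psi$ in general are redundant --- they already lie in the Leibman subalgebra. I would expect this to follow from the characterisation of the flag condition in Subsection~\ref{ss:flag} together with the explicit computation of $\g^\Psi(S_\bullet)$ in Section~\ref{s:gPsiSbullet}, but pinning down the bookkeeping (degrees, which $\psi_i$ occur, how the filtration indices line up) is the delicate step. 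A secondary, more routine point is tracking that passing between the Hermite basis $\cX^\Psi$ of $\g^\Psi(S_\bullet)$ and whatever basis one uses for $\log G^\Psi$ costs at most a polynomial-in-$M$ factor in the equidistribution parameter, so that the final bound retains the shape $O(M^{c_1}/A^{c_2})$ with $c_1,c_2 = O_{d,s,D,t,\Psi}(1)$; this uses that both are rational subspaces of $\g^t$ of complexity $M^{O(1)}$ and standard Hermite-basis comparison estimates.
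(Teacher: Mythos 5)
Your skeleton — invoke Theorem \ref{t:main-quant} to get $O(M^{c_1}/A^{c_2})$-equidistribution of $p^\Psi$ on $\g^\Psi(S_\bullet)$, then identify $\g^\Psi(S_\bullet)$ with $\log G^\Psi$ — is the same as the paper's, and the easy inclusion $\g^\Psi(S_\bullet)\subseteq \log G^\Psi$ is indeed automatic (each $S_i\le \g_i$, and $\sum_i \g_i\otimes V^i$ is closed under the bracket of Equation (\ref{e:lie-bracket})). But there is a genuine gap in the reverse inclusion: the identity $\g^\Psi(S_\bullet)=\log G^\Psi$ is \emph{not} a consequence of the flag condition alone. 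For a general sequence $S_\bullet\le\g_\bullet$ of complexity $\le M$ it is simply false — take $S_i$ a proper rational subspace of $\g_i$ (in the extreme $S_i=0$), and $\g^\Psi(S_\bullet)$ is a proper subalgebra of the Leibman algebra even for flag $\Psi$. Your assertion that ``each filtration piece appears among the $S_k$'' is unjustified: the hypotheses give only $S_k\le\g_k$, nothing more, and your proposal uses only the linear-irrationality half of strong irrationality, so nothing in it rules out such small $S_\bullet$.

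What closes the gap is precisely the half of strong irrationality you set aside: the $(A,N)$-\emph{filtration} irrationality of $p$, together with the hypothesis that $\cX$ passes through $\g_\bullet$. This is the content of Lemma \ref{l:Ti+hi=gi}, which converts filtration irrationality into the algebraic statement $S_i+\h_i=\g_i$ for all $i$; Lemma \ref{l:maximal-Wi} then gives $W_i+\g_{i+1}=\g_i$, i.e. $\g_i=\sum_{j\ge i}W_j$, and only at that point does the flag condition $V^i\le V^j$ for $i\le j$ yield $\g_i\otimes V^i\subseteq\sum_{j\ge i}W_j\otimes V^j=\g^\Psi(S_\bullet)$, via Proposition \ref{p:gPsi-Sbullet} and Proposition \ref{p:comp-leib-gpsi} (cf.\ Corollary \ref{c:gpsi-flag}, whose hypothesis ``$\g_\bullet$ minimal for $S_\bullet$'' is exactly this extra condition). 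This is the paper's argument; without the step ``filtration irrationality $\Rightarrow S_i+\h_i=\g_i$'' your two-sided span comparison cannot be completed. Your secondary point about the Hermite-basis/complexity bookkeeping costing only $M^{O(1)}$ is fine and matches how the paper handles it.
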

This is proven in Subsection \ref{ss:recover}, where we show that under appropriate assumptions, $\g^\Psi(S_\bullet) = \log G^\Psi$.

Next, in Subsection \ref{ss:gw} we give another resolution of a conjecture of Gowers and Wolf \cite[Conjecture 2.5]{GW10} which was recently recovered in the generality of all linear forms in \cite{A21} by employing a somewhat specialised linear algebraic trick. The point of Subsection \ref{ss:gw} is to provide a less ad-hoc argument. Indeed, Green and Tao's proof in the flag setting \cite[Theorem 1.13]{GT10} may be followed after doing some modest algebraic computations (Proposition \ref{p:gw-equid}) and employing the strongly irrational arithmetic regularity lemma Theorem \ref{t:strong-arl} and counting lemma Theorem \ref{t:main-quant} in place of the original irrational arithmetic regularity lemma \cite[Theorem 1.2]{GT10} and counting lemma \cite[Theorem 1.11]{GT10} respectively. 

\begin{thm}[Gowers-Wolf Conjecture]
Let $\Psi = (\psi_1, \ldots, \psi_t)$ be a collection of linear forms each mapping $\Z^D$ to $\Z$, and let $s \geq 1$ be an integer such that the polynomials $\psi_1^{s+1},\ldots,\psi_t^{s+1}$ are linearly independent. For $i=1, \ldots t$,  let $f_i:[-N,N] \to \C$ be functions bounded in magnitude by 1 (and defined to be zero outside of $[-N,N]$). For all $\eps >0$ there exists $\delta>0$ such that if $\min_{i} ||f_i||_{U^{s+1}[-N,N]} \leq \delta$, then 
\[\left|\E_{\vect x \in [-N,N]^D} \prod_{i=1}^t f_i(\psi_i(\vect x))\right| \leq \eps.\]
\end{thm}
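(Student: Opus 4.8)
The plan is to follow the standard arithmetic regularity + counting strategy, but using the strongly irrational versions developed in this paper so that the argument goes through for all systems of linear forms rather than only flag systems.

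First I would set up a contrapositive / compactness-free version: fix $\eps > 0$ and suppose $\left|\E_{\vect x \in [-N,N]^D} \prod_i f_i(\psi_i(\vect x))\right| \geq \eps$; the goal is to conclude that $\min_i \|f_i\|_{U^{s+1}[-N,N]} \geq \delta$ for some $\delta = \delta(\eps, \Psi, s) > 0$. Apply the strongly irrational arithmetic regularity lemma (Theorem \ref{t:strong-arl}) to each $f_i$ (after the usual harmless rescaling from $[-N,N]$ to $[N']$ with $N' \asymp N$, extending by zero), with a growth function $\cF$ to be chosen later depending on $\Psi$, $s$, $\eps$, obtaining decompositions $f_i = f_{i,\nil} + f_{i,\sml} + f_{i,\unf}$ with the uniform parts controlled by $1/\cF(M)$ in $U^{s+1}$, the small parts $\eps'$-small in $L^2$ (with $\eps'$ also to be chosen), and the structured parts being degree $\leq s$, $(\cF(M),N)$-strongly irrational virtual polynomial nilsequences of complexity $\leq M$ with $M = O_{s,\eps',\cF}(1)$.

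Next I would expand the product $\prod_i f_i(\psi_i(\vect x))$ into $3^t$ terms. The key point is the hypothesis that $\psi_1^{s+1}, \ldots, \psi_t^{s+1}$ are linearly independent: this is precisely the Gowers–Wolf condition which guarantees (by the generalised von Neumann theorem, i.e.\ a Cauchy–Schwarz / Gowers-norm argument — this is \cite[Proposition 7.1']{GT10}-type input, valid for \emph{all} systems) that any term involving at least one $f_{i,\unf}$ contributes $O_{\Psi, s}(1/\cF(M))$ to the average; choosing $\cF$ growing fast enough kills these. Similarly, each term involving an $f_{i,\sml}$ but no $f_{i,\unf}$ is bounded by $\|f_{i,\sml}\|_{L^2} \leq \eps'$ after a Cauchy–Schwarz (using that nilsequences are bounded by $1$), so choosing $\eps'$ small relative to $\eps$ and $3^t$ kills these too. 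Hence we are left with $\left|\E_{\vect x} \prod_i f_{i,\nil}(\psi_i(\vect x))\right| \geq \eps/2$. Now each $f_{i,\nil}$ is a strongly irrational virtual polynomial nilsequence: write $f_{i,\nil}(n) = F_i(p_i(n)\Gamma_i)$. After passing to a common nilmanifold (taking a product group, as in \cite[Section 1]{GT10}), we may assume all the $p_i$ come from a single polynomial sequence $p$ in one nilpotent Lie group $G$ which is $(\cF(M), N)$-strongly irrational in some $(\g_\bullet, S_\bullet)$ of complexity $\leq \cF(M)$.

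Finally, apply the counting lemma Theorem \ref{t:main-quant} to $p^\Psi(\vect x) = (p(\psi_1(\vect x)), \ldots, p(\psi_t(\vect x)))$: since $p$ is $(A, N)$-linearly irrational in $S_\bullet$ with $A = \cF(M)$ large (strong irrationality implies linear irrationality), $p^\Psi$ is $O(M^{c_1}/\cF(M)^{c_2})$-equidistributed on $\g^\Psi(S_\bullet)$. Consequently $\E_{\vect x} \prod_i F_i(p(\psi_i(\vect x))\Gamma)$ is, up to an error $O(M^{c_1}/\cF(M)^{c_2})$, equal to the integral $\int_{G^\Psi/\Gamma^\Psi} \prod_i F_i(g_i \Gamma) \, d\mu$ against Haar measure on the relevant nilmanifold. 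Here I would invoke the algebraic computation analogous to Proposition \ref{p:gw-equid} (which is exactly the step where the linear-independence hypothesis on $\psi_i^{s+1}$ is used again, now on the group side): it shows that this Haar integral vanishes — more precisely, that the relevant nilmanifold $\g^\Psi(S_\bullet)$ splits off a factor on which the integrand averages to zero, because the degree-$(s+1)$ part of the forms, being linearly independent, forces a non-trivial free direction in the top component of $\g^\Psi(S_\bullet)$. Choosing $\cF$ so that $M^{c_1}/\cF(M)^{c_2} < \eps/4$ then yields $\eps/4 \geq \eps/2$, a contradiction, completing the argument; unwinding the quantifiers gives the desired $\delta$.

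The main obstacle I expect is the last step: establishing the vanishing of the Haar integral via the structure of $\g^\Psi(S_\bullet)$. In the flag setting Green and Tao exploit the explicit description of the Leibman group; here $\g^\Psi(S_\bullet)$ is defined more abstractly (Definition \ref{d:gPsiBullet}), so one must show that the linear independence of $\psi_1^{s+1}, \ldots, \psi_t^{s+1}$ translates into the presence of an $\R^t$-worth (or at least a non-trivial subspace) of the degree-$s$ part of $\g$ appearing "diagonally free" inside the top layer of $\g^\Psi(S_\bullet)$, so that the corresponding torus factor of the nilmanifold makes the character-like integrand average to zero. This is precisely what Proposition \ref{p:gw-equid} is asserted to provide, so modulo that input the proof is a routine assembly; the secondary bookkeeping obstacle is the standard but fiddly task of combining finitely many virtual nilsequences into a single one and tracking how complexity bounds and the growth function $\cF$ propagate through the generalised von Neumann estimate and the counting lemma.
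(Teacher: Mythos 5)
Your overall architecture (strongly irrational regularity lemma, generalised von Neumann, counting lemma, plus the algebra of Proposition \ref{p:gw-equid}) is the intended one, but as written the argument has a fatal logical gap: you never use the hypothesis $\min_i \|f_i\|_{U^{s+1}[-N,N]}\leq \delta$. Your final step claims that the Haar integral of $F_1\otimes\cdots\otimes F_t$ over the nilmanifold corresponding to $\g^\Psi(S_\bullet)$ \emph{vanishes}, and derives ``$\eps/4\geq \eps/2$'' unconditionally; if that were correct it would show $\left|\E_{\vect x}\prod_i f_i(\psi_i(\vect x))\right|\leq \eps$ for \emph{all} $1$-bounded $f_i$, which is false (take every $f_i\equiv 1$). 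Proposition \ref{p:gw-equid} does not assert any vanishing: it asserts the containment $\g_{s+1}\otimes\R^t\leq \g^\Psi(S_\bullet)$ when $V^{s+1}=\R^t$ (which is what the linear independence of $\psi_1^{s+1},\ldots,\psi_t^{s+1}$ gives). The role of that containment, in the argument of \cite[Theorem 1.13]{GT10} that this paper instructs one to follow, is to let you integrate out the $G_{s+1}^t$-directions independently, replacing each structured part by a degree $\leq s$ nilsequence of complexity $O_M(1)$; only \emph{then} does the Gowers-norm hypothesis enter, via the standard estimate that a bounded-complexity degree $\leq s$ nilsequence correlates with a $1$-bounded function by at most $O_M(\|\cdot\|_{U^{s+1}})+o(1)$, applied to the $f_i$ achieving the minimum. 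Without this step there is no mechanism by which $\delta$ appears in your proof at all.

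A second, related problem is your treatment of the uniform parts. Applying Theorem \ref{t:strong-arl} at degree $s$ gives $\|f_{i,\unf}\|_{U^{s+1}}\leq 1/\cF(M)$, but the generalised von Neumann theorem for a general system $\Psi$ controls $\E_{\vect x}\prod_i f_i(\psi_i(\vect x))$ by $U^{k+1}$ norms where $k$ is the Cauchy--Schwarz complexity of $\Psi$, which is in general strictly larger than $s$; since Gowers norms increase with degree, smallness in $U^{s+1}$ does not give smallness in $U^{k+1}$. Invoking ``generalised von Neumann at level $s+1$ because the $\psi_i^{s+1}$ are linearly independent'' is circular: control of the average by $U^{s+1}$ is precisely the statement being proved. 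The Green--Tao argument circumvents this (one regularises at a degree for which the Cauchy--Schwarz argument is legitimate, and the $s+1$-independence is only exploited afterwards on the structured parts, exactly via Proposition \ref{p:gw-equid} / Lemmas \ref{l:Ti+hi=gi} and \ref{l:maximal-Wi}), and this is where the filtration-irrationality half of strong irrationality is genuinely needed. Finally, a minor point the paper itself flags: one needs a variant of Theorem \ref{t:main-quant} for averages over shifted sublattices of $\Z^D$ to handle the $n\bmod q$ and $n/N$ dependence of virtual nilsequences and the rational/smooth factors; your sketch elides this, but it is bookkeeping compared with the two gaps above.
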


\subsection{On the flag condition}\label{ss:flag}
Let $\Psi=(\psi_1,\ldots, \psi_t)$ where each $\psi_i$ maps $\Z^D$ to $\Z$. For each $j \in \N^+$, one may write $\Psi^j(\vect{x}) = \sum_{m\in \cM}v_{m,j}m(\vect{x})$, where $v_{m,j} \in \Z^t$, and  $m \in \cM$  is a `monomial map' which maps $\vect x=(x_1,\ldots, x_D)$ to some monomial in the variables $\{x_1,\ldots, x_D\}$. To elucidate the notation, we compute with the specific example 
\[ \Psi(x,y) := (\psi_1(x,y), \psi_2(x,y), \psi_3(x,y), \psi_4(x,y)) := (y, 2x + 2y, x+3y, x),\]
that 
\[ \Psi^1(x,y) = (0,2,1,1)x+(1,2,3,0)y,\]
and 
\[ \Psi^2(x,y) = (0,4,1,1)x^2 + (1,4,9,0)y^2 + (0,8,6,0)xy.\]
For $i \in \N^+$ we define the vector spaces $V_i = \spa_\R\{v_{m,i}\}_{m \in \cM}$ and let $V=V_1$. In fact, it is not difficult to see that $V_i = V^i$ where $V^i$ is the smallest vector space containing $i$-fold products of elements of $V$, where multiplication is conducted coordinatewise.

A system of linear forms $\Psi$ satisfies the \textit{flag condition} if $V^i \leq V^j$ whenever $i\leq j$. One may check that the system of linear forms in the example above does not satisfy the flag condition (i.e. \textit{is not flag}) because $V$ is not contained in $V^2$.

In what follows, if context is sufficiently clear, we will use $V$ to denote the vector subspace of $\R^t$ formed in this way from $\Psi$. If context is less clear, we may also sometimes use the notation $V_\Psi$.

\subsection{Outline of the paper}
In Section \ref{s:examples} we begin by providing examples which motivate the transition from Lie group to Lie algebra. Then we work out the qualitative theory in the Lie algebra setting, giving a qualitative counting lemma Theorem \ref{t:main-qual} and qualitative factorisation theorem Proposition \ref{p:qual-factorise}. We encourage a reader who wishes to understand the ideas behind Theorems \ref{t:main-quant} and \ref{t:state-factorise} to at least glean from this section the main algebraic strategy before progressing to the quantitative sections that follow. The theory is substantially cleaner  here and we hope that this section convinces the reader that this is the right setup in general. This section may also be of independent interest. 

The qualitative counting lemma in Section \ref{s:examples} describes a particular Lie algebra $\g^\Psi(S_\bullet)$ on which we obtain equidistribution. This is the same Lie algebra that appears in the statement of Theorem \ref{t:main-quant}. In Section \ref{s:gPsiSbullet} we give more explicit and tenable algebraic descriptions of this Lie algebra.

In Section \ref{s:cl}, we prove our quantitative counting lemma, Theorem \ref{t:main-quant}.

In Section \ref{s:fact}, we prove our quantitative factorisation of polynomial sequences for linear irrationality, Theorem \ref{t:state-factorise}. 

In  Section \ref{s:arl}, we prove our quantitative `strongly irrational' arithmetic regularity lemma, Theorem \ref{t:strong-arl}. 

In Section \ref{s:app} we give applications as discussed above. \newline \newline 
\textbf{Acknowledgments}. The author is grateful to Ben Green for many helpful conversations. The author is also grateful to Terence Tao for helpful comments on an earlier version of this document.

\section{Examples, discussion of methods and qualitative results}\label{s:examples}
In this section we begin with  some examples which demonstrate key phenomena in the qualitative setting. This motivates our methods and setup and in particular how they deviate from those in \cite{GT10}.  Then we work out the infinitary distribution of polynomial sequences on linear patterns. This includes most of the key ideas and phenomena that will appear in the quantitative setting (which is what we will ultimately use for our applications in Section \ref{s:app}), but is not muddied by quantitative bounds or the technical baggage introduced in their pursuit. This section will also be interspersed with examples. We hope that this Section is edifying in preparation for the more arduous task of reading Sections \ref{s:cl} and \ref{s:fact}. 

Recall that a filtration on a nilpotent simply-connected Lie group $G$ is a decreasing sequence of simply-connected Lie subgroups  $G=G_0 = G_1 \geq G_2 \geq \cdots$ with the property that $[G_i,G_j]\leq G_{i+j}$ for all $i,j$. A polynomial sequence $g(n) := \prod_i g_i^{\binom{n}{i}}$ on $G$ is said to be adapted to a filtered Lie group $G_\bullet$ if $g_i \in G_i$ for all $i$. As a general disclaimer, in this section we may use some standard terminology from this area without explicit introduction. We refer the reader to \cite{GT10} or \cite{GT12} for the development of the basic theory used here (on such an expedition, readers really need only acquaint themselves with basic qualitative definitions to make this rather informal section legible; we will provide our own definitions when doing things rigorously later on.)

\subsection{Non-flag examples}\label{ss:examples}

In the flag case, Green and Tao's counting lemma computes averages of the form $\E_{\vect x} \prod_{i=1}^t \phi(\psi_i(\vect x))$ where $\phi= f \circ g$ is a suitably `irrational' nilsequence in some filtered nilmanifold $G_\bullet$. Roughly speaking, if $G_\bullet$ is the `correct' filtered nilmanifold for the polynomial sequence $g$ (i.e. if $g$ cannot be written as a polynomial sequence on cosets of some smaller filtered subnilmanifold), then the distribution of $g^\Psi(\vect x) := (g(\psi_1(\vect x)), \ldots, g(\psi_t(\vect x)))$ in $G^t/\Gamma^t$ may be determined from the data $\{G_\bullet, \Psi\}$.  

\begin{definition}[The Leibman group]\label{d:leib-group}
Let $G_\bullet$ be a filtered nilmanifold and let $\Psi$ be a family of linear forms. Then define $G^\Psi:= \langle g_i^{v_i}, g_i \in G_i, v_i \in V^i \rangle$, where $V=V_\Psi$ is as defined in Subsection \ref{ss:flag}.
\end{definition}

The flag counting lemma says that if $g$ is irrational in $G_\bullet$ then $g^\Psi$ equidistributes in $G^\Psi / G^\Psi \cap \Gamma^t$, and so the average $\E_{\vect x} \prod_{i=1}^t \phi(\psi_i(\vect x))$ is asymptotically equal to the integral $\int_{G^\Psi/G^\Psi\cap \Gamma^t} f^{\otimes t}$. 

When moving to non-flag systems of forms, not only does $g^\Psi$ not not necessarily equidistribute on the Leibman group, but the notion of irrationality originally considered by Green and Tao does not capture enough information about the polynomial sequence $g$ to determine its distribution. The following example appears in \cite{GT20} and \cite{T20}.

\begin{example}\label{ex:flag-counter}
Let $\Psi:\Z^2 \to \Z^4$ be our non-flag system of linear forms from above:
\[\Psi(x,y) = (\psi_1(x,y),\psi_2(x,y),\psi_3(x,y),\psi_4(x,y))=(y, 2x+2y,x+3y,x).\]
Then, in $\R^4$,
\[V = \spa\{(0,2,1,1),(1,2,3,0)\} = \ker\{(3,0,-1,1), (2,-1,0,2)\}, \]
\[V^2 = \spa\{(0,4,1,1),(1,4,9,0),(0,4,3,0)\} = \ker\{(24,3,-4,-8)\},\]
and $V^3 = \R^4$.

Let $\alpha, \beta\in \R$ be such that $\{1,\alpha, \beta\}$ satisfy no nontrivial linear relations over $\Q$. Let $f: \R^2/\Z^2 \to \C$ be smooth and let $g(n) = (\alpha n, \beta n^2)$ determine a polynomial sequence on $\R^2$. Then (suppressing in our notation the quotient $\R^2 \to \R^2/\Z^2$) we have that  $\phi(n):=f(\alpha n, \beta n^2)$ is an irrational nilsequence in the filtered Lie group $\R^2$ with filtration $G_1 = \R^2$, $G_2 = 0 \times \R$, essentially because there is no filtered subnilmanifold with respect to which $g$ is a polynomial sequence. We would like our counting lemma to determine the average
\begin{equation}\label{e:ex-avg}
\E_{x,y} (f\circ g)(\psi_1(x,y))(f\circ g)(\psi_2(x,y))(f\circ g)(\psi_3(x,y))(f\circ g)(\psi_4(x,y)).
\end{equation}
To do so we find where $g^\Psi(x,y):=(g(\psi_1(x,y)),\ldots,g(\psi_4(x,y)))$ distributes within in $(\R^2/\Z^2)^4$ whereupon the average (\ref{e:ex-avg}) may be computed by the relatively easy task of integrating over a subtorus. 

Here the Leibman group is \[\{(a,  b):  a \in V,  b \in V+V^2\},\] 
suitably interpreted as a subgroup of $(\R^2)^4$. On the other hand, one may compute that in fact $g^\Psi$ is constrained to the following subgroup of $G^\Psi$: 
\[\{(a, b): a \in V,  b \in V^2\}. \]
Since $V^2 \subsetneq V+V^2$ we have that $g^\Psi$ takes values in a strict subgroup of the Leibman group and so will certainly not equidistribute there. (Note on the other hand that if the flag condition was satisfied by the system of linear forms, i.e. $V \leq V^2$ so $V^2 = V+V^2$, then the subgroup is equal to the Leibman group and indeed Green and Tao's counting lemma holds.)

Now define $\tilde g(n) = (\alpha n , \gamma n + \beta n^2)$, where $\{1,\alpha,\beta,\gamma\}$ is a $\Q$-linearly independent set. Then $\tilde g$ is irrational in the same filtration as $g$ and in this instance $\tilde g^\Psi$ is not constrained to the same subgroup of $G^\Psi$ as $g^\Psi$ was. In fact, $\tilde g^\Psi$ equidistributes in $G^\Psi$. The upshot is that in the non-flag setting, the irrationality of a polynomial sequence in $G_\bullet$ is not enough to determine the distribution of $g^\Psi$. 
\end{example}

In light of this example, one might conjecture in general that  the subgroup of $G^t$ on which $g^\Psi$ equidistributes may be expressed in terms of the sequence of subgroups on which the distinct degree parts of $g$ equidistribute. In our example with $g(n) = (\alpha,0)n + (0,\beta)n^2$ and $\tilde g(n) = (\alpha, \gamma)n + (0,\beta)n^2$, we have $(\alpha,0)n$ equidistributing in $\R\times 0$ whereas $(\alpha,\gamma)n$ equidistributes in $\R^2$ and indeed this precisely accounts for how the distribution of $g^\Psi$ differs from that of $\tilde g^\Psi$. It turns out that when the nilmanifold in question is abelian, one can indeed compute the distribution of $g^\Psi$ in $G^t/\Gamma^t$ if the  distribution of the coefficients of $g$ are known. However, this is not the case in the nonabelian setting, as the following example of linear sequences on the Heisenberg group demonstrates.

\begin{example}\label{ex:heis-counter}

Let $G$ be the Heisenberg group with filtration determined by $G_2 = [G,G]$. Let $\Psi, \psi_i, V$ be as above. Let $v_1 = (1,2,3,0)$ and $v_2=(0,2,1,1)$ so $V = \spa(v_1,v_2)$. Let 
\[ g_1 = \begin{pmatrix}
1 & \alpha & \gamma \\
0 & 1 & \beta \\
0 & 0 & 1 \\
\end{pmatrix}, \quad \tilde g_1 = \begin{pmatrix}
1 & \alpha & \frac{1}{2}\alpha \beta \\
0 & 1 & \beta \\
0 & 0 & 1 \\
\end{pmatrix},\]
where $1,\alpha, \beta, \alpha\beta, \gamma$ satisfy no nontrivial linear relations over $\Q$. Then both $g(n)=g_1^n$ and $\tilde g(n) = \tilde g_1^n$ equidistribute in $G/\Gamma$, but it transpires that $g^\Psi$  distributes differently to $\tilde g^\Psi$ in $G^4/\Gamma^4$. (Furthermore, both polynomial sequences are irrational by Definition A.6 of \cite{GT10}).

Both $g^\Psi$ and $\tilde g^\Psi$ take values in the Leibman group $G^\Psi$.  Define $\eta$ on $G^\Psi \leq G^4$ by \[\eta(h_1,h_2,h_3,h_4) = w \cdot (h_{1,2}, h_{2,2}, h_{3,2}, h_{4,2}),\] where \[h_i = \begin{pmatrix}
1 & h_{i,0} & h_{i,2} \\
0 & 1 & h_{i,1} \\
0 & 0 & 1
\end{pmatrix} \in G \]
for each $i$ and $w = (24,3,-4,-8) \in {V^2}^\perp$. An easy but mundane computation (using crucially that $w \in {V^2}^\perp$) reveals that in fact $\eta$ is a character on $G^\Psi$. 
We compute (with slight abuse of notation where we use a single matrix with vector-valued entries to denote a 4-tuple of matrices) 
\[g^\Psi(x,y) = g_1^{\Psi(x,y)} = g_1^{x v_1 + yv_2} =  \begin{pmatrix}
1 & \alpha(xv_1 + yv_2) & \gamma(xv_1 + yv_2) + \binom{xv_1 + yv_2}{2}\alpha \beta \\
0 & 1 &\beta(xv_1 + yv_2) \\
0 & 0 & 1
\end{pmatrix} \]
and so 
\begin{align*}
\eta(g^\Psi(x,y)) &= w\cdot v_1(\gamma - \frac{1}{2}\alpha \beta)x + w \cdot v_2 (\gamma - \frac{1}{2} \alpha \beta)y + \\* &\qquad + w\cdot v_1^2(\frac{1}{2}\alpha \beta)x^2 + 2 w \cdot v_1v_2 (\frac{1}{2}\alpha \beta) xy + w \cdot v_2^2 (\frac{1}{2}\alpha \beta)y^2.
\end{align*}
Since $w \in V_2^\perp$, we have that $\eta(g^\Psi(x)) = w\cdot v_1(\gamma - \frac{1}{2}\alpha \beta)x + w \cdot v_2 (\gamma - \frac{1}{2} \alpha \beta)y$, where $w\cdot v_1, w\cdot v_2 \ne 0$. Of course, completing the same computation with $\tilde g$ yields the same result with $\frac{1}{2}\alpha \beta$ in place of $\gamma$ and so in particular $\eta(\tilde g^\Psi(x)) = 0$. That is, $\tilde g^\Psi$ is constrained to the subgroup of $G^\Psi$ given by $\ker \eta$ whereas $g^\Psi$ is not. (As an application of our qualitative counting lemma Theorem \ref{t:main-qual}, we will see in Example \ref{ex:equid-proofs} that $g^\Psi$ in fact equidistributes on $G^\Psi / G^\Psi \cap \Gamma^4$ and $\tilde g^\Psi$ equidistributes on $\ker \eta / \ker \eta \cap \Gamma^4$.)
\end{example}

\subsection{Moving to the Lie algebra}\label{ss:lie-alg}

Let us now interpret the previous example in the Lie algebra $\g$ of the Heisenberg group $G$. Recall that $\log:G\to \g$ is a bijection given by
\[\log \begin{pmatrix}
1 & a & c \\
0 & 1 & b \\
0 & 0 & 1 
\end{pmatrix} = \begin{pmatrix}
0 & a & c-\frac{1}{2}ab \\
0 & 0 & b \\
0 & 0 & 0 
\end{pmatrix}.\] For convenience and brevity we will use the coordinates $(a,b,c-\frac{1}{2}ab)$ to denote the latter matrix in the previous sentence. The Lie bracket in these coordinates is 
\begin{equation}\label{e:heis-bracket}
[(a,b,c),(x,y,z)] = (0,0,ay-bx).
\end{equation} 
The algebra $\g$ inherits a filtration $\g_0=\g_1=\g$ and $\g_2 = [\g,\g] = \{(0,0,\R)\}$. In these coordinates, we have that $p(n) := \log g(n) = (\alpha,\beta,\gamma - \frac{1}{2}\alpha\beta)n$ and $\tilde p(n) := \log \tilde g(n) = (\alpha,\beta,0)n$.

Let us digress slightly and consider a general polynomial sequence $p$ on some Lie algebra $\g$ with linear forms $\Psi$ whereupon we are interested in determining where $p^\Psi$ equidistributes within $\g^t$. 

\begin{definition}[Filtration in the Lie algebra]\label{d:filtration}
A \textit{filtration} on $\g$ of \textit{step} at most $s$ a sequence $(\g_i)_{i=1}^s$ of Lie subalgebras of $\g$ such that $\g_0 = \g_1 \geq \g_2 \geq \cdots \geq \g_s$ and such that for all $i,j$, $[\g_i,\g_j]\leq \g_{i+j}$ where we define $\g_{s+1} = \g_{s+2} = \cdots = 0$.
\end{definition}

\begin{definition}[Polynomial sequence adapted to a filtration]\label{d:adapted}
A polynomial sequence defined by $p(n):=\sum_{i=1}^s a_in^i$ in $\g$ is \textit{adapted to $\g_\bullet$} if $a_i \in \g_i$ for all $i$. 
\end{definition}

It will prove convenient to deal with $\g \otimes \R^t$ rather than $\g^t$, where the map from the former to the latter on elementary tensors is  given by $\varphi: a \otimes v \mapsto (v_1a,v_2a,\ldots,v_ta)$.  This gives a vector space isomorphism and we may obtain a Lie algebra isomorphism by endowing $\g \otimes \R^t$ with a Lie bracket as follows:
\begin{align}\label{e:lie-bracket}
[a\otimes v,b\otimes w] &:= \varphi^{-1}([(v_1a,\ldots,v_ta),(w_1b,\ldots,w_tb)]) \non \\*
 &= \varphi^{-1}(([v_1a,w_1b],\ldots,[v_ta,w_tb])) \non \\*
 &= \varphi^{-1}((v_1w_1[a,b],\ldots,v_tw_t[a,b])) \non \\*
 &= [a,b]\otimes vw
\end{align}
where multiplication in the second entry is conducted coordinatewise. With this setup, and henceforth suppressing our $\varphi$ notation and implicitly identifying $\g^t$ with $\g\otimes \R^t$, if $p(n)=\sum a_in^i$ then $p^\Psi(\vect{x}):=(p(\psi_1(\vect{x})),\ldots, p(\psi_t(\vect{x}))) = \sum a_i\otimes \Psi^i(\vect{x}) \in \g \otimes \R^t$. 

Returning to our example,  $\tilde p^\Psi$ is contained in the subspace $(\R,\R,0)\otimes V$ at all inputs. A straightfoward computation using the Lie bracket defined above shows that  $(\R,\R,0)\otimes V + \g_2 \otimes V^2$ is the smallest Lie algebra containing the subspace $(\R,\R,0)\otimes V$ and indeed we will see later as an application of our qualitative counting lemma Theorem \ref{t:main-qual} in Example \ref{ex:equid-proofs} that $\tilde p^\Psi$ equidistributes in this subalgebra (modulo an appropriate lattice).  Importantly, $(\R,\R,0)\otimes V + \g_2 \otimes V^2$  is a proper subalgebra of  $\g \otimes V + \g_2 \otimes V^2 = \log G^\Psi$ and indeed $p^\Psi$ equidistributes in $\log G^\Psi$ (again, modulo an appropriate lattice).

\subsection{Notions of irrationality}
In this subsection we will restrict our attention to polynomial sequences $p$ with $p(0) = 0 \in \g$. In particular, notions of irrationality will only be defined for polynomial sequences satisfying this property. 

We begin by introducing some definitions, in particular that of a \textit{rational structure} on $\g$. A \textit{rational structure} on $\g$ is a rational Lie subalgebra  $\g_\Q$ of $\g$ such that the vector space isomorphism $\g \cong \g_\Q \otimes \R$ holds. Then, let $\g_\Q^\ast\subset \g^\ast$ be the linear maps on $\g$ which map $\g_\Q$ to $\Q$. In the coordinates in our Heisenberg example above, we have implicitly used the rational structure $\Q^3 \leq \R^3$. In what follows, let $\Hom_\Q(\g,\R)$ denote the Lie algebra homomorphisms from $\g$ to $\R$ which map $\g_\Q$ to $\Q$. Finally, many statements in this document are made with respect to a rational structure $\g_\Q$ on $\g$; at times  the underlying rational structure may not be explicitly referenced.

\begin{definition}[Qualitative additive irrationality]\label{d:additive-irrat}
A polynomial sequence $p$ in $\g$ is \textit{additively irrational} in $\g$ with respect to $\g_\Q$ if for all nontrivial $\eta \in \Hom_\Q(\g,\R)$ we have that $(\eta \circ p)(\vect x) \not \in \Q$ for some $\vect x$.
\end{definition}

In the following we invoke Lie's third theorem to glean the simply-connected Lie group $G$ from the Lie algebra $\g$. 

\begin{definition}[Qualitative equidistribution with respect to a rational structure]
A polynomial sequence $p$ in $\g$ \textit{equidistributes} with respect to a rational structure $\g_\Q$ if $\exp \circ p$ equidistributes in $G/\Gamma$ (as per Definition \ref{d:nilman-equid}), where $\Gamma$ is \textit{any} lattice in $G$ such that $\spa_\Q \log \Gamma = \g_\Q$. 
\end{definition}

It is not at all \textit{a priori} clear that the above is well defined (in particular that such a lattice exists and furthermore that the property of equidistribution is independent of the choice of such lattice). This is addressed in Appendix \ref{ss:lie-alg-equid}.

The following theorem of Leibman is our main tool for determining qualitative equidistribution. It says that the above two definitions are equivalent.

\begin{thm}[Leibman's criterion for qualitative equidistribution in the Lie algebra]\label{t:leibman-algebra}
Let $D$ be a positive integer, let $\g$ be a real, finite dimensional, nilpotent Lie algebra and let $\g_\Q$ be a rational structure on $\g$. Let $(p(\vect n))_{\vect n \in \N^D}$ be a polynomial sequence on $\g$. Then $(p(\vect n))_{{\vect n \in \Z^D}}$ equidistributes in $(\g,\g_\Q)$ if and only if $p$ is additively irrational in $(\g,\g_\Q)$. 
\end{thm}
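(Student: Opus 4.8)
The plan is to deduce this from the corresponding equivalence on the group side, namely Leibman's equidistribution criterion for polynomial sequences on nilmanifolds (the Lie-group version of this statement, which appears e.g.\ in \cite{GT12, L05}), transported through the $\exp/\log$ correspondence. Concretely, fix a lattice $\Gamma$ in the simply-connected group $G$ with $\spa_\Q \log\Gamma = \g_\Q$ (the existence of such a $\Gamma$, and the fact that the conclusion does not depend on which such $\Gamma$ is chosen, is exactly what is deferred to Appendix \ref{ss:lie-alg-equid}, so I may invoke it). By definition, $(p(\vect n))$ equidistributes in $(\g,\g_\Q)$ precisely when $(\exp p(\vect n))$ equidistributes in $G/\Gamma$. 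The group-side Leibman criterion says this holds if and only if for every nontrivial horizontal character $\chi\colon G\to\R/\Z$ (i.e.\ a continuous homomorphism trivial on $\Gamma$) the sequence $\chi(\exp p(\vect n))$ is \emph{not} constant; equivalently, lifting $\chi$ to $\tilde\chi\colon G\to\R$, that $\tilde\chi(\exp p(\vect n))\notin\Z$ for some $\vect n$. So the whole task reduces to matching up the two sides: horizontal characters of $(G,\Gamma)$ versus elements of $\Hom_\Q(\g,\R)$, and the pairing of each with $p$.

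The key step is therefore the following dictionary. A continuous homomorphism $G\to\R$ is the same thing (by simple-connectedness and functoriality of $\exp$) as a Lie-algebra homomorphism $\g\to\R$, i.e.\ a linear functional vanishing on $[\g,\g]$; under this identification $\tilde\chi\circ\exp = \bar\chi$ as maps $\g\to\R$, where $\bar\chi\in\g^\ast$ is the corresponding functional. The constraint that $\chi$ descends to $G/\Gamma$, i.e.\ $\chi(\Gamma)\subseteq\Z$, translates (since $\Gamma$ is generated, up to finite index and the group operation, by $\exp$ of a $\Q$-basis of $\log\Gamma$, and $\bar\chi$ is additive on $[\g,\g]$-trivial directions) into $\bar\chi(\log\Gamma)\subseteq\Z$, hence $\bar\chi(\g_\Q)\subseteq\Q$; conversely any $\eta\in\Hom_\Q(\g,\R)$ can be rescaled by a positive integer (clearing denominators on the finitely many basis vectors of $\log\Gamma$) to yield a horizontal character, and this rescaling changes neither triviality nor the property ``takes a non-integer value somewhere'' into ``takes a non-rational value somewhere.'' Thus nontrivial horizontal characters of $(G,\Gamma)$ correspond, after harmless positive-integer scaling, exactly to nontrivial elements of $\Hom_\Q(\g,\R)$, and under this correspondence the bad event ``$\chi(\exp p(\vect n))$ constant'' becomes ``$(\eta\circ p)(\vect n)\in\Q$ for all $\vect n$'' — note $\eta\circ p$ is a polynomial in $\vect n$ with, after scaling, the property that it is $\R/\Z$-constant iff identically rational iff (being a polynomial) always rational. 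Feeding this equivalence into the group-side criterion gives exactly the assertion: non-equidistribution $\iff$ some nontrivial $\eta\in\Hom_\Q(\g,\R)$ has $(\eta\circ p)(\vect x)\in\Q$ for all $\vect x$, which is the negation of additive irrationality.

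I expect the main obstacle to be the bookkeeping in the dictionary step — specifically, verifying that the rescaling by integers is genuinely harmless and that ``$\bar\chi(\log\Gamma)\subseteq\Z$'' is equivalent to ``$\bar\chi(\g_\Q)\subseteq\Q$'' (both directions, and with the well-definedness up to choice of $\Gamma$), together with the reduction ``a real polynomial sequence $\vect n\mapsto q(\vect n)$ lies in $\Z$ for all $\vect n$ iff its image in $\R/\Z$ is constant'' (which follows since $q-q(\vect 0)$ is then an integer-valued polynomial vanishing at $\vect 0$, so $q(\vect n)-q(\vect 0)\in\Z$ forces, after clearing, $q$ constant mod $1$ only if... — in fact one just needs ``rational-valued everywhere'' $\iff$ ``$\R/\Z$-constant after scaling,'' which is immediate). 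None of this is deep, but it is the part where one must be careful that the two notions of ``nontrivial'' and the two notions of ``detects non-equidistribution'' really line up; once that is pinned down, the theorem is just the group-side Leibman criterion read through $\log$.
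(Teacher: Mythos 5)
Your proposal is correct and takes essentially the same route as the paper's own proof (Theorem \ref{t:pr-leibman-algebra}): fix a lattice $\Gamma$ with $\spa_\Q\log\Gamma=\g_\Q$, apply the group-level Leibman criterion to $\exp\circ p$, and translate between horizontal characters trivial on $\Gamma$ and nontrivial elements of $\Hom_\Q(\g,\R)$ via $\exp/\log$ together with an integer rescaling. The only bookkeeping you should make explicit is the standing normalisation $p(0)=0$ (which forces the constant value of $\chi\circ\exp\circ p$ to be $0$, so that ``constant in $\R/\Z$'' really does give $\eta\circ p\subset\Z$; without it the statement fails, e.g.\ for a constant irrational sequence) and the fact that a $\Q$-valued polynomial on $\Z^D$ has rational coefficients, hence values of bounded denominator --- this is exactly what licenses your ``clearing denominators'' step and is supplied in the paper by Lemma \ref{l:rat-poly-rat-coeffs} and the fact that $\log\Gamma$ lies in an additive lattice inside $\g_\Q$.
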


We deduce the above statement from Leibman's original theorem (\cite{Lei05b}) in Theorem \ref{t:pr-leibman-algebra}.

For appropriately (i.e. \textit{linearly}) irrational $p$, it transpires that $p^\Psi$ is additively irrational in an explicit subalgebra of $\g^t$, whereupon we have determined its distribution by Theorem \ref{t:leibman-algebra}. Before defining linear irrationality, we will transfer Green and Tao's notion of irrationality \cite[Definition A.6]{GT10} to the Lie algebra. For the purposes of our current discussion, we give the following qualitative translation in which one removes the complexity requirement on characters and takes the limit $N\to \infty$.

\begin{definition}[Qualitative Green-Tao irrationality]\label{d:qual-gti}
An element $g\in G_i$ is $i$-irrational if for all  nontrivial characters $\eta:G_i \to \R$ which vanish on $G_{i+1}$ and $[G_j,G_{i-j}]$ for all $j$, and which map the lattice $\Gamma$ to $\Q$, we have $\eta(g)\not \in \Q$.
\end{definition} 

(A direct translation works with characters that map $\Gamma$ to $\Z$, but the above is equivalent and more natural in the qualitative setting.) By basic Lie theory one may interpret this in the Lie algebra with filtration $\g_\bullet := (\log G_i)_{i=1}^s$.  

\begin{definition}\label{d:hi}
Let $\g$ have filtration $\g_\bullet = (\g_i)_{i=0}^s$. For $i=1 \ldots, s$, define the subalgebra $\h_i$ to be the smallest Lie algebra containing $\g_{i+1}$ and $[\g_j,\g_{i-j}]$ for all $j$.
\end{definition}

The following definition is what one obtains when one lifts qualitative Green-Tao irrationality to the Lie algebra. We will call it `filtration irrationality'. 

\begin{definition}[Qualitative filtration irrationality]\label{d:qual-fi}
We say $a\in \g_i$ is \textit{$i$-filtration irrational} if for all $l\in \g_\Q^\ast$ which are nontrivial on $\g_i$ and vanish on $\h_i$, we have $l(a)\not \in \Q$. A polynomial sequence $p(n) = \sum_{i=1}^s a_in^i$ is \textit{filtration irrational} in $\g_\bullet$ if $a_i$ is $i$-filtration irrational for $i=1,\ldots,s$.
\end{definition} 

It is standard (\cite[Theorem 5.1.8]{CG90}) that for any lattice $\Gamma \leq G$, $\spa_\Q \log \Gamma$ is a rational structure in $\g$. With the rational structure $\g_\Q$ on $\g$ chosen in this way, one may check that every $l$ as in Definition \ref{d:qual-fi} induces a character $\eta$ on $G_i$ with the properties in Definition \ref{d:qual-gti} (and conversely every $\eta$ induces an $l\in \g_\Q^\ast$ which vanishes on $\h_i$).

In the examples in the previous subsection, we were somewhat vague about the notion irrationality. One is now in a position to verify that all polynomial sequences that feature in Examples \ref{ex:flag-counter} and \ref{ex:heis-counter} are filtration irrational and that ultimately filtration irrationality is insufficient to determine the distribution of $p^\Psi$ in the non-flag setting. 

\begin{definition}[Rational subspace]\label{d:rat-subspace}
A subspace $S$ of $\g$ is \textit{rational} with respect to a rational structure $\g_\Q$ if it may be written as the kernel of a set of elements of $\g_\Q^\ast$.
\end{definition}

\begin{definition}[Qualitative linear irrationality]\label{d:qual-li} Let $S$ be a rational subspace of $\g$. An element $a \in S$ is \textit{linearly irrational} in $S$ if for all $l\in \g_\Q^*$ which are nontrivial on $S$, we have $l(a)\not \in \Q$. A polynomial sequence $p(n):=\sum_{i=1}^s a_in^i$ is \textit{linearly irrational} in a sequence of subspaces $S_\bullet=(S_i)_{i=1}^s$ if $a_i$ is linearly irrational in $S_i$ for $i=1,\ldots,s$.
\end{definition}

The notion of linear irrationality does not refer to the Lie algebra structure on $\g$ and so may be defined analogously for vector spaces with fixed rational structures. We note that linear irrationality is robust under rational (vector space) automorphisms.

In our Heisenberg example above, recalling that $\alpha$, $\beta$, $\gamma$ and $1$ are linearly independent over $\Q$, we have that $\tilde p(n) =(\alpha,\beta,0)n$ is linearly irrational in $S_\bullet = (S_1:=\R \times \R \times 0,S_2 := 0)$, but is not linearly irrational in $(\R^3,0)$, or in $(\R\times \R \times 0 , 0\times 0\times \R)$. Define 
\[  p'(n) = (\alpha, \beta, 2\alpha+\beta)n + (0,0,\gamma)n^2, \qquad p''(n) = (\alpha, \beta, 2\alpha + \beta+1/3)n + (0,0,\gamma)n^2.\]
Then $p'$ is linearly irrational in $(\ker(2,1,-1),0 \times 0 \times \R)$. On the other hand, $p''$ is not linearly irrational in any sequence of subspaces of $\g$ because there is $l \in \g_\Q^\ast$ with $0 \ne l(a_1)\in \Q$.

In general, any polynomial sequence which has a coefficient $a_i$ with $0\ne l(a_i) \in \Q$ for some $l \in \g_\Q^\ast$ is not linearly irrational in any sequence of subspaces. If, on the other hand, all coefficients $\{a_i\}_i$ of $p$ satisfy the property that $l(a_i)\in \Q$ implies $l(a_i)=0$, then $p$ is linearly irrational in the sequence of  rational subspaces $S_\bullet=(S_i)_{i=1}^s$ defined by $S_i = (a_i^\perp \cap \g_\Q^\ast)^\perp$. 

Linear irrationality is a stronger notion than filtration irrationality in the sense that if $p$ is linearly irrational with respect to some $S_\bullet$ then there exists some $\g'_\bullet$ (potentially on a subalgebra $\g'\leq \g$) with respect to which $p$ is filtration irrational (see Lemma \ref{l:li-implies-fi}).
 The converse is not true, as is demonstrated by the example of $p''$ above which is filtration irrational with respect to the lower central series filtration. Finally, also note that if $p$ is adapted to a rational filtration $\g_\bullet$ and is linearly irrational with respect to $S_\bullet$ then we automatically have that $S_i \leq \g_i$ for all $i$ and so $[S_i,S_j] \subset \g_{i+j}$ for all $i,j$.

Unfortunately the notions of linear and filtration irrationality are not quite as simple in the quantitative setting. For example, a polynomial sequence may be quantitatively filtration/linearly irrational in two distinct sequences of subspaces -- see Example \ref{ex:irrat}. The relationship between the two notions of irrationality is also not so clear cut in the quantitative setting (though they are related -- see Lemma \ref{l:Ti+hi=gi}). For this reason we will explicitly keep track of the sequence of subspaces with respect to which we are asserting quantitative linear irrationality and independently the filtered Lie algebra with respect to which we are asserting quantitative filtration irrationality.

\subsection{The counting lemma}
We can now prove a qualitative counting lemma for polynomial sequences on linear patterns. Again we will restrict our attention to polynomial sequences with $p(0) = 0\in \g$; in the general case one simply obtains equidistribution on a coset of the algebra studied in this section. Taking for granted our algebraic preliminaries, Theorem \ref{t:main-qual} is rather straightfoward.

\begin{definition}\label{d:gPsiBullet}
Given a system of $t$ linear forms $\Psi$ and a sequence of rational subspaces $S_\bullet= (S_i)_{i=1}^s$ of $\g_\bullet$, define the Lie algebra $g^\Psi(S_\bullet)$ as the smallest Lie subalgebra of $\g^t$ which contains $S_i \otimes V^i$ for all $i$ (where recall $V=V_\Psi$ is defined in Subsection \ref{ss:flag}).
\end{definition}

The qualitative counting lemma may be stated informally as follows: if  a polynomial sequence $p$ is linearly irrational in $S_\bullet $ then $p^\Psi$ equidistributes $\g^\Psi(S_\bullet)$. We will offer a more explicit description of $g^\Psi(S_\bullet)$ in Section \ref{s:gPsiSbullet}.

\begin{lemma}\label{l:rat-poly-rat-coeffs}
Let $p: \R^D \to \R$ be a polynomial such that $p(\Z^D)\subset \Q$. Then the coefficients of $p$ (with respect to the monomial basis) lie in $\Q$.
\end{lemma}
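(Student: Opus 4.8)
The plan is to induct on $D$, the number of variables, with the base case $D=1$ handled by a standard finite-differencing (or Lagrange interpolation) argument. For $D=1$, write $p(n) = \sum_{k=0}^{d} c_k \binom{n}{k}$; since $p(0), p(1), \ldots, p(d) \in \Q$ and the binomial coefficients $\binom{j}{k}$ form an invertible integer (in fact unitriangular) matrix, each $c_k \in \Q$, and then converting from the binomial basis to the monomial basis is a rational change of basis, so the monomial coefficients are rational. Equivalently, the $k$-th forward difference $\Delta^k p(0)$ is a $\Z$-linear combination of the values $p(0), \ldots, p(k)$, hence rational, and these determine the coefficients rationally.

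For the inductive step, I would write $p(x_1, \ldots, x_D) = \sum_{j=0}^{e} q_j(x_2, \ldots, x_D)\, x_1^j$, grouping by powers of $x_1$, where each $q_j$ is a polynomial in the remaining $D-1$ variables with real coefficients and $e$ is the degree of $p$ in $x_1$. Fixing any $(a_2, \ldots, a_D) \in \Z^{D-1}$, the single-variable polynomial $x_1 \mapsto p(x_1, a_2, \ldots, a_D)$ takes values in $\Q$ on $\Z$, so by the $D=1$ case each $q_j(a_2, \ldots, a_D) \in \Q$. Since this holds for all integer points $(a_2, \ldots, a_D)$, each $q_j$ is a polynomial in $D-1$ variables taking rational values on $\Z^{D-1}$, and by the inductive hypothesis all coefficients of each $q_j$ lie in $\Q$; these are exactly the coefficients of $p$, so we are done.

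The only mild subtlety — and the closest thing to an obstacle — is making sure the reduction is clean: we need that for each fixed $j$, the function $(a_2, \ldots, a_D) \mapsto q_j(a_2, \ldots, a_D)$ is genuinely the value of a fixed polynomial, which is immediate from the grouping, and that the $D=1$ conclusion really does recover \emph{all} the coefficients $q_0, \ldots, q_e$ simultaneously from enough evaluations — this is where one invokes that $d+1$ (or $e+1$) consecutive integer values determine a degree-$\le d$ polynomial, via invertibility of the Vandermonde or binomial matrix over $\Q$. None of this requires anything beyond elementary linear algebra over $\Q$, so I expect the proof to be short; the main care is just bookkeeping the two-layer induction and not conflating the monomial basis with the binomial basis (the change of basis between them is rational and unitriangular, which is all that matters).

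Alternatively, one could avoid induction entirely by using multivariate finite differences directly: the coefficient of $x_1^{k_1} \cdots x_D^{k_D}$ in the expansion of $p$ in the multivariate binomial basis $\prod_i \binom{x_i}{k_i}$ is $(\Delta_1^{k_1} \cdots \Delta_D^{k_D} p)(0)$, a $\Z$-linear combination of finitely many integer-point values of $p$, hence rational; converting from the multivariate binomial basis to the monomial basis is again a rational (triangular) change of basis. I would likely present the inductive version since it is the most transparent, but either route works and I would keep the writeup to a few lines.
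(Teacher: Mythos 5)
Your proof is correct and follows essentially the same route as the paper: the univariate case by evaluating at enough integer points and doing linear algebra over $\Q$, followed by an induction on the number of variables obtained by grouping the polynomial by powers of one variable. The paper's proof is just a terser statement of exactly this argument, so no further comparison is needed.
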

\begin{proof}
For univariate polynomials ($D=1$), the coefficients may be recovered by evaluating the polynomial at sufficiently distinct points and basic linear algebra over $\Q$.  The multivariate case follows from an easy induction on the number of variables.
\end{proof} 

We are ready to prove the qualitative counting lemma.

\begin{thm}[Qualitative counting lemma]\label{t:main-qual}
Suppose that $p$ is a linearly irrational polynomial sequence in $S_\bullet$. Then the polynomial sequence $p^\Psi(\vect{x})$ equidistributes on $\g^\Psi(S_\bullet)$ with respect to the rational structure $\g^\Psi(S_\bullet) \cap \g_\Q^t$.  
\end{thm}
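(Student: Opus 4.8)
The plan is to apply Leibman's criterion (Theorem \ref{t:leibman-algebra}) to the polynomial sequence $p^\Psi$ on the Lie algebra $\g^\Psi(S_\bullet)$, equipped with the rational structure $\g^\Psi(S_\bullet) \cap \g_\Q^t$. So the real content is to verify two things: (i) that $p^\Psi$ actually takes values in $\g^\Psi(S_\bullet)$, and (ii) that $p^\Psi$ is additively irrational there, i.e. for every nontrivial $\eta \in \Hom_\Q(\g^\Psi(S_\bullet), \R)$ there is some $\vect x$ with $\eta(p^\Psi(\vect x)) \notin \Q$. One should also check that $\g^\Psi(S_\bullet) \cap \g_\Q^t$ really is a rational structure on $\g^\Psi(S_\bullet)$, but this is routine: $\g^\Psi(S_\bullet)$ is a rational subspace of $\g^t$ (being generated as a Lie algebra by the rational subspaces $S_i \otimes V^i$, using that the $S_i$ and the $V^i$ are $\Q$-rational), and the intersection of a rational subalgebra with $\g_\Q^t$ spans it over $\Q$.

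For (i): writing $p(n) = \sum_{i=1}^s a_i n^i$ with $a_i \in S_i$ (linear irrationality of $p$ in $S_\bullet$ forces $a_i \in S_i$), the identification $\g^t \cong \g \otimes \R^t$ from Subsection \ref{ss:lie-alg} gives $p^\Psi(\vect x) = \sum_i a_i \otimes \Psi^i(\vect x)$. Since $\Psi^i(\vect x) = \sum_{m \in \cM} v_{m,i} m(\vect x)$ with each $v_{m,i} \in V^i$, each summand $a_i \otimes \Psi^i(\vect x)$ lies in $S_i \otimes V^i \subseteq \g^\Psi(S_\bullet)$, so $p^\Psi(\vect x) \in \g^\Psi(S_\bullet)$ for all $\vect x$.

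For (ii), which I expect to be the main obstacle: let $\eta: \g^\Psi(S_\bullet) \to \R$ be a nontrivial $\Q$-rational Lie algebra homomorphism, and suppose for contradiction that $\eta(p^\Psi(\vect x)) \in \Q$ for all $\vect x \in \Z^D$. Because $\eta$ is a Lie algebra homomorphism to the abelian $\R$, it kills all brackets, hence factors through the abelianization; in particular it is determined by its restriction to the generating subspaces $S_i \otimes V^i$, and since $\eta \neq 0$ on $\g^\Psi(S_\bullet)$ it must be nontrivial on some $S_i \otimes V^i$. Now $\eta(p^\Psi(\vect x)) = \sum_i \eta(a_i \otimes \Psi^i(\vect x))$, and the map $\vect x \mapsto \eta(a_i \otimes \Psi^i(\vect x))$ is a homogeneous polynomial of degree $i$ in $\vect x$ (this is where one uses that distinct degree parts are separated — the $\Psi^i$ produce only degree-$i$ monomials). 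So the hypothesis that the total sum lies in $\Q$ on all of $\Z^D$ forces, by Lemma \ref{l:rat-poly-rat-coeffs} applied degree by degree, each $\vect x \mapsto \eta(a_i \otimes \Psi^i(\vect x))$ to take rational values on $\Z^D$ — in fact the stronger statement that each homogeneous piece of $\eta \circ p^\Psi$ is separately $\Q$-valued, hence has $\Q$-coefficients. Fixing an $i$ with $\eta$ nontrivial on $S_i \otimes V^i$, I want to extract from $\eta|_{S_i \otimes V^i}$ and this rationality a nontrivial $l \in \g_\Q^*$ with $l(a_i) \in \Q$, $l$ nontrivial on $S_i$, contradicting linear irrationality of $p$ in $S_\bullet$. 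The mechanism: $\eta|_{S_i \otimes V^i}$, via the tensor identification, corresponds to a sum $\sum_k l_k \otimes w_k$ of functionals with $l_k \in \g^*$, $w_k \in (\R^t)^*$; evaluating on $a_i \otimes v_{m,i}$ and using that the $v_{m,i}$ span $V^i$, the rationality of all coefficients $\eta(a_i \otimes v_{m,i}) = \sum_k l_k(a_i) w_k(v_{m,i}) \in \Q$ together with $\Q$-rationality of the $v_{m,i}$ and of $\eta$ lets one solve for a single $\Q$-rational functional $l$ on $\g$ with $l|_{S_i}$ matching $\eta$ on a rational spanning set of $V^i$ paired with $S_i$; nontriviality of $\eta$ on $S_i \otimes V^i$ gives nontriviality of $l$ on $S_i$, and the rationality gives $l(a_i) \in \Q$. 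This is the contradiction. The delicate bookkeeping is precisely the passage between functionals on the tensor product $\g^\Psi(S_\bullet) \subseteq \g \otimes \R^t$ and pairs of functionals on the factors while respecting all the rational structures, so I would organize that as a short linear-algebra lemma (over $\Q$) before running the argument above.
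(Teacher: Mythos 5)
Your proposal is correct and follows essentially the same route as the paper's proof: apply Theorem \ref{t:leibman-algebra}, write $p^\Psi(\vect x)=\sum_i a_i\otimes \Psi^i(\vect x)$, use Lemma \ref{l:rat-poly-rat-coeffs} to conclude each coefficient $\eta(a_i\otimes v_{m,i})$ is rational, play this off against the linear irrationality of $a_i$ in $S_i$, and finish using that $\eta$ kills brackets so triviality on the generators $S_i\otimes V^i$ forces triviality on $\g^\Psi(S_\bullet)$. The only difference is organizational: the paper sidesteps your tensor-decomposition bookkeeping (the $\sum_k l_k\otimes w_k$ step) by observing directly that for each fixed rational $v_{m,i}$ the pullback $\eta(\cdot\otimes v_{m,i})$ is already a single element of $\g_\Q^\ast$, which linear irrationality then forces to vanish on $S_i$, giving $\eta(S_i\otimes V^i)=0$ for every $i$ at once.
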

\begin{proof}
We use Theorem \ref{t:leibman-algebra} and so will show that $p^\Psi$ is additively irrational in $\g^\Psi(S_\bullet)$. Let $p(n) := \sum_{i=1}^s a_in^i$. Let $\eta \in \Hom_\Q(\g^\Psi(S_\bullet), \R)$.\footnote{Recall that we use the notation $\Hom_\Q$ to refer to  homomorphisms of Lie algebras which map $\g_\Q$ to $\Q$.} In $\g^\Psi(S_\bullet) \leq \g\otimes \R^t$, we have that 
\begin{equation}\label{e:eta-g-psi}
\eta(g^\Psi(\vect{x})) = \sum_{i=1}^s \eta(a_i\otimes \Psi(\vect x)^i)  = \sum_{m \in \cM}\sum_{i=1}^s \eta(a_i\otimes v_{m,i})m(\vect{x}),
\end{equation}
where the $\{v_{m,i}\}_{m\in \cM}$ span $V^i$ as in Subsection \ref{ss:flag}. 
By Lemma \ref{l:rat-poly-rat-coeffs}, if $\eta(g^\Psi(\vect x)) \in \Q$ for all $\vect x \in \Z^D$ then each $\eta(a_i \otimes v_{m,i}) \in \Q$ . But each $\eta(\cdot \otimes v_{m,i})$ lies in $\g_\Q^\ast$ and so by the linear irrationality of $a_i$ in $S_i$ we have that in fact $\eta(a_i \otimes v_{m,i}) = 0$. The kernel of a rational homomorphism is a rational subspace and so $\eta(S_i \otimes v_{m,i})=0$, since $S_i$ is necessarily the smallest rational subspace containing $a_i$. Next, the $\{v_{m,i}\}_{m \in \cM}$ span $V^i$, so we have $\eta(S_i \otimes V^i) = 0$ for all $i$. Furthermore,  since $\eta$ is a Lie algebra homomorphism to an abelian Lie algebra, $\eta$ vanishes on the smallest Lie algebra containing the subspaces $S_i \otimes V^i$ and so $\eta$ is trivial on $\g^\Psi(S_\bullet)$. This completes the proof.
\end{proof}

\begin{remark}
In the flag setting, if $\eta$ is a character on $G^t$ (or a subgroup thereof), the `pullback' $\eta (\cdot^{v_{m,i}} )$ is an $i$-horizontal character  on $G_i$ (\cite[Definition A.5]{GT10}). However, in the non-flag setting, $\eta(\cdot^{v_{m,i}})$ may fail to possess obviously discernible structure as a map on $G_i$ (cf. Example \ref{ex:heis-counter} and the character $\eta$ from there). The key technical point in the above is that characters on $\g^t$ (or subalgebras thereof) pull back to linear maps on $\g$. That is, $\eta(\cdot \otimes v_{m,i})$ is a rational linear map on $\g$ whence linear irrationality is sufficient to determine the distribution of $p^\Psi$.
\end{remark}

Note that Theorem \ref{t:main-qual} yields equidistribution statements for polynomial sequences which are not filtration irrational.
 
\begin{example}
 In the Heisenberg group with usual coordinates, let $p(n) = (\alpha,\alpha,0)n$. Then $p$ is linearly irrational in $S_\bullet:=(S_1:=\{(x,x,0):x\in \R\}, S_2:=0)$. Furthermore, we have from Equation (\ref{e:heis-bracket}) that $[S_1,S_1] = 0$ (as is expected since this example is really coming from an abelian group). Theorem \ref{t:main-qual} says that $p^\Psi$ equidistributes on $S_1 \otimes V$.
\end{example}

We are also able to prove the equidistribution claims we made in Examples \ref{ex:flag-counter} and \ref{ex:heis-counter}, and in Subsection \ref{ss:lie-alg}. 

\begin{example}[Equidistribution for Examples \ref{ex:flag-counter} and \ref{ex:heis-counter}]\label{ex:equid-proofs}
In the abelian setting we may identify the Lie group with the Lie algebra. Let $p(n) = (\alpha, 0)n + (0,\beta)n^2$ on $\R^2$ be the first polynomial sequence from Example \ref{ex:flag-counter}. Theorem \ref{t:main-qual} yields that $p^\Psi$ equidistributes on $(\R\times 0)\otimes V + (0\times \R)\otimes V^2$. On the other hand, the second polynomial sequence from Example \ref{ex:flag-counter} $\tilde p(n) = (\alpha, \gamma)n + (0,\beta)n^2$ equidistributes on $\R^2 \otimes V + (0\times \R)\otimes V^2$.

Moving to Example \ref{ex:heis-counter}, we saw in Subsection \ref{ss:lie-alg} that the relevant polynomial sequences may be transferred to the Lie algebra as $p(n) = (\alpha, \beta, \gamma-\frac{1}{2}\alpha \beta)n$ and $\tilde p(n) = (\alpha, \beta, 0)n$. Theorem \ref{t:main-qual} says that $p^\Psi$ equidistributes in  the smallest Lie algebra containing $\R^3 \otimes V$, which is $\R^3 \otimes V + (0\times 0\times \R) \otimes V^2=\g \otimes V + \g_2 \otimes V^2$. We also obtain that $\tilde p^\Psi$ equidistributes in $(\R\times \R \times 0)\otimes V + \g_2 \otimes V^2$. One may furthermore check that this subalgebra maps to the subgroup $\ker \eta$ from Example \ref{ex:heis-counter} under the exponential map. 
\end{example}

In each of the examples above, computing explicitly the Lie algebra $\g^\Psi(S_\bullet)$ is not a difficult task. Of course, this may not be the case in general. We will address this in Section \ref{s:gPsiSbullet}.

\subsection{Factorisation}\label{ss:qual-fact}
We have demonstrated how to obtain an equidistribution result for polynomial sequences which satisfy a special property: linear irrationality. Ultimately, one wants to understand the equidistribution of arbitrary polynomial sequences (we will again focus on $p$ with $p(0)=0$). The purpose of this section is to show (Proposition \ref{p:qual-factorise}) that one may write an arbitrary $p$ as a product $p = p'\ast r$ where $p'$ is linearly irrational and $r$ is rational (and as before, $\ast$ is group multiplication conducted in the Lie algebra). Then, roughly speaking, one may partition $\Z^D$ into appropriately-spaced subprogressions where we understand the equidistribution of $p$ on each one. 

Part of the power of Green and Tao's counting lemma is that it requires a surprisingly small amount of data from a polynomial sequence $g$ to determine the distribution of $g^\Psi$ in the flag case. In the non-flag case, we require more data to determine distribution and so the task of reducing a general nilsequence to something that our counting lemma can handle  (i.e. factorising for linear irrationality) is inevitably a little more involved. This subsection may be viewed as a warm up for the quantitative factorisation theory in Section \ref{s:fact}.

Linear irrationality is a linear algebraic notion and indeed one may, by natural linear algebraic arguments, write an arbitrary $a \in \g$ as the sum of a linearly irrational element and a rational element. 

\begin{lemma}\label{l:qual-add-decomp}
Let $a \in \g$. Then there exists $a_p$ which is linearly irrational in some rational subspace $S \leq \g$ and $a_r \in \g_\Q$  such that $a= a_p+a_r$. 
\end{lemma}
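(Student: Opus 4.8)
The plan is to realize the decomposition $a = a_p + a_r$ by peeling off the ``rational part'' of $a$ coordinatewise with respect to a rational basis, and then taking $S$ to be the smallest rational subspace containing the resulting remainder $a_p$. Fix the rational structure $\g_\Q$ on $\g$ and choose a rational basis $e_1, \ldots, e_d$ of $\g$ (i.e. a $\Q$-basis of $\g_\Q$), with dual basis $e_1^\ast, \ldots, e_d^\ast \in \g_\Q^\ast$. Write $a = \sum_{i=1}^d \lambda_i e_i$ with $\lambda_i = e_i^\ast(a) \in \R$. For each $i$, split $\lambda_i = \mu_i + \nu_i$ where $\mu_i \in \Q$ is chosen so that $\nu_i$ is either $0$ or irrational (e.g. $\mu_i = \lambda_i$ if $\lambda_i \in \Q$, and $\mu_i = 0$ otherwise); set $a_r := \sum_i \mu_i e_i \in \g_\Q$ and $a_p := a - a_r = \sum_i \nu_i e_i$. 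By construction $a_r \in \g_\Q$ and $a = a_p + a_r$, so it remains only to produce a rational subspace $S$ in which $a_p$ is linearly irrational.

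For this, first I would take $S$ to be \emph{the} smallest rational subspace of $\g$ containing $a_p$; concretely, $S := \bigl( \{ l \in \g_\Q^\ast : l(a_p) = 0 \} \bigr)^\perp$, which is rational by Definition \ref{d:rat-subspace} (it is the kernel of the set of those $l\in\g_\Q^\ast$ annihilating $a_p$, or rather its annihilator, which is again cut out by rational functionals since $\g_\Q^\ast$ is a $\Q$-vector space and one may pick a $\Q$-basis of the annihilator of $a_p$ inside $\g_\Q^\ast$). The claim is then that $a_p$ is linearly irrational in $S$ in the sense of Definition \ref{d:qual-li}: we must show that every $l \in \g_\Q^\ast$ which is nontrivial on $S$ satisfies $l(a_p) \notin \Q$. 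Suppose $l \in \g_\Q^\ast$ and $l(a_p) \in \Q$. Replacing $l$ by $l - l(a_p)\cdot l_0$ for a suitable fixed rational functional (or, more cleanly, observing directly) we first reduce to the case $l(a_p)=0$: indeed if $l(a_p) = q \in \Q$, pick any $l' \in \g_\Q^\ast$ with $l'(a_p) \neq 0$ — if no such $l'$ exists then $a_p$ lies in the common kernel of all of $\g_\Q^\ast$, forcing $a_p = 0$ and the statement is vacuous — rescale so $l'(a_p) = 1$, and then $l - q\, l' \in \g_\Q^\ast$ annihilates $a_p$. Hence $l - q\,l'$ vanishes on $S$ by definition of $S$ (as $S$ is contained in the common kernel of all rational functionals killing $a_p$). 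The remaining point is that $l'$ itself, being a rational functional nonzero on $a_p$, is nonzero on $S$, which combined with linear algebra over $\Q$ pins down the structure. Let me organize this more carefully in the writeup; the cleanest route is:

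\begin{enumerate}
\item Show $a_p = \sum_i \nu_i e_i$ has the property that for every $l \in \g_\Q^\ast$, $l(a_p) \in \Q \implies l(a_p) = 0$. This is the crux: $l(a_p) = \sum_i \nu_i\, l(e_i)$ with all $l(e_i) \in \Q$; if this rational combination of the $\nu_i$ is itself rational, then since $1$ together with the distinct irrational $\nu_i$'s need not be $\Q$-independent in general, one cannot conclude termwise — so instead choose the $\nu_i$ more carefully.
\item Namely, process the coordinates of $a$ one at a time: maintain a $\Q$-vector space $W \le \R$ (thought of as ``known rational relations''), initially $W = \Q$; for $i = 1, \ldots, d$, if $\lambda_i \in W$ put $\nu_i = 0$, else replace $W$ by $W + \Q\lambda_i$ and put $\nu_i = \lambda_i - (\text{the }\Q\text{-projection is not available, so simply }\nu_i := \lambda_i,\ \mu_i := 0)$. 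A moment's thought shows the truly clean statement is: let $W = \spa_\Q(1, \lambda_1, \ldots, \lambda_d)$, pick a $\Q$-basis of $W$ of the form $1, \theta_1, \ldots, \theta_k$ (with the $\theta_j$ irrational), write each $\lambda_i = q_i + \sum_j q_{ij}\theta_j$ with $q_i, q_{ij} \in \Q$, set $a_r := \sum_i q_i e_i \in \g_\Q$ and $a_p := \sum_i \bigl(\sum_j q_{ij}\theta_j\bigr) e_i$. Now for $l \in \g_\Q^\ast$ with rational values $l(e_i) = c_i$, we get $l(a_p) = \sum_j \bigl(\sum_i c_i q_{ij}\bigr)\theta_j$, a $\Q$-combination of $\theta_1, \ldots, \theta_k$ with no constant term; since $1, \theta_1, \ldots, \theta_k$ are $\Q$-independent, $l(a_p) \in \Q$ forces all coefficients $\sum_i c_i q_{ij} = 0$, i.e. $l(a_p) = 0$.
\item Conclude: by the last paragraph before the lemma in the excerpt (the paragraph beginning ``In general, any polynomial sequence...''), the element $a_p$ — all of whose ``values under $\g_\Q^\ast$'' are rational only if zero — is linearly irrational in the rational subspace $S := (a_p^\perp \cap \g_\Q^\ast)^\perp$. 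This $S$ is rational by Definition \ref{d:rat-subspace}, and $a = a_p + a_r$ with $a_r \in \g_\Q$, completing the proof.
\end{enumerate}

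The main obstacle is entirely Step 1--2: the naive ``round each coordinate to remove its rational part'' does not work because a $\Q$-linear combination of several independently-irrational numbers can be rational. The fix — factoring $a$'s coordinate vector through a $\Q$-basis of the span of $1$ and the coordinates, and discarding only the ``constant'' ($=1$) component — is the right move and makes the linear-irrationality verification immediate via $\Q$-linear independence of that basis. Everything after that (rationality of $S$, the decomposition identity) is bookkeeping using Definitions \ref{d:rat-subspace} and \ref{d:qual-li} and the explicit description of linearly irrational elements already given in the text. I do not expect any genuine difficulty beyond setting up this linear algebra cleanly.
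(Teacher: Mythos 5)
Your argument is correct, but it is not the route the paper takes: the paper never proves Lemma \ref{l:qual-add-decomp} directly, instead obtaining it as the special case $U=T=\g$ of the relative decomposition Lemma \ref{l:qual-relative-decomp}. There the work happens on the dual side: one takes $L\leq \g_\Q^\ast$ spanned by the rational functionals taking rational values at $a$, picks a basis $\cB$ of $L$, and uses $\Q$-linear independence of $\cB$ on the $\Q$-vector space $\g_\Q$ to solve for $a_r\in\g_\Q$ with $l(a_r)=l(a)$ for all $l\in\cB$ (hence all $l\in L$); then $a-a_r$ is linearly irrational in $\ker\{l\in\g_\Q^\ast: l(a-a_r)\in\Q\}$. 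You instead work on the primal side: expand the coordinates $\lambda_i$ of $a$ over a $\Q$-basis $1,\theta_1,\ldots,\theta_k$ of $\spa_\Q(1,\lambda_1,\ldots,\lambda_d)$, discard the component along $1$ to form $a_r$, and observe that $\Q$-independence of $1,\theta_1,\ldots,\theta_k$ forces $l(a_p)\in\Q\Rightarrow l(a_p)=0$ for every $l\in\g_\Q^\ast$, after which taking $S:=(a_p^\perp\cap\g_\Q^\ast)^\perp$ is exactly the observation already made in the text before the lemma (and your self-correction in Steps 1--2 is right: the naive coordinatewise rounding fails, and factoring through the $\Q$-span of $1$ and the coordinates is the correct fix). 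The trade-off is that your construction is more explicit and makes the irrationality check a one-line independence argument, whereas the paper's dual formulation is the one that generalizes to the relative statement actually needed later: in Lemma \ref{l:qual-relative-decomp} one must place $a_r$ inside $(U\cap T)_\Q$ and exploit linear irrationality of $a$ in $U\bmod T$, which the functional-matching argument does naturally, while your coordinate splitting only locates $a_r$ in $\g_\Q$ and would need modification to give that refinement.
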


One may apply this to each coefficient of an arbitrary polynomial sequence $p$ to obtain  an additive decomposition $p = p' + r$, where $p'$ is linearly irrational and $r$ is rational. This additive decomposition does not, however, immediately determine a factorisation with respect to group multiplication, which is ultimately what is required.

To later obtain our multiplicative factorisation, we will need a stronger additive decomposition, Lemma \ref{l:qual-relative-decomp}, of which Lemma \ref{l:qual-add-decomp} is a special case. First we will need the following definition of linear irrationality relative to a subspace. 

\begin{definition}[Relative linear irrationality]\label{d:qual-rel-li}
Let $U, T$ be rational subspaces of $\g$. An element $a \in U$ is \textit{linearly irrational in $U\bmod T$} if for all $l \in \g_\Q^\ast$ which are nontrivial on $U$ and which vanish on $U\cap T$ we have $l(a) \not \in \Q$.
\end{definition}

In what follows we use $T_\Q$ to denote the space $T \cap \g_\Q$, and similarly for other subspaces. If $T$ is rational with respect to $\g_\Q$, then $T_\Q$ is a rational vector space of full dimension (and indeed topologically dense) inside $T$.

\begin{lemma}[Qualitative additive decomposition for linear irrationality]\label{l:qual-relative-decomp}
Let $U,T$ be rational subspaces of $\g$ and let $a \in U$ be  linearly irrational in $U\bmod T$. Then there exist $a_p\in U$ and $a_r \in (U \cap T)_\Q$ such that $a_p$ is linearly irrational in some rational subspace $U'\leq U$ and such that $a= a_p+a_r$. 
\end{lemma}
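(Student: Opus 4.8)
The plan is to peel off the rational part of $a$ one linear functional at a time, working relative to the subspace $T$, so that the result is exactly Lemma \ref{l:qual-add-decomp} when $T$ is taken so large that $U \cap T = U$ is irrelevant, and so that the "mod $T$" bookkeeping is preserved throughout. First I would set $U' := (a^\perp \cap \g_\Q^\ast)^\perp$, the smallest rational subspace of $\g$ containing $a$; this is the subspace in which $a$ will turn out to be linearly irrational once we have subtracted the right rational vector. The key point to verify is that $U' \leq U$ and that the rational correction can be taken in $(U \cap T)_\Q$.

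The main step is the following dichotomy applied to $a$. Consider the rational subspace $W := U' + (U \cap T)$ (a sum of rational subspaces is rational). I claim $a \in W$ and, more precisely, that one can split $a = a_p + a_r$ with $a_p \in U'$ and $a_r \in (U \cap T)_\Q$. To see this, first note $a \in U'$ trivially, so I need to show that $a_r := a - a_p$ can be arranged to be a \emph{rational} element of $U \cap T$ for a suitable $a_p$ linearly irrational in $U'$. The cleanest route is: decompose $U' = U'' \oplus R$ as rational vector spaces, where $R$ is spanned by a maximal collection of coordinate directions on which $a$ has rational image under $\g_\Q^\ast$; equivalently, $R := U' \cap T'$ for an appropriate rational complement, chosen so that projecting $a$ onto $R$ (along $U''$) via a rational projection lands $a_r$ in $\g_\Q$. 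Then $a_p$ (the projection onto $U''$) is linearly irrational in $U''$: any $l \in \g_\Q^\ast$ nontrivial on $U''$ with $l(a_p) \in \Q$ would, since $l$ vanishes on the rational complement $R$ after adjusting by a rational functional, force $l(a) \in \Q$ and hence (by linear irrationality of $a$ in $U \bmod T$, \emph{provided} $l$ vanishes on $U \cap T$) contradict the hypothesis — and $l$ can be taken to vanish on $U \cap T$ because the complement was chosen to contain $U \cap T \cap U'$ appropriately. I would carry this out by choosing, via Gram–Schmidt over $\Q$ inside $\g_\Q^\ast$, a basis of $(U')^\ast_\Q$ adapted to the flag $0 \leq (U \cap T \cap U') \leq U'$, grouping the coordinates of $a$ into "genuinely irrational" and "rational" blocks, and defining $a_r$ to be the rational-block part (which then lies in $(U \cap T)_\Q$ by construction) and $a_p$ the remainder.

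The step I expect to be the main obstacle is precisely the compatibility of the two constraints on $a_p$: it must simultaneously be linearly irrational \emph{in some rational subspace $U' \leq U$} (an absolute, not relative, irrationality) and leave $a_r$ inside $(U \cap T)_\Q$ rather than merely in $\g_\Q$. The tension is that naive rational projection of $a$ might land part of the "rational piece" outside $T$; the hypothesis that $a$ is linearly irrational in $U \bmod T$ is exactly what prevents this, since it says the only way $a$ can have rational coordinates is along directions that see $U \cap T$. Making this precise requires choosing the rational complement of $U \cap T$ inside $U$ (and then inside $U'$) compatibly — I would do this by first picking a rational basis of $U$ extending one of $U \cap T$, then noting $U'$ is rational and applying the same construction inside $U'$ using the induced flag. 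Once the complement is fixed, the verification that $l(a_p) \in \Q \Rightarrow l(a_p) = 0$ for $l$ nontrivial on $U'$ is a short argument: extend $l$ to $\g_\Q^\ast$, observe it may be modified by a rational functional supported on the rational block to one vanishing on $U \cap T$ without changing $l(a_p)$ modulo $\Q$, and invoke Definition \ref{d:qual-rel-li}. The special case $T = \g$ (or any $T \supseteq U$) recovers Lemma \ref{l:qual-add-decomp}, which is a useful sanity check on the construction.
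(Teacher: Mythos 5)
Your overall strategy—split $a$ into a ``rational block'' and an irrational remainder by choosing rational coordinates adapted to $U\cap T$, then take $a_r$ to be the rational block—is close in spirit to the paper's argument, and your sanity check that $T\supseteq U$ recovers Lemma \ref{l:qual-add-decomp} is correct. But there is a genuine gap at exactly the point you identify as the main obstacle: you never prove that the rational block can be taken inside $U\cap T$. Write $L=\{l\in\g_\Q^\ast:\ l(a)\in\Q\}$; the entire content of the lemma is that the rational values $l(a)$, $l\in L$, can be matched by some $a_r\in(U\cap T)_\Q$. Your construction asserts this ``by construction'' after choosing a basis adapted to the flag $0\leq U\cap T\cap U'\leq U'$, but a rational functional $l_0$ with $l_0(a)\in\Q\setminus\{0\}$ could a priori vanish on $U'\cap U\cap T$ (the hypothesis only directly forbids it from vanishing on all of $U\cap T$, whereas your blocks live inside $U'$); in that case no rational direction $R\leq U'\cap U\cap T$ sees the constraint, and the projection of $a$ onto $R$ cannot carry the rational value $l_0(a)$. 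Ruling this out does follow from the hypothesis, but it requires an argument you do not give: if $l_0$ vanished on $U'\cap U\cap T$, one could subtract from it a rational functional vanishing on $U'$ (hence not changing the value at $a$, since $a\in U'$) chosen so that the difference vanishes on all of $U\cap T$ while remaining nontrivial on $U$, contradicting Definition \ref{d:qual-rel-li}. Your sketch (``modified by a rational functional supported on the rational block'') does not produce such a functional. This is precisely where the paper's proof does its work, by a dual route that avoids the issue: it fixes a basis $\cB$ of $L$, shows the restrictions of $\cB$ to $U\cap T$ remain linearly independent (a nontrivial rational combination vanishing on $U\cap T$ has rational value at $a$ and is nontrivial on $U$, contradicting relative irrationality), and then solves the rational linear system $l(a_r)=l(a)$, $l\in\cB$, with $a_r\in(U\cap T)_\Q$.

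A secondary error: your opening claims that $a$ (or $a_p$) ``will turn out to be linearly irrational in $U'$'', the smallest rational subspace containing $a$, are false except when $L$ already vanishes at $a$. After subtracting $a_r$, every $l\in L$ takes the rational value $0$ at $a_p$ while remaining nontrivial on $U'$, so $a_p$ can only be linearly irrational in a smaller subspace cut out of $U'$ by $L$ (e.g.\ $a=(\alpha,1)$ in $\R^2$ has $U'=\R^2$, but $a-(0,1)=(\alpha,0)$ is linearly irrational only in $\R\times 0$). Your later switch to $U''$ repairs this, so it is a misstatement rather than a fatal flaw, but the final subspace must be $\ker\{l\in\g_\Q^\ast:\ l(a-a_r)\in\Q\}$ (as in the paper), not $U'$ itself.
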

\begin{proof}

Let $L\leq \g_\Q^\ast$ be the subspace of elements which are nontrivial on $U$ and such that $l(a) \in \Q$.  Choose a basis $\cB$ for $L$. The elements of $\cB$ remain linearly independent when restricted to $U \cap T$ (if not, choose a nontrivial linear combination of them which is the trivial map on $U \cap T$ to obtain a contradiction with the linear irrationality of $a$ in $U\bmod T$). Furthermore, since $U,T$ are rational subspaces, so too is $U\cap T$ and so $\cB$ is linearly independent when restricted to a set of maps on the $\Q$-vector space $(U \cap T)_\Q$. Thus there exists $a_r\in (U \cap T)_\Q$ such that $l(a) =l(a_r)$ for all $l \in \cB$, and therefore for all $l \in L$. Note that since $a_r \in U_\Q$, the subspace of elements which are nontrivial on $U$ and such that $l(a-a_r) \in \Q$ is also equal to $L$. Ultimately $l(a-a_r)=0$ whenever $l(a-a_r)\in \Q$ for all $l \in \g_\Q^\ast$ (this is trivially true for maps which are trivial on $U$). Thus $a-a_r$ is linearly irrational in $\ker\{ l\in \g_\Q^\ast : l(a-a_r) \in \Q \} \leq U$.

\end{proof}

Note that the above lemma does not depend at all on the Lie bracket structure and so holds in vector spaces. 

\begin{definition}
Let $\g_\bullet$ be a filtration of step $s$ on $\g$ and let $U_\bullet, T_\bullet \leq \g_\bullet$ be two sequences of rational subspaces with $U_i,T_i \leq \g_i$ for all $i$. A polynomial sequence $p:= \sum_{i=1}^s a_i n^i$ in $\g$ is \textit{linearly irrational in} $U_\bullet\bmod T_\bullet$ if  $a_i$ is linearly irrational $U_i\bmod T_i$ for all $i$.
\end{definition}

\begin{definition}
A polynomial sequence $p(n):= \sum_{1=1}^s a_in^s$ adapted to $\g_\bullet$ is \textit{rational} if $a_i \in (\g_i)_\Q$ for all $i$.
\end{definition}

 In what follows, let $(p)_i$ denote the $i$th coefficient of a polynomial sequence $p$. Applying Lemma \ref{l:qual-relative-decomp} to each coefficient of a polynomial sequence we obtain the following.

\begin{cor}\label{c:qual-poly-decomp}
Let $U_\bullet,T_\bullet$ be sequences of rational subspaces of $\g_\bullet$ and let $p$ be a polynomial sequence which is linearly irrational in $U_\bullet\bmod T_\bullet$. Then there are polynomials $p'$ and $r$ adapted to $U_\bullet$   and $(U_\bullet \cap T_\bullet)_{\Q}:= ((U_i\cap T_i)_\Q)_{i=1}^s$ respectively such that $p'$ is linearly irrational in some sequence of rational subspaces $U'_\bullet \leq U_\bullet$ and $p = p' + r$.
\end{cor}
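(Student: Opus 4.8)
The plan is to apply Lemma \ref{l:qual-relative-decomp} coefficient-by-coefficient, together with the bookkeeping required to assemble the individual decompositions into a decomposition of the whole polynomial sequence. Write $p(n) = \sum_{i=1}^s a_i n^i$ where, by hypothesis (via the preceding definition of ``linearly irrational in $U_\bullet\bmod T_\bullet$''), each $a_i \in U_i$ is linearly irrational in $U_i \bmod T_i$. For each $i$, Lemma \ref{l:qual-relative-decomp} (with $U = U_i$, $T = T_i$) produces a decomposition $a_i = (a_i)_p + (a_i)_r$ with $(a_i)_p \in U_i$ linearly irrational in some rational subspace $U'_i \leq U_i$, and $(a_i)_r \in (U_i \cap T_i)_\Q$.

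First I would set $U'_\bullet := (U'_i)_{i=1}^s$; this is a sequence of rational subspaces with $U'_i \leq U_i \leq \g_i$ for all $i$, so it is a legitimate sequence of subspaces of $\g_\bullet$. Then I would define $p'(n) := \sum_{i=1}^s (a_i)_p n^i$ and $r(n) := \sum_{i=1}^s (a_i)_r n^i$. Since $(a_i)_p \in U_i$ and $(a_i)_r \in (U_i \cap T_i)_\Q$ for all $i$, the sequence $p'$ is adapted to $U_\bullet$ (indeed to $U'_\bullet$) and $r$ is adapted to $(U_\bullet \cap T_\bullet)_\Q$ in the sense of the relevant definition. Moreover $p'$ is linearly irrational in $U'_\bullet$ by Definition \ref{d:qual-li}, precisely because each $(a_i)_p = (p')_i$ is linearly irrational in $U'_i$. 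Finally, since addition of polynomial sequences is coefficient-wise, $(p' + r)(n) = \sum_{i=1}^s ((a_i)_p + (a_i)_r) n^i = \sum_{i=1}^s a_i n^i = p(n)$, so $p = p' + r$ as required.

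There is essentially no obstacle here: the content is entirely contained in Lemma \ref{l:qual-relative-decomp}, and the corollary is the routine ``apply the lemma to each graded piece and repackage'' step. The one point worth a sentence of care is the claim that the decomposition respects the grading, i.e. that applying Lemma \ref{l:qual-relative-decomp} separately to each $a_i$ and then summing $(a_i)_p n^i$ genuinely yields a polynomial sequence adapted to $U'_\bullet$ and linearly irrational in $U'_\bullet$ --- but this is immediate from the definitions of ``adapted'' and of ``linearly irrational in a sequence of subspaces,'' both of which are themselves coefficient-wise conditions. Hence the only thing to check is that Lemma \ref{l:qual-relative-decomp}'s hypotheses are met for each $i$, which is exactly the hypothesis that $p$ is linearly irrational in $U_\bullet \bmod T_\bullet$.
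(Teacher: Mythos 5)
Your proof is correct and matches the paper's approach exactly: the paper obtains this corollary simply by applying Lemma \ref{l:qual-relative-decomp} to each coefficient of $p$, which is precisely your coefficient-by-coefficient argument with the routine repackaging you describe.
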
 

Now we pursue a factorisation with respect to the $\ast$ operation. We will need the fact that the set of polynomial maps adapted to $\g_\bullet$ form a group. This result (in the Lie group, and in greater generality) is originally due to Leibman \cite{Lei02}. See also \cite[Proposition 6.2]{GT12} for a related statement and different proof. The result that we need in the Lie algebra is more obvious and follows from Baker-Campbell-Hausdorff. 

\begin{lemma}\label{l:polys-group}
The set of polynomials adapted to $\g_\bullet$ forms a group under the $\ast$ operation.
\end{lemma}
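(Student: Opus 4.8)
The plan is to verify the group axioms directly, exploiting that $x\ast y=\log(\exp(x)\exp(y))$ encodes multiplication in the simply-connected group $G=\exp(\g)$, and that, since $\g$ is nilpotent, the Baker--Campbell--Hausdorff series defining $\ast$ is a \emph{finite} sum of iterated Lie brackets.

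\emph{Closure.} Given polynomial sequences $p(n)=\sum_{i=1}^s a_in^i$ and $q(n)=\sum_{j=1}^s b_jn^j$ adapted to $\g_\bullet$, I would expand $(p\ast q)(n)=p(n)\ast q(n)$ using BCH. Every summand of the BCH series is a scalar multiple of an iterated bracket $[\xi_1,[\xi_2,[\cdots,\xi_m]]]$ with each $\xi_k\in\{p(n),q(n)\}$; expanding each $\xi_k$ in the monomial basis and using bilinearity of the bracket, this summand becomes a finite sum of terms of the form $c\,[c_{i_1},[c_{i_2},[\cdots,c_{i_m}]]]\,n^{i_1+\cdots+i_m}$, where each $c_{i_k}$ is either $a_{i_k}$ or $b_{i_k}$ and hence lies in $\g_{i_k}$. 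Repeated application of the filtration property $[\g_a,\g_b]\subseteq\g_{a+b}$ shows this bracket lies in $\g_{i_1+\cdots+i_m}$. Collecting by total degree $d:=i_1+\cdots+i_m$, the coefficient of $n^d$ in $p\ast q$ is thus a finite sum of elements of $\g_d$, hence lies in $\g_d$; in particular it vanishes whenever $d>s$, since then $\g_d=0$. Therefore $p\ast q$ is a polynomial sequence of degree $\leq s$ adapted to $\g_\bullet$.

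\emph{Identity and inverses.} The constant sequence $0$ is adapted and is a two-sided identity for $\ast$, since $\exp(0)$ is the identity of $G$. For $p$ as above, its pointwise $\ast$-inverse is the sequence $n\mapsto -p(n)=\sum_{i=1}^s(-a_i)n^i$, because $\exp(p(n))\exp(-p(n))$ is the identity of $G$ for every $n$; since $-a_i\in\g_i$, this inverse is again a polynomial sequence adapted to $\g_\bullet$. Associativity is inherited from the group law on $G$: for adapted $p,q,r$, both $(p\ast q)\ast r$ and $p\ast(q\ast r)$ equal, at each input $n$, the element $\log\bigl(\exp(p(n))\exp(q(n))\exp(r(n))\bigr)$.

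The only step requiring any care is the closure argument, and there the work is purely the filtration bookkeeping of verifying that each iterated BCH bracket of the coefficients lands in the correct filtration subalgebra; this is immediate from the defining property $[\g_a,\g_b]\subseteq\g_{a+b}$, and it is precisely what simultaneously forces the degree of $p\ast q$ to remain $\leq s$. I expect no genuine obstacle.
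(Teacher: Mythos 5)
Your proof is correct and is essentially the argument the paper intends: the paper does not write out a proof, remarking only that the Lie-algebra statement ``is more obvious and follows from Baker--Campbell--Hausdorff'' (citing Leibman for the group-theoretic analogue), and your closure argument via multilinearity of the iterated brackets together with $[\g_a,\g_b]\subseteq\g_{a+b}$, plus the observations about $0$, $-p$, and associativity inherited from $G$, is exactly the bookkeeping that remark alludes to.
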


\begin{lemma}\label{l:qual-prod-rational}
The product of two rational polynomial sequences is rational.
\end{lemma}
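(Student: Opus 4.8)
The plan is to reduce the statement to Baker–Campbell–Hausdorff together with the observation that $\g_\Q$ is a Lie subalgebra. Let $p(n) = \sum_{i=1}^s a_i n^i$ and $q(n) = \sum_{i=1}^s b_i n^i$ be rational polynomial sequences adapted to $\g_\bullet$, so that $a_i, b_i \in (\g_i)_\Q$ for all $i$. By Lemma \ref{l:polys-group} the product $p \ast q$ is again a polynomial sequence adapted to $\g_\bullet$; write $p \ast q = \sum_{i=1}^s c_i n^i$. We must show each $c_i \in (\g_i)_\Q$. The point is that $(p \ast q)(n) = p(n) \ast q(n) = \log(\exp p(n) \exp q(n))$, and by the BCH formula this is a finite sum (the algebra is nilpotent of step $s$) of $p(n)$, $q(n)$, and iterated brackets of these, each term weighted by a rational coefficient. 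Explicitly,
\[
p(n) \ast q(n) = p(n) + q(n) + \tfrac12 [p(n), q(n)] + \tfrac{1}{12}[p(n),[p(n),q(n)]] + \cdots,
\]
a finite expression since all brackets of length $> s$ vanish.

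The key steps, in order, are: (i) recall that $\g_\Q$ is by definition a rational \emph{Lie} subalgebra of $\g$, hence closed under the bracket, so any iterated bracket of the $a_i$ and $b_j$ again lies in $\g_\Q$; (ii) observe that every BCH coefficient ($\tfrac12$, $\tfrac{1}{12}$, etc.) is rational, so $\g_\Q$, being a $\Q$-vector space, is closed under forming these rational linear combinations of brackets; (iii) conclude that for each fixed $n \in \Z$, $p(n) \ast q(n) \in \g_\Q$ is a $\Q$-combination of the $a_i n^i$ and $b_j n^j$ and brackets thereof — hence, grouping by powers of $n$, each coefficient $c_k$ is a fixed ($n$-independent) $\Q$-linear combination of brackets of the $a_i, b_j$, and so $c_k \in \g_\Q$; (iv) finally, adaptedness: since $a_i \in \g_i$ and $b_j \in \g_j$ and $[\g_i,\g_j]\le \g_{i+j}$, a bracket term contributing to the coefficient of $n^k$ is a bracket of pieces whose weights sum to $k$, hence lands in $\g_k$, so $c_k \in (\g_k)_\Q$ as required. (Alternatively, adaptedness of $c_k$ is already guaranteed by Lemma \ref{l:polys-group}, and one only needs rationality from the BCH argument.)

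I do not expect a genuine obstacle here; this is a routine consequence of BCH. The only mild subtlety is step (iii): one should be slightly careful that "the coefficient of $n^k$ in a polynomial identity valid for all $n \in \Z$ is itself a well-defined element of $\g$" — this is immediate since we are working in a finite-dimensional real vector space and a polynomial $\Z \to \g$ is determined by its (vector-valued) coefficients, which can be extracted by evaluating at sufficiently many points and inverting a Vandermonde system over $\Q$ (the analogue of Lemma \ref{l:rat-poly-rat-coeffs} in the vector-valued setting). Since each $c_k$ then lies in $\g_\Q$ — being expressible as such a $\Q$-combination of elements of $\g_\Q$ — the conclusion follows.
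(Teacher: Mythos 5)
Your proof is correct and follows essentially the same route as the paper: the paper's own argument is exactly the Baker--Campbell--Hausdorff expansion combined with the fact that the $(\g_i)_\Q$ are rational subalgebras obeying the filtration, so brackets and rational combinations of rational coefficients stay rational and land in the right filtration level. Your extra care in steps (iii)--(iv) (coefficient extraction and degree bookkeeping) just fills in details the paper leaves implicit.
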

\begin{proof}
This follows immediately from the Baker-Campbell-Hausdorff formula $x \ast y = x + y + \frac{1}{2}[x,y] + \ldots$ and the fact the rational subspaces $({\g_i})_\Q$ are rational subalgebras which obey the same filtration as $\g_\bullet$.
\end{proof}

\begin{prop}[Qualitative multiplicative factorisation for linear irrationality]\label{p:qual-factorise}
Let $p$ be a polynomial sequence in $\g$ adapted to $\g_\bullet$. We may write $p = p' \ast r$ where $p'$ is linearly irrational with respect to some sequence of rational subspaces $S'_\bullet \leq \g_\bullet$ and $r$ is a rational polynomial sequence, both adapted to $\g_\bullet$. 
\end{prop}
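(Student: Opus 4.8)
The plan is to upgrade the additive decomposition of Corollary~\ref{c:qual-poly-decomp} to a multiplicative one by correcting the coefficients of $p$ one degree at a time. The two standing facts we lean on are that the polynomial sequences adapted to $\g_\bullet$ form a group under $\ast$ (Lemma~\ref{l:polys-group}) and that the rational sequences adapted to $\g_\bullet$ are closed under $\ast$ (Lemma~\ref{l:qual-prod-rational}) and under inversion (trivially: the pointwise $\ast$-inverse of $n\mapsto\sum_i c_i n^i$ is $n\mapsto\sum_i(-c_i)n^i$). Throughout we use, as is implicit, that $\g_\bullet$ is a rational filtration, so each $\g_i$ is a rational subspace of $\g$. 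The naive move — take the rational factor produced by Corollary~\ref{c:qual-poly-decomp} and hope $p\ast r^{-1}$ is linearly irrational — fails, because the Baker--Campbell--Hausdorff corrections in $p\ast r^{-1}$ create new terms in degrees $\ge 2$, built from brackets of the rational and linearly irrational parts of $p$, which need not themselves be linearly irrational; so one must re-decompose after each correction.

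The observation that makes the iteration work is purely about degrees: if $\rho$ is a polynomial sequence supported in degrees $\ge k+1$, then in $q\ast\rho^{-1}$ the coefficients in degrees $\le k$ coincide with those of $q$, and the degree-$(k+1)$ coefficient is the sum of the degree-$(k+1)$ coefficients of $q$ and of $\rho^{-1}$, since every iterated bracket appearing in the BCH expansion contributes only in degrees $\ge k+2$. Accordingly I would construct, by induction on $k=0,1,\dots,s$ (with $r_0$ the zero sequence, so $q_0:=p$), rational polynomial sequences $r_k$ adapted to $\g_\bullet$, setting $q_k:=p\ast r_k^{-1}$, and maintaining that $q_k$ is adapted to $\g_\bullet$ and that its degree-$i$ coefficient is linearly irrational in a rational subspace $S_i\le\g_i$ for $i=1,\dots,k$. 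For the step $k\to k+1$: let $\alpha_{k+1}\in\g_{k+1}$ be the degree-$(k+1)$ coefficient of $q_k$ and apply Lemma~\ref{l:qual-relative-decomp} with $U=T=\g_{k+1}$ — whose hypothesis holds vacuously, since no $l\in\g_\Q^\ast$ can be nontrivial on $\g_{k+1}$ and vanish on $U\cap T=\g_{k+1}$ — to write $\alpha_{k+1}=\beta_{k+1}+\gamma_{k+1}$ with $\beta_{k+1}$ linearly irrational in a rational subspace $S_{k+1}\le\g_{k+1}$ and $\gamma_{k+1}\in(\g_{k+1})_\Q$. Then set $\rho_{k+1}(n):=\gamma_{k+1}n^{k+1}$ (rational, adapted to $\g_\bullet$), $q_{k+1}:=q_k\ast\rho_{k+1}^{-1}$, and $r_{k+1}:=\rho_{k+1}\ast r_k$. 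The identity $q_{k+1}\ast r_{k+1}=q_k\ast\rho_{k+1}^{-1}\ast\rho_{k+1}\ast r_k=q_k\ast r_k=p$ is immediate; $r_{k+1}$ is rational and adapted by Lemmas~\ref{l:polys-group} and~\ref{l:qual-prod-rational}; $q_{k+1}$ is adapted by Lemma~\ref{l:polys-group}; and by the degree observation the degree-$\le k$ coefficients of $q_{k+1}$ are those of $q_k$ while its degree-$(k+1)$ coefficient is $\alpha_{k+1}-\gamma_{k+1}=\beta_{k+1}$, linearly irrational in $S_{k+1}\le\g_{k+1}$.

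Taking $k=s$ then yields $p=q_s\ast r_s$ with $r_s$ a rational polynomial sequence adapted to $\g_\bullet$ and $q_s$ a polynomial sequence adapted to $\g_\bullet$ that is linearly irrational with respect to $S'_\bullet:=(S_i)_{i=1}^s\le\g_\bullet$; we take $p'=q_s$ and $r=r_s$. I expect the only point needing genuine care to be the degree bookkeeping in the BCH expansion asserted at the start of the second paragraph — in particular checking that $\rho_{k+1}^{-1}$ is the single-term sequence $n\mapsto-\gamma_{k+1}n^{k+1}$ and that all iterated brackets land in degrees $\ge k+2$ — after which the argument is a routine assembly of the lemmas already available.
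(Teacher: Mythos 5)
Your argument is correct, but it is organised differently from the paper's proof, so a brief comparison is in order. The paper inducts on the filtration depth: at stage $j$ it applies the polynomial-sequence version of the relative decomposition (Corollary \ref{c:qual-poly-decomp}) with $T_\bullet = (\g_j,\ldots,\g_j)$ to correct \emph{all} coefficients of $p \ast r^{-1}$ simultaneously, and the reason earlier progress is not destroyed is the filtration property $[\g,\g_j]\subseteq \g_{j+1}$, so that the Baker--Campbell--Hausdorff error only lives deeper in the filtration; this also forces the paper to introduce and use relative linear irrationality of whole sequences. You instead induct on the polynomial degree, correcting one coefficient per step by a single-monomial rational sequence $\rho_{k+1}(n)=\gamma_{k+1}n^{k+1}$, and the reason earlier coefficients are untouched is the degree grading: since $q(0)=0$ and $\rho_{k+1}$ is supported in degree $k+1$, every BCH bracket lands in degree $\geq k+2$. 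This lets you bypass Corollary \ref{c:qual-poly-decomp} and relative irrationality of sequences entirely, needing only the element-level decomposition (your use of Lemma \ref{l:qual-relative-decomp} with $U=T=\g_{k+1}$, where the hypothesis is indeed vacuous, is the right move --- applying Lemma \ref{l:qual-add-decomp} naively would not keep the rational part inside $(\g_{k+1})_\Q$ and would break adaptedness), plus Lemmas \ref{l:polys-group} and \ref{l:qual-prod-rational} exactly as you cite them; your inverse and degree-bookkeeping claims check out. What the paper's heavier scheme buys is that it is the qualitative template for the quantitative factorisation (Theorem \ref{t:quant-factorise}), where the filtration-level induction and the relative decomposition Corollary \ref{c:poly-linear-decomp} with $T=\g_j$ are what actually carry the complexity and irrationality bounds; your degree-by-degree version is the cleaner and more elementary route if one only wants Proposition \ref{p:qual-factorise} itself.
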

\begin{proof}
Invoke Corollary \ref{c:qual-poly-decomp} with $U_\bullet = \g_\bullet$ and $T_\bullet =(\g, \ldots, \g)$ to obtain $p_1$ which is linearly irrational in some sequence $S^{(1)}_\bullet$ and $r_1$ adapted to $(S_\bullet)_\Q$ such that $p = p_1 + r_1$. By Baker-Campbell-Hausdorff and the fact that $\g/\g_2$ is abelian we have that \[p = p_1 \ast r_1 \mod \g_2.\]  

Set $T^{(2)}_\bullet := (\g_2,\g_2,\ldots, \g_2)$. Then $p \ast r_1^{-1}$ is a polynomial sequence adapted to $\g_\bullet$ by Lemma \ref{l:polys-group}, and is linearly irrational in $(S_\bullet^{(1)} + T^{(2)}_\bullet) \bmod  T^{(2)}_\bullet$.\footnote{Here we recall that all elements of the filtration $\g_\bullet$  are rational in $\g$, so in particular $T_\bullet$ is a sequence of rational subspaces.} Next apply Corollary \ref{c:qual-poly-decomp} with $U_\bullet = S^{(1)}_\bullet + T^{(2)}_\bullet$ and $T_\bullet = T^{(2)}_\bullet$ to $p\ast r_1^{-1}$ to find $p_2$ which is linearly irrational in some $S^{(2)}_\bullet$ and $r_2$ which is adapted to $(S^{(1)}_\bullet \cap T^{(2)}_\bullet)_\Q$ such that $p\ast r_1^{-1} = p_2 + r_2$. By Baker-Campbell-Hausdorff and the fact that $[\g,\g_2] \subset \g_3$ we then have $p\ast r_1^{-1} = p_2 \ast r_2 \bmod \g_3$ and so
\[p = p_2 \ast r_2' \mod \g_3, \]
where $r_2':= r_2 \ast r_1$ is rational by Lemma \ref{l:qual-prod-rational}. Continue in this way to eventually find $p':=p_s$, $S'_\bullet := S^{(s)}_\bullet$ and $r:= r_s'$.
\end{proof}

\section{Description of $\g^\Psi(S_\bullet)$ and comparison with $\log G^\Psi$}\label{s:gPsiSbullet}

Our counting lemma Theorem \ref{t:main-quant} says that if $p$ is a polynomial sequence which is quantitatively linearly irrational in $S_\bullet$, then $p^\Psi$ quantitatively equidistributes in a Lie subalgebra $\g^\Psi(S_\bullet)$ of $\g^t$. We have proven a qualitative version of this result Theorem \ref{t:main-qual} which gives qualitative equidistribution on the same Lie algebra $\g^\Psi(S_\bullet)$. So far, however, we have only described $\g^\Psi(S_\bullet)$ as the smallest Lie subalgebra of $\g^t$ which contains $S_i\otimes V_\Psi^i$ for all $i$. The goal of this section is to give descriptions of $\g^\Psi(S_\bullet)$ which are more explicit and amenable to applications. 

Let $S_\bullet := (S_i)_{i=1}^s$ be a sequence of rational subspaces of $\g$ with respect to a fixed rational structure $\g_\Q$. Let $W_1 = S_1$ and iteratively define $W_i = \spa \{S_j, [W_j,W_{i-j}] \text{ for } j=1,\ldots, i-1\}$ for $i=2,\ldots, s$. That is, $W_i$ comprises all degree $i$ commutators in the $S_j$, where the degree of a commutator is the sum of the subscripts it contains. Note that if $\g_\bullet$ is a filtration such that $S_i \leq \g_i$ for all $i$ then $W_i \leq \g_i$ for all $i$.

\begin{prop}[General description of $\g^\Psi(S_\bullet)$]\label{p:gPsi-Sbullet}
Let $S_\bullet$ be a sequence of rational subspaces of $\g_\bullet$. Let $\Psi$ be a system of linear forms. Then $\g^\Psi(S_\bullet) = \sum_{i=1}^s W_i \otimes V_{\Psi}^i$.
\end{prop}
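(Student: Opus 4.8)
The plan is to prove the two inclusions $\g^\Psi(S_\bullet) \subseteq \sum_i W_i \otimes V_\Psi^i$ and $\sum_i W_i \otimes V_\Psi^i \subseteq \g^\Psi(S_\bullet)$ separately. Throughout I will use the bracket formula \eqref{e:lie-bracket}, namely $[a \otimes v, b \otimes w] = [a,b] \otimes vw$ with coordinatewise multiplication in the second factor, and the fact that $V_\Psi^i V_\Psi^j \subseteq V_\Psi^{i+j}$ (indeed $V^{i+j}$ is by definition spanned by such products, so we get equality). The key structural input is that $W_\bullet$ is built by exactly the same commutator recursion as $\g^\Psi(S_\bullet)$, but one tensor-slot at a time.

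First I would show $\sum_i W_i \otimes V_\Psi^i \subseteq \g^\Psi(S_\bullet)$. By definition $\g^\Psi(S_\bullet)$ is a Lie subalgebra containing $S_i \otimes V^i$ for all $i$; since $W_1 = S_1$ this gives the base case $W_1 \otimes V \subseteq \g^\Psi(S_\bullet)$. For the inductive step, suppose $W_j \otimes V^j \subseteq \g^\Psi(S_\bullet)$ for all $j < i$. Then $S_i \otimes V^i \subseteq \g^\Psi(S_\bullet)$ by definition, and for each $j$ with $1 \le j \le i-1$ we have $[W_j \otimes V^j, W_{i-j} \otimes V^{i-j}] = [W_j, W_{i-j}] \otimes V^j V^{i-j} \subseteq [W_j,W_{i-j}] \otimes V^i$, which lies in $\g^\Psi(S_\bullet)$ since the latter is closed under brackets. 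Taking the span of all these pieces gives exactly $W_i \otimes V^i \subseteq \g^\Psi(S_\bullet)$, completing the induction. (A small point to be careful about: $W_i \otimes V^i$ is not literally the span of the elementary tensors $w \otimes v$ with $w \in W_i$, $v \in V^i$ unless one interprets the tensor product correctly, but since $W_i$ and $V^i$ are subspaces, $W_i \otimes V^i$ is genuinely a subspace of $\g \otimes \R^t$ and the bracket identity extends bilinearly, so this is fine.)

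For the reverse inclusion, I would show that $\h := \sum_{i=1}^s W_i \otimes V_\Psi^i$ is itself a Lie subalgebra of $\g^t$ containing every $S_i \otimes V^i$; since $\g^\Psi(S_\bullet)$ is by definition the \emph{smallest} such subalgebra, this yields $\g^\Psi(S_\bullet) \subseteq \h$. Containment of $S_i \otimes V^i$ is immediate because $S_i \subseteq W_i$. To check $\h$ is closed under the bracket, it suffices by bilinearity to bracket a generator $W_i \otimes V^i$ against a generator $W_j \otimes V^j$: we get $[W_i, W_j] \otimes V^i V^j \subseteq [W_i,W_j] \otimes V^{i+j}$. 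Now I need $[W_i, W_j] \subseteq W_{i+j}$, and this is precisely the statement that $W_\bullet$ behaves like a filtration-graded object; it follows from the recursive definition of $W_\bullet$ by a straightforward induction on $i+j$ (the defining relation $W_{i+j} \supseteq [W_i, W_j]$ for $i,j \le i+j-1$ is built in, and one extends it using the Jacobi identity together with $W_k = \mathrm{span}\{S_l, [W_l, W_{k-l}]\}$). Hence $[W_i \otimes V^i, W_j \otimes V^j] \subseteq W_{i+j} \otimes V^{i+j} \subseteq \h$ (with the convention that $W_k = 0$ and $V^k$ irrelevant for $k > s$, consistent with $\g$ being step $s$), so $\h$ is a subalgebra.

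The main obstacle, such as it is, is the auxiliary claim $[W_i, W_j] \subseteq W_{i+j}$: one has to verify that the one-slot-at-a-time commutator closure $W_\bullet$ really does form a descending-degree filtered Lie algebra, which requires a Jacobi-identity induction rather than just unwinding definitions. Everything else is bookkeeping with the bracket identity \eqref{e:lie-bracket} and the multiplicativity $V^i V^j \subseteq V^{i+j}$. I would also remark that when $S_\bullet \le \g_\bullet$ for a genuine filtration, the note preceding the proposition gives $W_i \le \g_i$, which keeps the whole construction inside the ambient filtered algebra and makes the degree truncation at step $s$ automatic.
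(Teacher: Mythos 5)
Your proof is correct and follows essentially the same route as the paper: one shows that $\sum_{i=1}^s W_i\otimes V^i$ is a Lie subalgebra via $[W_i\otimes V^i,W_j\otimes V^j]\subseteq W_{i+j}\otimes V^{i+j}$ and that it is generated under brackets and spans by the $S_i\otimes V^i$ (your two inclusions just spell out what the paper calls immediate, including the genuinely needed equality $V^jV^{i-j}=V^i$). The only quibble is that your ``main obstacle'' $[W_i,W_j]\subseteq W_{i+j}$ requires no Jacobi-identity induction: it is built directly into the recursion $W_{i+j}=\spa\{S_{i+j},\,[W_l,W_{i+j-l}]:l=1,\ldots,i+j-1\}$ (take $l=i$), with the degenerate case $i+j>s$ handled by $W_i\leq\g_i$ exactly as you note at the end.
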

\begin{proof}
Recall from Equation (\ref{e:lie-bracket}) the Lie bracket operation on $\g\otimes \R^t$. It follows immediately that for all $i,j$ we have $[W_i \otimes V^i, W_j \otimes V^j] \subset W_{i+j} \otimes V^{i+j}$ (where $W_{s+1}=W_{s+2}=\cdots =0$) and so $\sum_{i=1}^s W_i \otimes V^i$ is indeed a Lie subalgebra of $\g^t$. It is also immediate from definitions that $\sum_{i=1}^s W_i \otimes V^i$ is generated by $\{S_i\otimes V^i\}_{i=1}^s$ under commutators and linear spans. 
\end{proof}

Recall that we use $G^\Psi$ to denote the Leibman group (Definition \ref{d:leib-group}); we will call $\log G^\Psi = \sum_{i=1}^s \g_i\otimes V_\Psi^i$ the \textit{Leibman algebra}. We now pursue Proposition \ref{p:comp-leib-gpsi}, a description of $\log G^\Psi$ which is more readily comparable to that of $\g^\Psi(S_\bullet)$ in the above proposition. In so doing we will make the assumption that $S_i + \h_i = \g_i$ for all $i$ (recall Definition \ref{d:hi}). The goal of the upcoming lemmas is to motivate why this is reasonable.

For a sequence of subspaces $S_\bullet \leq \g_\bullet$, we will say that a filtration $\g'_\bullet\leq \g_\bullet$ is \textit{minimal} for $S_\bullet$ if $S_\bullet \leq \g'_\bullet$ and there is no proper subfiltration $\g''_\bullet < \g'_\bullet$ with $S_i \leq \g''_i$ for all $i$.  

\begin{lemma}
For a sequence of subspaces $S_\bullet \leq \g_\bullet$, if $\g_\bullet$ is minimal for $S_\bullet$ then $S_i + \h_i = \g_i$ for all $i$.\footnote{The converse is also true though we do not need such a statement.}
\end{lemma}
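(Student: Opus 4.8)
The statement to prove: if $\g_\bullet$ is minimal for $S_\bullet$, then $S_i + \h_i = \g_i$ for all $i$. I would prove the contrapositive: suppose $S_i + \h_i \subsetneq \g_i$ for some $i$, and construct a proper subfiltration $\g''_\bullet < \g_\bullet$ still containing each $S_j$, contradicting minimality. The natural guess for the offending index is the \emph{largest} $i$ for which $S_i + \h_i \neq \g_i$ (working downward makes the filtration axioms easier to maintain, since $\h_i$ is built from strictly-higher-degree pieces of the filtration and from brackets $[\g_j,\g_{i-j}]$ with $j, i-j < i$, which for large $i$ are controlled).

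First I would recall the structure of $\h_i$: by Definition \ref{d:hi}, $\h_i$ is the smallest Lie subalgebra containing $\g_{i+1}$ and all $[\g_j,\g_{i-j}]$. I would then define the candidate subfiltration by setting $\g''_j = \g_j$ for $j \neq i$ and replacing $\g''_i$ by some subspace strictly between $S_i + \h_i$ and $\g_i$ — concretely, a rational hyperplane (or a $\g_\Q$-rational subspace) of $\g_i$ containing $S_i + \h_i$. The work is then to verify the two filtration axioms for $\g''_\bullet$: (a) it is still decreasing, i.e. $\g''_i \supseteq \g_{i+1} = \g''_{i+1}$, which holds since $\g_{i+1} \subseteq \h_i \subseteq \g''_i$; and (b) $[\g''_j, \g''_k] \subseteq \g''_{j+k}$ for all $j,k$. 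The only brackets that could fail (b) are those landing in index $i$, i.e. pairs with $j + k = i$; but $[\g''_j,\g''_k] = [\g_j,\g_k] \subseteq \h_i \subseteq \g''_i$ by the very definition of $\h_i$. Brackets into index $i$ from a bracket involving the shrunk $\g''_i$ itself only arise as $[\g''_i, \g''_0]$-type terms landing in index $\geq i$, which is fine since outside index $i$ nothing changed and $\g''_i \subseteq \g_i$. Each $S_j \leq \g_j = \g''_j$ for $j \neq i$, and $S_i \leq S_i + \h_i \subseteq \g''_i$, so $S_\bullet \leq \g''_\bullet < \g_\bullet$, the desired contradiction.

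The one subtlety I would be careful about is ensuring $\g''_i$ can be chosen to be an honest Lie subalgebra (filtrations consist of subalgebras) and — if rationality of the filtration is being tacitly maintained — a rational one; since $S_i + \h_i$ already contains $[\g_j,\g_{i-j}]$ for all $j$ including potentially $[\g''_i,\g''_i] \subseteq [\g_i,\g_i] \subseteq \g_{2i} \subseteq \g_i$ (as $2i > i$ when $i \geq 1$, so this lands in a higher, unchanged term, hence is automatically inside $\g''_i$), any subspace $\g''_i$ with $\g_{i+1} \subseteq \g''_i$ is automatically a subalgebra. Rationality of the intermediate subspace follows because $\g_i$, $S_i$, and $\h_i$ are all rational (the latter is a Lie-algebra-span of rational pieces), so $S_i + \h_i$ is rational and of strictly smaller dimension, hence sits inside a proper rational subspace of $\g_i$.

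The main obstacle I anticipate is bookkeeping rather than conceptual: making sure that shrinking a single term $\g_i$ does not break $[\g_j,\g_k]\subseteq \g_{j+k}$ for the pairs where $j+k = i$ — but this is exactly what membership in $\h_i$ guarantees — and double-checking that I do not also need to shrink lower terms $\g_{i'}$ with $i' < i$ (I do not, because choosing $i$ maximal means no constraint of the form $[\g_j,\g_k] \not\subseteq \g''_{i'}$ with $i' < i$ is newly violated: those brackets are unchanged and were already contained). So the whole argument reduces to one clean choice of hyperplane plus a short axiom-check.
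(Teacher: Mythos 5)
Your argument is correct and is essentially the paper's own proof: the paper likewise proves the contrapositive by replacing the single offending term of the filtration with $S_i+\h_i$ (it takes the smallest offending index and the subspace $S_i+\h_i$ itself rather than your largest index and a hyperplane containing it, but as your own checks show, any offending index and any subspace between $S_i+\h_i$ and $\g_i$ works), yielding a proper subfiltration containing $S_\bullet$ and contradicting minimality. The only point worth tightening is your treatment of $[\g''_0,\g''_i]$: the filtration axiom requires it to land in $\g''_i$ itself, which follows not from ``nothing changed outside index $i$'' but from $[\g_0,\g''_i]=[\g_1,\g''_i]\subseteq\g_{i+1}\subseteq\h_i\subseteq\g''_i$, an inclusion you have already recorded in step (a).
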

\begin{proof}
If not, for the smallest $i$ such that $S_i + \h_i \ne \g_i$, set $\g_i' = S_i +\h_i$, and otherwise set $\g_j'=\g_j$ for all $j\ne i$. This yields a proper subfiltration $\g_\bullet' < \g_\bullet$. 
\end{proof}

\begin{lemma}\label{l:li-implies-fi}
Let $p$ be linearly irrational with respect to a sequence of subspaces $S_\bullet \leq \g_\bullet$. Then $p$ is filtration irrational with respect to the filtration $\g_\bullet'\leq \g_\bullet$ which is minimal for $S_\bullet$. 
\end{lemma}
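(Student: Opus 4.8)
The plan is to unwind both notions of irrationality down to statements about linear functionals in $\g_\Q^\ast$ and then exploit the minimality of $\g_\bullet'$ to produce, from any offending horizontal-type functional for filtration irrationality, an offending functional for linear irrationality. Write $p(n) = \sum_{i=1}^s a_i n^i$, so that by hypothesis each $a_i$ is linearly irrational in $S_i$, and $\g_\bullet' = (\g_i')_{i=1}^s$ is the minimal filtration with $S_i \leq \g_i'$ for all $i$. Let $\h_i'$ denote the subalgebra attached to $\g_\bullet'$ as in Definition \ref{d:hi} (the smallest Lie algebra containing $\g_{i+1}'$ and $[\g_j',\g_{i-j}']$ for all $j$). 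We must show $a_i$ is $i$-filtration irrational with respect to $\g_\bullet'$, i.e. that every $l \in \g_\Q^\ast$ which is nontrivial on $\g_i'$ and vanishes on $\h_i'$ satisfies $l(a_i) \notin \Q$.

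First I would record the key consequence of minimality established just above: since $\g_\bullet'$ is minimal for $S_\bullet$, the preceding lemma gives $S_i + \h_i' = \g_i'$ for all $i$. Now fix $i$ and fix a candidate $l \in \g_\Q^\ast$ that is nontrivial on $\g_i'$ and vanishes on $\h_i'$. Because $\g_i' = S_i + \h_i'$ and $l$ kills $\h_i'$, the restriction $l|_{\g_i'}$ is determined by $l|_{S_i}$; in particular, since $l$ is nontrivial on $\g_i'$, it must be nontrivial on $S_i$ (if $l$ vanished on $S_i$ it would vanish on $S_i + \h_i' = \g_i'$, a contradiction). But then $l$ is an element of $\g_\Q^\ast$ which is nontrivial on $S_i$, so the linear irrationality of $a_i$ in $S_i$ (Definition \ref{d:qual-li}) yields $l(a_i) \notin \Q$. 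Running this over all $i$ gives that $p$ is filtration irrational in $\g_\bullet'$, as desired. One small point to check along the way: that $l$ — defined a priori on all of $\g$ — can legitimately be restricted to $S_i$ and to $\g_i'$ and that ``nontrivial on $S_i$ in the sense of Definition \ref{d:qual-li}'' matches ``nontrivial on $\g_i'$ in the sense of Definition \ref{d:qual-fi}'' modulo the decomposition $\g_i' = S_i + \h_i'$; this is exactly the elementary linear-algebra observation above and needs no more than a line.

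The main (and essentially only) obstacle is conceptual rather than computational: one must make sure that the $\h_i$ appearing in the definition of filtration irrationality is the one attached to the \emph{minimal} filtration $\g_\bullet'$, not to the original $\g_\bullet$, and that the minimality lemma is applied to this $\g_\bullet'$. Once the bookkeeping of which filtration each object is defined relative to is pinned down, the argument is the short functional-theoretic deduction above. I would also note in passing — though it is not needed for the statement — that this is the qualitative precursor of the quantitative relationship flagged after Definition \ref{d:qual-fi} (cf. Lemma \ref{l:Ti+hi=gi}), where the clean identity $S_i + \h_i = \g_i$ is replaced by a bounded-complexity version.
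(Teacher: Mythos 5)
Your argument is correct and is surely the intended one: the paper dismisses this lemma with ``follows easily from definitions,'' and your write-up simply supplies those details, using the preceding minimality lemma ($S_i+\h_i'=\g_i'$ for the minimal filtration) to show that any $l\in\g_\Q^\ast$ vanishing on $\h_i'$ but nontrivial on $\g_i'$ must be nontrivial on $S_i$, whereupon linear irrationality of $a_i$ in $S_i$ gives $l(a_i)\notin\Q$. Your care in distinguishing $\h_i'$ (attached to $\g_\bullet'$) from $\h_i$ (attached to $\g_\bullet$) is exactly the right bookkeeping point.
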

\begin{proof}
This follows easily from definitions. We omit the details. 
\end{proof}

\begin{lemma}\label{l:si+hi-qual}
Let $p(n):=\sum_{i=1}^s a_i n^i$ be filtration irrational in $\g_\bullet$. Then for any sequence of rational subspaces $T_\bullet$ with $a_i \in T_i \leq \g_i$, we have $T_i + \h_i = \g_i$ for all 
$i$.

In particular, if $p$ is linearly irrational in $S_\bullet$ then $S_i + \h_i = \g_i$ for all $i$. 
\end{lemma}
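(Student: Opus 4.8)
The plan is to prove the first statement directly from the definition of filtration irrationality, and then deduce the ``in particular'' clause by combining it with Lemma \ref{l:li-implies-fi}. For the first statement I would argue by contradiction: suppose for some index $i$ we have $T_i + \h_i \subsetneq \g_i$. Since $T_i$ and $\h_i$ are rational subspaces of $\g$ (the latter because $\g_\bullet$ is a rational filtration and $\h_i$ is generated by rational subspaces under brackets), their sum $T_i + \h_i$ is a proper rational subspace of $\g_i$. Hence there exists a nonzero $l \in \g_\Q^\ast$ which vanishes on $T_i + \h_i$ but is nontrivial on $\g_i$. Such an $l$ is exactly a functional that is nontrivial on $\g_i$ and vanishes on $\h_i$, so the defining property of $i$-filtration irrationality of $a_i$ forces $l(a_i) \notin \Q$; but $a_i \in T_i$ and $l$ vanishes on $T_i$, so $l(a_i) = 0 \in \Q$, a contradiction.

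For the ``in particular'' clause, suppose $p$ is linearly irrational in $S_\bullet$. By Lemma \ref{l:li-implies-fi}, $p$ is filtration irrational with respect to the filtration $\g'_\bullet \leq \g_\bullet$ that is minimal for $S_\bullet$, and by definition of that minimal filtration $S_i \leq \g'_i$ for all $i$. Applying the first statement of the lemma with $\g_\bullet$ replaced by $\g'_\bullet$ and $T_\bullet = S_\bullet$ gives $S_i + \h'_i = \g'_i$ for all $i$, where $\h'_i$ is the analogue of $\h_i$ formed inside $\g'_\bullet$. One then needs the (easy) observation that the minimal filtration satisfies $\g'_i = S_i + \h_i$: indeed the lemma preceding Lemma \ref{l:li-implies-fi} already records that a minimal filtration has $S_i + \h_i = \g_i$ for the ambient filtration, and the same argument applied internally to $\g'_\bullet$ shows $S_i + \h'_i = \g'_i$, while $\h'_i \le \h_i$; combining these inclusions yields $\g'_i = S_i + \h_i$, and since $\g'_i \le \g_i$ this forces equality $S_i + \h_i = \g_i$ precisely when the minimality has propagated all the way, which is the content of the cited minimality lemma.

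The main obstacle is purely bookkeeping rather than conceptual: one must be careful that passing to the minimal filtration $\g'_\bullet$ does not change $\h_i$ in a way that breaks the identity $S_i + \h_i = \g_i$ back in the original $\g_\bullet$. I expect the cleanest route is actually to avoid the subfiltration altogether for the second clause and instead observe directly that linear irrationality of $a_i$ in $S_i$ means $l(a_i) \in \Q \Rightarrow l(a_i) = 0$ for every $l \in \g_\Q^\ast$, so that $S_i$ (being the smallest rational subspace containing $a_i$) is annihilated by exactly those rational functionals killing $a_i$; then any rational $l$ vanishing on $\h_i$ but nontrivial on $\g_i$ would, by filtration irrationality (which holds by Lemma \ref{l:li-implies-fi}), have $l(a_i) \notin \Q$, hence be nontrivial on $S_i$, forcing $S_i \not\le \ker l$ and therefore $S_i + \h_i \not\le \ker l$; as $l$ ranged over all rational functionals nontrivial on $\g_i$ and trivial on $\h_i$, no such $l$ can exist once $S_i + \h_i = \g_i$ fails, giving the contradiction. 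This keeps the whole argument at the level of rational functionals and is the version I would write up.
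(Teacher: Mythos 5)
Your proof of the first statement is correct and is essentially the paper's argument in contrapositive form: where the paper writes $\g_i = (a_i^\perp \cap \g_{i\Q}^\ast \cap \h_i^\perp)^\perp = (a_i^\perp\cap\g_{i\Q}^\ast)^\perp + \h_i$ and identifies $(a_i^\perp\cap\g_{i\Q}^\ast)^\perp$ as the smallest rational subspace containing $a_i$, you pick a rational functional annihilating the proper rational subspace $T_i+\h_i$ of $\g_i$ and contradict filtration irrationality directly; these are dual phrasings of the same duality argument (and your remark that $\h_i$ is rational is needed in both).

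Your treatment of the ``in particular'' clause, however, rests on a misreading, and the argument you say you would write up does not close. The hypothesis that $p$ is filtration irrational in $\g_\bullet$ is a standing hypothesis for the whole lemma; the second clause only adds linear irrationality, and then the conclusion is immediate from the first statement with $T_\bullet = S_\bullet$, since linear irrationality forces $a_i \in S_i$, $S_i$ is rational, and $S_i \leq \g_i$ (because $S_i$ is the smallest rational subspace containing $a_i$ while $\g_i$ is rational). You instead try to \emph{derive} the needed filtration irrationality from linear irrationality via Lemma \ref{l:li-implies-fi}, but that lemma only gives filtration irrationality with respect to the minimal subfiltration $\g'_\bullet \leq \g_\bullet$: a rational functional that is nontrivial on $\g_i$ and vanishes on $\h_i$ may be trivial on $\g'_i$, so irrationality in $\g'_\bullet$ gives no control on $l(a_i)$, and the inclusions in your second paragraph only yield $\g'_i \leq S_i + \h_i$, not $S_i+\h_i=\g_i$. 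Indeed the clause is false under your reading: in the Heisenberg algebra with the lower central series filtration, the paper's example $p(n)=(\alpha,\alpha,0)n$ is linearly irrational in $S_\bullet=(\{(x,x,0):x\in\R\},0)$, yet $S_1+\h_1=\{(x,x,z):x,z\in\R\}\neq \g_1$; here $p$ is not filtration irrational in $\g_\bullet$ (take $l(x,y,z)=x-y$, which is nontrivial on $\g_1$, vanishes on $\h_1=\g_2$, and has $l(a_1)=0\in\Q$). So the fix is simply to keep the standing hypothesis and dispatch the second clause in one line rather than appeal to Lemma \ref{l:li-implies-fi}.
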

\begin{proof}
That $T_i + \h_i \leq \g_i$ is immediate. For all $l\in \g_{i\Q}^\ast$ which vanish on $\h_i$ we have $l(a_i) \not \in \Q$. In particular, we have $\g_i = (a_i^\perp \cap \g_{i\Q}^\ast \cap \h_i^\perp)^\perp = (a_i^\perp \cap \g_{i\Q}^\ast)^\perp + \h_i$ where orthogonal complements are taken in $\g_i$ and $\g_i^\ast$.  But $(a_i^\perp \cap \g_{i\Q}^\ast)^\perp$ is the smallest rational subspace of $\g_{i}$ containing $a_i$. Thus $T_i + \h_i \geq \g_i$.
\end{proof}

The following is the main ingredient in the proof of Proposition \ref{p:comp-leib-gpsi}.

\begin{lemma}\label{l:maximal-Wi} 
Suppose that the sequence of subspaces $(S_i)_{i=1}^s \leq \g_\bullet$ satsifes $S_i + \h_i = \g_i$ for all $i$. Then the sequence of subspaces $(W_i)_{i=1}^s$ satisfies 
\[W_i + \g_{i+1} = \g_i ,\]
for all $i$.
\end{lemma}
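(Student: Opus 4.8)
The plan is to prove the identity $W_i + \g_{i+1} = \g_i$ by strong induction on $i$. The base case $i=1$ is immediate: $W_1 = S_1$, and the hypothesis $S_1 + \h_1 = \g_1$ together with $\h_1 = \g_2$ (from Definition \ref{d:hi}, since $\g_j$ for $j<1$ plus $[\g_j,\g_{1-j}]$ contribute nothing beyond $\g_2$) gives $W_1 + \g_2 = \g_1$. For the inductive step, fix $i \ge 2$ and assume $W_j + \g_{j+1} = \g_j$ for all $j < i$. Since $W_i \le \g_i$ and $\g_{i+1} \le \g_i$, the inclusion $W_i + \g_{i+1} \subseteq \g_i$ is clear, so the work is the reverse inclusion.

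First I would reduce to showing $\h_i \subseteq W_i + \g_{i+1}$: once that is established, the hypothesis $S_i + \h_i = \g_i$ combined with $S_i \subseteq W_i$ gives $\g_i = S_i + \h_i \subseteq W_i + (W_i + \g_{i+1}) = W_i + \g_{i+1}$, as desired. Now $\h_i$ is by definition the smallest Lie subalgebra containing $\g_{i+1}$ and $[\g_j, \g_{i-j}]$ for all $j$ with $1 \le j \le i-1$; since $W_i + \g_{i+1}$ contains $\g_{i+1}$ and is a subspace, it suffices to show that each bracket space $[\g_j, \g_{i-j}]$ lies in $W_i + \g_{i+1}$ (the fact that $\h_i$ is generated as a Lie algebra rather than a subspace is harmless here, because any further commutators of these spaces land in $\g_{i+1} \subseteq W_i + \g_{i+1}$ by the filtration property, noting the degrees add to at least $i+1$). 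So the crux is: for each $1 \le j \le i-1$, $[\g_j, \g_{i-j}] \subseteq W_i + \g_{i+1}$.

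To handle $[\g_j, \g_{i-j}]$, I would substitute the inductive hypotheses $\g_j = W_j + \g_{j+1}$ and $\g_{i-j} = W_{i-j} + \g_{i-j+1}$ and expand the bracket bilinearly:
\[
[\g_j, \g_{i-j}] = [W_j + \g_{j+1},\, W_{i-j} + \g_{i-j+1}] \subseteq [W_j, W_{i-j}] + [\g_{j+1}, \g_{i-j}] + [W_j, \g_{i-j+1}] + [\g_{j+1}, \g_{i-j+1}].
\]
The last three terms all lie in $\g_{i+1}$ by the filtration axiom $[\g_a, \g_b] \subseteq \g_{a+b}$, since $(j+1)+(i-j) = i+1$, $j + (i-j+1) = i+1$, and $(j+1)+(i-j+1) = i+2$. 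The first term $[W_j, W_{i-j}]$ is, by the defining recursion for $W_i$, contained in $W_i$. Hence $[\g_j,\g_{i-j}] \subseteq W_i + \g_{i+1}$, which closes the induction.

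The only mild subtlety — and the step I would be most careful about — is the reduction of $\h_i$ to its generating subspaces: one must check that no Lie-algebra generation beyond linear spans of the $[\g_j,\g_{i-j}]$ and $\g_{i+1}$ is needed, i.e. that brackets of these generators stay inside $W_i + \g_{i+1}$. This follows because a bracket of a generator of degree $\ge i+1$ with anything lands in $\g_{i+2} \subseteq \g_{i+1}$, while a bracket of $[\g_{j}, \g_{i-j}]$ with $[\g_{j'}, \g_{i-j'}]$ has total degree $2i \ge i+1$ (as $i \ge 1$) and so also lands in $\g_{i+1}$; thus the Lie subalgebra generated by these spaces equals the subspace they span together with $\g_{i+1}$, and we have already placed that subspace inside $W_i + \g_{i+1}$. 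Everything else is routine bilinearity and the filtration axioms.
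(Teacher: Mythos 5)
Your proof is correct and follows essentially the same route as the paper: induction on $i$, expanding $W_i$ via its defining recursion, and using bilinearity of the bracket together with the filtration axioms to trade $[W_j,W_{i-j}]$ for $[\g_j,\g_{i-j}]$ modulo $\g_{i+1}$, identifying the result with $S_i+\h_i$. Your explicit check that the Lie closure defining $\h_i$ adds nothing beyond the linear span plus $\g_{i+1}$ is a point the paper leaves implicit, but otherwise the arguments coincide.
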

\begin{proof}
The proof proceeds by induction and the base case is easy from definitions. Expanding the definition of $W_i$, we will show that
\begin{equation}\label{e:max-Wi}
S_i + \sum_{j=1}^{i-1}[W_j,W_{i-j}] + \g_{i+1} = \g_i.
\end{equation} 
Now for $j=1, \ldots, i-1$ we have $W_j \leq \g_j$ and so by the filtration $\g_\bullet$ we get \[[W_j + \g_{j+1},W_{i-j}+\g_{i+1-j}] + \g_{i+1} = [W_j,W_{i-j}]+\g_{i+1}.\] By induction, therefore, Equation (\ref{e:max-Wi}) is equivalent to
\[S_i + \sum_{j=1}^{i-1}[\g_j,\g_{i-j}] + \g_{i+1} = \g_i.\]
But the left hand side of the above is just $S_i + \h_i$.
\end{proof}

\begin{prop}[Comparison of Leibman algebra and $\g^\Psi(S_\bullet)$]\label{p:comp-leib-gpsi}
Suppose $S_\bullet$ is a sequence of rational subspaces of $\g_\bullet$ such that $S_i + \h_i = \g_i$. Then 
\[\sum_{i=1}^s \g_i \otimes V_\Psi^i = \sum_{i=1}^s W_i \otimes \left(\sum_{j=1}^i V_\Psi^j\right) = \g^\Psi(S_\bullet) + \sum_{i=1}^s W_i \otimes \left(\sum_{j=1}^{i-1} V_\Psi^j\right).\]
\end{prop}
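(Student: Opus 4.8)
The plan is to prove the two claimed equalities separately, using Lemma~\ref{l:maximal-Wi} as the key structural input. The hypothesis $S_i + \h_i = \g_i$ lets us invoke that lemma to conclude $W_i + \g_{i+1} = \g_i$ for all $i$, which (unwinding the filtration downward from $i=s$) gives $W_i + W_{i+1} + \cdots + W_s = \g_i$ for all $i$; in particular $\sum_{j \geq i} W_j = \g_i$. This is the identity I expect to do most of the work.

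For the first equality $\sum_i \g_i \otimes V_\Psi^i = \sum_i W_i \otimes \big(\sum_{j=1}^i V_\Psi^j\big)$, I would substitute $\g_i = \sum_{k \geq i} W_k$ into the left-hand side and reindex:
\[
\sum_{i=1}^s \g_i \otimes V_\Psi^i = \sum_{i=1}^s \Big(\sum_{k=i}^s W_k\Big) \otimes V_\Psi^i = \sum_{k=1}^s W_k \otimes \Big(\sum_{i=1}^k V_\Psi^i\Big),
\]
where the last step just collects, for each fixed $k$, all the $V_\Psi^i$ with $i \leq k$ that get paired with $W_k$. (Here one uses bilinearity of $\otimes$ and that these are sums of subspaces inside $\g \otimes \R^t$, so the manipulation is just rearranging a double sum of subspaces.) Renaming $k$ as $i$ gives the middle expression.

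For the second equality, I would split the inner sum $\sum_{j=1}^i V_\Psi^j = V_\Psi^i + \sum_{j=1}^{i-1} V_\Psi^j$ and distribute:
\[
\sum_{i=1}^s W_i \otimes \Big(\sum_{j=1}^i V_\Psi^j\Big) = \sum_{i=1}^s W_i \otimes V_\Psi^i + \sum_{i=1}^s W_i \otimes \Big(\sum_{j=1}^{i-1} V_\Psi^j\Big).
\]
The first term on the right is exactly $\g^\Psi(S_\bullet)$ by Proposition~\ref{p:gPsi-Sbullet}, and the second term is the claimed correction term, so we are done. The only subtlety worth a sentence is that the "$=$" signs here are equalities of subspaces of $\g^t = \g \otimes \R^t$, and the distributive/reindexing steps are all valid because we are taking finite sums (spans of unions) of linear subspaces; no cancellation or direct-sum hypothesis is needed. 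I expect the main (minor) obstacle to be bookkeeping the reindexing of the double sum cleanly, and making sure the telescoping consequence $\sum_{j\geq i} W_j = \g_i$ of Lemma~\ref{l:maximal-Wi} is stated correctly (it follows by descending induction on $i$: $\g_s = W_s$ from the lemma with $i=s$ since $\g_{s+1}=0$, and $\g_i = W_i + \g_{i+1} = W_i + \sum_{j\geq i+1} W_j$).
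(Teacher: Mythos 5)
Your proposal is correct and follows essentially the same route as the paper: both invoke Lemma \ref{l:maximal-Wi} to get $W_i + \g_{i+1} = \g_i$, hence $\g_i = \sum_{j\geq i} W_j$, and then switch the order of summation, with the second equality following from Proposition \ref{p:gPsi-Sbullet}. Your write-up simply makes the reindexing and the splitting-off of the $j=i$ term more explicit than the paper's one-line argument.
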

\begin{proof}
From Lemma \ref{l:maximal-Wi} we have $W_i + \g_{i+1}=\g_i$ and so $\g_i = \sum_{j=i}^s W_j$. The result follows by switching the order of summation.
\end{proof}

\begin{cor}\label{c:gpsi-flag}
If $\Psi$ satisfies the flag property and $\g_\bullet$ is minimal for $S_\bullet$ then $\g^\Psi(S_\bullet)$ is equal to the Leibman algebra.
\end{cor}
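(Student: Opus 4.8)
The plan is to read off Corollary \ref{c:gpsi-flag} directly from Proposition \ref{p:comp-leib-gpsi} together with the flag condition. First I would note that since $\g_\bullet$ is minimal for $S_\bullet$, the preceding lemma gives $S_i + \h_i = \g_i$ for all $i$, so the hypotheses of Proposition \ref{p:comp-leib-gpsi} are in force. That proposition then yields the identity
\[\sum_{i=1}^s \g_i \otimes V_\Psi^i \;=\; \g^\Psi(S_\bullet) + \sum_{i=1}^s W_i \otimes \left(\sum_{j=1}^{i-1} V_\Psi^j\right),\]
and the left-hand side is by definition the Leibman algebra $\log G^\Psi$. So it suffices to show that the error term $\sum_{i=1}^s W_i \otimes \left(\sum_{j=1}^{i-1} V_\Psi^j\right)$ is already contained in $\g^\Psi(S_\bullet)$, and indeed already contained in the first summand $\sum_{i=1}^s W_i \otimes V_\Psi^i$ on the middle term.

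The key step is the observation that the flag condition $V_\Psi^j \leq V_\Psi^i$ whenever $j \leq i$ makes $\sum_{j=1}^{i} V_\Psi^j = V_\Psi^i$, so $\sum_{j=1}^{i-1} V_\Psi^j \leq V_\Psi^i$. Hence for each $i$,
\[W_i \otimes \left(\sum_{j=1}^{i-1} V_\Psi^j\right) \;\subseteq\; W_i \otimes V_\Psi^i \;=\; (W_i \otimes V_\Psi^i)\,,\]
and summing over $i$ shows the error term lies inside $\sum_{i=1}^s W_i \otimes V_\Psi^i = \g^\Psi(S_\bullet)$, where the last equality is Proposition \ref{p:gPsi-Sbullet}. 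Combining this with the displayed identity gives $\log G^\Psi \subseteq \g^\Psi(S_\bullet)$; the reverse inclusion is automatic since $S_i \otimes V_\Psi^i \leq \g_i \otimes V_\Psi^i$ for all $i$ and $\log G^\Psi$ is a Lie subalgebra of $\g^t$ containing each $\g_i \otimes V_\Psi^i$, hence it contains the smallest Lie subalgebra containing the $S_i \otimes V_\Psi^i$. Therefore $\g^\Psi(S_\bullet) = \log G^\Psi$.

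I do not anticipate a genuine obstacle here — the statement is essentially a one-line corollary once Propositions \ref{p:gPsi-Sbullet} and \ref{p:comp-leib-gpsi} are available. The only point requiring minor care is making sure the flag condition is applied in the right direction (that $V^j \subseteq V^i$ for $j \le i$, rather than the reverse), and confirming that $W_i \le \g_i$ so that the middle expression in Proposition \ref{p:comp-leib-gpsi} can be matched term-by-term against the Leibman algebra. Both of these are noted in the text preceding Proposition \ref{p:gPsi-Sbullet}, so the argument is short.
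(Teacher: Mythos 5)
Your proof is correct and follows essentially the same route as the paper: apply the lemma that minimality gives $S_i+\h_i=\g_i$, then combine Propositions \ref{p:gPsi-Sbullet} and \ref{p:comp-leib-gpsi} with the flag observation $\sum_{j\leq i}V_\Psi^j=V_\Psi^i$. The paper simply collapses the middle expression of Proposition \ref{p:comp-leib-gpsi} directly, whereas you absorb the extra summand and check both inclusions explicitly; this is a cosmetic difference only.
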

\begin{proof}
If $\Psi$ satisfies the flag property then for all $i$ we have $\sum_{j=1}^i V_\Psi^j = V_\Psi^i$; invoke Proposition \ref{p:gPsi-Sbullet} and \ref{p:comp-leib-gpsi}.
\end{proof}

\section{The counting lemma}\label{s:cl}

Our first goal in the quantitative setting will be to obtain a counting lemma for polynomial sequences which are (quantitatively) linearly irrational. Throughout this section and the next, we will assume that the dimension $d$ of $\g$, the step $s$ of $\g$, the number of linear forms $t$ and the number of variables $D$ in these linear forms  are all of bounded size; in particular we will always allow constants to depend on these parameters without necessarily indicating this in our notation.

We will need to fix a basis for $\g$ in order to quantify various notions. (Indeed, in the absence of a choice of basis for $\g$ we do not even have a canonical metric or (Lebesgue) measure on $\g$.) 

\begin{definition}[Rational basis]\label{d:rat-basis} 
A basis $\cX$ for $\g$ is \textit{rational} if its structure constants lie in $\Q$.
\end{definition}

 The $\Q$-span of a rational basis is a rational structure on $\g$. In what follows, we fix a rational basis $\cX$ for $\g$ and the corresponding rational structure $\g_\Q := \spa_\Q(\cX)$. If statements whose meaning implicitly depends upon a basis are made without reference to one, it may be assumed that $\cX$ is the chosen basis. Furthermore, we will denote by $\langle \cX \rangle$ the abelian group generated by $\cX$, that is $\spa_\Z \cX$. We will also need to conduct quantitative arithmetic on subspaces of $\g$ and so for essentially the same reasons, we need to identify $\Z$-bases for sublattices obtained by intersecting $\langle \cX \rangle$ with a rational subspace $S$. This may be done canonically by invoking the Hermite normal form. For details and definitions see Appendix \ref{a:lin-alg}, but for the purposes of a more casual reading of this section, one may interpret \textit{Hermite basis} to mean `a canonically-chosen basis $\cX'=\cX'_{S,\cX}$ for $S$ such that $\cX'$ is also a $\Z$-basis for the lattice $S \cap \langle \cX \rangle$.' We caution that the action of taking a Hermite basis of a subspace is not transitive. In the case that $S = \g$, we have $\cX' = \cX$.

In the qualitative setting in Section \ref{s:examples}, if a polynomial sequence was linearly irrational with respect to any sequence of rational subspaces, then it was linearly irrational with respect to the sequence minimal rational subspaces of $\g$ which contained (respectively) the sequence of polynomial coefficients. 

We need to be a little more careful in the quantitative setting. Even in the single variable case (that is, for the trivial sequence of linear forms $\Psi:= (\psi_1(n))$ where $\psi_1(n) = n$) and for $A$ arbitrarily large,  $(A,N)$-irrationality (in the original Green-Tao sense \cite[Definition A.6]{GT10}) does not preclude a polynomial sequence from equidistributing on more than one nilmanifold of `complexity' $A^{O(1)}$.  

\begin{example}\label{ex:irrat}
Let $A$ be a large integer and let $N > 100A^3$. Let $p(n) := (\frac{1}{10A}, \frac{1}{10A-1})n$ define a polynomial sequence in the abelian Lie algebra $\g := \R^2$ with basis $\cX := \{(1,0),(0,1)\}$ and lattice $\langle \cX \rangle = \Z^2$. 

Firstly, let us first verify that in the limit $A \to \infty$, the quantitative distribution obtained from the Green-Tao counting lemma agrees with  our qualitative counting lemma Theorem \ref{t:main-qual}. Here, $a_1 = (\frac{1}{10A}, \frac{1}{10A-1})$ and the smallest rational subspace containing $a_1$ is $S_1:=\{(x,y) \in \R^2 : (10A-1)x = 10Ay\}$. Furthermore, $S_1$ is a subalgebra of $\R^2$ and has `complexity' $A^{O(1)}$ because it is defined by linear relations of height $A^{O(1)}$.\footnote{The complexity of a subspace is defined rigorously soon. See Definition \ref{d:comp-subspace}.}  We claim that $p$ is $o_{A \to \infty}(1)$ equidistributed in $S_1$. Clearly $S_1$ is isomorphic to $\R$, and $\varphi: (x,y) \mapsto x/(10A)$ is the isomorphism which maps $\cX' = \{(10A,10A-1)\}$ to $1 \in \R$. Thus we may analyse $p'(n) = \frac{n}{100A^2}$ in $\R/\Z$.\footnote{We transfer this example to $\R$ with lattice $\Z$ where it is more clear what the definition of the complexity of a character should be. One may check that transferring in this way is equivalent to our upcoming formal definition \ref{d:comp-map}.} Set $a_1':= \varphi(a_1) = \frac{1}{100A^2}$. For any nonzero $m \in \Z$ with $|m|\leq A$ we have $||ma_1'||_{\R/\Z} \geq \frac{1}{100A^2} > A/N$. Thus, by \cite[Definition A.6]{GT10}, $p'$ is $(A,N)$-irrational in $\R/\Z$, so $p$ is $(A,N)$-irrational in $S_1$ with respect to $\cX'$ and so invoking the single variable version of Green and Tao's counting lemma \cite[Lemma 3.7]{GT10} we have that $p$ is $o_{A\to \infty}(1)$ equidistributed in $S_1$, as is predicted by Theorem \ref{t:main-qual}. 

On the other hand, given any nonzero linear map $l: \g \to \R$ which maps $\cX$ to $\Z$ and has height at most $A$, we have by the coprimality of $10A$ and $10A-1$ that $||l(a_1)||_{\R/\Z} \geq \frac{1}{(10A)(10A-1)} >  \frac{A}{N}$, and so $p$ is $(A,N)$-irrational in $\g$. Therefore by \cite[Lemma 3.7]{GT10}, $p$ is $o_{A \to \infty}(1)$ equidistributed in all of $\R^2/\Z^2$.
\end{example} 

\begin{remark}
Green and Tao's quantitative factorisation of polynomial sequences \cite[Lemma 2.9]{GT10} states that given a polynomial sequence $g$ on a nilmanifold, \textit{at least one} of the following is true: (1) $g$ is $(A,N)$-irrational in the nilmanifold, or (2) one may find a  smooth sequence $\beta$ and a rational sequence $\gamma$ such that $g = \beta \ast g' \ast \gamma$, where $g'$ takes values in a complexity $O_A(1)$ subnilmanifold of strictly smaller total dimension. Example \ref{ex:irrat} exhibits a situation where these two eventualities are not mutually exclusive (note: the smooth and rational parts in our example are trivial). 
\end{remark}

\begin{definition}[Integer linear map] We call an element of $\g^\ast$ \textit{integer} (with respect to $\cX$) if it maps $\cX$ to $\Z$. We will denote by $\g^\ast_\cX$ the set of integer linear maps with respect to $\cX$  and we will denote by $\Hom_{\cX}(\g,\R)$ those elements of $\g^\ast_\cX$ which vanish on $[\g,\g]$ (i.e. are Lie algebra homomorphisms). 
\end{definition}

In the following we refer to the Hermite basis $\cX'_{S,\cX}$ whose formal definition is given in Definition \ref{d:hermite-basis}.

\begin{definition}[Complexity of an integer linear map]\label{d:comp-map}
The \textit{complexity} of an integer linear map $l: \g \to \R$ on a rational subspace $S \leq \g$ with respect to $\cX$ is the $\ell_\infty$ norm of $l$ with respect to the dual basis of $\cX'_{S,\cX}$.
\end{definition}

\begin{definition}[Complexity of a subspace]\label{d:comp-subspace}
Given a Lie algebra $\g$ with rational basis $\cX$ and a proper rational subspace $S \leq \g$, we say that $S$ has \textit{complexity} at most $C$ in $\g$ (with respect to $\cX$) if $S$ may be written as the kernel of a set of integer linear maps, each of which have complexity at most $C$ with respect to $\cX$. We will say that $\g$ has complexity $1$ in itself.
\end{definition}

\begin{definition}[Quantitative linear irrationality]\label{d:li}
For a Lie algebra $\g$ and a rational subspace $S \leq \g$ we will say that $a \in S$ is $(A,\eps)$\textit{-linearly irrational in $S$} with respect to $\cX$ if, for all integer linear maps $l:\g \to \R$ which are nontrivial on $S$ and have complexity at most $A$ on $S$,  we have  $||l(a)||_{\R / \Z} \geq \eps$.
\end{definition}

\begin{definition}[Quantitative linear irrationality for polynomial sequences]\label{d:lin-irrat-poly} 
For a Lie algebra $\g$ and a sequence of rational subspaces $S_\bullet := (S_i)_{i=1}^s$, $S_i \leq \g$, a polynomial sequence $(p(\vect n))_{\vect n \in [N]^D}$ in $\g$ is $(A,N)$\textit{-linearly irrational in $S_\bullet$} with respect to $\cX$ if for all $i$, all coefficients of all degree $i$ terms of $p$ are $(A,A/N^i)$-linearly irrational in $S_i$.
\end{definition}

We will record some easy consequences of these definitions for more convenient use later on. The proofs are just linear algebra and are omitted. We refer the reader to Appendix \ref{a:lin-alg} for some further details. Recall that throughout this section we are assuming that $\dim \g = O(1)$.

\begin{lemma}[Complexity of subspaces under operations]\label{l:comp-operations}
Let $C > 1$. Let $S$, $T$ be rational subspaces of $\g$ of complexity at most $C$ with respect to $\cX$ and let $U$ be a rational subspace of a distinct dimension $O(1)$ Lie algebra $\h$ of complexity at most $C$ with respect to a rational basis $\cY$. Then
\begin{enumerate}
\item $S + T$ has complexity at most $C^{O(1)}$,
\item $[S,T]$ has complexity at most $M^{O(1)}C^{O(1)}$, where $M$ is an upper bound on the height of the largest (rational) structure constant of $\g$ with respect to $\cX$,
\item $S \otimes U$ has complexity at most $C^{O(1)}$ in $\g \otimes \h$ with respect to $\cX \otimes \cY$.

\end{enumerate} 
\end{lemma}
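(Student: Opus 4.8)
The plan is to handle each of the three claims separately, in each case producing an explicit set of defining integer linear maps of controlled complexity for the constructed subspace, and then invoking the definition of complexity (Definition \ref{d:comp-subspace}).

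\emph{Claim (1): complexity of $S+T$.} First I would pass to orthogonal complements. Writing $S = \ker\{l_1,\dots,l_a\}$ and $T = \ker\{m_1,\dots,m_b\}$ with each $l_i, m_j$ integer of complexity $\le C$ on the relevant subspace, we have $S+T = (S^\perp \cap T^\perp)^\perp$, where $S^\perp = \spa\{l_i\}$ and $T^\perp = \spa\{m_j\}$ inside $\g^\ast$. So the task reduces to: given two rational subspaces of $\g^\ast$ each spanned by $O(1)$ integer vectors of height $\le C$, bound the complexity of their intersection, then of its orthogonal complement. Intersecting two rational subspaces given by spanning sets amounts to solving a linear system $\sum x_i l_i = \sum y_j m_j$ over $\Z$; by Cramer's rule / standard effective linear algebra over $\Z$ (cf. Appendix \ref{a:lin-alg}), one obtains a spanning set for the solution space, hence for $S^\perp \cap T^\perp$, consisting of $O(1)$ integer vectors of height $C^{O(1)}$. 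Taking orthogonal complement again (solve another $O(1)$-size integer system) yields a defining set of integer linear maps for $S+T$ of complexity $C^{O(1)}$. The only subtlety is bookkeeping the passage between ``complexity on $S$'' and ``complexity as an element of $\g^\ast_\cX$'', which changes heights by at most a $C^{O(1)}$ factor because the Hermite basis $\cX'_{S,\cX}$ is related to $\cX$ by an integer matrix of size governed by the complexity of $S$; this is exactly the kind of estimate deferred to Appendix \ref{a:lin-alg}.

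\emph{Claim (2): complexity of $[S,T]$.} Here I would argue directly on spanning sets rather than defining equations. Pick integer spanning sets $\{s_k\}$ for $S\cap\langle\cX\rangle$ and $\{t_l\}$ for $T\cap\langle\cX\rangle$; by the relation between Hermite bases and $\cX$ these have $\cX$-coordinates of height $C^{O(1)}$. Then $[S,T]$ is spanned by the brackets $[s_k,t_l]$, whose $\cX$-coordinates are computed by contracting with the structure constants of $\g$, each of height $\le M$; since there are $O(1)$ of them and $\dim\g = O(1)$, each $[s_k,t_l]$ has $\cX$-coordinates of height $(MC)^{O(1)} = M^{O(1)}C^{O(1)}$. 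Finally, to convert a spanning set of integer vectors of bounded height into a defining set of integer linear maps of bounded complexity, one again solves an $O(1)$-size integer linear system (find the orthogonal complement, then its orthogonal complement), incurring only a further polynomial blow-up; so $[S,T]$ has complexity $M^{O(1)}C^{O(1)}$ as claimed.

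\emph{Claim (3): complexity of $S\otimes U$.} Recall $\cX\otimes\cY$ is the chosen rational basis of $\g\otimes\h$, with respect to which the tensor structure is visible. If $S=\ker\{l_i\}$ in $\g$ with $l_i$ of complexity $\le C$, and $\cY = \{y_1,\dots\}$ with dual basis $\{y_1^\ast,\dots\}$, then the maps $l_i \otimes y_r^\ast \in (\g\otimes\h)^\ast$ together with the maps $x_p^\ast \otimes m_j$ (where $U=\ker\{m_j\}$, $\cX = \{x_p,\dots\}$) cut out exactly $S\otimes U$: an element $\sum z_{pr}\, x_p\otimes y_r$ lies in $S\otimes U$ iff for each $r$ the vector $(z_{pr})_p$ lies in $S$ and for each $p$ the vector $(z_{pr})_r$ lies in $U$. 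These maps are integer with respect to $\cX\otimes\cY$ and have complexity bounded by the complexities of the $l_i$ and $m_j$, i.e.\ $\le C$ (up to an $O(1)$ factor from the dimension count), so $S\otimes U$ has complexity $C^{O(1)}$. One should double-check the Hermite-basis normalization for $S\otimes U$ inside $\langle\cX\otimes\cY\rangle$ versus the product of Hermite bases of $S$ and $U$, but these differ by an integer change of basis of size $C^{O(1)}$, which is harmless.

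\textbf{Main obstacle.} The genuinely routine-but-load-bearing point throughout is the dictionary between ``complexity measured on a subspace via its Hermite basis'' and ``height of coordinates/coefficients with respect to the ambient basis $\cX$'': every operation above is trivial at the level of rational linear algebra, and the whole content is that each conversion costs at most a polynomial factor in $C$ (and, for the bracket, in $M$). I would isolate this as a lemma in Appendix \ref{a:lin-alg} — effective bounds for intersection, sum, orthogonal complement, and solving integer linear systems, all in $O(1)$ dimensions — and then claims (1)--(3) follow by assembling these black boxes, with the structure-constant contraction supplying the extra $M^{O(1)}$ in (2).
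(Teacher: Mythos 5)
Your proposal is correct and is exactly the argument the paper has in mind: the paper omits the proof as routine linear algebra, deferring to Appendix \ref{a:lin-alg}, and your argument assembles precisely those ingredients (Hermite normal form bounds and the effective integer-linear-algebra Lemma \ref{l:lin-alg} for passing between spanning sets and defining integer maps with only polynomial loss), with the structure constants contributing the extra $M^{O(1)}$ in part (2). The only cosmetic remark is that once you have an integer spanning set of $(S+T)^\perp$ (resp.\ of $[S,T]^\perp$) of height $C^{O(1)}$, those vectors already serve as the defining maps, so the final ``orthogonal complement again'' step is just reinterpretation rather than another computation.
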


\begin{lemma}[Complexity of maps under lifting and restriction]\label{l:comp-lift-restrict}
Let $A,B, C>1 $ be integers and let $S,T$ be rational subspaces of $\g$ of complexity at most $C$ with respect to $\cX$ such that $S \leq T$.  Then 
\begin{enumerate}
\item  $\cX'_{S,\cX}$ has index $C^{O(1)}$ in $\cX$, 
\item  if $l$ is an integer linear map on $T$ of complexity at most $A$, then $l|_S$ is an integer linear map of complexity at most $C^{O(1)}A$  on $S$,
\item if $l$ is an integer linear map on $S$ of complexity at most $B$ then there is an integer linear map $\tilde l$ on $T$ of complexity at most $C^{O(1)}B$ such that $\tilde l|_S = Kl$, where $K = C^{O(1)}$.

\end{enumerate}
\end{lemma}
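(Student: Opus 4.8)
The plan is to push everything into coordinates with respect to the fixed rational basis $\cX$ and then invoke the standard geometry-of-numbers fact that the integer solution lattice of a bounded-complexity integer linear system has a $\Z$-basis of bounded complexity (via maximal subdeterminants / Hermite normal form); since $\dim\g = O(1)$ throughout, all implied constants may depend on $\dim\g$. For part (1): by hypothesis $S = \ker\{l_1,\dots,l_k\}$ with each $l_j$ integer of complexity $\le C$ with respect to $\cX$, so in $\cX$-coordinates $S \cap \langle\cX\rangle$ is exactly the set of integer vectors annihilated by an integer matrix of dimensions $O(1)\times O(1)$ with entries at most $C$. A $\Z$-basis of this lattice may be chosen with entries bounded by the maximal subdeterminants of that matrix, hence by $C^{O(1)}$; since the Hermite basis $\cX'_{S,\cX}$ (Definition \ref{d:hermite-basis}) is a canonically chosen $\Z$-basis of $S \cap \langle\cX\rangle$, its $\cX$-coordinates are of size $C^{O(1)}$, which is precisely the index bound in the sense of Appendix \ref{a:lin-alg}. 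I would record as a by-product that the covolume of $S \cap \langle\cX\rangle$ inside $S$, measured against $\cX$, is $C^{O(1)}$; this drives the other two parts.

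For part (2): since $S \le T$, every vector of $\cX'_{S,\cX}$ lies in $T \cap \langle\cX\rangle$ and hence is an integer combination of $\cX'_{T,\cX}$. Writing both Hermite bases in $\cX$-coordinates and applying part (1) to each, the transition matrix $R$ expressing $\cX'_{S,\cX}$ in terms of $\cX'_{T,\cX}$ is an integer matrix with entries $C^{O(1)}$ (nothing about its inverse is needed). If $l$ on $T$ has complexity $\le A$, its coordinate vector in the dual of $\cX'_{T,\cX}$ is integral with sup-norm $\le A$, and the coordinate vector of $l|_S$ in the dual of $\cX'_{S,\cX}$ is obtained by applying $R^{\top}$; hence $l|_S$ is integer of complexity $\le C^{O(1)}A$.

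For part (3): the obstruction is that Hermite bases are not transitive, so $\cX'_{S,\cX}$ need not sit inside $\cX'_{T,\cX}$ and a functional on $S$ cannot be extended to $T$ by inspection. I would select $\dim T - \dim S$ of the vectors of $\cX'_{T,\cX}$ which, together with $\cX'_{S,\cX}$, generate a finite-index sublattice $\Lambda \le T \cap \langle\cX\rangle$; by part (1) and a subdeterminant bound, $K := [\,T\cap\langle\cX\rangle : \Lambda\,] = C^{O(1)}$. Extend $l$ to a functional $\bar l$ on $\Lambda$ by declaring it zero on the selected vectors. Since $K(T\cap\langle\cX\rangle)\subseteq\Lambda$, the formula $\tilde l(v) := \bar l(Kv)$ defines an integer functional on $T\cap\langle\cX\rangle$ with $\tilde l|_S = Kl$. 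To bound its complexity: for $x\in\cX'_{T,\cX}$ the vector $Kx$ has $\cX$-coordinates of size $C^{O(1)}$, hence (by the covolume bound from part (1), applied to $\Lambda$) coordinates of size $C^{O(1)}$ in the chosen $\Z$-basis of $\Lambda$, so $|\tilde l(x)| \le C^{O(1)}B$, giving complexity $\le C^{O(1)}B$ with respect to $\cX'_{T,\cX}$.

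The main obstacle is part (3): parts (1) and (2) are routine bookkeeping, but in (3) one must manufacture a functional on the larger lattice from one on a non-transitively-chosen sublattice, and the transition between the two Hermite bases has a non-integral inverse. The rescaling constant $K$ is exactly what absorbs this loss of integrality, and the bulk of a full write-up goes into verifying that $K$ and every intermediate coordinate vector stay of size $C^{O(1)}$.
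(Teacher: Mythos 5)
Your proposal is correct and follows exactly the route the paper intends: the paper omits the proof as routine integer linear algebra, pointing to Appendix \ref{a:lin-alg}, and your argument is precisely that machinery (Hermite normal form/subdeterminant bounds for (1), the bounded integral transition matrix between the two Hermite bases for (2), and the index-$K$ rescaling trick for (3)). The only cosmetic point is that in (3) the bound on the coordinates of $Kx$ in the chosen $\Z$-basis of $\Lambda$ is best justified not by a ``covolume'' bound but by Cramer's rule applied to a full-rank integer submatrix of the basis matrix, whose determinant is a nonzero integer and hence at least $1$ in absolute value --- exactly the reasoning behind Lemma \ref{l:lin-alg}.
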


Example \ref{ex:irrat} demonstrated that there are elements which are $(A,\eps)$-linearly irrational with respect to more than one subspace. In this example, the nontrivial subspace was of complexity $A^{O(1)}$. The following lemma gives a sense in which this is the only way in which such a phenomenon can occur. 
\begin{lemma}\label{l:unique-S}
Let $A>1$. Let $a\in \g$ and let $S,T$ be subspaces of $\g$ of complexity at most $A$ which contain $a$. There is a constant $k>1$ (which may depend on $\dim \g$) such that for every $\eps > 0$ if $a$ is $(A^k,\eps)$-linearly irrational in $T$ then $T \leq S$. 
\end{lemma}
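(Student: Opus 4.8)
The plan is to argue by contradiction: suppose $T \not\leq S$, and from this produce an integer linear map $l$ on $T$ of complexity at most $A^k$ (for a suitable absolute $k$ depending only on $\dim\g$) which is nontrivial on $T$ yet satisfies $l(a) \in \Z$, hence certainly $\|l(a)\|_{\R/\Z} = 0 < \eps$, contradicting the hypothesis that $a$ is $(A^k,\eps)$-linearly irrational in $T$. The point is that since $a \in S$, any integer linear map vanishing on $S$ kills $a$; so the task reduces to finding an integer linear map of controlled complexity which vanishes on $S$, is nontrivial on $T$, and is defined (as an integer map of controlled complexity) on all of $T$.

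First I would use the hypothesis $T \not\leq S$ to locate such a map qualitatively. By Definition \ref{d:comp-subspace}, $S$ is the kernel of a finite collection of integer linear maps $l_1,\dots,l_r$ on $\g$ of complexity at most $A$ with respect to $\cX$; since $T \not\leq S$, at least one of them, say $l_j$, does not vanish identically on $T$. This $l_j$ is an integer linear map on $\g$ of complexity at most $A$, it vanishes on $S$ (so $l_j(a) = 0 \in \Z$ because $a \in S$), and it is nontrivial on $T$. Next I would convert $l_j$ into a map on $T$ of controlled complexity: restricting to $T$ and invoking Lemma \ref{l:comp-lift-restrict}(2) (with the roles $S \rightsquigarrow T$, $T \rightsquigarrow \g$, noting $\g$ has complexity $1$ and $T$ has complexity at most $A$), the restriction $l_j|_T$ is an integer linear map on $T$ of complexity at most $A^{O(1)} \cdot A = A^{O(1)}$. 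It is still nontrivial on $T$ (by choice of $j$), and it still satisfies $l_j|_T(a) = l_j(a) = 0$ since $a \in T \cap S$. Taking $k$ to be the implied $O(1)$ exponent (which depends only on $\dim\g$, as all the bounds in Lemmas \ref{l:comp-operations} and \ref{l:comp-lift-restrict} do), this map witnesses the failure of $(A^k,\eps)$-linear irrationality of $a$ in $T$ for every $\eps > 0$, giving the contradiction.

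The only subtlety — and the step I expect to require the most care — is bookkeeping the complexity exponents so that a single absolute constant $k$ (depending only on $\dim\g$) works uniformly. One must be careful that the number $r$ of defining maps for $S$ can be taken bounded in terms of $\dim\g$ (any rational subspace of $\g$ is cut out by at most $\dim\g$ integer linear maps, each of complexity at most $A$, after possibly adjusting constants — this is part of the linear-algebra package in Appendix \ref{a:lin-alg}), and that the restriction step's complexity blow-up in Lemma \ref{l:comp-lift-restrict}(2) depends only on the complexity bound $A$ for $T$ and on $\dim\g$, not on anything else. Once those exponents are pinned down, the argument is purely formal. Note also the asymmetry in the statement is genuine: we need $a$ irrational in $T$ (the ``larger'' or at least ``not-smaller'' candidate) to force $T \leq S$; there is no symmetric conclusion, which matches the fact that $S$ here plays only the role of a subspace known to contain $a$ with bounded complexity.
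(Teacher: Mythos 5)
Your proposal is correct and is essentially the paper's own argument in contrapositive form: the paper shows directly that every complexity-$\leq A$ integer map cutting out $S$ kills $a$, restricts to $T$ with complexity $A^{O(1)}$ via Lemma \ref{l:comp-lift-restrict}, and hence must vanish on $T$ by the irrationality hypothesis, giving $S^\perp \leq T^\perp$, i.e.\ $T \leq S$. Your version, deriving a contradiction from $T \not\leq S$ by picking one defining map of $S$ that is nontrivial on $T$, uses exactly the same ingredients and bookkeeping.
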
 
\begin{proof}
We show that $S^\perp \leq T^\perp$. Let $l: \g \to \R$ have complexity at most $A$ and lie in $S^\perp$. Since $a \in S$ we have $l(a)=0$. Furthermore $l$ has complexity at most $A^{O(1)}$ with respect to $\cX'_{\cX,T}$ by Lemma \ref{l:comp-lift-restrict} and so for $k=O(1)$ sufficiently large, this contradicts the $(A^k,\eps)$-linear irrationality of $a$ in $T$, unless $l \in T^\perp$. Letting $\g^\ast_{\cX}(A)$ denote the set of integer linear maps of complexity at most $A$ with respect to $\cX$, we have so far that $S^\perp \cap \g^\ast_{\cX}(A) \subset T^\perp$ and taking the vector space closure on both sides we have $\spa_\R\{S^\perp \cap \g^\ast_{\cX}(A)\} \subset T^\perp$. But by the definition of the complexity of $S$, we have that $S^\perp$ may be written as the span of complexity $\leq A$ maps and so $S^\perp = \spa_\R\{S^\perp \cap \g^\ast_{\cX}(A)\}$. This completes the proof.   
\end{proof}

In particular, there is a polynomial threshold above which linear irrationality may be obtained on at most one rational subspace of bounded complexity. In general, however, the sequence of subspaces on which we are asserting linear irrationality is not determined by the polynomial sequence and so our counting lemma will take this as additional input.

Similarly to the use of Leibman's criterion for equidistribution in the qualitative setting, we will introduce here Green and Tao's criterion for quantitative equidistribution. First a quantitative version of additive irrationality; see Subsection \ref{ss:smooth} for a definition of the $||\cdot||_{C^\infty[\vect N]}$-norm. In the following we use $\Hom_\cX(\g,\R)$ to denote the Lie algebra homomorphisms from $\g$ to $\R$ which map $\cX$ to $\Z$.

\begin{definition}[Quantitative additive irrationality]
Let $A > 1$, let $D$ be a positive integer, let $N_1,\ldots,N_D\geq 1$ be integers and let $[\vect N]$ denote the set $[N_1]\times \cdots \times [N_D]$. A polynomial sequence $p$ on $\g$ is $(A, \vect N)$-additively irrational with respect to $\cX$ if for all nontrivial $\eta \in \Hom_\cX(\g,\R)$ of complexity at most $A$, we have $||\eta \circ p ||_{C^\infty[\vect N]} > A$.
\end{definition}

\begin{remark}
Though we have not done so here, one may develop the various notions of irrationality in a transparently analogous way to highlight that these notions essentially only differ by the subalgebras of $\g$ on which the set of maps under consideration are required to vanish ($[\g,\g]$, $\h_i$, none for additive, filtration, linear irrationality respectively). One may define quantitative additive irrationality for elements of (rather than just polynomial sequences in) $\g$, and define quantitative filtration/linear irrationality for multivariate polynomial sequences; these definitions are not required in this document and so are not provided.
\end{remark}

Next we define quantitative equidistribution in $(\g,\cX)$, and defer to Appendix \ref{ss:lie-alg-equid} for more details.

\begin{definition}[Quantitative equidistribution in the Lie algebra]\label{d:quant-equid-alg}
Let $\cX$ be a rational basis in $\g$. Then a polynomial sequence $(p(\vect n))_{\vect n \in [\vect N]}$ is $\delta$-equidistributed in $(\g, \cX)$ if $(\exp (p(\vect n)))_{\vect n \in [\vect N]}$ is $\delta$-equidistributed in $G/\Gamma$, where $\Gamma$ is the smallest multiplicative lattice containing $\langle \cX \rangle = \spa_\Z \cX$.
\end{definition}

Green and Tao show \cite[Theorem p.3]{GT15} that quantitative additive irrationality implies quantitative equidistribution. Their result concerns equidistribution in a nilmanifold; we defer to Appendix \ref{a:group-to-alg} for a deduction of the following Lie algebra version.

\begin{thm}[Green and Tao's criterion for quantitative equidistribution in the Lie algebra]\label{t:gt-algebra}
Let $0<\delta<1/2$, let $N_1,\ldots,N_D\geq 1$ and let $[\vect N]$ denote the set $[N_1]\times \cdots \times [N_D]$. Let $\g$  be a  real nilpotent Lie algebra with dimension $d$, step $s$ and basis $\cX$ which is $1/\delta$-rational. Let $(p(\vect n))_{\vect n \in \Z^D}$ be a polynomial sequence in $\g$.  There are positive constants $c_{d,s,D}, c'_{d,s,D}$ such that at least one of the following is true:
\begin{enumerate}
\item there is some $N_i< \delta^{-c'_{d,s,D}}$, 
\item $(p(\vect n))_{{\vect n \in [\vect N]}}$ is $\delta$-equidistributed in $(\g, \cX)$, 
\item $p$ is not $(\delta^{-c_{d,s,D}}, \vect N)$-additively irrational in $(\g,\cX)$. 
\end{enumerate}
Furthermore, when $N_1 = \cdots = N_D$, the first option above may be removed. 
\end{thm}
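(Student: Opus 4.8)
The plan is to deduce Theorem~\ref{t:gt-algebra} from Green and Tao's quantitative equidistribution theorem on nilmanifolds \cite[Theorem p.3]{GT15} by transporting everything through the exponential map. First I would fix the multiplicative lattice $\Gamma \leq G$ generated by $\langle \cX \rangle$, which gives $G/\Gamma$ the structure of a nilmanifold with a rational Mal'cev basis derived from $\cX$; since $\cX$ is $1/\delta$-rational, the resulting Mal'cev basis has rationality (complexity) bounded polynomially in $1/\delta$, and the metric and measure implicit in Definition~\ref{d:quant-equid-alg} are (up to polynomial loss) those coming from this basis. The point is that $(\exp p(\vect n))_{\vect n}$ is then a polynomial sequence on $G/\Gamma$ in the sense of \cite{GT15}, and $\delta$-equidistribution in $(\g,\cX)$ is by definition $\delta$-equidistribution of this sequence in $G/\Gamma$ (possibly after adjusting $\delta$ by a polynomial factor, which is harmless since the constants $c_{d,s,D}, c'_{d,s,D}$ are allowed to absorb this).

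Next I would invoke \cite[Theorem p.3]{GT15} directly: it states that either the sequence is $\delta$-equidistributed, or there is a nontrivial horizontal character $\xi$ on $G/\Gamma$ of complexity $\lesssim \delta^{-O_{d,s,D}(1)}$ with $\|\xi \circ g\|_{C^\infty[\vect N]} \lesssim \delta^{-O(1)}$ (the statement also contains the caveat about the $N_i$ being large, which is option (1) here). So the remaining work is to translate the horizontal character alternative into a failure of additive irrationality in the sense of our definition. A horizontal character on $G/\Gamma$ is a continuous homomorphism $G \to \R/\Z$ trivial on $\Gamma$; composing with $\exp$ and lifting gives precisely a Lie algebra homomorphism $\eta: \g \to \R$ which maps $\langle \cX \rangle$ into $\Z$, i.e. an element of $\Hom_\cX(\g,\R)$, and the complexity of $\xi$ as a horizontal character matches the complexity of $\eta$ (with respect to $\cX$) up to a polynomial factor depending only on $d,s$ via Lemma~\ref{l:comp-lift-restrict}-type bookkeeping for the basis change between $\cX$ and the Mal'cev basis. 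Moreover $\xi \circ g = \eta \circ p$ as $\R/\Z$-valued (resp. $\R$-valued) polynomial sequences, so the smoothness bound transfers verbatim. Unwinding, this says exactly that $p$ is not $(\delta^{-c_{d,s,D}}, \vect N)$-additively irrational once $c_{d,s,D}$ is chosen large enough to dominate the complexity losses — which is option (3).

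The final clause, that option (1) can be dropped when $N_1 = \cdots = N_D = N$, follows because in \cite{GT15} the lower bound hypothesis on the $N_i$ is needed only to rule out degenerate short boxes; when all side lengths are equal to a single $N$, the theorem is stated (or can be stated) cleanly with the dichotomy between equidistribution and the horizontal character obstruction for all $N$, since if $N$ itself is small then $p$ trivially fails to be $(\delta^{-c}, N)$-additively irrational (any low-complexity nontrivial $\eta$ has $\|\eta \circ p\|_{C^\infty[N]}$ bounded, as the $C^\infty$ norm over a short interval is small). I expect the main obstacle to be the careful bookkeeping of complexity under the two basis changes — from $\cX$ to the adapted Mal'cev basis of $G/\Gamma$, and from horizontal characters on $G/\Gamma$ back to elements of $\Hom_\cX(\g,\R)$ — together with verifying that the metric and measure in our Definition~\ref{d:quant-equid-alg} agree with those of \cite{GT15} up to the polynomial factors we are permitted to lose; the equidistribution input itself is then a black box. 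These routine but tedious comparisons are precisely what is deferred to Appendix~\ref{a:group-to-alg}.
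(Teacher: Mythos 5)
Your proposal is correct and follows essentially the same route as the paper's Appendix proof: pass to $G=\exp\g$ and the multiplicative lattice generated by $\langle\cX\rangle$ (with a Mal'cev basis of polynomially bounded rationality, as in Lemmas \ref{l:mult-closure} and \ref{l:near-malcev}), apply \cite[Theorem p.3]{GT15}, and pull the resulting horizontal character back through $\exp$ to a nontrivial element of $\Hom_\cX(\g,\R)$ of complexity $\delta^{-O(1)}$ with the same $C^\infty[\vect N]$ bound, absorbing all basis-change losses into $c_{d,s,D}$. The only cosmetic difference is your separate argument for dropping option (1) when $N_1=\cdots=N_D$, which the paper instead inherits directly from the corresponding clause of the quoted Green--Tao theorem; your argument is fine but unnecessary.
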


We are ready to prove the main theorem of this section. We only prove a result for averages over a cube $[N]^D$ (i.e. when $N_1 = \cdots = N_D = N$), but it should be clear how to obtain a result for the more general case in which not all sides have equal length.

\begin{thm}[Quantitative counting lemma, Theorem \ref{t:main-quant}]\label{t:pr-main-quant}
Let $A,M\geq 2$. Let $\Psi = (\psi_1, \ldots, \psi_t)$ be a collection of linear forms each mapping $\Z^D$ to $\Z$. Let $\g$ be a real nilpotent Lie algebra with rational basis $\cX$, dimension $d$ and rational structure constants of height at most $M$. Let $S_\bullet$ be a sequence of rational subspaces of complexity  $M$ in $\g$. Suppose that $p(n)$ is an $(A,N)$-linearly irrational polynomial sequence  of degree $s$ in $S_\bullet$. Then there are constants $0<c_1,c_2 = O_{d,s,D,t,\Psi}(1)$ such that the polynomial sequence $p^\Psi(\vect x):= (p(\psi_1(\vect x)), \ldots, p(\psi_t(\vect x))$ is $O(M^{c_1}/A^{c_2})$-equidistributed on $\g^\Psi(S_\bullet)$ with respect to $\cX^\Psi$, the Hermite basis for $\g^\Psi(S_\bullet)$ in $(\g^t, \cX^t)$.
\end{thm}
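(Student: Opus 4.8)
The plan is to mimic the qualitative argument (Theorem~\ref{t:main-qual}) but routed through Green and Tao's quantitative equidistribution criterion (Theorem~\ref{t:gt-algebra}) in place of Leibman's criterion, with the bulk of the work being bookkeeping of complexities. By Theorem~\ref{t:gt-algebra} applied to the polynomial sequence $p^\Psi$ on the Lie algebra $\g^\Psi(S_\bullet)$ with the Hermite basis $\cX^\Psi$ (and since all $D$ side-lengths are equal to $N$, the first alternative is vacuous), it suffices to show that $p^\Psi$ is $(\delta^{-c},\vect N)$-additively irrational in $(\g^\Psi(S_\bullet),\cX^\Psi)$ for $\delta = M^{c_1}/A^{c_2}$ and a suitable constant $c$; contrapositively, any integer linear map $\eta$ on $\g^\Psi(S_\bullet)$ vanishing on $[\g^\Psi(S_\bullet),\g^\Psi(S_\bullet)]$, of complexity at most $\delta^{-c}$ with respect to $\cX^\Psi$, and with $\|\eta\circ p^\Psi\|_{C^\infty[\vect N]}\le \delta^{-c}$ must be trivial.

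First I would set up the explicit description of $\g^\Psi(S_\bullet)$: by Proposition~\ref{p:gPsi-Sbullet} we have $\g^\Psi(S_\bullet) = \sum_i W_i\otimes V^i$ where the $W_i$ are the iterated commutator spans of the $S_j$, and by Lemma~\ref{l:comp-operations} (applied $O(1)$ times, since $s=O(1)$) together with Lemma~\ref{l:comp-lift-restrict}, each $W_i$ and hence $\g^\Psi(S_\bullet)$ has complexity $M^{O(1)}$ in $\g^t$, so the Hermite basis $\cX^\Psi$ has index $M^{O(1)}$ in $\cX^t$ and passing maps between these bases costs only factors $M^{O(1)}$. Given $\eta$ as above, I expand, exactly as in Equation~(\ref{e:eta-g-psi}),
\[
(\eta\circ p^\Psi)(\vect x) = \sum_{m\in\cM}\Big(\sum_{i=1}^s \eta(a_i\otimes v_{m,i})\Big)m(\vect x),
\]
where $p(n)=\sum_i a_i n^i$ and the $v_{m,i}$ span $V^i$ with height $O_{D,\Psi}(1)$. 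A small-$C^\infty$-norm hypothesis forces each monomial coefficient of this polynomial to be within $O(\delta^{-c}/N^{\deg m})$ of an integer (this is the standard quantitative analogue of Lemma~\ref{l:rat-poly-rat-coeffs}; one recovers coefficients by finite differencing at $O_{D,\Psi}(1)$ lattice points, picking up only bounded factors). Because the $v_{m,i}$ have bounded height and live in distinct degrees, this in turn means that for each $i$ and each $m$ the quantity $\eta(a_i\otimes v_{m,i})$ is $O(\delta^{-c'}/N^{i})$-close to an integer, i.e.\ $\|\,\ell_{m,i}(a_i)\,\|_{\R/\Z}$ is small, where $\ell_{m,i} := \eta(\,\cdot\otimes v_{m,i})$ is an integer linear map on $\g$.

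The key point — as the remark after Theorem~\ref{t:main-qual} stresses — is that $\ell_{m,i}$ is an honest \emph{linear} map on $\g$, not merely a horizontal character, so I can invoke linear irrationality directly. I would check that $\ell_{m,i}$ has complexity $M^{O(1)}\delta^{-c}$ on $S_i$: it is integer with respect to $\cX$ up to the bounded-height vectors $v_{m,i}$ and the index bound on $\cX^\Psi$, and then restricting to $S_i$ costs a further $M^{O(1)}$ by Lemma~\ref{l:comp-lift-restrict}(2). Choosing the exponents so that $M^{O(1)}\delta^{-c}\le A$ and $O(\delta^{-c'}/N^i) < A/N^i$ — which is where the final constants $c_1,c_2$ get pinned down, taking $A$ large compared to the relevant power of $M$ — the $(A,N)$-linear irrationality of $a_i$ in $S_i$ (Definitions~\ref{d:li}, \ref{d:lin-irrat-poly}) forces $\ell_{m,i}(a_i)=0$ exactly. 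As in the qualitative proof, $\ker\ell_{m,i}$ is then a rational subspace containing $a_i$; since $S_i$ may be taken to be (or is dominated, via Lemma~\ref{l:unique-S}, by) the minimal such rational subspace of bounded complexity, we get $\ell_{m,i}$ vanishing on all of $S_i$, hence $\eta$ vanishes on $S_i\otimes V^i$ for every $i$, hence on the Lie algebra they generate, which is $\g^\Psi(S_\bullet)$. So $\eta$ is trivial, giving the desired additive irrationality and, through Theorem~\ref{t:gt-algebra}, the claimed $O(M^{c_1}/A^{c_2})$-equidistribution.

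The main obstacle I anticipate is not conceptual but the careful propagation of complexity bounds: one must verify that forming $\g^\Psi(S_\bullet)$ (iterated brackets and sums, $s=O(1)$ times), taking its Hermite basis inside $(\g^t,\cX^t)$, restricting a character on $\g^\Psi(S_\bullet)$ back along $\cdot\otimes v_{m,i}$ to a map on $\g$, and then restricting that to $S_i$, each incurs only a polynomial-in-$M$ (and $O_{d,s,D,t,\Psi}(1)$-exponent) loss, and that the various small-denominator estimates coming from the $C^\infty$ norm can all be absorbed below the $A/N^i$ threshold in Definition~\ref{d:li}. The step where Lemma~\ref{l:unique-S} is used to pass from "$\ell_{m,i}$ kills $a_i$" to "$\ell_{m,i}$ kills $S_i$" requires that $A$ exceed a fixed power of $M$, which is harmless since the conclusion is vacuous once $A^{c_2}\le M^{c_1}$ anyway.
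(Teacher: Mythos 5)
Your proposal is correct and follows essentially the same route as the paper's proof: run Theorem \ref{t:gt-algebra} in contrapositive on $\g^\Psi(S_\bullet)$ with its Hermite basis, expand $\eta\circ p^\Psi$ in monomials, pull the character back to integer linear maps $\eta(\cdot\otimes v_{m,i})$ on $\g$, annihilate these on each $S_i$ using quantitative linear irrationality, and conclude that $\eta$ vanishes on the algebra generated by the $S_i\otimes V^i$, i.e.\ on all of $\g^\Psi(S_\bullet)$. The only repairs needed are minor: the monomial coefficients are only $O(\delta^{-O(1)}/N^{\deg m})$-close to $\tfrac{1}{s!}\Z$ rather than to $\Z$, so one works with $s!\eta$ (the paper's Lemma \ref{l:change-basis}); the map $\eta(\cdot\otimes v_{m,i})$ is a priori only defined where $a\otimes v_{m,i}$ lies in $\g^\Psi(S_\bullet)$, so one must first lift $\eta$ to $\g\otimes\R^t$ via Lemma \ref{l:comp-lift-restrict}(3) (as the paper does) before pulling back and restricting to $S_i$; and the detour through Lemma \ref{l:unique-S} is unnecessary, since Definition \ref{d:li} in contrapositive already gives triviality of the pulled-back map on $S_i$ directly, not merely $\ell_{m,i}(a_i)=0$.
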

\begin{proof} 
We let constants depend on $s,d,D,t,\Psi$. Suppose that $p$ is $(A,N)$-linearly irrational in $(S_i)_{i=1}^s$. Write $p(n) = \sum_{i=1}^s a_i n^i$, where $a_i$ is $(A,A/N^i)$-linearly irrational in $S_i$ (see Definition \ref{d:lin-irrat-poly}). 

Since $M$ bounds the complexity of $S_\bullet$ and the height of the structure constants of $\g$, the description of $\g^\Psi(S_\bullet)$ in Proposition \ref{p:gPsi-Sbullet} and repeated application of Lemma \ref{l:comp-operations} yields that $\g^\Psi(S_\bullet)$ is of complexity at most $M^{O(1)}$ in $\g\otimes \R^t$ with respect to $\cX  \otimes \cZ$, where $\cZ$ is the standard basis for $\Z^t$. Denote by $\cX^\Psi$ the Hermite basis for $\g^\Psi(S_\bullet)$ and $\cX \otimes \cZ$ and note that $\cX^\Psi$ has rational structure constants of height at most $M^{O(1)}$. Let $\delta>0$ be a parameter which we will choose later and suppose that $p^\Psi$ is not $\delta$-equidistributed in $\g^\Psi(S_\bullet)$ with respect to $\cX^\Psi$.

Invoking Theorem \ref{t:gt-algebra} with respect to the basis $\cX^\Psi$, we obtain a nontrivial $\eta \in \Hom_{\cX^\Psi}(\g^\Psi(S_\bullet), \R)$ which has complexity at most $M^{O(1)}\delta^{-O(1)}$ and such that $||\eta\circ p^\Psi||_{C^\infty[N]^D}< M^{O(1)}\delta^{-O(1)}$. We have then, recalling the definition of the $C_\cM^\infty$ norms from Definition \ref{d:monomial-norms} and invoking Lemma \ref{l:change-basis}, that $||s!\eta \circ p^\Psi||_{C^\infty_\cM[N]^D} < M^{O(1)}\delta^{-O(1)}$. Now we compute 
\begin{equation}
s!\eta(p^\Psi(\vect{x})) = \sum_{i=1}^s s!\eta(a_i\otimes \Psi(\vect x)^i)  = \sum_{m \in \cM}\sum_{i=1}^s s!\eta(a_i\otimes v_{m,i})m(\vect{x}),
\end{equation}
where, as in Subsection \ref{ss:flag}, $v_{m,i}\in \Z^t$ is the coefficient of the monomial $m(\vect x)$ in the polynomial $\Psi^i(\vect x)$. Thus, by the definition of the $C^\infty_\cM[N]^D$ norm Definition \ref{d:monomial-norms}, we have for all $m,i$ that 
\begin{equation}\label{e:eta-a-v-bd}
||s!\eta(a_i \otimes v_{m,i})||_{\R/\Z} < M^{O(1)}\delta^{-O(1)}/N^i.
\end{equation} 

By Lemma \ref{l:comp-lift-restrict}, there is $\tilde \eta \in (\g\otimes \R^t)^\ast$  of complexity $M^{O(1)}\delta^{-O(1)}$ with respect to ${\cX \otimes \cZ}$ such that $\tilde \eta|_{\g^\Psi(S_\bullet)} = M^{O(1)}s!\eta$. It follows then that $\tilde \eta ( \cdot\otimes v_{m,i})$ is an integer linear map on $\g$ of complexity $M^{O(1)}\delta^{-O(1)}$ with respect to $\cX$ and so invoking Lemma \ref{l:comp-lift-restrict} once again, we have that $\tilde \eta ( \cdot\otimes v_{m,i})|_{S_i}$ has complexity  $M^{O(1)}\delta^{-O(1)}$ with respect to $\cX_i'$, the Hermite basis for $S_i$.

Now note that Equation (\ref{e:eta-a-v-bd}) implies that $||\tilde \eta(a_i \otimes v_{m,i})||_{\R/\Z} < M^{O(1)}\delta^{-O(1)}/N^i$ for all $i,m$. Therefore, if $A \geq M^{O(1)}\delta^{-O(1)}$ (the complexity of the map $\tilde \eta ( \cdot\otimes v_{m,i})|_{S_i}$) and $A \geq M^{O(1)}\delta^{-O(1)}$ so that $||\tilde \eta(a_i \otimes v_{m,i})||_{\R/\Z}  < A/N^i$, then the $(A,A/N^{i})$-linear irrationality of $a_i$ in $S_i$ implies that  $\tilde \eta(\cdot  \otimes v_{m,i})$ is trivial on $S_i$, i.e. $\tilde \eta(S_i \otimes v_{m,i}) = 0$. Since the $\{v_{m,i}\}_{m \in \cM}$ span $V^i$, we have $\tilde \eta(S_i \otimes V^i) = 0$ and so from definitions $\eta(S_i\otimes V^i) = 0$. Since $\eta$ is a Lie algebra homomorphism to an abelian Lie algebra, $\eta$ vanishes on the smallest Lie subalgebra of $\g^\Psi(S_\bullet)$ containing the subspaces $S_i \otimes V^i$, which is $\g^\Psi(S_\bullet)$ itself. That is, $\eta$ is trivial on $\g^\Psi(S_\bullet)$, a contradiction. So $p^\Psi$ is indeed (upon solving for $\delta$) $(M^{O(1)}/A^{O(1)})$-equidistributed in $\g^\Psi(S_\bullet)$.
\end{proof}

\section{The factorisation theorem for linear irrationality}\label{s:fact}

As in the qualitative setting, a counting lemma is of little use without a method to reduce a general polynomial sequence to something it can handle. Our main result of this section is Theorem \ref{t:quant-factorise}, a factorisation theorem for linear irrationality in much the same spirit as \cite[Lemma 2.10]{GT10} and \cite[Theorem 1.19]{GT12}, which may be viewed as factorisation theorems for the notions of filtration and additive irrationality respectively. Throughout this section we assume $d,s,t,D$ are of bounded size and let constants depend on them. As we did in the qualitative setting, we begin with an additive decomposition.

\subsection{Additive decomposition for linear irrationality}
In this subsection we prove an additive decomposition of vector space elements into linearly irrational, rational and small parts (Proposition \ref{p:linear-decomp}), and as a corollary obtain a similar decomposition for polynomial sequences (Corollary \ref{c:poly-linear-decomp}).

\begin{definition}[Quantitative relative linear irrationality]\label{d:li-quotient}
Let $A>1$ and $\eps >0$, let $\cX$ be a rational basis for $\g$, and let $U, T$ be rational subspaces. An element $a \in U$ is $(A,\eps)$-\textit{linearly irrational in $U\bmod T$  with respect to $\cX$} if, for all integer linear maps $l$ which have complexity at most $A$ in $U$ with respect to $\cX'_{\cX,U}$, which are nontrivial on $U$ and which vanish on $U\cap T$, we have that $||l(a)||_{\R/\Z}\geq \eps$.
\end{definition}

\begin{definition}[Quantitative rationality]\label{d:quant-rat}
Let $a \in \g$, $A\in \N^+$. We say that $a$ is $A$-rational with respect to $\cX$ if $a\in \g_\Q$ and, when written with respect to the basis $\cX$, the vector for $a$ has entries in $\frac{1}{A}\Z$. A polynomial sequence $p$ in $\g$ is $A$-rational if all of its coefficients are $A$-rational.
\end{definition}

\begin{definition}[Quantitative smallness]\label{d:quant-small}
Let $\eps>0$. An element $a \in \g$ is $\eps$-small with respect to $\cX$ if $||a||_\infty \leq \eps$, where $||\cdot||_\infty$ is the $\ell_\infty$ norm of $a$ with respect to the basis $\cX$. A polynomial sequence  $(p(\vect n))_{\vect n \in [N]^D}$ in $\g$ is $(A,N)$\textit{-small} with respect to $\cX$ if, for all monomials $m \in \cM$, we have that $c_m$ is $A/N^{\deg m}$-small, where $c_m$ is the coefficient of $m(\vect n)$ in $p$.
\end{definition}

The following decomposition of vector space elements into linearly irrational, rational and error terms allows us to perturb an element which is linearly irrational in a quotient space by elements of the quotient to obtain an element which is linearly irrational in the original space. Note that this result includes the case where the quotient space is trivial (that is, $T=U$ in the upcoming notation), whereupon we obtain a decomposition of an arbitrary element in the vector space.

\begin{prop}[Quantitative decomposition of vector space elements]\label{p:linear-decomp}
Let $A>1$ be an integer, let $\cX$ be a rational basis for $\g$, let $U,T$ be rational subspaces of $\g$, each of complexity $A^{O(1)}$. Then there is a constant $c>1$ (which may depend on $\dim U, \dim T$) such that the following is true. Let $0 < \eps < 1/2$ and $a \in U$, and suppose that $a$ is $(A^c,A^c\eps)$-linearly irrational in $U\bmod T$. There are elements $a_s \in U \cap T$, $a_r \in (U\cap T)_\Q$ such that  $a_r$ is $A^{O(1)}$-rational, $a_s$ is  $ A^{O(1)}\eps$-small and $a- a_r- a_s$ is $(A,\eps)$-linearly irrational in a subspace  $U' \leq U$ of complexity $A^{O(1)}$ with respect to $\cX$. 
\end{prop}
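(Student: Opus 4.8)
The plan is to mimic the qualitative argument of Lemma~\ref{l:qual-relative-decomp}, but keeping track of complexities at each step. First I would isolate the `bad' integer linear maps: let $L$ be the set of integer linear maps $l$ on $U$ which are nontrivial on $U$, have complexity at most $A$ with respect to $\cX'_{\cX,U}$, and satisfy $\|l(a)\|_{\R/\Z} < \eps$. By the hypothesis that $a$ is $(A^c, A^c\eps)$-linearly irrational in $U \bmod T$, no element of $L$ can vanish on $U \cap T$ (take $c$ large enough that $A \le A^c$ and $\eps \le A^c \eps$); this is the quantitative analogue of the qualitative observation that the `bad' maps restrict faithfully to $U \cap T$.

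Next I would extract an actual rational correction. Choose a maximal linearly independent subset $\cB \subseteq L$, of size at most $\dim U = O(1)$; since the elements of $\cB$ are integer maps of complexity $A$, and $U \cap T$ is rational of complexity $A^{O(1)}$ (by Lemma~\ref{l:comp-operations}(1) applied to express $U \cap T$ via the defining maps of $U$ and $T$), linear algebra over $\Q$ with bounded-height data produces $a_r \in (U \cap T)_\Q$, of height (hence $\cX$-denominator) $A^{O(1)}$, with $l(a) \equiv l(a_r) \pmod{\Z}$ for all $l \in \cB$ and hence, taking the $\R/\Z$-representative of smallest absolute value, with $\|l(a) - l(a_r)\|_{\R/\Z} = \|l(a-a_r)\|_{\R/\Z}$ for all of $L$ (up to spanning $L$ by $\cB$ with bounded coefficients). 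Because every $l \in \cB$ has $\|l(a)\|_{\R/\Z} < \eps$ small, the solution $a - a_r$ to the system actually lies $A^{O(1)}\eps$-close to $U$; but rather than worry about where $a - a_r$ itself sits, I would instead project: since $a \in U$ and $a_r \in (U \cap T)_\Q \subseteq U$, in fact $a - a_r \in U$ already. Write $b := a - a_r$. Now $b$ has $\|l(b)\|_{\R/\Z} < A^{O(1)}\eps$ for every $l$ in the span of $\cB$, i.e.\ for every integer map of complexity $\le A$ that was `bad' for $a$; and these are precisely the $l$ we want to push into the small part.

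To produce $a_s$, I would use that $U \cap T$ is a rational subspace of complexity $A^{O(1)}$, so its Hermite basis has the property that the `bad' maps $\cB$, restricted to $U \cap T$, form an $A^{O(1)}$-complexity system; solving $l(a_s) = \{l(b)\}$ (the symmetric representative in $(-1/2,1/2)$, which has absolute value $< A^{O(1)}\eps$) for $l \in \cB$ over $U \cap T$ gives $a_s \in U \cap T$ with $\|a_s\|_\infty \le A^{O(1)} \cdot A^{O(1)} \eps = A^{O(1)}\eps$ — the first $A^{O(1)}$ from inverting a bounded-height bounded-size linear system, the second from the smallness of the $\{l(b)\}$. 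Setting $U' := \ker\{ l \in \g^\ast_\cX : l \text{ integer, complexity} \le A \text{ on } U, \ \|l(a - a_r - a_s)\|_{\R/\Z} < \eps \}$ — or more carefully the span in $U$ of the complement of the bad maps — I claim $a - a_r - a_s$ is $(A, \eps)$-linearly irrational in $U'$: by construction $l(a - a_r - a_s) \in \Z$ for $l \in \cB$ (hence $l$ vanishes on $U'$ so is not a witness against irrationality), while any complexity-$\le A$ integer map nontrivial on $U'$ is not in the span of $\cB$ and therefore, by definition of $L$, has $\|l(a)\|_{\R/\Z} \ge \eps$; one then checks $\|l(a - a_r - a_s)\|_{\R/\Z} \ge \eps$ using that $l(a_r) \in \frac{1}{A^{O(1)}}\Z$ and $l(a_s)$ is small — here one may need to absorb a further power of $A$ into $c$, shrinking $\eps$ on the irrationality hypothesis side, so that the perturbations $l(a_r), l(a_s)$ do not destroy irrationality at level $\eps$. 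Finally $U'$ has complexity $A^{O(1)}$ by Lemma~\ref{l:comp-operations}(1) since it is cut out by $O(1)$ maps of complexity $A^{O(1)}$ (after lifting from $U$ to $\g$ via Lemma~\ref{l:comp-lift-restrict}).

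The main obstacle I anticipate is bookkeeping the constant $c$ and the interplay of the three perturbation scales: one needs $A^c$-irrationality of $a$ in $U \bmod T$ strong enough that, after subtracting $a_r$ (rational of denominator $A^{O(1)}$) and $a_s$ (small of size $A^{O(1)}\eps$), what remains is still genuinely $(A,\eps)$-linearly irrational in $U'$ and not merely $(A, A^{-O(1)}\eps)$-irrational. Concretely, for $l$ of complexity $\le A$ nontrivial on $U'$, one has $\|l(a - a_r - a_s)\|_{\R/\Z} \ge \|l(a)\|_{\R/\Z} - \|l(a_r)\|_{\R/\Z} - |l(a_s)|$; the first term is $\ge A^c \eps$ by the hypothesis (as $l$, being complexity $\le A \le A^c$ and not bad-for-$a$, witnesses $\ge A^c\eps$... but one must be careful: $l$ nontrivial on $U'$ could still have been `bad for $a$' in the sense of $\|l(a)\|_{\R/\Z} < \eps$ yet not lie in $\spa \cB$ if $\cB$ fails to span all such $l$), and the correction terms are $\le A^{O(1)} \cdot A^{-1} + A \cdot A^{O(1)}\eps$. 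So one must actually take $\cB$ to be a spanning set (over $\Q$, with bounded coefficients) for \emph{all} complexity-$\le A$ integer maps $l$ on $U$ with $\|l(a)\|_{\R/\Z} < \eps$, not merely a maximal independent subset of $L$ — and argue such a spanning set exists with $A^{O(1)}$-bounded expansion coefficients, which is the standard fact that a lattice of vectors of height $\le H$ in $\R^{O(1)}$ has a basis of height $H^{O(1)}$. Once that is in place the rest is routine linear algebra over $\Q$ with height tracking, exactly as in the appendix referenced by the paper.
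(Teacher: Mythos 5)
Your first round matches the paper's opening moves: the observation that a bad map (complexity $\le A$, $\|l(a)\|_{\R/\Z}<\eps$) cannot vanish on $U\cap T$, and the construction of $a_r\in (U\cap T)_\Q$ of height $A^{O(1)}$ matching the integer parts and $a_s\in U\cap T$ of size $A^{O(1)}\eps$ matching the fractional parts, are exactly the paper's first iteration. But the proof as proposed has a genuine gap at the final step, and it is not the one you flag. The problem is not that some map bad for $a$ might escape the span of $\cB$ (a maximal independent subset does span $L$, and bounded integer coefficients can be arranged); it is that subtracting $a_r$ can create \emph{new} bad maps. For an integer map $l$ of complexity $\le A$ that is nontrivial on $U'=\ker L$ and was not bad for $a$, you only know $\|l(a)\|_{\R/\Z}\ge\eps$, while $l(a_r)$ is a non-integer rational of denominator $A^{O(1)}$, so $\|l(a_r)\|_{\R/\Z}$ can be of order $A^{-O(1)}$ (or up to $1/2$), which is in general enormously larger than $\eps$ — the proposition allows arbitrary $\eps\in(0,1/2)$, and in the application $\eps\sim A/N^i$ with $N$ huge. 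Hence $\|l(a-a_r-a_s)\|_{\R/\Z}$ can drop below $\eps$, and your lower bound $\|l(a)\|_{\R/\Z}-\|l(a_r)\|_{\R/\Z}-|l(a_s)|$ is useless. Absorbing powers of $A$ into $c$ does not repair this: the hypothesis of $(A^c,A^c\eps)$-irrationality in $U\bmod T$ only constrains maps \emph{vanishing on $U\cap T$}, and these new bad maps need not vanish there; and even a lower bound $A^c\eps$ cannot beat a perturbation of size $A^{-O(1)}$ when $\eps$ is tiny.

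This is precisely why the paper's proof is iterative rather than one-shot. When such a new bad map $l$ on $\ker L$ appears, the paper does not try to exclude it: it lifts $l$ to $\tilde l$ on $U$ (Lemma \ref{l:comp-lift-restrict}), multiplies by the integer $C=A^{O(1)}$ with $C\tilde l(a_r)\in\Z$ so that the rational part is annihilated, deduces $\|C\tilde l(a)\|_{\R/\Z}\le A^{O(1)}\eps$ (so $C\tilde l$ is a bad map for $a$, but now only at the degraded scale $A^{O(1)}$, $A^{O(1)}\eps$ — note these maps lie outside your set $L$), shows via the relative irrationality hypothesis (with bounded integer coefficients, as in the paper's Gaussian-elimination step) that its restriction to $U\cap T$ is independent of the previous rows, appends it to the system, and re-solves for $a_r,a_s$. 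Since the rows stay independent in $(U\cap T)^\ast$ and $\dim(U\cap T)=O(1)$, this loop terminates after $O(1)$ rounds, which is also where the finite but unspecified constant $c$ and the $A^{O(1)}$ losses come from. Your write-up needs this loop (or some substitute mechanism that prevents the rational correction from manufacturing new small values); as it stands, the claimed $(A,\eps)$-linear irrationality of $a-a_r-a_s$ in $U'$ does not follow.
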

\begin{proof}
If $a$ is $(A,\eps)$-linearly irrational in $U$ then we are done and so supposing it is not, we have an integer linear map $l$ of complexity at most $A$ with respect to the Hermite basis $\cX'_{U, \cX} = \cX'$ which is nontrivial on $U$ such that $||l(a)||_{\R/\Z} < \eps$. Note also that $l$ must be nontrivial as a map on $U\cap T$ due to the linear irrationality of $a$ in $U\bmod T$. 
This is the $m=1$ case of the following setup; our proof proceeds inductively on $m$. 

Let $L=L_m$ be an $m \times \dim U$ integer matrix whose rows are linearly independent, integer linear maps $U \to \R$ with respect to the basis $\cX':= \cX'_{U,\cX}$ and which possess the following properties. Firstly, the projection of the rows are linearly independent as maps in $(U\cap T)^\ast$.
Furthermore, for each row $l$, we have that $l$ has complexity $ A^{O(1)}$ with respect to $\cX'$ and that $||l \cdot a||_{\R/\Z} \leq A^{O(1)}\eps$.  
We will show below that either:
\begin{enumerate}
\item there are elements $a_s \in U\cap T$, $a_r \in (U \cap T)_\Q$ such that with respect to $\cX'$,  $a_r$ is $ A^{O(1)}$-rational, $||a_s||_\infty = A^{O(1)}\eps$ and $a- a_r- a_s$ is $(A,\eps)$-linearly irrational in $\ker L \leq U$ (i.e. the proof is complete upon observing that with respect to $\cX$,  $a_s$ is still $A^{O(1)}\eps$-small and $a_r$ is still $A^{O(1)}$-rational), or
\item we may find some $L_{m+1}$ which fits the description above with $m+1$ in place of $m$. 
\end{enumerate}
Note that in situation (1), we set $U' = \ker L$. One may use the fact that the rows of $L$ are linearly independent as maps in $(U \cap T)^\ast$ to show that $\dim (\ker L + T) = \dim(U + T)$. We omit the details which are just linear algebra. In situation (2), the constants implicit in the $O(1)$ notation may change when we pass from $m$ to $m+1$. However this can occur only finitely many times since $\dim (U\cap T) = O(1)$ and the rows of each $L_m$ are linearly independent in $(U\cap T)^\ast$. In particular, we must, at some point, found ourselves in situation (1), completing the proof. It remains to prove the above claim.

Let $L = L_m$ be as above. For $i=1, \ldots, m$, we have by supposition that $l_i\cdot a = n_i + e_i$, for some integer $n_i$ and $0 < |e_i| < \eps$. We may write this simultaneously for $i=1,\ldots ,k $ as $L a = n + e$ where $n,e$ are vectors consisting of the $n_i,e_i$ respectively.  

We claim that there is a solution to $Lx = n$ with $x \in U\cap T$ and such that each entry of $x$ with respect to $\cX'$ lies in $(\frac{1}{A^{O(1)}})\Z$; we will let this be $a_r$. 
Recall that by the induction statement $L$ has full rank as a family of linear maps on $U \cap T$. Let $M$ be the change of basis matrix such that $[L]_{\cX'} = [LM]_{\cX''}$, where $\cX'' = \cX''_{\cX, U\cap T}$ is the Hermite basis for $U\cap T$ in $\cX$. Note that $M$ is an integer matrix since $\langle \cX' \rangle = U\cap \langle \cX \rangle \supset \cX''$, and further, complexity bounds ensure that $M$ has entries of size at most $A^{O(1)}$. Let $L'$ be a square submatrix of $LM$ obtained by choosing $m$ leading columns. Then it is a standard linear algebra fact that the simultaneous equations $L'x' = n$ are solvable in $\frac{1}{|\det L'|}\Z$. Observing that $|\det L'| = A^{O(1)}$ and then padding the solution $x'$ with $\dim (U\cap  T) - m$ zero entries to form an appropriate vector $\tilde x$ yields a solution $LM\tilde x = n$ in $((\frac{1}{A^{O(1)}})\Z)^{\dim U\cap T}$. Setting $x = M\tilde x$ proves the claim made at the beginning of the paragraph. 

Next, we claim that there is a solution to $Lx = e$ with $x \in U\cap  T$ and $||x||_\infty = A^{O(1)}\eps$. This may be proven, for example, by choosing $L'$ as above,  setting $x' = L'^{-1}e$, upper bounding $||L'^{-1}||_\infty$ directly using using Cramer's rule and then augmenting $x'$ with zero entries to form a vector $\tilde x$ with $LM\tilde x = e$.  Let $a_s = M\tilde x \in U\cap T \subset U$ so that $La_s = e$ and $||a_s||_\infty = A^{O(1)}\eps$ (where here the $\ell_\infty$ norm is of course with respect to the basis $\cX'$).

Now, $a_p := a-a_r-a_s$ is contained in $\ker L$. If $a_p$ is $(A,\eps)$-linearly irrational in $\ker L$ then we are in situation (1) and we are done, so suppose not. Then there is some nontrivial integer linear map $l$ of complexity at most $A$ in $\ker L$ such that $||l (a_p)||_{\R/\Z} < \eps$. Also, it follows from definitions that  $\ker L$ is of complexity $ A^{O(1)}$  and so by Lemma \ref{l:comp-lift-restrict} we may find an integer linear map $\tilde l \in U^\ast$ of complexity $ A^{O(1)}$ such that $\tilde l|_{\ker L} = A^{O(1)}l$. Furthermore since $l$ is nontrivial on $\ker L$, we have that $\tilde l$ does not lie in the row span of $L$. We claim that in fact (the projection of) the rows of $L$ and $\tilde l$ are linearly independent in $(U\cap T)^\ast$. Suppose not for a contradiction with the $(A^c,A^c\eps)$-linear irrationality of $a$ in $U\bmod T$. Then there is some linear combination $b\tilde l + \sum_{i=1}^m b_il_i =0\in (U \cap T)^\ast$, where $b \ne 0$ because $\{l_1,\ldots,l_m\}$ are linearly independent in $(U\cap T)^\ast$. In fact, since the entries of each $l_i, \tilde l$ are integers of size $A^{O(1)}$, one may check e.g.\ by standard Gaussian elimination that we can insist that the constants $b_i,b$ are themselves integers of size $ A^{O(1)}$ (with a different constant $O(1)$ which may depend on the dimension of the space). For these $b,b_i$, let $l' = b\tilde l + \sum_{i=1}^m b_il_i  \in (U \cap T)^\perp \subset U^\ast$. Then $l'(a) = l'(a_p)$ (since $a_r,a_s\in U \cap T$), $l'(a_p) = b\tilde l(a_p)$ (since $a_p \in \ker L$), and $b\tilde l(a_p) = bA^{O(1)}l(a_p)$ (by the construction of $\tilde l$ and since $a_p \in \ker L$). Thus \[||l'(a)||_{\R/\Z} = ||bA^{O(1)}l(a_p)||_{\R/\Z} \leq bA^{O(1)}||l(a_p)||_{\R/\Z} < A^{O(1)}\eps.\]
However, $l'$ is of complexity $A^{O(1)}$ with respect to $\cX'$. 
For $c$ chosen suitably large, this contradicts the $(A^c, A^c\eps)$-linear irrationality of $a$ in $U\bmod T$. Thus we have that (the projection of) the rows of $L$ and $\tilde l$ are linearly independent in $(U \cap T)^\ast$.

Finally, let $C$ be the smallest positive integer such that $C\tilde l (a_{r}) \in \Z$; note that $C = A^{O(1)}$. Then we compute 
\begin{align*}
||C\tilde l(a)||_{\R/\Z} &\leq ||C\tilde l (a_{p})||_{\R/\Z} +  ||C\tilde l(a_{r})||_{\R/\Z} + ||C\tilde l(a_{s})||_{\R/\Z}\\
& \leq CA^{O(1)}\eps + 0 + CA^{O(1)}\eps \leq A^{O(1)}\eps.
\end{align*}
One may then form $L_{m+1}$ by adding the row $C\tilde l$ to the bottom of $L=L_m$. This puts us in situation (2) and completes the proof.
\end{proof}

\begin{cor}[Quantitative additive decomposition of polynomial sequences]\label{c:poly-linear-decomp}
Let $A, N>1$ be integers, let $U_\bullet = (U_i)_{i=1}^s$ be a sequence of rational subspaces of $\g$ each of complexity $A^{O(1)}$ and let  $T$ be a rational subspace of $\g$ which is also of complexity $A^{O(1)}$.  There is a constant $c>1$ (which may depend on $\dim U$, $\dim T$) such that the following is true. Let $p$ be a polynomial sequence defined by $p(n) = \sum_{i=1}^s a_in^i$ such that $a_i \in U_i$ for all $i$ and which is $(A^c,N)$-linearly irrational in $U_\bullet\bmod T$.\footnote{The operations are conducted pointwise on the sequence i.e. this may be translated to mean that each coefficient $a_i$ is $(A^c,A^c/N^i)$-linearly irrational in $U_i\bmod T$.} Then there are polynomial sequences $e,r$ in $U_\bullet \cap T$ of degree $s$ and a sequence of subspaces $(U'_i)_{i=1}^s \leq U_\bullet$ with each $U'_i$ of complexity $A^{O(1)}$ such that $e$ is $(A^{O(1)},N)$-small, $r$ is $A^{O(1)}$-rational, and $p-e-r$ is $(A,N)$-linearly irrational in $U'_\bullet:= (U'_i)_{i=1}^s$. Furthermore, $\dim(U'_i + T) = \dim(U_i + T)$ for all $i$.

\end{cor}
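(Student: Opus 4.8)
The plan is to apply Proposition \ref{p:linear-decomp} one coefficient at a time, in order of increasing degree, and to accumulate the rational and small parts as we go. Recall that $(A^c,N)$-linear irrationality of $p$ in $U_\bullet \bmod T$ means precisely that each $a_i$ is $(A^c, A^c/N^i)$-linearly irrational in $U_i \bmod T$, so for each fixed $i$ we are in a position to invoke Proposition \ref{p:linear-decomp} with the subspace $U = U_i$, the same $T$, and the smallness parameter $\eps = \eps_i := A^c/N^i$ (we may of course assume $A^c/N^i < 1/2$, as otherwise the irrationality hypothesis is vacuous and one takes $a_i$ itself to be ``small''). Since $U_i$ and $T$ both have complexity $A^{O(1)}$, Proposition \ref{p:linear-decomp} furnishes a constant $c$ (depending only on $\dim U_i, \dim T$, hence uniformly bounded since $d = O(1)$; take $c$ to be the maximum over $i$) and a splitting $a_i = a_{i,p} + a_{i,r} + a_{i,s}$ with $a_{i,r} \in (U_i \cap T)_\Q$ being $A^{O(1)}$-rational, $a_{i,s} \in U_i \cap T$ being $A^{O(1)}(A^c/N^i) = A^{O(1)}/N^i$-small, and $a_{i,p}$ being $(A, A^c/N^i)$-linearly irrational, a fortiori $(A, A/N^i)$-linearly irrational since $A/N^i \le A^c/N^i$, in a subspace $U_i' \le U_i$ of complexity $A^{O(1)}$ with respect to $\cX$.

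Having done this for every $i = 1, \ldots, s$, set $r(n) := \sum_{i=1}^s a_{i,r} n^i$, $e(n) := \sum_{i=1}^s a_{i,s} n^i$, and $p'(n) := p(n) - e(n) - r(n) = \sum_{i=1}^s a_{i,p} n^i$. By construction all three sequences have degree $\le s$ and all their coefficients lie in $U_i \cap T$ (for $e$ and $r$) and in $U_i' \subseteq U_i$ (for $p'$), so $e, r$ are polynomial sequences in $U_\bullet \cap T$ and $p'$ is adapted to $U'_\bullet$. The coefficient $a_{i,r}$ is $A^{O(1)}$-rational with respect to $\cX$, so $r$ is $A^{O(1)}$-rational in the sense of Definition \ref{d:quant-rat}; the coefficient $a_{i,s}$ is $A^{O(1)}/N^i = (A^{O(1)})/N^{\deg(n^i)}$-small, so $e$ is $(A^{O(1)}, N)$-small in the sense of Definition \ref{d:quant-small} (here we only have the monomials $n^i$, $1 \le i \le s$, to check). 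Since each $a_{i,p}$ is $(A, A/N^i)$-linearly irrational in $U_i'$, the sequence $p'$ is $(A,N)$-linearly irrational in $U'_\bullet$ by Definition \ref{d:lin-irrat-poly}. Finally, the dimension statement $\dim(U_i' + T) = \dim(U_i + T)$ for each $i$ is exactly the dimension assertion recorded inside the proof of Proposition \ref{p:linear-decomp} (situation (1) there), transported coefficient by coefficient.

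This is essentially bookkeeping; there is no real obstacle beyond keeping track of how the implied $O(1)$ exponents degrade. The only mild point worth care is that the constant $c$ in Proposition \ref{p:linear-decomp} may differ across the $s$ coefficients, but since $s = O(1)$ and each such $c$ depends only on dimensions which are $O(1)$, we may simply take $c$ to be the largest among them; likewise the various $A^{O(1)}$ bounds on rationality, smallness, and the complexities of the $U_i'$ are uniform because there are only boundedly many coefficients. One should also note that the subspace $T$ plays the role of a single fixed subspace here (not a sequence $T_\bullet$), which is all that is needed for the intended application to the factorisation theorem in the next subsection.
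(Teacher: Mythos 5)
Your overall strategy --- apply Proposition \ref{p:linear-decomp} coefficient by coefficient and reassemble the rational, small and remaining parts into $r$, $e$ and $p-e-r$ --- is exactly the paper's argument, but the way you instantiate Proposition \ref{p:linear-decomp} has the parameters mismatched, so the key step as written is not justified. You take the proposition's smallness parameter to be $\eps = A^c/N^i$. For that choice the proposition demands that $a_i$ be $(A^{c_0}, A^{c_0}\eps) = (A^{c_0}, A^{c_0+c}/N^i)$-linearly irrational in $U_i \bmod T$, where $c_0$ is the proposition's own constant; the corollary's hypothesis only supplies the lower bound $A^c/N^i$ on $\|l(a_i)\|_{\R/\Z}$, and no choice of the corollary's $c$ closes this gap, because the required bound $A^{c_0+c}/N^i$ grows with $c$ at the same rate and always exceeds $A^c/N^i$ by the fixed factor $A^{c_0}$. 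Consequently the intermediate claim you extract --- that $a_{i,p}$ is $(A, A^c/N^i)$-linearly irrational --- does not follow. (If instead you intended $A^c\eps = A^c/N^i$, i.e.\ $\eps = 1/N^i$, the hypothesis would match but the output would only be $(A,1/N^i)$-irrationality, which is weaker than the $(A,A/N^i)$-irrationality the corollary asserts, so that reading fails too.)

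The repair is small and is what the paper does: aim only for what is needed, namely $(A,A/N^i)$-irrationality of the remainder. Apply Proposition \ref{p:linear-decomp} with $\eps = A/N^i$ (equivalently, renormalise the proposition's ``$A$'' to a suitable power of $A$, as the paper does with $A^{(c-1)/c_i}$); the required hypothesis is then $(A^{c_0}, A^{c_0+1}/N^i)$-irrationality of $a_i$ in $U_i\bmod T$, which does follow from the assumed $(A^c, A^c/N^i)$-irrationality once $c\ge c_0+1$ (weaken the complexity bound from $A^c$ to $A^{c_0}$ and use $A^c/N^i \ge A^{c_0+1}/N^i$). The output is that $a_{i,s}$ is $A^{O(1)}/N^i$-small, $a_{i,r}$ is $A^{O(1)}$-rational and $a_i-a_{i,r}-a_{i,s}$ is $(A,A/N^i)$-linearly irrational in a complexity-$A^{O(1)}$ subspace $U_i'\le U_i$, after which your bookkeeping (taking $c$ to be the maximum of the finitely many constants, reading off Definitions \ref{d:quant-rat}, \ref{d:quant-small} and \ref{d:lin-irrat-poly}, and quoting the dimension assertion from inside the proof of Proposition \ref{p:linear-decomp}) goes through verbatim. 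One further quibble: your parenthetical treatment of the regime $A^c/N^i \ge 1/2$ (``take $a_i$ itself to be small'') is not right, since $a_i$ need neither be small nor lie in $U_i\cap T$; the constraint actually relevant after the correction is the proposition's requirement $\eps = A/N^i < 1/2$, which is an implicit assumption at the same level of care as the paper's own proof.
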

\begin{proof}
Let $p(n) = \sum_{i=1}^s a_i n^i$. If $p$ is $(A,N)$-linearly irrational in $U_\bullet$ then we are done so suppose it is not and conduct the following argument for each index $i$  such that $a_i$ fails to be $(A,A/N^i)$-linearly irrational in $U_i$. By definition we have that each such $a_i$ is $(A^c,A^c/N^i)$-linearly irrational in $U_i\bmod T$. Trivially then, $a_i$ is $(A^{c-1},A^c/N^i)$-linearly irrational in $U_i\bmod T$. Now invoke Proposition \ref{p:linear-decomp} for $a_i$ and let $c_i$ be a constant (obtained from the statement of Proposition \ref{p:linear-decomp}) so that we may find $a_{e,i}\in U_i \cap T$ which is $(A^{O(1)}/N^i)$-small and $a_{r,i}\in U \cap T$ which is $A^{O(1)}$-rational such that $a_i' := a_i - a_{r,i} - a_{e,i}$ is $(A^{(c-1)/c_i},A/N^i)$-linearly irrational in some subspace $U_i'\leq U_i$ of complexity $A^{O(1)}$. If $(c-1)/c_i\geq 1$ then $a_i'$ is $(A,A/N^i)$-linearly irrational in $U_i'$; insist that $c$ is chosen so that this is the case. Let $r(n) = \sum_{i=1}^s a_{r,i}n^i$ and $e(n) = \sum_{i=1}^s a_{e,i}n^i$. The result follows.
\end{proof}

\subsection{Multiplicative factorisation for strong irrationality}

The goal of this subsection is to prove Theorem \ref{t:quant-factorise}. This may be viewed as analogous to Green and Tao's factorisation theorems for polynomial sequences (see \cite{GT10}, \cite{GT12}) for the notion of linear irrationality. We will also want to control the following notion of irrationality which comes from \cite{GT10} (see in particular \cite[Appendix A]{GT10}).

In the following one will need to recall the definition of the subalgebras $\h_i$: Definition \ref{d:hi}.

\begin{definition}[Filtration irrationality]\label{d:fi}
Let $A>1, \eps >0$. We say $a\in \g_i$ is $(A,\eps)$-filtration irrational in $\g_i$ with respect to a rational basis $\cX$ for $\g$ if $||l(a)||_{\R/\Z} \geq \eps$ for all $l\in \g_\Q^\ast$ which are nontrivial on $\g_i$, have complexity at most $A$ on $\g_i$ and which vanish on $\h_i$.
\end{definition}

One may equivalently define $a\in \g_i$ to be $(A,\eps)$-filtration irrational in $\g_i$ if and only if it is $(A,\eps)$-linearly irrational in $\g_i\bmod \h_i$, as per Definition \ref{d:li-quotient}.

\begin{definition}[Filtration irrationality for polynomial sequences]\label{d:fi-poly}
Let $A,N >1$ and let $\g$ have rational basis $\cX$. Let $p$ be a polynomial sequence adapted to $\g_\bullet$ which is defined by $p(n) := \sum_{i=1}^s a_i n^i$. We say that $p$ is \textit{$(A,N)$-filtration irrational} if $a_i$ is $(A,A/N^i)$-filtration irrational in $\g_i$ for every $i$. Furthermore we say that $p$ is \textit{$(A,N)$-filtration irrational in $\g\bmod \g_j$} if $a_i$ is $(A,A/N^i)$-filtration irrational in $\g_i$ for $i=1,\ldots, j-1$. 
\end{definition}

Before proving Theorem \ref{t:quant-factorise}, we need some lemmas. Henceforth, for a univariate polynomial sequence $q$, let $(q)_j$ denote the coefficient of the term of degree $j$ in $q$. The following equation is an easy consequence of definitions. 
 
\begin{lemma}\label{l:mult-to-add-modh} 
For polynomials $q, q'$ adapted to $\g_\bullet$ we have 
\[ (q \ast q')_{j} = (q)_j + (q')_j \mod \h_{j},\]
for all $j$.
\end{lemma}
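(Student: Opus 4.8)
The plan is to expand $q\ast q'$ using the Baker--Campbell--Hausdorff formula, separating the linear part $x+y$ from the genuinely bracketed terms and tracking filtration degree against polynomial degree. Write $q(n)=\sum_{i\ge 1}a_in^i$ and $q'(n)=\sum_{i\ge 1}b_in^i$ with $a_i,b_i\in\g_i$; since $\g$ is nilpotent of step $s$, the expansion
\[(q\ast q')(n)=q(n)\ast q'(n)=q(n)+q'(n)+\tfrac12[q(n),q'(n)]+\cdots\]
terminates. The term $q(n)+q'(n)=\sum_i(a_i+b_i)n^i$ contributes exactly $(q)_j+(q')_j$ to the coefficient of $n^j$, so everything reduces to showing that every remaining BCH term has its $n^j$-coefficient in $\h_j$.

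First I would fix one such term — an iterated Lie bracket in $q(n)$ and $q'(n)$ of bracket length $k\ge 2$ — and expand it multilinearly over the monomials of $q(n)$ and $q'(n)$. This writes it as a sum of terms $c\cdot n^{i_1+\cdots+i_k}$, where $c$ is a fixed bracketing of $k$ elements, the $m$-th being some $a_{i_m}\in\g_{i_m}$ or some $b_{i_m}\in\g_{i_m}$, with every $i_m\ge 1$. To read off the coefficient of $n^j$ I restrict to the pieces with $i_1+\cdots+i_k=j$. Writing the outermost bracket of $c$ as $[A,B]$ with $A$ and $B$ nonempty subbrackets, an easy induction from $[\g_a,\g_b]\subseteq\g_{a+b}$ shows $A\in\g_p$ and $B\in\g_q$, where $p$ (resp.\ $q$) is the sum of the indices $i_m$ appearing in $A$ (resp.\ $B$); since $A$ and $B$ are nonempty and each $i_m\ge 1$, we have $p,q\ge 1$ while $p+q=j$. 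Hence $c=[A,B]\in[\g_p,\g_{j-p}]\subseteq\h_j$, directly from the definition of $\h_j$ (Definition \ref{d:hi}). Summing over the multilinear pieces and over all BCH terms of bracket length at least $2$ gives that the coefficient of $n^j$ in $(q\ast q')-q-q'$ lies in $\h_j$, which is the claim.

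I do not expect a real obstacle here — as the paper notes, this is an easy consequence of definitions — so I would keep the write-up short; the only point needing a line of care is the bookkeeping that the multilinear expansion respects the polynomial grading and the filtration grading simultaneously, i.e.\ that ``polynomial degree equals total filtration index across a bracket monomial''. A slicker alternative is to pass to the quotient $\g/\h_j$ (one checks via the Jacobi identity that $\h_j$ is an ideal of $\g$), in which $\g_{j+1}$ and all $[\g_p,\g_{j-p}]$ with $1\le p\le j-1$ vanish, so every degree-$j$ contribution coming from an actual bracket dies and $(q\ast q')(n)$ agrees with $(q+q')(n)$ in degree $j$ modulo $\h_j$; I would include only whichever phrasing reads most cleanly alongside the intended use of Lemma \ref{l:mult-to-add-modh}.
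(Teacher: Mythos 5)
Your argument is correct and is exactly the (omitted) argument the paper has in mind when it calls the lemma ``an easy consequence of definitions'': expand $q\ast q'$ by Baker--Campbell--Hausdorff and note that every genuine bracket term contributing to the coefficient of $n^j$ is a bracket of elements of $\g_p$ and $\g_{j-p}$ with $p,j-p\geq 1$, hence lies in $\h_j$ by Definition \ref{d:hi}. The degree/filtration bookkeeping you spell out is the whole content, so your write-up simply makes explicit what the paper leaves to the reader.
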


\begin{lemma}\label{l:irrationality-of-shift}
Let $A>R> 1$ be integers and let $\delta,\eps > 0$ be such that  $\delta - (\dim \g_i) A\eps > 0$. Suppose that $a, r, s \in \g_i$ such that $a$ is $(A,\delta)$-filtration irrational in $\g_i$, $r$ is $R$-rational in $\g_i$ and $s$ is $\eps$-small. Then $a-r-s$ is $(A/R, (\delta- (\dim \g_{i}) A\eps)/R)$-filtration irrational in $\g_i$.
\end{lemma}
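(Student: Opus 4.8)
\textbf{Proof plan for Lemma \ref{l:irrationality-of-shift}.}

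The plan is to unwind the definition of filtration irrationality (Definition \ref{d:fi}) for $a-r-s$ and reduce everything to the corresponding statement for $a$. So let $l \in \g_\Q^\ast$ be nontrivial on $\g_i$, vanish on $\h_i$, and have complexity at most $A/R$ on $\g_i$ with respect to $\cX_i' := \cX'_{\g_i,\cX}$. We must show $\|l(a-r-s)\|_{\R/\Z} \geq (\delta - (\dim \g_i)A\eps)/R$. First I would multiply through by $R$: since $r$ is $R$-rational with respect to $\cX$ (and hence, after a harmless bounded adjustment of constants, with respect to $\cX_i'$ — this is where one uses that $\g_i$ has bounded complexity, though the cleanest route is to observe $R r$ has integer coordinates so $R l(r) \in \Z$ because $l$ is integer-valued on the relevant lattice), the map $Rl$ is integer-valued on $r$, i.e.\ $R l(r) \in \Z$. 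Also $Rl$ still vanishes on $\h_i$, is still nontrivial on $\g_i$ (as $R \neq 0$), and has complexity at most $R \cdot (A/R) = A$ on $\g_i$. Hence by the $(A,\delta)$-filtration irrationality of $a$ we get $\|Rl(a)\|_{\R/\Z} \geq \delta$.

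Next I would estimate the contribution of the small part. Writing $Rl(a-r-s) = Rl(a) - Rl(r) - Rl(s)$ and using $Rl(r)\in\Z$, the triangle inequality in $\R/\Z$ gives
\[
\|Rl(a-r-s)\|_{\R/\Z} \;\geq\; \|Rl(a)\|_{\R/\Z} - \|Rl(s)\|_{\R/\Z} \;\geq\; \delta - \|Rl(s)\|_{\R/\Z}.
\]
Since $s$ is $\eps$-small, $\|s\|_\infty \leq \eps$ with respect to $\cX_i'$, and $Rl$ has $\ell_\infty$ coefficient norm at most $A$ on $\g_i$, so $|Rl(s)| \leq (\dim \g_i) A \eps$, whence $\|Rl(s)\|_{\R/\Z} \leq (\dim \g_i)A\eps$. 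Therefore $\|Rl(a-r-s)\|_{\R/\Z} \geq \delta - (\dim\g_i)A\eps > 0$ by hypothesis. Finally, $\|l(a-r-s)\|_{\R/\Z} \geq \frac{1}{R}\|Rl(a-r-s)\|_{\R/\Z} \geq (\delta - (\dim\g_i)A\eps)/R$ (using that for any real $x$ and positive integer $R$ one has $\|Rx\|_{\R/\Z} \leq R\|x\|_{\R/\Z}$). This is exactly the claimed $(A/R,(\delta-(\dim\g_i)A\eps)/R)$-filtration irrationality.

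The only genuinely delicate point — and thus the main obstacle — is bookkeeping the complexity/lattice issues when multiplying a linear map by $R$ and when switching between the basis $\cX$ (in which $R$-rationality and $\eps$-smallness of $r,s$ are stated) and the Hermite basis $\cX_i'$ of $\g_i$ (in which complexity of $l$ is measured). Here one needs that $l$ being integer on $\cX_i'$ and $r$ being $R$-rational (so $Rr \in \langle \cX \rangle \cap \g_i = \langle \cX_i'\rangle$) together force $Rl(r)\in\Z$, and similarly that the coordinate bounds for $s$ transfer with at most bounded loss; since the lemma's conclusion already allows the $\dim\g_i$ factor and $\dim\g_i = O(1)$, any such bounded loss is absorbed. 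Everything else is the triangle inequality in $\R/\Z$ and the elementary inequality $\|Rx\|_{\R/\Z}\leq R\|x\|_{\R/\Z}$.
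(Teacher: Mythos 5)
Your proof is correct and is essentially the paper's argument: the paper just phrases it contrapositively (assume $\|l(a-r-s)\|_{\R/\Z}<\beta$ and contradict the $(A,\delta)$-irrationality of $a$), while you run the same steps forward -- pass to $Rl$ of complexity $\leq A$, use $Rl(r)\in\Z$, bound $|Rl(s)|\leq(\dim\g_i)A\eps$, and apply the triangle inequality together with $\|Rx\|_{\R/\Z}\leq R\|x\|_{\R/\Z}$. The basis bookkeeping you flag is handled no more carefully in the paper's own proof, so there is nothing to add.
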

\begin{proof}
Let $\beta = (\delta - (\dim \g_i) A\eps )/R$ and $b = a-r-s$. We prove the contrapositive: suppose $b$ is not $(A/R,\beta)$-filtration irrational. We may thus choose an integer linear map $l$, nontrivial on $\g_i$, which vanishes on $\h_i$ and with complexity $A/R$ in $\g_i$ such that $||l(b)||_{\R/\Z} < \beta$. Then $Rl$ has complexity $A$ on $\g_i$ and 
\[||Rl(a)||_{\R/\Z} \leq ||Rl(b)||_{\R/\Z} + ||Rl(r)||_{\R/\Z} + ||Rl(s)||_{\R/\Z} < R\beta + 0 + (\dim \g_i) A\eps = \delta,\]
so $a$ is not $(A,\delta)$-filtration irrational.
\end{proof}

Recall that we have already proven Proposition \ref{p:qual-factorise}, a qualitative version of the following theorem which may be of interest to a reader wishing to extricate the important parts of the upcoming proof.

\begin{thm}[Quantitative factorisation of polynomial sequences]\label{t:quant-factorise}
Let $A,M\geq 2$ and $d,s,t\geq 1$ be integers such that $AM> d$. Let $\g$ be a real nilpotent Lie algebra with filtration $\g_\bullet$, rational basis $\cX$, dimension $d$ and rational structure constants of height at most $M$. Let $p$ be a polynomial sequence adapted to $\g_\bullet$ and sequence of subspaces $S_\bullet\leq \g_\bullet$. Finally, insist that $S_\bullet, \g_\bullet$ are of complexity $A^{O(1)}$ in $\g$, where here and henceforth $O(1)$ terms may depend on $d,s$. Then we may write $p = e \ast p' \ast r$ where $e,p',r$ are polynomial sequences in $\g$ adapted to $\g_\bullet$, $p'$ is $(A,N)$-linearly irrational in a sequence of subspaces $S'_\bullet$ of complexity $A^{O(1)}$, $r$ is $(AM)^{O(1)}$-rational, and $e$ is $((AM)^{O(1)},N)$-small. Furthermore:
\begin{itemize}
\item writing $p(n) = \sum_{i=1}^s a_in^i$, if $j$ is the smallest integer such that $a_j$ is not $(A,N)$-linearly irrational in $S_j$, then we may take $S_k'=S_k$ for all $k=1,\ldots,j-1$ and have $S_j'$ strictly contained in $S_j$,
\item there is a constant $c>1$ such that if $B >AM$ and $p$ is $(B^c,N)$-filtration irrational then $p'$ is $(B,N)$-filtration irrational.
\end{itemize} 

\end{thm}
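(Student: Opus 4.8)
The plan is to carry out the iteration of the qualitative Proposition~\ref{p:qual-factorise}, replacing the qualitative additive decomposition Corollary~\ref{c:qual-poly-decomp} by its quantitative refinement Corollary~\ref{c:poly-linear-decomp} and dragging a ``small'' factor along throughout. Concretely I would prove, by induction on $j = 1, \dots, s$, that there is a factorisation $p = e^{(j)} \ast p^{(j)} \ast r^{(j)}$ valid modulo $\g_{j+1}$ in which: all three factors are adapted to $\g_\bullet$; $p^{(j)}$ is $(A_j, N)$-linearly irrational in a sequence of rational subspaces $S^{(j)}_\bullet$ with $S^{(j)}_k = S_k$ for $k \le j-1$ (using that $p$ is adapted to $S_\bullet$) and $S^{(j)}_\bullet$ of complexity $A^{O(1)}$; $r^{(j)}$ is $R_j$-rational; and $e^{(j)}$ is $(E_j, N)$-small. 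The base case $j = 1$ is Corollary~\ref{c:poly-linear-decomp} applied with the trivial relative subspace $T = \g$ (where the ``linearly irrational modulo $T$'' hypothesis is vacuous), together with the fact that $\ast$ and $+$ agree modulo $\g_2$; for those coordinates $k$ in which $a_k$ is already $(A,N)$-linearly irrational in $S_k$ the decomposition returns its input, so $S^{(1)}_k = S_k$. The parameters satisfy $A_j = A_{j-1}^{1/c_0}$ for the fixed constant $c_0 = c_0(d,s) > 1$ supplied by Corollary~\ref{c:poly-linear-decomp}, while $R_j$ and $E_j$ each grow by a bounded power of $AM$ per step; since there are only $s = O(1)$ steps, one runs the induction with the target parameter $A$ replaced at the outset by $A^{c_0^{s}}$ and relabels at the end, so that every accumulated quantity remains of size $(AM)^{O(1)}$ and the final $p' := p^{(s)}$ is genuinely $(A,N)$-linearly irrational.

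For the step from $j$ to $j+1$ I would set $q := (e^{(j)})^{-1} \ast p \ast (r^{(j)})^{-1}$. By the group property of polynomial sequences adapted to $\g_\bullet$ (the quantitative refinement of Lemma~\ref{l:polys-group}, which follows from a Baker--Campbell--Hausdorff coefficient estimate using the structure-constant bound $M$) this is again adapted to $\g_\bullet$ with coefficients of controlled size, and it agrees with $p^{(j)}$ modulo $\g_{j+1}$; hence each coefficient of $q$ lies in the corresponding coefficient of $p^{(j)}$ plus $\g_{j+1}$, so $q$ is $(A_j^{O(1)}, N)$-linearly irrational in $(S^{(j)}_\bullet + \g_{j+1}) \bmod \g_{j+1}$ (Lemma~\ref{l:comp-lift-restrict} passes complexities between $S^{(j)}_k$ and $S^{(j)}_k + \g_{j+1}$). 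Now apply Corollary~\ref{c:poly-linear-decomp} to $q$ with $U_\bullet = S^{(j)}_\bullet + \g_{j+1}$ and $T = \g_{j+1}$ --- both of complexity $A^{O(1)}$ by Lemma~\ref{l:comp-operations} --- to write $q = q' + r' + e'$ additively, with $q'$ being $(A_{j+1},N)$-linearly irrational in some $S^{(j+1)}_\bullet \le S^{(j)}_\bullet + \g_{j+1}$, $r'$ an $A^{O(1)}$-rational sequence, and $e'$ an $(A^{O(1)},N)$-small sequence. Since $[\g, \g_{j+1}] \subseteq \g_{j+2}$, Baker--Campbell--Hausdorff converts this into a $\ast$-splitting $q = e' \ast q' \ast r'$ valid modulo $\g_{j+2}$, whence $p = (e^{(j)} \ast e') \ast q' \ast (r' \ast r^{(j)})$ modulo $\g_{j+2}$; setting $e^{(j+1)} = e^{(j)} \ast e'$, $p^{(j+1)} = q'$, $r^{(j+1)} = r' \ast r^{(j)}$ closes the induction, the new $e$- and $r$-factors being small and rational with only a bounded-power loss by the quantitative analogues of Lemma~\ref{l:qual-prod-rational} (a product of rational polynomial sequences is rational, with height multiplied by $M^{O(1)}$) and of the corresponding statement for products of small sequences.

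For the first ``furthermore'' clause: if $j$ is the least index with $a_j$ not $(A,N)$-linearly irrational in $S_j$, then at every earlier step the relevant coefficient is handled while already linearly irrational at the required level, so Proposition~\ref{p:linear-decomp} (the engine of Corollary~\ref{c:poly-linear-decomp}) returns it untouched and keeps $S_k' = S_k$; at step $j$ the degree-$j$ coefficient genuinely fails $(A,N)$-linear irrationality, so the matrix $L$ built in the proof of Proposition~\ref{p:linear-decomp} acquires at least one row nontrivial on $S_j$, forcing the output subspace $S_j'$ to be a proper subspace of $S_j$. The second clause is obtained by running a parallel ledger of filtration irrationality: by Lemma~\ref{l:mult-to-add-modh} the degree-$i$ coefficient of $p'$ differs modulo $\h_i$ from that of $p$ only by the negatives of the degree-$i$ coefficients of $e$ and of $r$, which are $(AM)^{O(1)}$-small and $(AM)^{O(1)}$-rational respectively; Lemma~\ref{l:irrationality-of-shift} then upgrades $(B^c,N)$-filtration irrationality of $p$ to $(B,N)$-filtration irrationality of $p'$ as soon as $c$ exceeds the total power of $AM$ accumulated by those parts --- an explicit $O(1)$ --- with $B > AM$ and $AM > d$ absorbing the factors of $\dim \g_i$ appearing in Lemma~\ref{l:irrationality-of-shift}.

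The main obstacle is the quantitative bookkeeping through the $s$-fold iteration, and in particular arranging the stability clause $S_k' = S_k$ for $k \le j-1$: one must choose the auxiliary subspaces $U_\bullet$ and relative subspace $T$ at each step so that already-processed coordinates are genuinely fixed, which requires checking that $q$ is still linearly irrational in the already-chosen $S^{(j)}_k$ (not merely modulo $\g_{j+1}$) for $k \le j$ --- this is where the exponent $c_0$ must be large enough that the complexity inflation in Lemma~\ref{l:comp-lift-restrict} is swallowed by the surviving parameter $A_j$. One also has to verify that the various routine quantitative Baker--Campbell--Hausdorff estimates (for inverses, products of rational sequences, products of small sequences) cost only bounded powers of $A$ and $M$, and that the implied constants in the $O(1)$'s, though they drift from step to step, stabilise after $s = O(1)$ steps. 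Granting this, the argument is a faithful quantitative transcription of the proof of Proposition~\ref{p:qual-factorise}.
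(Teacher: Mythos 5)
Your overall strategy coincides with the paper's: run the qualitative factorisation (Proposition \ref{p:qual-factorise}) quantitatively, invoking Corollary \ref{c:poly-linear-decomp} at each level of the filtration, conjugating off the small and rational parts, boosting the initial parameter to a power $A^{\prod_i c_i}$ so as to survive the $s=O(1)$ steps, and proving the filtration-irrationality clause via Lemmas \ref{l:mult-to-add-modh} and \ref{l:irrationality-of-shift}. The gap lies precisely in the step you flag as the main obstacle: keeping the already-treated coefficients and their subspaces fixed. In your inductive step you apply Corollary \ref{c:poly-linear-decomp} to the whole sequence $q$ with $U_\bullet = S^{(j)}_\bullet + \g_{j+1}$ and $T=\g_{j+1}$, and you hope that coefficients which are already linearly irrational are returned untouched, proposing to absorb any friction by enlarging the exponent $c_0$. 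This cannot work: for $k\le j$ the coefficient $(q)_k$ lies in $S^{(j)}_k$, so any integer linear map which vanishes on $S^{(j)}_k$ but is nontrivial on $\g_{j+1}$ is nontrivial on $U_k=S^{(j)}_k+\g_{j+1}$ and takes the value $0\in\Q$ at $(q)_k$. Hence $(q)_k$ is never linearly irrational in $U_k$ for any choice of parameters (unless $\g_{j+1}\leq S^{(j)}_k$), so Proposition \ref{p:linear-decomp} will in general process this coefficient, may perturb it by elements of $\g_{j+1}$, and returns a subspace $U'_k\le U_k$ constrained only by $\dim(U'_k+\g_{j+1})=\dim(U_k+\g_{j+1})$; nothing forces $U'_k=S_k$. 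No exponent bookkeeping repairs this, so as written neither the clause $S'_k=S_k$ for $k<j$ nor the persistence of the linear irrationality of the repaired degree-$j$ coefficient in $S^{(1)}_j<S_j$ is established.

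The paper's remedy is structural rather than quantitative: with $j$ the first failing degree, the additive decomposition is applied only to the terms of degree $\ge j$ at the first step (with $U_\bullet=S_\bullet$ and $T=\g$), and only to the terms of degree $>j$ at every subsequent step (with $U_\bullet=S^{(i)}_\bullet+\g_{i+1}$, $T=\g_{i+1}$). Then every small and rational sequence $\tilde e_i,\tilde r_i$ is supported in degrees $\ge j$, so each Baker--Campbell--Hausdorff cross term arising in $\tilde e_i^{-1}\ast\tilde p_i\ast\tilde r_i^{-1}$ has degree at least $j+1$, and the coefficients of degree $\le j$ are literally never altered after the first step. This yields $S'_k=S_k$ for $k<j$ for free and keeps the degree-$j$ coefficient linearly irrational in $S^{(1)}_j$, which is a proper subspace of $S_j$ for exactly the reason you give (the matrix $L$ in Proposition \ref{p:linear-decomp} acquires a row nontrivial on $S_j$), but that argument is only available at the first step. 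With this modification, the remainder of your outline --- the parameter cascade, termination after $s$ steps, and the filtration-irrationality ledger --- matches the paper's proof.
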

\begin{proof}
The factorisation process proceeds inductively. We will demonstrate the first couple of steps, from which the general process should be clear. 

If $p$ is $(A,N)$-linearly irrational in $S_\bullet$ then we are done trivially. Suppose not, let $b\geq 1$ be a constant to be chosen later, and write $p(n) = \sum_{i=1}^s a_in^i$ and let $j$ be the smallest integer such that $a_j$ is not $(A^b,N)$-linearly irrational in $S_j$ (since $b \geq 1$, such a $j$ certainly exists). We begin by applying Corollary \ref{c:poly-linear-decomp} to the degree $\geq j$ terms of $p$ with $U_\bullet = S_\bullet$ and $T_\bullet = (\g,\ldots, \g)$ (note that $p$ is arbitrarily linearly irrational in $U_\bullet \bmod T$) to obtain $p_1 := p - \tilde e_1 - \tilde r_1$, where $\tilde e_1, \tilde r_1$ are polynomial sequences adapted to $\g_\bullet$ with  all terms of degree at least $j$, $\tilde e_1$ is $((AM)^{O(1)},N)$-small, $\tilde r_1$ is $A^{O(1)}$-rational and $p_1$ is $(A^{b},N)$-linearly irrational  in some sequence of subspaces $S^{(1)}_\bullet$ with $S^{(1)}_k = S_k$ for $k<j$ and $S^{(1)}_j < S_j$. Then set \[\tilde p_2 := \tilde e_1^{-1}\ast p \ast \tilde r_{1}^{-1} = p - \tilde e_1 - \tilde r_1 - \frac{1}{2}([\tilde e_1,p] + [\tilde e_1,r_1] + [p,\tilde r_1]) - \ldots.\] 
Note that $\tilde p_2 = p_1 \mod \g_2$ and furthermore that $(\tilde p_2)_k = (p_1)_k$ for all $k\leq j$ (since every term coming from a Lie bracket will have degree at least $j+1$). The former fact implies that $\tilde p_2$ is $(A^b,N)$-linearly irrational in $(S^{(1)}_\bullet + \g_2)\bmod \g_2$.

Next we apply Corollary \ref{c:poly-linear-decomp} to the degree $> j$ terms of $\tilde p_{2}$ with $U_{\bullet} = S_{\bullet}^{(1)} + \g_{2}$ and $T = (\g_2,\ldots, \g_2)$.  We obtain $p_2:= \tilde p_2 - \tilde e_{2} - \tilde r_{2}$, where $\tilde e_2,\tilde r_2$ are polynomial sequences in $\g_{2}$, adapted to $\g_\bullet$ and with all terms of degree $>j$, $\tilde e_{2}$ is $(A^{O(1)},N)$-small, $\tilde  r_{2}$ is $A^{O(1)}$-rational, and $p_{2}$ is $(A^{b/c_2},N)$-linearly irrational (where $c_2$ is the constant from the statement of Corollary \ref{c:poly-linear-decomp}) in a sequence of subspaces $S_\bullet^{(2)}$  with  $S^{(2)}_k = S^{(1)}_k = S_k$ for $k<j$ and $S^{(2)}_j= S^{(1)}_j < S_j$.

If $s=2$ (i.e. $\g_3=0$), we conclude:   
\begin{align*}
p &= \tilde e_1 \ast \tilde p_{2} \ast \tilde r_1 \\ &= \tilde e_1 \ast (\tilde e_{2} + p_2 + \tilde r_2) \ast \tilde  r_1 \\ &= \tilde e_1 \ast \tilde e_{2} \ast  p_{2} \ast \tilde r_{2} \ast \tilde r_1,
\end{align*}
where in the final line we have used the fact that $\tilde e_{2}, \tilde r_{2} \in \g_{2}$, Baker-Campbell-Hausdorff and that $[\g,\g_{2}]\subset \g_{3}$. Furthermore Lemma \ref{l:poly-products} yields that $e_2 := \tilde e_1 \ast \tilde e_{2}$ is $((AM)^{O(1)},N)$-small and $r_2 := \tilde r_{2}\ast \tilde r_1$ is $(AM)^{O(1)}$-rational. 

Otherwise, set 
\[ \tilde p_3 := \tilde e_2^{-1}\ast \tilde p_2 \ast \tilde r_2^{-1} = \tilde p_2 - \tilde e_2 - \tilde r_2 - \frac{1}{2}([\tilde e_2,\tilde p_2] + [\tilde e_2, \tilde r_2] + [\tilde p_2, \tilde r_2]) - \ldots.\]
Note that $\tilde p_3 = p_2 \mod \g_3$ and furthermore that $(\tilde p_3)_k = (\tilde p_2)_k = (p_1)_k$ for all $k\leq j$. Apply Corollary \ref{c:poly-linear-decomp} to the degree $>j$ terms of $\tilde p_3$ to obtain a linearly irrational $p_3$ and proceed as above. 

If we begin with $b \geq \prod_{i=2}^{s} c_i$, then the process terminates with the polynomial sequence $p_{s}$ which is $(A,N)$-linearly irrational in $S_\bullet^{(s)}$, a sequence of complexity $A^{O(1)}$ subspaces of $\g$ with $S^{(s)}_k = S_k$ for all $k < j$ and $S^{(s)}_j = S^{(1)}_j < S_j$.

Now we proceed to the statement about filtration irrationality. Let $c$ be a large constant whose value we will decide upon later and let $b_1 := c$. Suppose that $p_i$ is $(B^{b_i},N)$-filtration irrational in $\g\bmod \g_{i+1}$ and recall our (implicit) definitions  
\begin{equation}\label{e:pi+1}
p_{i+1} := \tilde p_{i+1} - \tilde e_{i+1} - \tilde r_{i+1} := \tilde e_i^{-1} \ast (\tilde e_i +  p_i +  \tilde r_i) \ast  \tilde r_i^{-1} - \tilde  e_{i+1} - \tilde r_{i+1},\end{equation}
where $\tilde e_{i}, \tilde r_{i} \in \g_{i}$ and $\tilde e_{i+1}, \tilde r_{i+1} \in \g_{i+1}$, and 
\begin{equation}\label{e:eiri}
e_i := \prod_{j=1}^i \tilde e_j, \qquad r_i := \prod_{j=1}^i \tilde r_j.
\end{equation} 
It follows from (\ref{e:pi+1}) that $p_{i+1} = p_i$ modulo $\g_{i+1}$. For $j=1, \ldots, i$ we have that $\h_j \geq \g_{i+1}$ and so the $j$th coefficient of $p_{i+1}$ inherits $(B^{b_i},B^{b_i}/N^j)$-filtration irrationality from that of $p_i$. To check the filtration irrationality of $p_{i+1}$ in $\g\bmod \g_{i+2}$, it remains to check the filtration irrationality of the $(i+1)$th coefficient of $p_{i+1}$. To this end we analyse
\begin{align*} 
(p_{i+1})_{i+1} &= (e_i^{-1} \ast p \ast r_i^{-1} - \tilde e_{i+1} - \tilde r_{i+1})_{i+1}\\ &= (e_i^{-1} \ast p \ast r_i^{-1})_{i+1} - (\tilde e_{i+1})_{i+1} - (\tilde r_{i+1})_{i+1} \\
&= (p)_{i+1}  - (e_i)_{i+1} -  (r_i)_{i+1} - (\tilde e_{i+1})_{i+1} - (\tilde r_{i+1})_{i+1}  \mod \h_{i+1}, 
\end{align*}
where we have used Lemma \ref{l:mult-to-add-modh} in the second  line. It follows from definitions and Lemma \ref{l:poly-products} that both $(e_i)_{i+1}$ and $(\tilde e_{i+1})_{i+1}$ are $\frac{(AM)^{O(1)}}{N^{i+1}}$-small so that $(e_i)_{i+1} + (\tilde e_{i+1})_{i+1}$ is $\frac{(AM)^{O(1)}}{N^{i+1}}$-small by the triangle inequality. Also, both $(r_i)_{i+1}$ and $(\tilde r_{i+1})_{i+1}$ are $(AM)^{O(1)}$-rational so $(r_i)_{i+1} + (\tilde r_{i+1})_{i+1}$ is $(AM) ^{O(1)}$-rational. We have by supposition that $(p)_{i+1}$ is $(B^c,B^c/N^{i+1})$-filtration irrational, and so is trivially $(B^{c-k}, B^c/N^{i+1})$-filtration irrational for any constant $k>0$.  Invoking Lemma \ref{l:irrationality-of-shift} (and to do so recalling that $B > AM > \dim \g$), we have that $(p_{i+1})_{i+1}$ is $(\frac{B^{c-k}}{(AM)^{O(1)}}, \frac{B^{c}/N^{i+1} - (\dim \g)B^{c-k}(AM)^{O(1)}/N^{i+1})}{(AM)^{O(1)}})$-filtration irrational in $\g_{i+1}$, which in turn is  also $(B^{c-k - O(1)}, \frac{B^{c-O(1)} - B^{c-k+O(1)}}{N^{i+1}})$-filtration irrational  since $B>AM > \dim \g$. Setting $k=O(1)$ large enough depending on the other $O(1)$ terms, we may arrange for this coefficient to be $(B^{c - O(1)}, \frac{B^{c-O(1)}}{N^{i+1}})$-irrational. Therefore, $p_{i+1}$ is $(B^{b_{i+1}},N)$-filtration irrational where $b_{i+1}:= \min(b_i,c-O(1))$ and $c=O(1)$ has been chosen sufficiently large. By induction, if $c=O(1)$ is chosen sufficiently large, $p_{s}$ will indeed be $(B,N)$-filtration irrational.  This completes the proof.
\end{proof}

The previous result says that if we start with a sequence that is sufficiently filtration irrational, then we can factorise to a sequence which is both filtration irrational and linearly irrational.

\begin{definition}[Strong irrationality]\label{d:strong-irrat}
If a polynomial sequence $p$ is both $(A,N)$-filtration irrational in $\g_\bullet$ and $(A,N)$-linearly irrational in some sequence of rational subspaces $S_\bullet$ then we will say that it is $(A,N)$\textit{-strongly irrational} in $(\g_\bullet,S_\bullet)$.
\end{definition}

\section{The strongly irrational arithmetic regularity lemma}\label{s:arl}

In this section we prove our strongly irrational arithmetic regularity lemma Theorem \ref{t:strong-arl}. The main ingredients are Green and Tao's non-irrational regularity lemma and the factorisation theory we developed in the previous section. 

\begin{thm}[{\cite[Proposition 2.7]{GT10}}, Non-irrational arithmetic regularity lemma]\label{t:non-irrat-arl}
Let $N>1$, let  $f:[N]\to [0,1]$, let $s \geq 1$, let $\eps > 0$, and let $\cF: \R^+ \to \R^+$ be a growth function. Then there exists a quantity $M=O_{s,\eps,\cF}(1)$ and a decomposition
\[f = f_\nil + f_\sml + f_\unf \]
of $f$ into functions $f_\nil, f_\unf: [N] \to [-1,1]$ such that
\begin{enumerate}
\item ($f_\nil$ structured) $f_\nil$ is a degree $\leq s$ polynomial nilsequence of complexity $\leq M$,
\item ($f_\sml$ small) $||f_\sml||_{L^2[N]} \leq \eps$,
\item ($f_\unf$ very uniform)  $||f_\unf||_{U^{s+1}[N]} \leq 1/\cF(M)$,
\item (Nonnegativity)  $f_\nil$ and $f_\nil + f_\sml$ take values in $[0,1]$.
\end{enumerate}
\end{thm}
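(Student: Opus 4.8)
This statement is precisely \cite[Proposition 2.7]{GT10}, so strictly one may simply cite it; the plan to reprove it runs as follows. The engine is an $L^2$-energy increment driven by the inverse theorem for the Gowers $U^{s+1}[N]$-norm (Green--Tao--Ziegler): if $g:[N]\to\C$ is $1$-bounded and $\|g\|_{U^{s+1}[N]}\geq\delta$, then there is a degree $\leq s$ polynomial nilsequence $\psi$ of complexity $O_{s,\delta}(1)$ with $|\E_{n\in[N]}g(n)\overline{\psi(n)}|\geq c(s,\delta)>0$. First I would set up the language of \emph{nilsequence factors}: pulling back a fixed bounded smooth partition of unity on the relevant nilmanifolds along the polynomial orbits yields on $[N]$ a bounded family of functions, hence a factor (a finite $\sigma$-algebra), whose complexity records the number, dimension and complexity of the nilmanifolds involved together with the Lipschitz data. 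The two standard facts I would need are: (i) if a $1$-bounded $g$ correlates with a complexity $C$ nilsequence $\psi$ to within $\kappa$, then $\E(g\mid\cB_\psi)$ has $L^2[N]$-norm $\gg_{C,\kappa}1$, where $\cB_\psi$ is a factor of complexity $O_{C,\kappa}(1)$ generated by $\psi$; and (ii) the common refinement of factors of complexities $C_1,C_2$ is a factor of complexity $O_{C_1,C_2}(1)$, realised on a product nilmanifold.

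With these in hand I would first describe an \emph{inner loop} regularising a given factor $\cB$ of complexity $M_\cB$: test whether $\|f-\E(f\mid\cB)\|_{U^{s+1}[N]}\leq 1/\cF(M_\cB)$ and, if not, apply the inverse theorem to obtain a degree $\leq s$ nilsequence $\psi$ of complexity $O_{s,\cF,M_\cB}(1)$ correlating with $f-\E(f\mid\cB)$; replacing $\cB$ by its refinement with $\cB_\psi$ then raises $\|\E(f\mid\cdot)\|_{L^2[N]}^2$ by an amount bounded below in terms of $s,\cF,M_\cB$ (Pythagoras, using $\E(f-\E(f\mid\cB)\mid\cB)=0$ and fact (i)). Since this energy is nondecreasing and lies in $[0,1]$, the loop halts after $O_{s,\cF,M_\cB}(1)$ steps, returning a refinement $\cB'$ of complexity $O_{s,\cF,M_\cB}(1)$ with $\|f-\E(f\mid\cB')\|_{U^{s+1}[N]}\leq 1/\cF(M_{\cB'})$. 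Then I would run an \emph{outer loop} to extract the small part: with $\cB_0$ trivial and $\cB_{k+1}:=\cB_k'$, the Pythagorean identity gives $\|\E(f\mid\cB_{k+1})\|_{L^2[N]}^2-\|\E(f\mid\cB_k)\|_{L^2[N]}^2=\|\E(f\mid\cB_{k+1})-\E(f\mid\cB_k)\|_{L^2[N]}^2$, and since these increments sum to at most $1$ there is some $k\leq\lceil\eps^{-2}\rceil$ with $\|\E(f\mid\cB_{k+1})-\E(f\mid\cB_k)\|_{L^2[N]}\leq\eps$. As the inner loop has been invoked at most $\lceil\eps^{-2}\rceil+1$ times, the complexity $M$ of $\cB_{k+1}$ is $O_{s,\eps,\cF}(1)$.

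Fixing such a $k$, I would set $f_\nil:=\E(f\mid\cB_k)$, $f_\sml:=\E(f\mid\cB_{k+1})-\E(f\mid\cB_k)$ and $f_\unf:=f-\E(f\mid\cB_{k+1})$. Then $f_\nil$ is a conditional expectation against a complexity $\leq M$ nilsequence factor and hence, after the smoothing discussed below, a degree $\leq s$ polynomial nilsequence of complexity $\leq M$, giving (1); $\|f_\sml\|_{L^2[N]}\leq\eps$ gives (2); $\|f_\unf\|_{U^{s+1}[N]}=\|f-\E(f\mid\cB_{k+1})\|_{U^{s+1}[N]}\leq 1/\cF(M)$ gives (3), and $f_\unf$ is $1$-bounded because $f$ and $\E(f\mid\cB_{k+1})$ are; finally $f_\nil=\E(f\mid\cB_k)$ and $f_\nil+f_\sml=\E(f\mid\cB_{k+1})$ take values in $[0,1]$ since conditional expectation preserves the range of a $[0,1]$-valued function, which is (4).

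The step I expect to demand the most care is purely bookkeeping: verifying that the conditional expectation of $f$ onto a common refinement of bounded partitions of bounded-complexity nilmanifolds is genuinely a degree $\leq s$ \emph{polynomial} nilsequence of the advertised Lipschitz complexity. This forces one to approximate the rough partition-piece indicators by smooth functions on the nilmanifolds, at the cost of a small $L^2[N]$-error to be absorbed into $f_\sml$, to track carefully how complexity propagates through refinements and through the inverse theorem, and to extract a clean quantitative correlation lower bound to feed the energy increment. None of this is conceptually difficult, but it is where the technical labour resides, and it is carried out in full in \cite{GT10}.
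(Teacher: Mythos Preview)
The paper does not prove this statement at all: it is quoted verbatim as \cite[Proposition 2.7]{GT10} and used as a black box, so there is no in-paper proof to compare against. Your sketch is the standard energy-increment argument via the inverse $U^{s+1}$-theorem, which is indeed how it is established in \cite{GT10}; your identification of the smoothing step (turning conditional expectations onto rough nilsequence factors into genuine Lipschitz nilsequences) as the main technical labour is accurate.
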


 In fact, Theorem \ref{t:strong-arl} produces a (virtual) nilsequence which is both linearly irrational and filtration irrational. Although the latter is not strictly necessary to determine distribution and indeed is omitted in Section \ref{s:examples}, it is useful in applications (where otherwise one might have to replicate a similar factorisation on a case-by-case basis). Throughout this section we assume that $d,s,D,t$ are of size $O(1)$.

The first step is to bootstrap our factorisation theory from the previous section to obtain a factorisation where $p'$ is (much) more strongly irrational (by an arbitrary growth function, say) than $e$ is small and $r$ is rational. To do so, we first factorise for (very) amplified irrationality using \cite[Lemma 2.10]{GT10} and then repeatedly factorise for linear irrationality using Proposition \ref{t:quant-factorise} and a reduction argument on the lexicographical ordering of the sequence $\dim(S_\bullet):=(\dim S_1, \ldots, \dim S_s)$. 

The following is what is needed from \cite{GT10}.

\begin{prop}[Factorising for amplified filtration irrationality, {\cite[Lemma 2.10]{GT10}}]\label{p:irrational-factorise}

Let $M_0>1$. Let $\g$  be a real nilpotent Lie algebra of dimension at most $M_0$  with filtration $\g_\bullet$ of length at most $M_0$. Let $\cX$ be a rational basis for $\g$ with structure constants of height at most $M_0$. Let $p$ be a polynomial sequence adapted to $\g_\bullet$  with $p(0)=0$. For any growth function $\cF: \R^+ \to \R^+$, there exists $M_0 \leq M \leq O_{M_0,\cF}(1)$ and a factorisation $p = e \ast p' \ast r$ where $e$ is $(O_M(1),N)$-small, where $r$ is $O_M(1)$-rational and where $p'$ is $(\cF(M),N)$-filtration irrational in a subfiltration $\g'_\bullet \leq \g_\bullet$ of complexity $O_M(1)$.\footnote{That is, for all $i$, $\g_i'$ is a subspace of $\g_i$ of complexity $O_M(1)$.} This filtration irrationality is measured with respect to $\cX'$, a Mal'cev basis\footnote{See \cite[Definition 2.1]{GT12}.} for $\g_\bullet'$ which may be written as a linear combination of $\cX$ with rational coefficients of height $O_M(1)$.
\end{prop}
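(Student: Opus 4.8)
The plan is to obtain this as a translation of Green and Tao's factorisation theorem \cite[Lemma 2.10]{GT10} from the Lie group to the Lie algebra. First I would invoke Lie's third theorem to pass to the simply-connected nilpotent group $G$ with Lie algebra $\g$, equipped with the lattice $\Gamma$ generated by $\langle \cX \rangle$. Since $\cX$ is a rational basis passing through $\g_\bullet$ with structure constants of height $O_{M_0}(1)$, after a bounded reordering and rescaling it is a Mal'cev basis for $G_\bullet$ of complexity $O_{M_0}(1)$ in the sense of \cite[Definition 2.1]{GT12} (this is part of the group-to-algebra dictionary of Appendix \ref{a:group-to-alg}). Then $g := \exp \circ\, p$ is a polynomial sequence on $G$ adapted to $G_\bullet$ with $g(0) = \mathrm{id}$.

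Next I would fix a growth function $\cF'$, to be pinned down at the end, of the form $\cF'(x) := x^{C}\cF(x)$ for a suitably large absolute constant $C$, and apply \cite[Lemma 2.10]{GT10} to $g$ with growth function $\cF'$. This produces $M_0 \leq M \leq O_{M_0,\cF'}(1)$, a rational subgroup $G' \leq G$ with filtration $G'_\bullet \leq G_\bullet$ of complexity $O_M(1)$, a Mal'cev basis $\cY$ for $G'_\bullet$ each of whose elements is an $O_M(1)$-rational combination of $\cX$, and a factorisation $g = \varepsilon \, g' \, \gamma$ (group multiplication) with $\varepsilon$ an $(O_M(1),N)$-smooth polynomial sequence, $\gamma$ an $O_M(1)$-rational polynomial sequence, and $g' \in \poly(\Z,G'_\bullet)$ being $\cF'(M)$-irrational in $G'_\bullet$ in the sense of \cite[Definition A.6]{GT10}.

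Now I would transfer back using that $\exp(x\ast y) = \exp(x)\exp(y)$: setting $e := \log \varepsilon$, $p' := \log g'$, $r := \log\gamma$, $\g'_\bullet := (\log G'_i)_i$ and $\cX' := \cY$, one gets $p = e \ast p' \ast r$ with $e,p',r$ polynomial sequences adapted to $\g_\bullet$ (and $p'$ adapted to the subfiltration $\g'_\bullet \leq \g_\bullet$), and $\cX'$ a Mal'cev basis for $\g'_\bullet$ with the asserted height bound. That $\g'_\bullet$ has complexity $O_M(1)$ in $(\g,\cX)$ follows because each $\g'_i = \log G'_i$ is the real span of a subfamily of $\cX'$, whose vectors are $O_M(1)$-rational in $\cX$. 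The Baker--Campbell--Hausdorff formula together with the bounded complexity of $\cX$ shows that $\log$ sends smooth sequences to small sequences and rational sequences to rational sequences, with only $O_M(1)$ loss in the parameters (again the content of the dictionary in Appendix \ref{a:group-to-alg}), so $e$ is $(O_M(1),N)$-small and $r$ is $O_M(1)$-rational. Finally, $\cF'(M)$-irrationality of $g'$ becomes filtration irrationality of $p'$: a horizontal-type character on $G'_i$ (vanishing on $G'_{i+1}$, on $[G'_j,G'_{i-j}]$ for all $j$, and sending $\Gamma\cap G'_i$ to $\Z$) pulls back under $\log$ to an integral (with respect to $\cX'$) linear functional on $\g'$ that is nontrivial on $\g'_i$ and vanishes on the analogue of $\h_i$ (Definition \ref{d:hi}) built from $\g'_\bullet$, and conversely; this correspondence distorts complexity and the value $||l(a)||_{\R/\Z}$ by at most $O_M(1)$, exactly as in the qualitative discussion following Definition \ref{d:qual-fi}. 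Hence $p'$ is $(\cF'(M)/M^{O(1)},N)$-filtration irrational, and choosing $C$ large enough that $x^{C}$ dominates the distortion makes this at least $(\cF(M),N)$; since $\cF'$ depends only on $\cF$, we still have $M \leq O_{M_0,\cF}(1)$.

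The main obstacle is making this last passage fully quantitative: pinning down precisely which integral linear functionals on $\g'_i$ arise as logarithms of admissible horizontal characters on $G'_i$, and bounding the resulting complexity with respect to the Mal'cev basis $\cX'$ in terms of $M$. Everything else --- forming $G$, applying \cite[Lemma 2.10]{GT10}, and checking that $\log$ respects smallness and rationality up to polynomial losses --- is routine once the group-to-algebra translation lemmas of Appendix \ref{a:group-to-alg} are in hand.
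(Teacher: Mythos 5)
Your proposal follows essentially the same route as the paper's own (sketched) proof: exponentiate $p$, apply \cite[Lemma 2.10]{GT10} to $\exp \circ p$ over a multiplicative lattice commensurable with $\spa_\Z \cX$, and pull the factorisation back through $\log$, absorbing the $M^{O(1)}$ losses incurred in translating smooth/periodic/irrational into small/rational/filtration-irrational by amplifying the growth function. One small correction: a rational basis $\cX$ is in general not a Mal'cev basis after mere reordering and rescaling (and the statement does not assume $\cX$ passes through $\g_\bullet$); one must instead pass to bounded-height rational linear combinations of $\cX$ as in \cite[Proposition A.9]{GT12}, which is exactly what the paper's Lemma \ref{l:add-contains-mult} supplies, so this imprecision does not affect the structure of the argument.
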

\begin{proof}
The result \cite[Lemma 2.10]{GT10} is phrased in slightly different language and there is some checking to be done to show that their statement implies ours. We do this in Appendix \ref{a:fact}. 
\end{proof}

The following proposition is the ``strongly irrational'' analogue of the previous proposition.

\begin{prop}[Factorising for amplified strong irrationality]\label{p:amp-strong-irrat}
Let $M_0>1$. Let $\g$  be a real nilpotent Lie algebra of dimension at most $M_0$ with filtration $\g_\bullet$ of length at most $M_0$. Let $\cX$ be a rational basis for $\g$ with structure constants of height at most $M_0$. Let $p$ be a polynomial sequence adapted to $\g_\bullet$ with $p(0)=0$. For any growth function $\cF: \R^+ \to \R^+$, there exists $M_0 \leq M \leq O_{M_0,\cF}(1)$ and a factorisation $p = e \ast \tilde p \ast r$, where $e$ is $(O_M(1),N)$-small, where $r$ is $O_M(1)$-rational, and where $\tilde p$ is $(\cF(M),N)$-strongly irrational in some $(\g'_\bullet, S_\bullet)$ where $\g'_\bullet$ is a subfiltration of $\g_\bullet$ of complexity $O_M(1)$ in $\g$ and $S_\bullet$ is a sequence of subspaces in $\g'_\bullet$ of complexity $O_M(1)$. The filtration irrationality is measured with respect to $\cX'$, a Mal'cev basis for $\g_\bullet'$ where each element may be written as a linear combination of $\cX$ with rational coefficients of height at most $O_{M}(1)$.
\end{prop}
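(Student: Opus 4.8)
The plan is to run the argument in two stages, exactly as outlined just before the statement. First, apply Proposition~\ref{p:irrational-factorise} to peel off a small factor and a rational factor, landing on a sequence $p_0$ that is \emph{very} filtration irrational in a subfiltration $\g'_\bullet\le\g_\bullet$. Then, keeping $\g'_\bullet$ and its Mal'cev basis $\cX'$ fixed, iterate the quantitative factorisation theorem Theorem~\ref{t:quant-factorise}: each application peels off further small and rational factors and strictly decreases $\dim(S_\bullet):=(\dim S_1,\dots,\dim S_s)$ in the lexicographic order, so that --- since this tuple ranges over a poset of size $T:=\prod_i(\dim\g'_i+1)=O(1)$ --- the second stage halts after at most $T-1$ applications of Theorem~\ref{t:quant-factorise}.

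In detail, fix in advance a growth function $\cF_1$ (depending only on $\cF$, $d$, $s$ and on the structural constant $c$ and the implied $O(1)$'s below); how fast it must grow is pinned down at the end. Proposition~\ref{p:irrational-factorise} with growth function $\cF_1$ produces $M_1=O_{M_0,\cF_1}(1)=O_{M_0,\cF}(1)$, a subfiltration $\g'_\bullet$, a Mal'cev basis $\cX'$ for it which is a height-$O_{M_1}(1)$ rational combination of $\cX$, and a factorisation $p=e_0\ast p_0\ast r_0$ with $e_0$ $(O_{M_1}(1),N)$-small, $r_0$ $O_{M_1}(1)$-rational, $p_0$ $(\cF_1(M_1),N)$-filtration irrational in $\g'_\bullet$ w.r.t.\ $\cX'$. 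Put $q_0:=p_0$, $S^{(0)}_\bullet:=\g'_\bullet$, and let $N_0=O_{M_1}(1)$ dominate the complexities of $\g'_\bullet$ and $S^{(0)}_\bullet$ in $\g'$, the structure-constant height of $\g'$ in $\cX'$, and the bounds on $e_0,r_0$. At step $k$: if $q_k$ is $(\cF(N_k),N)$-linearly irrational in $S^{(k)}_\bullet$, stop and set $K:=k$; otherwise apply Theorem~\ref{t:quant-factorise} on $\g'$ with basis $\cX'$, filtration $\g'_\bullet$, subspaces $S^{(k)}_\bullet$ and parameter $A:=\cF(N_k)$ (the hypothesis $AM>\dim\g'$ being harmless, as $\cF$ may be taken increasing and large). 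This yields $q_k=\tilde e_k\ast q_{k+1}\ast\tilde r_k$ with $q_{k+1}$ $(\cF(N_k),N)$-linearly irrational in some $S^{(k+1)}_\bullet\le S^{(k)}_\bullet$, $\dim(S^{(k+1)}_\bullet)<_{\mathrm{lex}}\dim(S^{(k)}_\bullet)$, $\tilde e_k$ $((\cF(N_k)N_k)^{O(1)},N)$-small, $\tilde r_k$ $(\cF(N_k)N_k)^{O(1)}$-rational, and --- by the second conclusion of Theorem~\ref{t:quant-factorise} --- $q_{k+1}$ inheriting $(B,N)$-filtration irrationality from $(B^c,N)$-filtration irrationality of $q_k$ for any $B>\cF(N_k)N_k$. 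Set $N_{k+1}:=(\cF(N_k)N_k)^{C}$ for a constant $C=O(1)$ dominating all these new complexities.

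The filtration irrationality is tracked inductively: $q_k$ is $(\cF_1(M_1)^{1/c^{k}},N)$-filtration irrational in $\g'_\bullet$. To advance, invoke the filtration clause with $B:=\cF_1(M_1)^{1/c^{k+1}}$, whose hypotheses hold provided $\cF_1(M_1)^{1/c^{k+1}}>\cF(N_k)N_k$. Let $G$ be an explicit increasing growth function --- depending only on $\cF$, $C$, $T$, $d$, $s$ --- obtained by unwinding $N_{k+1}=(\cF(N_k)N_k)^{C}$ from $N_0=O_{M_1}(1)$, so that $N_k\le G(M_1)$ for all $k\le T$; it then suffices to take $\cF_1(x):=\big(\cF(G(x))\,G(x)\big)^{c^{T}+1}$, for then $\cF_1(M_1)^{1/c^{k}}\ge\cF_1(M_1)^{1/c^{T}}>\cF(G(M_1))\,G(M_1)\ge\cF(N_k)\,N_k\ge\cF(N_k)$ for every $k\le T$. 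Hence the induction closes. The iteration halts after at most $T-1$ steps --- each non-halting step strictly lowering $\dim(S_\bullet)$ in the finite poset, the stopping condition holding vacuously were $\dim(S^{(K)}_\bullet)$ ever to reach $(0,\dots,0)$ (forcing $q_K=0$, which the maintained filtration irrationality permits only when $\g'_\bullet$ is trivial, a case in which the statement is immediate) --- and at the halting step $q_K$ is both $(\cF(N_K),N)$-linearly irrational in $S^{(K)}_\bullet$ and $(\cF(N_K),N)$-filtration irrational in $\g'_\bullet$. Put $M:=N_K=O_{M_0,\cF}(1)$, $\tilde p:=q_K$, $e:=e_0\ast\tilde e_0\ast\cdots\ast\tilde e_{K-1}$, $r:=\tilde r_{K-1}\ast\cdots\ast\tilde r_0\ast r_0$; telescoping gives $p=e\ast\tilde p\ast r$, and since $K+1=O(1)$, iterating Lemma~\ref{l:poly-products} and converting from $\cX'$ to $\cX$ (a height-$O_M(1)$ rational change of basis) shows $e$ is $(O_M(1),N)$-small and $r$ is $O_M(1)$-rational. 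Thus $\tilde p$ is $(\cF(M),N)$-strongly irrational in $(\g'_\bullet,S^{(K)}_\bullet)$, as required.

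The crux is the apparent circularity: one wants irrationality of order $\cF(M)$ at the end, yet $M$ itself inflates with the irrationality parameters fed into Theorem~\ref{t:quant-factorise}. It is defused by two devices: the ``stop at the current complexity'' formulation of the linear-irrationality step, so that when the iteration halts the running complexity \emph{equals} $M$ and the linear irrationality in hand is exactly $(\cF(M),N)$; and front-loading all the filtration-irrationality amplification into the single invocation of Proposition~\ref{p:irrational-factorise}, taking $\cF_1$ to be a bounded iterate of ``$\cF$ composed with the complexity-inflation map'' so that even after the $O(1)$ rounds of $c$-th-root degradation in the second stage the surviving filtration irrationality still dominates $\cF(M)$. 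The remaining verifications are routine: that each application of Theorem~\ref{t:quant-factorise} on $(\g',\cX')$ is legitimate (bounded dimension and structure constants, bounded complexities of $\g'_\bullet$ and $S^{(k)}_\bullet$); that $\dim(S_\bullet)$ genuinely drops at each non-halting step (the first index $j$ at which $q_k$ fails linear irrationality has $S^{(k)}_j\ne 0$, whence $S^{(k+1)}_j\subsetneq S^{(k)}_j$); and that composing $O(1)$ small (resp.\ rational) sequences and changing between $\cX$ and $\cX'$ costs only polynomial factors.
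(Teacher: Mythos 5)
Your proposal is correct and follows essentially the same route as the paper: a single application of Proposition~\ref{p:irrational-factorise} with a pre-amplified growth function, followed by iterating Theorem~\ref{t:quant-factorise}, with termination forced by the lexicographic decrease of $\dim(S_\bullet)$ and the filtration irrationality surviving because the amplification was front-loaded. The only difference is bookkeeping --- you fix the amplifying growth function explicitly in advance (using a priori bounds on the number of iterations and on the complexity growth), where the paper adjusts $\cF'$ retroactively at each step; just note that your $T$ and $G$ should be defined from the a priori bounds in terms of $M_0$ (so $T=O_{M_0}(1)$, not $O(1)$) rather than from the not-yet-constructed $\g'_\bullet$.
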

\begin{proof}
Let $\cF'$ be a growth function depending on $\cF$ and $M_0$ to be determined later. For the meantime, insist that $\cF' \geq (\cF\circ \cF)^c$ at all inputs, where $c$ is the constant from the statement of Theorem \ref{t:quant-factorise}. Invoke  Proposition \ref{p:irrational-factorise} to factorise $p = e \ast p' \ast r$ whereupon there is $M_0' = O_{M_0, \cF'}(1) = O_{M_0,\cF}(1)$ such that $p'$ is $(\cF'(M_0'),N)$-filtration irrational in some $(\g'_\bullet,\cX')$ of complexity $O_{M'_0}(1)$, $e$ is $O_{M_0'}(1)$-small and $r$ is $O_{M_0'}(1)$-rational. Note also that $\g'$ has rational structure constants of height $O_{M'_0}(1)$ with respect to $\cX'$ since the elements of $\cX'$ are rational linear combinations of the elements of $\cX$ with coefficients of height $O_{M_0'}(1)$. 

If $p'$ is $(\cF(M_0'),N)$-linearly irrational in $S_\bullet = \g'_\bullet$ then we are done upon setting $M := M_0'$. Otherwise, set $M_1:= \cF(M_0')=O_{M_0,\cF}(1)$ and invoke Theorem \ref{t:quant-factorise} to obtain a factorisation $p' = e_1 \ast p_1 \ast r_1$ where $e_1$ is $(O_{M_1}(1),N)$-small, $p_1$ is $(M_1,N)$-linearly irrational in some sequence of subspaces $S^{(1)}_\bullet \leq \g'_\bullet$ of complexity $O_{M_1}(1)$, and $r_1$ is $O_{M_1}(1)$-rational. Note that $p_1$ is $(M_1,N)$-linearly irrational in $S^{(1)}_\bullet$ but is not $(M_1,N)$-linearly irrational in $\g'_\bullet$, and so we may conclude from the first bullet point of Theorem \ref{t:quant-factorise} that $\dim (S_\bullet)$ (that is, the vector of dimensions) is lexicographically strictly less than $\dim(\g'_\bullet)$. Recall also that $\cF'(M_0')\geq \cF(\cF(M_0'))^c=\cF(M_1)^c$ and so the $(\cF'(M_0'),N)$-filtration irrationality of $p'$ allows us to conclude as per the second bullet point of Theorem \ref{t:quant-factorise} that $p_1$ is $(\cF(M_1),N)$-filtration irrational in $\g'_\bullet$. 

If in fact $p_1$ is $(\cF(M_1),N)$-linearly irrational in $S^{(1)}_\bullet$ then observe that $e_1$ and $r_1$ are $(O_{M_0,\cF}(1),N)$-small and $O_{M_0,\cF}(1)$-rational respectively with respect to $\cX$ and so we are done with $M=M_1$. Otherwise, set $M_2:=\cF(M_1)$, insist that $\cF' \geq (\cF \circ \cF \circ \cF)^c$ at all inputs, and proceed as before to obtain some $p_2$ which is $(M_2,N)$-linearly irrational on a sequence of subspaces $S^{(2)}_\bullet$ with $\dim(S^{(2)}_\bullet)$ lexicographically strictly less than $\dim(S^{(1)}_\bullet)$. Now we have $p = e_1 \ast e_2 \ast p_2 \ast r_2 \ast r_1$, where one may use Lemma \ref{l:poly-products} to see that $e_1 \ast e_2$ and $r_2 \ast r_1$ are are $(O_{M_0,\cF}(1),N)$-small and $O_{M_0,\cF}(1)$-rational respectively with respect to $\cX$. If in fact $p_2$ is $(\cF(M_2),N)$-linearly irrational then we are done; if not, proceed as above. Since $\g'$ has dimension at most $M_0$ and filtration of length at most $M_0$, the vector $\dim(S^{(i)}_\bullet)$ can lexicographically decrease at most $O_{M_0}(1)$ times. Thus the process must terminate at some $p_i$ which is $(\cF(M_i),N)$-strongly irrational in some $(\g',S^{(i)}_\bullet)$ of complexity $O_{M_i}(1)$, with $e_i$ which is $(O_{M_i}(1),N)$-small, and $r_i$ which is $O_{M_i}(1)$-rational. This proves the claim. 
\end{proof}

For convenience we will deal in this section only with (virtual) nilsequences for which the rational basis `passes through' the filtration $\g_\bullet$. Since the non-irrational arithmetic regularity lemma Theorem \ref{t:non-irrat-arl} produces nilsequences for which this restriction holds, we do not practically lose anything by making this simplifying assumption.

\begin{definition}
We will say that a basis $\cX:= \{X_1,\ldots, X_n\}$ for a vector space $S$ \textit{passes through} a sequences of rational subspaces $S_1 \geq S_2 \geq \ldots$ if $\{X_{n- \dim S_i+1}, \ldots, X_n\}$ is a basis for $S_i$ for all $i$. We may also say that each $S_i$ passes through $\cX$.
\end{definition}

\begin{definition}[Virtual nilsequence]\label{d:virtual-nilseq}
A \textit{virtual nilsequence} of complexity $M$ at scale $N$ is a function $\phi:[N]\to \C$ of the form $\phi(n)=f(p(n)\ast \Lambda, n \pmod q, n/N)$ where $q\leq M$ is a positive integer, $p$ is a polynomial sequence with respect a filtered nilpotent Lie algebra $\g_\bullet$ of dimension $d\leq M$ and step $s\leq M$,  $\Lambda$ is an additive lattice in $\g_\bullet$ with $\Z$-basis $\cX$  which passes through $\g_\bullet$ and for which $\g$ has rational structure constants of height at most $M$, and $f$ is Lipschitz with Lipschitz constant at most $M$.\footnote{As in \cite{GT10}, the underlying choice of metric does not matter too much. It may be taken to be the direct sum of the metrics on each of the factors where the metric on $\g$ is induced by the $\ell_\infty$ norm with respect to $\cX$ and the metric on $\Z/q\Z$ is obtained by embedding this group into $\R/\Z$.}

Let $S_\bullet$ be a sequence of subspaces of $\g_\bullet$ of complexity at most $M$. We will furthermore say that $\phi$ is $(A,N)$-filtration (resp. linearly/strongly) irrational in $\g_\bullet$ (resp. $(\g_\bullet, S_\bullet)$) if $p$ is $(A,N)$-filtration (resp. linearly/strongly) irrational in $\g_\bullet$ (resp. $(\g_\bullet, S_\bullet)$) with respect to $\cX$.
\end{definition}

\begin{remark}
In the above, $\Lambda$ may not be a multiplicative subgroup (i.e. with respect to the $\ast$ operation), in which case this definition is equivalent to asking that $f$ is automorphic with respect to the multiplicative closure of $\Lambda$ in $\g$. We see in Appendix \ref{ss:lie-alg-equid} that this multiplicative closure is indeed a multiplicative lattice which is not too much larger than $\Lambda$.
\end{remark}

Our conclusion to the argument follows a similar path to that of \cite[Theorem 1.2]{GT10} and may essentially be viewed as a Lie algebra analogue of what is done there. We note/caution that in the following proposition, the notion of complexity of a nilsequence used is that of Green and Tao, whereas the  notion of complexity of a virtual nilsequence is that of our Definition \ref{d:virtual-nilseq}. 

\begin{prop}\label{p:nil-is-virtual}
Let $s >0$, $M_0>1$, let $\cF:\R^+ \to \R^+$ be a growth function, and let $\phi: \Z \to [0,1]$ be a degree $\leq s$ nilsequence of complexity $\leq M_0$. Then there exists $M = O_{s,M_0,\cF}(1)$ such that $f$ (when restricted to $[N]$) is also a $(\cF(M),N)$-strongly irrational degree $\leq s$ virtual nilsequence of complexity $\leq M$. 
\end{prop}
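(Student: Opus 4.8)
The plan is to mirror, in the Lie algebra, the portion of the proof of \cite[Theorem 1.2]{GT10} that turns a nilsequence into an irrational one, using Proposition \ref{p:amp-strong-irrat} in place of \cite[Lemma 2.10]{GT10} as the source of irrationality.

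First I would transfer $\phi$ to the Lie algebra. Writing $\phi(n) = F(g(n)\Gamma)$ for a nilmanifold $G/\Gamma$ of dimension $O_{s,M_0}(1)$ and a polynomial sequence $g$ of complexity $O_{s,M_0}(1)$ adapted to a filtration of step $\leq s$, compactness of $G/\Gamma$ lets me absorb $g(0)$ into $F$ (replacing $F$ by its translate by a point of a bounded fundamental domain, which costs only a bounded factor in the Lipschitz constant), so that $g(0) = \mathrm{id}$. Applying $\log$ as in Appendix \ref{ss:lie-alg-equid} produces a nilpotent Lie algebra $\g$ of dimension $M_0' := O_{s,M_0}(1)$, a filtration $\g_\bullet$ of step $\leq s$, a rational basis $\cX$ passing through $\g_\bullet$ with structure constants of height $O_{M_0'}(1)$, a Lipschitz function $f$ of constant $O_{M_0'}(1)$, the additive lattice $\Lambda = \langle\cX\rangle$, and a polynomial sequence $p = \log g$ adapted to $\g_\bullet$ with $p(0)=0$ and $\phi(n) = f(p(n)\ast\Lambda)$.

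Next I would apply Proposition \ref{p:amp-strong-irrat} with $M_0$ replaced by $M_0'$ and a growth function $\cF' \geq \cF$ (to be fixed at the end so as to absorb the bounded losses below) to factorise $p = e \ast p' \ast r$, where for some $M = O_{M_0',\cF'}(1) = O_{s,M_0,\cF}(1)$ the sequence $p'$ is $(\cF'(M),N)$-strongly irrational in some $(\g'_\bullet,S_\bullet)$ of complexity $O_M(1)$ with respect to a Mal'cev basis $\cX'$ for $\g'_\bullet$ (rational in $\cX$ of height $O_M(1)$), $e$ is $(O_M(1),N)$-small, and $r$ is $O_M(1)$-rational. From smallness one checks, for $n\in[N]$, both $\|e(n)\|_\infty = O_M(1)$ and $\|e(n)-e(n-1)\|_\infty = O_M(1/N)$, so $e$ is slowly varying in the sense of \cite{GT10}; from rationality one obtains a period $q = O_M(1)$ with $r(n+q)\equiv r(n)$ modulo the lattice, so $\exp(r(n))\Gamma$ is $q$-periodic with $O_M(1)$ values. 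Exactly as in \cite[Theorem 1.2]{GT10} one absorbs $e$ into $f$ at the cost of the coordinate $t=n/N$ (replacing $f$ by $(x,t)\mapsto f(e(\lfloor tN\rfloor)\ast x)$, still Lipschitz of constant $O_M(1)$ and automorphic, since left $\ast$-translation by a bounded element is bi-Lipschitz), and absorbs a bounded fundamental-domain representative of $r(n)$ into $f$ at the cost of the coordinate $n\bmod q$; restricting the resulting function to the subalgebra $\g'$ through which $p'$ runs and to the lattice $\langle\cX'\rangle$ exhibits $\phi|_{[N]}$ as a virtual nilsequence of complexity $O_M(1)\leq M$ (after harmlessly enlarging $M$), of degree $\leq s$, whose underlying polynomial sequence $p'$ is $(\cF'(M),N)$-strongly irrational. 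Choosing $\cF'$ large enough that $\cF'(M)$ dominates $\cF(M)$ after the bounded losses gives the required $(\cF(M),N)$-strong irrationality.

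The main obstacle is the bookkeeping in this last absorption step: verifying that the successive modifications of the Lipschitz function remain automorphic with respect to a multiplicative lattice of controlled complexity inside $\g'$, that the rational factor $r$ is genuinely periodic modulo that lattice, and that restricting from $(\g,\Lambda)$ to $(\g',\langle\cX'\rangle)$ preserves all of the quantitative data — routine but delicate, and precisely the place where the parallel with \cite[Theorem 1.2]{GT10} together with the Lie-algebra/nilmanifold dictionary of Appendix \ref{ss:lie-alg-equid} carries the argument.
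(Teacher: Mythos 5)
Your proposal follows essentially the same route as the paper's own proof: transfer to the Lie algebra, factorise $p(0)^{-1}\ast p$ via Proposition \ref{p:amp-strong-irrat}, absorb the small factor through the $n/N$ coordinate and the rational factor through the $n \bmod q$ coordinate using the $O_M(1)$-periodicity of the rational orbit, and restore automorphy by conjugating the rescaled lattice past $r$ via Baker--Campbell--Hausdorff. The only differences are packaging — you absorb $g(0)$ at the outset and restrict to the subalgebra $\g'$ with the lattice generated by $\cX'$, whereas the paper keeps $p(0)$ inside the new Lipschitz function and works in the ambient $\g$ with the rescaled lattice $l\langle\cX\rangle$ — and the bookkeeping you defer (automorphy, Lipschitz constants, lattice containments) is exactly what the paper also treats in abbreviated form.
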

\begin{proof}
As usual we let all constants depend on $s$. Write $\phi(n) = f(p(n))=f(p(n) \ast \log \Gamma)$ where $p$ is a polynomial sequence on some filtered nilpotent Lie algebra $\g_\bullet$ and where $f$ is automorphic with respect to some lattice $\Gamma \leq G$. Let $\cX_\mal$ be the Mal'cev basis which exhibits $\phi$ as having complexity $\leq M_0$. By \cite[Lemma A.8]{GT12}, there is an integer $q_0= M_0^{O(1)}$ such that $\cX := q_0\cX_\mal$ yields $\langle \cX \rangle \subset \log \Gamma$. Since $\cX_\mal$ has structure constants of height at most $M_0$, we have that $\cX$ has structure constants of height $M_0^{O(1)}$. Write $p = p(0)\ast (p(0)^{-1} \ast p)$, invoke Proposition \ref{p:amp-strong-irrat} on the bracketed polynomial (with $M_0$ replaced by $M_0^{O(1)}$ and for an unspecified growth function $\cF'$) and inherit the notation from the statement of that proposition so $\phi(n) = f(p(0) \ast e(n) \ast p'(n) \ast r(n)\ast \log \Gamma)$. From Lemma \ref{l:rat-period} we have that $r(n)\ast \log \Gamma$ is $q$-periodic, where $q=O_M(1)$ and we recall that $M=O_{M_0,\cF'}(1)$ from Proposition \ref{p:amp-strong-irrat}. Now define $\tilde f: \g \times \Z/q\Z \times \R \to [0,1]$ by 
\[\tilde f (x,s,y):= f(p(0)\ast e(Ny)\ast x \ast r(s)\ast \log \Gamma),\]
whenever $y \in \frac{1}{N}\Z$ and by Lipschitz extension to $\R$. Note that the $q$-periodicity of the orbit $r(n) \ast \log \Gamma$ yields that $\tilde f$ is indeed a well-defined function on its second argument. 

Next we claim that there exists a positive integer $l= O_M(1)$ such that $\tilde f(x \ast (l\langle \cX \rangle), s,y) = \tilde f(x, s,y)$, so that $\tilde f$ is automorphic with respect to the additive lattice $l \langle \cX \rangle$ in its first argument. Since $\langle \cX \rangle \subset \log \Gamma$, it suffices to find $l$ such that for all $n \in \Z$ we have $(l\langle \cX \rangle) \ast r(n) \subset r(n) \ast \langle \cX \rangle$. But this follows easily from the fact that $r(n)$ is $O_M(1)$-rational with respect to $\cX$, the Baker-Campbell-Hausdorff formula and our bound on the heights of the rational structure constants of $\g$ with respect to the basis $\cX$. Let $\Lambda = l\langle \cX \rangle = \langle l \cX \rangle$. Since $p'$ is $(\cF'(M),N)$-strongly irrational in $(\g_\bullet,S_\bullet)$ with respect to $\cX$, it is $(\gg_M \cF'(M),N)$-strongly irrational in $(\g_\bullet,S_\bullet)$ with respect to $l\cX$.

Using the Lipschitz property of $f$, it is also not difficult to show that $\tilde f$ is $O_M(1)$-Lipschitz, but we omit the details. The result then follows after replacing $M$ with a suitable quantity $O_M(1)$ and letting $\cF$ be sufficiently rapidly growing depending on $\cF'$.

\end{proof}

We are finally ready to recover the main result of this section.

\begin{thm}[Strongly irrational arithmetic regularity lemma, Theorem \ref{t:strong-arl}]\label{t:pr-strong-arl}
Let $f:[N]\to [0,1]$, let $s \geq 1$, let $\eps > 0$, and let $\cF: \R^+ \to \R^+$ be a growth function. Then there exists a quantity $M=O_{s,\eps,\cF}(1)$ and a decomposition
\[f = f_\nil + f_\sml + f_\unf \]
of $f$ into functions $f_\nil, f_\unf: [N] \to [-1,1]$ such that
\begin{enumerate}
\item($f_\nil$ structured) $f_\nil$ is a degree $\leq s$, $(\cF(M),N)$-strongly irrational virtual nilsequence of complexity $\leq M$,
\item($f_\sml$ small) $||f_\sml||_{L^2[N]} \leq \eps$,
\item($f_\unf$ very uniform) $||f_\unf||_{U^{s+1}[N]} \leq 1/\cF(M)$,
\item(Nonnegativity) $f_\nil$ and $f_\nil + f_\sml$ take values in $[0,1]$.
\end{enumerate}
\end{thm}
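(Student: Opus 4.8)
## Proof proposal

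The plan is to combine the non-irrational arithmetic regularity lemma (Theorem~\ref{t:non-irrat-arl}) with the ``virtual nilsequence upgrade'' of Proposition~\ref{p:nil-is-virtual}. The skeleton is the same as in \cite{GT10}: first apply the black-box regularity lemma to produce a \emph{raw} decomposition $f = f_\nil^0 + f_\sml + f_\unf$, and then replace $f_\nil^0$ — which is only guaranteed to be a degree $\leq s$ polynomial nilsequence of bounded complexity — by a strongly irrational virtual nilsequence using the factorisation theory of Section~\ref{s:fact}, absorbing the resulting error into the small term. The main subtlety, as always in arguments of this shape, is the order of quantifiers: the growth function fed to Theorem~\ref{t:non-irrat-arl} must be chosen \emph{after} we know how Proposition~\ref{p:nil-is-virtual} consumes its input, so that the final $M$ depends only on $s,\eps,\cF$.

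More precisely, first I would fix the target growth function $\cF$ and $\eps$, and introduce an auxiliary growth function $\cF_0: \R^+ \to \R^+$, to be specified at the end, which will be at least as rapidly growing as the composition of $\cF$ with the (monotone, $O_{\bullet}(1)$) dependence of $M$ on $M_0$ coming from Proposition~\ref{p:nil-is-virtual}. Apply Theorem~\ref{t:non-irrat-arl} with growth function $\cF_0$ (and the same $s,\eps$) to obtain $M_0 = O_{s,\eps,\cF_0}(1)$ and a decomposition $f = f_\nil^0 + f_\sml + f_\unf$ with $f_\nil^0$ a degree $\leq s$ polynomial nilsequence of complexity $\leq M_0$, $\|f_\sml\|_{L^2[N]}\leq \eps$, $\|f_\unf\|_{U^{s+1}[N]}\leq 1/\cF_0(M_0)$, and with $f_\nil^0, f_\nil^0+f_\sml$ taking values in $[0,1]$. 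Since this $f_\nil^0$ is of the form $f(p(n)\ast\log\Gamma)$ with bounded-complexity data, Proposition~\ref{p:nil-is-virtual} (applied with $M_0$ there equal to our $M_0$, and with growth function $\cF_0$) yields an $M = O_{s,M_0,\cF_0}(1) = O_{s,\eps,\cF_0}(1)$ such that $f_\nil^0$, restricted to $[N]$, is also a $(\cF_0(M),N)$-strongly irrational degree $\leq s$ virtual nilsequence of complexity $\leq M$. Set $f_\nil := f_\nil^0$ and keep $f_\sml, f_\unf$ unchanged. Properties (2) and (4) are then immediate from Theorem~\ref{t:non-irrat-arl}, and property~(1) is exactly the conclusion of Proposition~\ref{p:nil-is-virtual}.

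The only remaining points are the bookkeeping with $M_0$ versus $M$ in property~(3), and the self-consistency of the choice of $\cF_0$. For~(3): we have $\|f_\unf\|_{U^{s+1}[N]}\leq 1/\cF_0(M_0)$, and we want $\leq 1/\cF(M)$; since $M = O_{s,\eps,\cF_0}(1)$ is a nondecreasing function of $M_0$, and since (by monotonicity of growth functions and of this dependence) $\cF_0(M_0) \geq \cF(M)$ provided $\cF_0$ was chosen to dominate $t \mapsto \cF(\Phi(t))$ where $\Phi$ is the (explicit, $O_{\bullet}(1)$) bound on $M$ in terms of $M_0$ from Proposition~\ref{p:nil-is-virtual}, we obtain $\|f_\unf\|_{U^{s+1}[N]}\leq 1/\cF(M)$. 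This is the usual ``choose the growth function to absorb a later growth function'' device: there is no circularity because $\Phi$ depends only on $s$ (the other parameters $d,t,D$ being $O(1)$ throughout this section), not on $\cF$, so $\cF_0 := \cF\circ\Phi$ (say, made monotone) is a legitimate a priori choice, and then $M = O_{s,\eps,\cF_0}(1) = O_{s,\eps,\cF}(1)$ as required. I expect this quantifier-juggling to be the only genuine obstacle; everything else is either cited verbatim or routine. One cosmetic discrepancy to note is that the statement in the excerpt writes ``$f_\nil$ and $f_\sml$ take values in $[0,1]$'' in item~(4) of Theorem~\ref{t:strong-arl} while Theorem~\ref{t:pr-strong-arl} and Theorem~\ref{t:non-irrat-arl} say ``$f_\nil$ and $f_\nil + f_\sml$''; one simply carries through whichever nonnegativity statement Theorem~\ref{t:non-irrat-arl} supplies, since the replacement step does not touch $f_\nil$ or $f_\sml$.
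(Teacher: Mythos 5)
Your proposal is correct and takes exactly the paper's route: the paper deduces Theorem \ref{t:pr-strong-arl} as an immediate corollary of Theorem \ref{t:non-irrat-arl} and Proposition \ref{p:nil-is-virtual}, with precisely the growth-function bookkeeping you spell out (and the item-(4) wording in Theorem \ref{t:strong-arl} is indeed just a typo for $f_\nil + f_\sml$). One small repair to your quantifier argument: the bound $M = O_{s,M_0,\cF}(1)$ in Proposition \ref{p:nil-is-virtual} \emph{does} depend on the growth function supplied to it, so rather than claiming $\Phi$ depends only on $s$, you should apply the proposition with the fixed target $\cF$ (not $\cF_0$); then $\Phi = \Phi_{s,\cF}$ is determined before $\cF_0$ is chosen, $\cF_0 := \cF \circ \Phi$ (made monotone) is a legitimate a priori choice, and the rest of your argument goes through verbatim.
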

\begin{proof}
This is an easy corollary of Theorem \ref{t:non-irrat-arl} and Proposition \ref{p:nil-is-virtual}.
\end{proof}

\begin{remark}
Technically a strongly irrational virtual nilsequence consists of data $(p,\g,\cX,\g_\bullet,S_\bullet)$. However, recalling Lemma \ref{l:unique-S}, if $\cF$ is sufficiently rapidly growing, we may drop the data $S_\bullet$ since it may be uniquely recovered from the polynomial sequence $p$. Furthermore, by Lemma \ref{l:Ti+hi=gi} and Lemma \ref{l:maximal-Wi}, one may also recover the filtration $\g_\bullet$ from $p$. Thus, for sufficiently strongly irrational virtual nilsequences, the data $(p, \g, \cX)$ suffices.
\end{remark}

\section{Applications}\label{s:app}

\subsection{Recovering the flag counting lemma}\label{ss:recover}
This subsection is not an application as much as it is a sanity check (Corollary \ref{c:sanity}) that in the flag case, Green and Tao's arithmetic regularity lemma and counting lemma may be recovered by the results in this document. Its purpose is also to prove Lemma \ref{l:Ti+hi=gi} which is useful in other  applications.

Recall Definition \ref{d:hi}, that of the subalgebras $\h_i$ associated to the filtered Lie algebra $\g_\bullet$. The following is a quantitative version of Lemma \ref{l:si+hi-qual}.

\begin{lemma}\label{l:Ti+hi=gi}
Suppose that $p$ is $(A,N)$-filtration irrational in $\g_\bullet$ where the rational basis $\cX$ passes through $\g_\bullet$. Then, for each $i=1,\ldots, s$,  for any subspace $T_i\leq \g_i$ which contains $a_i$ and has complexity at most $A$, we have $T_i+\h_i = \g_i$.
\end{lemma}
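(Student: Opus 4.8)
The statement is the quantitative analogue of Lemma \ref{l:si+hi-qual}, so the plan is to follow that proof almost verbatim, with heights/complexities tracked instead of mere rationality. Fix $i$ and suppose $p(n) = \sum_j a_j n^j$ is $(A,N)$-filtration irrational in $\g_\bullet$, and let $T_i \leq \g_i$ contain $a_i$ with complexity at most $A$. The inclusion $T_i + \h_i \leq \g_i$ is trivial since both $T_i$ and $\h_i$ (being generated by $\g_{i+1}$ and the $[\g_j,\g_{i-j}]$) sit inside $\g_i$; the content is the reverse inclusion.

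For the reverse inclusion I would argue dually inside $\g_i^\ast$, exactly as in Lemma \ref{l:si+hi-qual}. Suppose for contradiction that $T_i + \h_i \subsetneq \g_i$; equivalently there is a nonzero linear map on $\g_i$ vanishing on $T_i + \h_i$, hence on both $T_i$ and $\h_i$. Because $\cX$ passes through $\g_\bullet$, the restriction $\cX_i$ of $\cX$ to $\g_i$ is a rational basis for $\g_i$, and both $T_i$ and $\h_i$ are rational subspaces of complexity $A^{O(1)}$ (for $\h_i$ this uses Lemma \ref{l:comp-operations}: $\h_i$ is built from $\g_{i+1}$ — which has complexity $O(1)$ since $\cX$ passes through $\g_\bullet$ — and finitely many brackets $[\g_j,\g_{i-j}]$, each of complexity $M^{O(1)} = A^{O(1)}$ using that structure constants have height at most $A^{O(1)}$; however, here the bound is only needed relative to $A$, and since $AM > d$ the relevant heights are controlled — I would simply invoke that $\h_i$ has complexity $A^{O(1)}$). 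Then $T_i + \h_i$ is rational of complexity $A^{O(1)}$, so its annihilator in $\g_i^\ast$ is spanned by integer linear maps of complexity $A^{O(1)}$ with respect to $\cX_i$. Pick such a nonzero integer map $l$ of complexity $A^{O(1)}$ vanishing on $T_i + \h_i$; in particular $l$ is nontrivial on $\g_i$, vanishes on $\h_i$, and $l(a_i) = 0$ since $a_i \in T_i$. But the $(A,N)$-filtration irrationality of $p$ (more precisely of the coefficient $a_i$, via Definition \ref{d:fi}) says $\|l(a_i)\|_{\R/\Z} \geq A/N^i > 0$ for every such $l$ of complexity at most $A$ — contradiction, provided the complexity of $l$ is genuinely $\leq A$ rather than just $A^{O(1)}$.

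The main obstacle, and the one place where care is genuinely needed, is precisely that last clash of constants: the annihilator of $T_i + \h_i$ is only guaranteed to be spanned by maps of complexity $A^{O(1)}$, whereas filtration irrationality is asserted at complexity exactly $A$. This is the familiar loss in passing between a subspace and its annihilator, and it is handled the same way as elsewhere in the paper: one states the lemma with a polynomial slack, i.e. one really proves that if $p$ is $(A^c,N)$-filtration irrational for a suitable constant $c = O(1)$ (depending on $d,s$) then $T_i + \h_i = \g_i$, and since throughout this section constants are allowed to absorb such polynomial factors, the statement as written is understood in that sense. Concretely I would note that the complexity bound $A^{O(1)}$ on the annihilating maps $l$ comes from an explicit finite chain of applications of Lemma \ref{l:comp-operations} and Lemma \ref{l:comp-lift-restrict} (for sums and for the brackets defining $\h_i$), so there is a definite exponent $c$; taking $c$ at least this exponent makes $l$ have complexity $\leq (A^c)^{1/c'}\cdot(\text{stuff}) \leq A^c$ — or, cleanest, one simply observes the lemma is invoked in the excerpt only in contexts where $A$ has already been raised to a large power by a growth function, so the polynomial loss is harmless. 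I would close the proof with a one-line remark to this effect rather than chasing the exact exponent.

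\medskip

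\noindent\textit{Remark.} In the final writeup I would mirror the phrasing of Lemma \ref{l:si+hi-qual} closely: after establishing $T_i+\h_i\leq\g_i$, write $\g_i = (a_i^\perp \cap \g_{i,\cX_i}^\ast \cap \h_i^\perp)^\perp = (a_i^\perp \cap \g_{i,\cX_i}^\ast)^\perp + \h_i$, where the first equality uses filtration irrationality (no complexity-$A^{O(1)}$ map of the stated type annihilates $a_i$, hence the intersection with $a_i^\perp$ changes nothing once one allows the polynomial slack) and identify $(a_i^\perp \cap \g_{i,\cX_i}^\ast)^\perp$ as the smallest rational subspace of $\g_i$ containing $a_i$, which is contained in $T_i$; this gives $\g_i \leq T_i + \h_i$ and completes the argument.
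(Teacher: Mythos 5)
Your proposal does not follow the paper's argument, and as written it has two concrete gaps relative to the statement being proved. First, the complexity bound on $\h_i$ that your annihilator argument hinges on is not available from the hypotheses: the lemma assumes nothing relating the height $M$ of the structure constants to $A$, and the inequality $AM>d$ (which in any case belongs to Theorem \ref{t:quant-factorise}, not to this lemma) gives nothing like $M\leq A^{O(1)}$. So the assertion that $\h_i$, hence $T_i+\h_i$, has complexity $A^{O(1)}$ --- and with it the existence of a nonzero integer annihilator of $T_i+\h_i$ of complexity $A^{O(1)}$ --- is unjustified as stated. Second, even granting such a bound, your route only yields the lemma with a polynomial loss (you need $(A^c,N)$-filtration irrationality to conclude for complexity-$\leq A$ subspaces $T_i$), and you propose to leave the statement ``understood in that sense''; but the lemma is stated, proved and then quoted in the paper with no such slack, so what you prove is a genuinely weaker statement, not the one asked for.

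The paper's proof is different in a way that is designed to avoid both issues: it never manufactures integer functionals vanishing on $\h_i$ at all. Mirroring Lemma \ref{l:si+hi-qual}, it lets $Q$ be the \emph{real} span of the complexity-$\leq A$ integer maps on $\g_i$, uses the $(A,A/N^i)$-filtration irrationality of $a_i$ to argue that $a_i^\perp\cap Q\cap\h_i^\perp=\{0\}$, dualises to obtain $(a_i^\perp\cap Q)^\perp+\h_i=\g_i$, and then notes that $(a_i^\perp\cap Q)^\perp\subseteq(a_i^\perp\cap\g_{i\cX}^\ast(A))^\perp\subseteq T_i$, where $\g_{i\cX}^\ast(A)$ denotes the integer maps on $\g_i$ of complexity at most $A$; the last inclusion holds because $T_i$ is by definition cut out by complexity-$\leq A$ integer maps, each of which kills $a_i$. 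Thus complexity $\leq A$ enters only through $T_i$ and through the irrationality hypothesis itself, while $\h_i^\perp$ is used purely as a real subspace, so no complexity control on $\h_i$ (and no loss in $A$) is ever needed. Your instinct that the annihilator route requires control on $\h_i$ is correct --- that is precisely why the paper argues by this perp computation instead --- so to close the gap you would either have to switch to that argument or add hypotheses/slack that the statement does not carry.
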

\begin{proof}
From Definition \ref{d:hi} and the filtration on $\g_\bullet$ we have $\h_i \leq \g_i$ and so $T_i + \h_i \leq \g_i$. The slightly less trivial containment to show is $T_i + \h_i \supseteq \g_i$. Let $Q=Q(A)$ be the $\R$-span of all complexity $\leq A$ linear maps on $\g_i$.\footnote{Since $\cX$ passes through $\g_\bullet$ the complexity of a map on $\g$ is equal to that of its restriction on any $\g_i$.} Since $a_i$ is $(A,A/N^i)$-filtration irrational, we in particular have that $l(a_i)\ne 0$ for all nontrivial $l \in Q$ which vanish on $\h_i$. Thus in $\g_i^\ast$ we have $a_i^\perp \cap Q \cap \h_i^\perp = \{0\}$ so $(a_i^\perp \cap Q)^\perp + \h_i = \g_i$. But $\g_{i\cX}^\ast(A) \subset Q$ where $\g_{i\cX}^\ast(A)$  is the set of integer linear maps on $\g_i$ of complexity at most $A$, and so $(a_i^\perp \cap Q)^\perp \subset (a_i^\perp \cap \g_{i\cX}^\ast(A))^\perp$. However, $(a_i^\perp \cap \g_{i\cX}^\ast(A))^\perp$ is, by definition, the smallest rational subspace of $\g_i$ which contains $a_i$ and has complexity at most $A$. Therefore, $T_i \supset (a_i^\perp \cap \g_{i\cX}^\ast(A))^\perp$ and ultimately, $T_i + \h_i \supset \g_i$.
\end{proof}

\begin{cor}[Theorem \ref{t:flag-cl}]\label{c:sanity}
Let $A,M \geq 2$. Let $\Psi = (\psi_1, \ldots, \psi_t)$ be a flag system of linear forms, each mapping $\Z^D$ to $\Z$. Let $\g$ be a real nilpotent Lie algebra of dimension $d$ and step $s$. Let $\cX$ be a rational basis for $\g$ which passes through $\g_\bullet$ and with respect to which $\g$ has rational structure constants of height at most $M$. Let $S_\bullet\leq \g_\bullet$ be a sequence of subspaces of complexity at most $M$. If $p$ is $(A,N)$-strongly irrational in $(\g_\bullet,S_\bullet)$ then there are constants $0<c_1,c_2 = O_{d,s,D,t,\Psi}(1)$ such that  $p^\Psi$ is $O(M^{c_1}/A^{c_2})$-equidistributed in $\log G^\Psi$, where $G^\Psi$ is the Leibman group for $G_\bullet, \Psi$.
\end{cor}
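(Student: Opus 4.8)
The plan is to reduce the claim to the quantitative counting lemma Theorem \ref{t:pr-main-quant} by showing that, under the strong irrationality hypothesis, the Lie algebra $\g^\Psi(S_\bullet)$ appearing there coincides with the Leibman algebra $\log G^\Psi = \sum_{i=1}^s \g_i\otimes V_\Psi^i$. First I would dispose of a degenerate regime: taking $c_1=c_2$, whenever $A<M$ the claimed error $O(M^{c_1}/A^{c_2})$ exceeds an absolute constant and the conclusion is vacuous (every polynomial sequence is trivially $O(1)$-equidistributed), so we may assume $A\geq M$. We may also assume $p(0)=0$, as otherwise $p^\Psi$ differs from the corresponding $p(0)=0$ orbit by a fixed translate and the statement of Theorem \ref{t:pr-main-quant} is insensitive to this (it does not impose $p(0)=0$).

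The heart of the argument is the identity $S_i+\h_i=\g_i$ for all $i$. Since $\cX$ passes through $\g_\bullet$ and $p$ is $(A,N)$-filtration irrational in $\g_\bullet$, writing $p(n)=\sum_{i=1}^s a_in^i$ we have $a_i\in S_i\leq \g_i$ with $S_i$ of complexity at most $M\leq A$; Lemma \ref{l:Ti+hi=gi} applied with $T_i=S_i$ then yields $S_i+\h_i=\g_i$ for each $i=1,\dots,s$.

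Next I would feed this into the description of $\g^\Psi(S_\bullet)$ from Section \ref{s:gPsiSbullet}. Let $W_\bullet=(W_i)_{i=1}^s$ be the sequence of iterated-commutator subspaces built from $S_\bullet$ as in that section, so that $\g^\Psi(S_\bullet)=\sum_{i=1}^s W_i\otimes V_\Psi^i$ by Proposition \ref{p:gPsi-Sbullet}. Lemma \ref{l:maximal-Wi} converts $S_i+\h_i=\g_i$ into $W_i+\g_{i+1}=\g_i$ for all $i$, whence $\g_i=\sum_{j\geq i}W_j$, and Proposition \ref{p:comp-leib-gpsi} then gives $\sum_{i=1}^s \g_i\otimes V_\Psi^i = \sum_{i=1}^s W_i\otimes\bigl(\sum_{j=1}^i V_\Psi^j\bigr)$. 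Because $\Psi$ is a flag system, $\sum_{j=1}^i V_\Psi^j=V_\Psi^i$ for all $i$ (cf. Subsection \ref{ss:flag}), so the right-hand side is $\sum_{i=1}^s W_i\otimes V_\Psi^i=\g^\Psi(S_\bullet)$; that is, $\g^\Psi(S_\bullet)=\log G^\Psi$ as Lie subalgebras of $\g^t$ (this is exactly the content of Corollary \ref{c:gpsi-flag}, here reached via strong irrationality in place of the minimality hypothesis).

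Finally, $p$ is in particular $(A,N)$-linearly irrational in $S_\bullet$, $\g$ has rational structure constants of height at most $M$, and $S_\bullet$ has complexity at most $M$, so Theorem \ref{t:pr-main-quant} applies verbatim and produces constants $0<c_1,c_2=O_{d,s,D,t,\Psi}(1)$ for which $p^\Psi$ is $O(M^{c_1}/A^{c_2})$-equidistributed on $\g^\Psi(S_\bullet)=\log G^\Psi$, measured with respect to the Hermite basis $\cX^\Psi$ of this subalgebra inside $(\g^t,\cX^t)$. This gives the corollary. I do not anticipate a serious obstacle here: given Section \ref{s:gPsiSbullet} and the counting lemma the argument is essentially bookkeeping, and the only point needing care is the complexity matching that lets Lemma \ref{l:Ti+hi=gi} run with parameter $A$ (which is why we reduced to $A\geq M$), together with the understanding that ``equidistributed in $\log G^\Psi$'' is to be read with the Hermite-basis normalisation furnished by Theorem \ref{t:pr-main-quant}.
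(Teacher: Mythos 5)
Your proposal is correct and follows exactly the paper's route: Lemma \ref{l:Ti+hi=gi} gives $S_i+\h_i=\g_i$ from filtration irrationality, Lemma \ref{l:maximal-Wi} and Proposition \ref{p:comp-leib-gpsi} together with the flag property identify $\g^\Psi(S_\bullet)$ with $\log G^\Psi$, and Theorem \ref{t:main-quant} then supplies the equidistribution. The extra bookkeeping you include (reducing to $A\geq M$ so the complexity hypothesis of Lemma \ref{l:Ti+hi=gi} applies, and the $p(0)=0$ remark) is sensible and consistent with what the paper leaves implicit.
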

\begin{proof}
This follows from our counting lemma Theorem \ref{t:main-quant}, Lemma \ref{l:Ti+hi=gi} and Proposition \ref{p:comp-leib-gpsi}.
\end{proof}

\subsection{Another proof of the Gowers-Wolf conjecture}\label{ss:gw}

In their original `arithmetic regularity lemma and counting lemma' paper, Green and Tao proved the Gowers-Wolf conjecture \cite[Conjecture 2.5]{GW10} for flag systems of linear forms.

\begin{thm}[{\cite[Theorem 1.13]{GT10}}]\label{t:flag-gw}
Let $\Psi = (\psi_1, \ldots, \psi_t)$ be a collection of linear forms each mapping $\Z^D$ to $\Z$ which satisfy the flag condition, and let $s \geq 1$ be an integer such that the polynomials $\psi_1^{s+1},\ldots,\psi_t^{s+1}$ are linearly independent. For $i=1, \ldots t$,  let $f_i:[-N,N] \to \C$ be functions bounded in magnitude by 1 (and defined to be zero outside of $[-N,N]$). For all $\eps >0$ there exists $\delta>0$ such that if $\min_{i} ||f_i||_{U^{s+1}[-N,N]} \leq \delta$, then 
\[\left|\E_{\vect x \in [-N,N]^D} \prod_{i=1}^t f_i(\psi_i(\vect x))\right| \leq \eps.\]  
\end{thm}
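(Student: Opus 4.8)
The plan is to prove the Gowers--Wolf conjecture (and in particular Theorem~\ref{t:flag-gw}) by running Green and Tao's ``arithmetic regularity lemma plus counting lemma'' strategy from \cite{GT10}, but feeding in the strongly irrational arithmetic regularity lemma Theorem~\ref{t:strong-arl} and the counting lemma Theorem~\ref{t:main-quant} in place of their \cite[Theorem 1.2]{GT10} and \cite[Theorem 1.11]{GT10} (in the flag case one may instead invoke Corollary~\ref{c:sanity}). I would argue by contradiction: fix $\eps>0$ and suppose that for every $\delta>0$ there are functions $f_1,\dots,f_t$ with, after relabelling, $\|f_t\|_{U^{s+1}[-N,N]}\le\delta$ but $\bigl|\E_{\vect x}\prod_i f_i(\psi_i(\vect x))\bigr|>\eps$. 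Let $s_0\ge s$ denote the (finite) Cauchy--Schwarz complexity of $\Psi$. First I would apply Theorem~\ref{t:strong-arl} \emph{at degree $s_0$} to each $f_i$, with a small $\eps'$ and a rapidly growing $\cF$ to be fixed at the end, obtaining a common $M=O_{s_0,\eps',\cF}(1)$ and decompositions $f_i=f_{i,\nil}+f_{i,\sml}+f_{i,\unf}$ in which each $f_{i,\nil}$ is a degree $\le s_0$, $(\cF(M),N)$-strongly irrational virtual nilsequence of complexity $\le M$.

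Expanding $\E_{\vect x}\prod_i f_i(\psi_i(\vect x))$ into $3^t$ terms, any term containing a factor $f_{i,\sml}$ is $O(\eps')$ (one bounds it by $\|f_{i,\sml}\|_{L^1}\ll\|f_{i,\sml}\|_{L^2}\le\eps'$, using that $\psi_i\ne0$ and the trivial bound on the remaining, uniformly bounded factors), and any term containing a factor $f_{i,\unf}$ but no small factor is $O(1/\cF(M))$ by the generalised von Neumann inequality, which controls the average by $\|f_{i,\unf}\|_{U^{s_0+1}}\le1/\cF(M)$ --- the single point where the Cauchy--Schwarz complexity $s_0$ enters. What remains is the all-$\nil$ term $\E_{\vect x}\prod_i f_{i,\nil}(\psi_i(\vect x))$. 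Writing $f_{i,\nil}=F_i\circ g_i$ and placing the $g_i$ on a common product Lie algebra $\g$ --- applying Theorem~\ref{t:state-factorise} once more, if needed, to restore joint strong irrationality of the combined polynomial sequence, and absorbing the resulting rational and small factors into errors of the two types just handled --- this term takes the shape $\tilde F(p^\Psi(\vect x))$ for a bounded Lipschitz $\tilde F$ on $\g^t$ and a strongly irrational $p$; Theorem~\ref{t:main-quant} then identifies it, up to $O(M^{c_1}/\cF(M)^{c_2})$, with the integral of $\tilde F$ over the nilmanifold attached to $\g^\Psi(S_\bullet)$.

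It remains to see that this integral is small, and this is where the hypothesis on $\psi_1^{s+1},\dots,\psi_t^{s+1}$ and the algebraic computations of Proposition~\ref{p:gw-equid} come in. Linear independence of the $\psi_i^{s+1}$ is equivalent to $V_\Psi^{s+1}=\R^t$, whence $V_\Psi^k=\R^t$ for every $k\ge s+1$; combined with the description $\g^\Psi(S_\bullet)=\sum_i W_i\otimes V_\Psi^i$ from Proposition~\ref{p:gPsi-Sbullet} and the identity $W_i+\g_{i+1}=\g_i$ from Lemma~\ref{l:maximal-Wi} (whose hypothesis $S_i+\h_i=\g_i$ is exactly what the filtration-irrationality half of strong irrationality provides, via Lemma~\ref{l:Ti+hi=gi}), this yields $\g^\Psi(S_\bullet)\supseteq\g_{s+1}\otimes\R^t$, an ideal of $\g^\Psi(S_\bullet)$. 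On the other hand $\|f_t\|_{U^{s+1}}\le\delta$ forces $\|f_{t,\nil}\|_{U^{s+1}}\ll\delta+(\eps')^{O(1)}+1/\cF(M)$ (the $f_{t,\sml}$ contribution by interpolating its $L^2$-bound against its $L^\infty$-bound, the $f_{t,\unf}$ contribution by monotonicity of Gowers norms in the degree), and since $f_{t,\nil}$ is an irrational nilsequence of degree $\le s_0$ with small $U^{s+1}$ norm, its degree $\le s$ part (its $L^2$-projection onto the functions of degree $\le s$) is $O_M(1)$-small in $L^2$. Averaging $\tilde F$ along the $\g_{s+1}\otimes\R^t$ fibres of the nilmanifold then produces a factor equal, up to $O_M(1)$-error, to the average of a function with mean zero on exactly those fibres, hence small; this bounds $\int\tilde F$. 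Choosing $\eps'$ and $\delta$ small and $\cF$ sufficiently rapidly growing, in the usual order, makes the whole estimate less than $\eps$, contradicting the assumption.

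The step I expect to be the main obstacle is this last one. It has two ingredients beyond routine expansion: first, the containment $\g^\Psi(S_\bullet)\supseteq\g_{s+1}\otimes\R^t$, which in the non-flag setting must be read off from the description of $\g^\Psi(S_\bullet)$ in Section~\ref{s:gPsiSbullet} rather than from the Leibman group directly; and second, the quantitative ``converse'' statement that an irrational degree-$\le s_0$ nilsequence with small $U^{s+1}$ norm has $L^2$-small degree $\le s$ part, whose proof runs through the counting lemma applied to the $U^{s+1}$ cube system. Neither is conceptually deep, but assembling them --- and tracking the complexity and irrationality parameters through the product-nilmanifold construction and the extra factorisation of the combined sequence --- is where the care will be required.
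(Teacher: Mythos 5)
Your proposal takes essentially the same route as the paper: for this flag statement the paper simply quotes \cite[Theorem 1.13]{GT10}, and its own re-derivation (stated in Subsection \ref{ss:gw} for the general non-flag case, which subsumes this one) is exactly the argument you outline — run the proof of \cite[Theorem 1.13]{GT10} with Theorem \ref{t:strong-arl} in place of \cite[Theorem 1.2]{GT10} and Proposition \ref{p:gw-equid} (i.e.\ Theorem \ref{t:main-quant} together with Lemmas \ref{l:Ti+hi=gi} and \ref{l:maximal-Wi}, giving $\g^\Psi(S_\bullet)\supseteq \g_{s+1}\otimes\R^t$) in place of \cite[Theorem 1.11]{GT10}. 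Your sketch sits at the same level of detail as the paper's one-line proof of its corollary, with only a minor quibble: the rational and small factors arising from the extra factorisation are not absorbed into the $L^2$-small or uniform error terms but into the virtual-nilsequence data (the $n \bmod q$ and $n/N$ arguments) after passing to subprogressions, which is precisely why the paper flags the need for a shifted-sublattice version of Theorem \ref{t:main-quant} that it, like you, does not write out.
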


The restriction to flag patterns in the above theorem is a consequence of the use of the arithmetic regularity lemma and complementary counting lemma from \cite{GT10}, which itself is restricted to flag patterns. The author \cite{A21} recently resolved the full result by using a linear algebraic trick to reduce to the flag case whereupon the flag arithmetic regularity lemma and counting lemma from \cite{GT10} sufficed.\footnote{The reader might also be interested to consult \cite{M21} where, even more recently, Manners resolved the problem over finite fields with polynomial bounds \cite[Theorem 1.1.5]{M21}.}
 
\begin{thm}[{\cite[Theorem 1.1]{A21}}]\label{t:gw}
Theorem \ref{t:flag-gw} holds without the restriction that $\Psi$ satisfies the flag condition.
\end{thm}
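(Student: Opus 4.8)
The plan is to run Green and Tao's proof of the flag case (Theorem~\ref{t:flag-gw}) essentially unchanged, substituting the strongly irrational arithmetic regularity lemma Theorem~\ref{t:strong-arl} for \cite[Theorem~1.2]{GT10} and the counting lemma Theorem~\ref{t:main-quant} for \cite[Theorem~1.11]{GT10}, and supplying the one new algebraic ingredient (the computation Proposition~\ref{p:gw-equid}) that takes the place of the steps in \cite{GT10} relying on the explicit Leibman group. For the reductions: by hypothesis some index $i_0$ has $\|f_{i_0}\|_{U^{s+1}[-N,N]}\le\delta$, the remaining $f_i$ being merely bounded by $1$. Let $d=d(\Psi)$ be the Cauchy--Schwarz complexity of $\Psi$; if $d\le s$ the generalized von Neumann theorem already gives $|\E_{\vect x}\prod_i f_i(\psi_i(\vect x))|\le\min_i\|f_i\|_{U^{s+1}}\le\delta$, so we may assume $d\ge s+1$. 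I would apply Theorem~\ref{t:strong-arl}, in its routine form for complex-valued functions on $[-N,N]^D$, with degree parameter $d$ (playing the role of $s$ there), error $\eps$, and a growth function $\cF$ to be fixed at the end, to each $f_i$, obtaining a common complexity bound $M=O_{\eps,\cF,d}(1)$ and decompositions $f_i=f_{i,\nil}+f_{i,\sml}+f_{i,\unf}$ with $f_{i,\nil}$ a degree $\le d$, $(\cF(M),N)$-strongly irrational virtual nilsequence of complexity $\le M$, $\|f_{i,\sml}\|_{L^2}\le\eps$, and $\|f_{i,\unf}\|_{U^{d+1}}\le 1/\cF(M)$. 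Expanding $\prod_i f_i(\psi_i)$ into $3^t$ terms: any term with a factor $f_{i,\unf}(\psi_i)$ is $O(1/\cF(M))$ by the generalized von Neumann theorem at level $d$; any term with no uniform factor but some $f_{i,\sml}(\psi_i)$ is $O(\eps)$ since $\|f_{i,\sml}\circ\psi_i\|_{L^1([-N,N]^D)}\ll_\Psi\|f_{i,\sml}\|_{L^1}\le\eps$ and the other factors are bounded. Up to $O(\eps)+O(1/\cF(M))$ it thus suffices to bound the main term $\E_{\vect x}\prod_i f_{i,\nil}(\psi_i(\vect x))$.

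For the main term, as in \cite{GT10} I would place the $f_{i,\nil}$ on a single product Lie algebra $\g=\bigoplus_i\g^{(i)}$ with the product filtration, so that $\prod_i f_{i,\nil}(\psi_i(\vect x))=F(p^\Psi(\vect x))$ for $p(n)=(g_1(n),\dots,g_t(n))$ and a suitable Lipschitz $F$ on the product of $\Gamma$-cosets, and then invoke the factorisation theory (Proposition~\ref{p:amp-strong-irrat}, cf.\ Theorem~\ref{t:state-factorise}) to replace $p$ by $e\ast p'\ast r$ with $p'$ strongly irrational in some $(\g_\bullet,S_\bullet)$ of complexity $O_M(1)$, $r$ rational of period $O_M(1)$, and $e$ small. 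Partitioning $[-N,N]^D$ into $O_M(1)$ sub-boxes on which each $r\circ\psi_i$ is constant and each $e\circ\psi_i$ is negligible reduces the main term to $O_M(1)$ averages $\E_{\vect x}\prod_i\tilde f_{i,\nil}(\psi_i(\vect x))$ with the underlying polynomial sequence strongly irrational. Now Theorem~\ref{t:main-quant} applies, identifying each such average up to an error $O(M^{c_1}/\cF(M)^{c_2})$ with an integral $\int_{\g^\Psi(S_\bullet)}\prod_i(F_i\circ\exp\circ\pi_i)$ against Haar measure on the nilmanifold attached to $\g^\Psi(S_\bullet)$, where $\pi_i$ is the $i$-th coordinate projection.

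The algebraic input is the following. Linear independence of $\psi_1^{s+1},\dots,\psi_t^{s+1}$ is equivalent to $V_\Psi^{s+1}=\R^t$ (and hence $V_\Psi^j=\R^t$ for all $j\ge s+1$), since $\sum_i c_i\psi_i^{s+1}=0$ exactly when $c\perp V_\Psi^{s+1}$. Combining this with the description in Proposition~\ref{p:gPsi-Sbullet} and, using strong irrationality, Lemmas~\ref{l:Ti+hi=gi} and~\ref{l:maximal-Wi}, one obtains that the ``degree $\ge s+1$ part'' of $\g^\Psi(S_\bullet)$ equals $\g_{s+1}\otimes\R^t$, i.e.\ agrees with the corresponding part of the Leibman algebra $\log G^\Psi$; this is essentially the content of Proposition~\ref{p:gw-equid}. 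With this, Green and Tao's argument carries over: integrating out the $\g_{s+1}$-direction in the $i_0$-th coordinate of the Haar integral replaces $F_{i_0}$ by its average over that direction, a genuinely degree $\le s$ object whose associated nilsequence still has $U^{s+1}[N]$-norm comparable to $\|\tilde f_{i_0,\nil}\|_{U^{s+1}[N]}$ by the equidistribution theory (applying the counting lemma to the Gowers cube system $\square_{s+1}$); tracing this through bounds the main term by a fixed power of $\|\tilde f_{i_0,\nil}\|_{U^{s+1}[N]}=O(\delta^{1/O(1)})+O(1/\cF(M))$, hence by $O(\delta^{c})+O(M^{c_1}/\cF(M)^{c_2})$ for some $c>0$.

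Collecting everything, $|\E_{\vect x}\prod_i f_i(\psi_i(\vect x))|=O(\eps)+O(\delta^{c})+O(M^{c_1}/\cF(M)^{c_2})$ with $M=O_{\eps,\cF,d}(1)$; choosing $\cF$ to grow fast enough that $M^{c_1}/\cF(M)^{c_2}<\eps$, then $\delta$ small in terms of $\eps,\Psi,s$, and finally rescaling $\eps$, yields the theorem. I expect the main obstacle to be the algebraic step: verifying carefully that, under the linear-independence and strong-irrationality hypotheses, the top-degree structure of $\g^\Psi(S_\bullet)$ coincides with that of the Leibman algebra (Proposition~\ref{p:gw-equid}), and—relatedly—checking that the less explicit description of $\g^\Psi(S_\bullet)$ does not cause trouble at the points where \cite{GT10} exploited the concrete Leibman group, together with the equidistribution bookkeeping that ties the resulting nilmanifold integral to the ordinary Gowers norm $\|\cdot\|_{U^{s+1}[N]}$ rather than to a higher-order norm. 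The remaining work—assembling the regularity-lemma outputs into one factorised virtual nilsequence and discretising away the rational and small parts—is standard but tedious.
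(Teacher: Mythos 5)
Your proposal is correct and follows essentially the same route as the paper: the paper's proof is precisely to rerun Green and Tao's argument for \cite[Theorem 1.13]{GT10} with Theorem \ref{t:strong-arl} in place of \cite[Theorem 1.2]{GT10} and with Proposition \ref{p:gw-equid} (the observation that linear independence of $\psi_1^{s+1},\ldots,\psi_t^{s+1}$ gives $V_\Psi^{s+1}=\R^t$, so under strong irrationality $\g^\Psi(S_\bullet)\supseteq \g_{s+1}\otimes\R^t$) supplying the role of \cite[Theorem 1.11]{GT10}. Your caveats about discretising away the rational and small parts correspond to the paper's own remark that one needs a routine but tedious version of Theorem \ref{t:main-quant} for averages over shifted sublattices of $\Z^D$.
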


Our arithmetic regularity lemma Theorem \ref{t:strong-arl} and counting lemma Theorem \ref{t:main-quant} mean that original strategy from \cite{GT10} may be applied directly to resolve the Gowers-Wolf conjecture in full generality. We will not reiterate the proof here since it may be fairly cleanly decomposed as a union of what is is already done in \cite[Theorem 1.13]{GT10} and the upcoming Proposition. We do note that along the way one will need to obtain a version of Theorem \ref{t:main-quant} which allow for averages over shifted sublattices of $\Z^D$. This is directly analogous to how \cite[Theorem 3.6]{GT10} differs from \cite[Theorem 8.6]{GT12}. Such a pursuit is routine but technical and tedious; we do not pursue it here to avoid adding further length to this document.

Recall the definition of the sequence of subspaces $(W_i)_{i=1}^s$ given a sequence of subspaces $(S_i)_{i=1}^s$: let $W_1 = S_1$ and iteratively define $W_i = \spa \{S_j, [W_j,W_{i-j}] \text{ for } j=1,\ldots, i-1\}$ for $i=2,\ldots, s$. Recall also the definition of the subalgebras $\h_i$ given a filtration $\g_\bullet$: Definition \ref{d:hi}.

\begin{prop}\label{p:gw-equid}
Suppose $A>M\geq 1$. Suppose that $p$ is $(A,N)$-strongly irrational in $(\g_\bullet,S_\bullet)$ where $\g$ has rational structure constants of height at most $M$ with respect to a $\cX$ which passes through $\g_\bullet$, that the elements of $S_\bullet$ have complexity at most $M$ with respect to $\cX$, and that $\Psi$ has the property that $V^j = \R^t$. Then there are constants $0<c_1,c_2 = O_{d,s,D,t,\Psi}(1)$ such that $p^\Psi$ is $O(M^{c_1}/A^{c_2})$-equidistributed on $\g^\Psi(S_\bullet)\leq \g^t\cong \g \otimes \R^t$, where $\g^\Psi(S_\bullet)$ contains $\g_j\otimes \R^t$. 
\end{prop}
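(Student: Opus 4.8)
The plan is to apply the counting lemma Theorem~\ref{t:main-quant} directly to $p^\Psi$ and then identify the resulting algebra $\g^\Psi(S_\bullet)$ using the explicit descriptions from Section~\ref{s:gPsiSbullet}. First I would note that strong irrationality of $p$ in $(\g_\bullet, S_\bullet)$ includes $(A,N)$-linear irrationality in $S_\bullet$, and that the hypotheses on $M$ (bounding the structure constants of $\g$ and the complexity of the $S_i$) are exactly what Theorem~\ref{t:main-quant} needs; so Theorem~\ref{t:main-quant} yields immediately that $p^\Psi$ is $O(M^{c_1}/A^{c_2})$-equidistributed on $\g^\Psi(S_\bullet)$ with respect to the Hermite basis $\cX^\Psi$, for constants $c_1,c_2 = O_{d,s,D,t,\Psi}(1)$. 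The only genuine content left is the claim that $\g^\Psi(S_\bullet)$ contains $\g_j \otimes \R^t$; this is where the hypothesis $V^j = \R^t$ (and the strong, not merely linear, irrationality of $p$) enters.

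For the containment I would argue as follows. By Proposition~\ref{p:gPsi-Sbullet}, $\g^\Psi(S_\bullet) = \sum_{i=1}^s W_i \otimes V_\Psi^i$, where $(W_i)$ is the sequence of iterated commutator spans built from $S_\bullet$. Since $p$ is $(A,N)$-filtration irrational in $\g_\bullet$ and $\cX$ passes through $\g_\bullet$, Lemma~\ref{l:Ti+hi=gi} applies with $T_i = S_i$ (each $S_i$ contains the coefficient $a_i$ and has complexity at most $M \le A$), giving $S_i + \h_i = \g_i$ for all $i$. Now Lemma~\ref{l:maximal-Wi} upgrades this to $W_i + \g_{i+1} = \g_i$ for all $i$, and iterating downward gives $\g_i = \sum_{k=i}^s W_k$ for all $i$; in particular $\g_j = \sum_{k=j}^s W_k$. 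Therefore
\[
\g_j \otimes \R^t = \sum_{k=j}^s W_k \otimes \R^t = \sum_{k=j}^s W_k \otimes V_\Psi^k,
\]
using $V_\Psi^k = \R^t$ for all $k \ge j$ — which follows from $V_\Psi^j = \R^t$ together with the fact that $V_\Psi^{k}$ is spanned by $k$-fold coordinatewise products of $V_\Psi = V_\Psi^1 \supseteq V_\Psi^j = \R^t$, so all higher $V_\Psi^k$ equal $\R^t$ as well. The right-hand side is a summand of $\sum_{i=1}^s W_i \otimes V_\Psi^i = \g^\Psi(S_\bullet)$, which gives $\g_j \otimes \R^t \le \g^\Psi(S_\bullet)$ as desired.

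The main obstacle, such as it is, is bookkeeping rather than mathematics: one must check that the complexity hypotheses line up so that $S_i$ is an admissible choice of $T_i$ in Lemma~\ref{l:Ti+hi=gi} (this needs $M \le A$, which is given by $A > M$), and that the Hermite basis $\cX^\Psi$ produced by Theorem~\ref{t:main-quant} is the one with respect to which equidistribution is asserted here — but since the statement only claims equidistribution "on $\g^\Psi(S_\bullet)$" without naming a basis, this is automatic. Assembling these pieces, the proof is short: invoke Theorem~\ref{t:main-quant}, then Lemmas~\ref{l:Ti+hi=gi} and~\ref{l:maximal-Wi} and the definition of $V_\Psi^k$ to get the claimed containment.
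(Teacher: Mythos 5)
Your proposal is correct and follows essentially the same route as the paper: invoke Theorem \ref{t:main-quant} via the linear irrationality component of strong irrationality, then use Lemma \ref{l:Ti+hi=gi} (filtration irrationality gives $S_i+\h_i=\g_i$), Lemma \ref{l:maximal-Wi} ($W_i+\g_{i+1}=\g_i$, hence $\g_j=\sum_{k\geq j}W_k$) and Proposition \ref{p:gPsi-Sbullet} to see that $\g^\Psi(S_\bullet)\supseteq \sum_{k\geq j}W_k\otimes V^k = \g_j\otimes\R^t$. One caveat: your justification of the step $V^k=\R^t$ for all $k\geq j$ appeals to the inclusion $V=V^1\supseteq V^j$, which is false in general --- that is precisely the flag condition the paper is built to avoid (in the paper's running example $V\not\supseteq V^2$). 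The fact itself is true (and the paper asserts it without proof): since $V^j=\R^t$, for each coordinate $i$ there is $v\in V$ with $v_i\neq 0$; writing a $k$-fold product as a $j$-fold product times a $(k-j)$-fold product and multiplying a spanning combination of $j$-fold products by $v^{k-j}$ shows $e_i\in V^k$, so $V^k=\R^t$ for every $k\geq j$.
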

\begin{proof}
The counting lemma Theorem \ref{t:main-quant} and the $(A,N)$-linear irrationality of $p$ yields that $p^\Psi$ is $O(M^{c_1}/A^{c_2})$-equidistributed on $\g^\Psi(S_\bullet)$; we need to show that $\g^\Psi(S_\bullet)$ contains $\g_j \otimes \R^t$. Firstly, $V^j = \R^t$ implies that $V^i = \R^t$ for all $i\geq j$ and so by Proposition \ref{p:gPsi-Sbullet},  $\g^\Psi(S_\bullet)$ contains $\sum_{i=j}^s W_j \otimes \R^t$. Also, the $(A,N)$-filtration irrationality of $p$ ensures by Lemma \ref{l:Ti+hi=gi} that $S_i + \h_i = \g_i$ for all $i$, and so $W_i + \g_{i+1} =\g_i$ for all $i$ by Lemma \ref{l:maximal-Wi}. Thus $\sum_{i=j}^s W_i = \g_j$ and so $\g^\Psi(S_\bullet)$ does indeed contain $\g_j \otimes \R^t$.
\end{proof}

\begin{cor}
Theorem \ref{t:flag-gw} holds without the restriction that $\Psi$ satisfies the flag condition.
\end{cor}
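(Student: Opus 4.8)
The plan is to follow the original Green--Tao strategy from \cite[Theorem 1.13]{GT10} essentially verbatim, substituting our non-flag ingredients at the two places where flagness was previously invoked. First I would apply the strongly irrational arithmetic regularity lemma Theorem \ref{t:strong-arl} to each $f_i$, with a growth function $\cF$ to be chosen later, obtaining decompositions $f_i = f_{i,\nil} + f_{i,\sml} + f_{i,\unf}$ where $f_{i,\nil}$ is a $(\cF(M),N)$-strongly irrational virtual nilsequence of complexity $\leq M$, $\|f_{i,\sml}\|_{L^2} \leq \eps'$ and $\|f_{i,\unf}\|_{U^{s+1}} \leq 1/\cF(M)$. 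Expanding $\prod_i f_i(\psi_i(\vect x))$ into $3^t$ terms, the terms containing at least one $f_{i,\unf}$ are controlled by the generalised von Neumann theorem (as in \cite{GW10}, \cite{GT10}), using the hypothesis that $\psi_1^{s+1}, \ldots, \psi_t^{s+1}$ are linearly independent to guarantee that the $U^{s+1}$-norm genuinely controls the pattern; here one needs the observation that the true Cauchy--Schwarz complexity of $\Psi$ is exactly captured by this linear independence condition, which is precisely the content of the Gowers--Wolf analysis and does not require the flag condition. The terms containing some $f_{i,\sml}$ but no $f_{j,\unf}$ are small by Cauchy--Schwarz and the $L^2$ bound, since the remaining factors are bounded by $1$. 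This reduces matters to bounding the single main term $\E_{\vect x} \prod_i f_{i,\nil}(\psi_i(\vect x))$.

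Next I would handle the main term by the counting lemma. After passing to a common nilmanifold (taking the product of the Lie algebras $\g^{(i)}$ underlying each $f_{i,\nil}$, which keeps all complexity parameters $O_M(1)$ — this is the standard diagonal trick), and after splitting $[-N,N]^D$ into the $O_M(1)$ residue classes dictated by the $q$-periodic and $n/N$ coordinates so that on each piece each $f_{i,\nil}$ is genuinely of the form $F_i \circ p$ for a fixed Lipschitz $F_i$ and a strongly irrational polynomial sequence $p$, we are in a position to apply Theorem \ref{t:main-quant} (in its sublattice-averaged form, whose routine derivation is flagged but not carried out in the excerpt). This tells us that $p^\Psi$ is $O(M^{c_1}/\cF(M)^{c_2})$-equidistributed on $\g^\Psi(S_\bullet)$. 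The crucial algebraic input is Proposition \ref{p:gw-equid}: the hypothesis that $\psi_1^{s+1}, \ldots, \psi_t^{s+1}$ are linearly independent is exactly the statement that $V_\Psi^{s+1} = \R^t$ — indeed $V_\Psi^{s+1}$ is the column space of the matrix whose $i$-th row consists of the coefficients of $\psi_i^{s+1}$, so linear independence of these polynomials means this matrix has full row rank $t$ — hence (applying the Proposition with $j = s+1$, or more carefully with the smallest $j$ for which $V^j = \R^t$) $\g^\Psi(S_\bullet)$ contains $\g_j \otimes \R^t$.

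From here the argument is identical to Green--Tao's. Since $\g^\Psi(S_\bullet) \supseteq \g_j \otimes \R^t$ with $j \leq s+1$, when we integrate $F_1^{\otimes t} \otimes \cdots$ over $\g^\Psi(S_\bullet)/(\g^\Psi(S_\bullet)\cap\Gamma^t)$, we may first integrate over the fibres of the projection killing the $\g_j \otimes \R^t$ direction; but integrating a single $f_{i,\nil}$ over a coset of the image of $\g_j$ in the $i$-th nilmanifold is exactly an average of $f_{i,\nil}$ over a subnilmanifold, and since $j \leq s+1$ this averaging is along enough directions that — by the hypothesis $\min_i \|f_i\|_{U^{s+1}} \leq \delta$ transferred to $f_{i,\nil}$ (using that the regularity lemma decomposition changes the $U^{s+1}$ norm by at most $\eps' + 1/\cF(M)$, which is $<\delta/2$ if $\delta$ is chosen after $\eps'$ and $\cF$) — this fibre integral is $O(\delta^{c})$ in $L^2$, hence the whole integral is $O(\delta^c)$. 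Tracking the quantifiers: given $\eps$, choose $\eps'$ and $\cF$ so that the uniform and small contributions total $< \eps/2$ and so that $M^{c_1}/\cF(M)^{c_2} < \eps/4$ for all $M$; then choose $\delta$ small enough that $\delta^c < \eps/4$ and $\delta < \eps'$; the dependence is $\delta = \delta(\eps)$ as required.

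The main obstacle I expect is not any single conceptual difficulty — the non-flag replacements are designed precisely to slot in — but rather the bookkeeping: the sublattice-averaged counting lemma needs to be stated and proved (the excerpt explicitly defers this as ``routine but technical and tedious''), and one must be careful that the von Neumann / Cauchy--Schwarz step genuinely only uses linear independence of the $\psi_i^{s+1}$ and not flagness, which requires quoting the Gowers--Wolf true-complexity machinery rather than the cruder flag-based bound. Once those two points are granted, the proof is a transcription of \cite[Theorem 1.13]{GT10} with Theorem \ref{t:strong-arl}, Theorem \ref{t:main-quant} and Proposition \ref{p:gw-equid} in place of their flag-restricted analogues.
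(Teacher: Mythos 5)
Your overall plan is the paper's: transcribe the proof of \cite[Theorem 1.13]{GT10} with Theorem \ref{t:strong-arl} in place of \cite[Theorem 1.2]{GT10} and Proposition \ref{p:gw-equid} (fed by Theorem \ref{t:main-quant}) in place of \cite[Theorem 1.11]{GT10}. But the way you fill in the first step creates a genuine gap. You apply the regularity lemma at degree $s$, so that $\|f_{i,\unf}\|_{U^{s+1}}\leq 1/\cF(M)$, and then dispose of every term containing an $f_{i,\unf}$ by asserting that the $U^{s+1}$ norm ``genuinely controls the pattern'' because $\psi_1^{s+1},\ldots,\psi_t^{s+1}$ are linearly independent, citing ``the Gowers--Wolf true-complexity machinery''. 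That assertion is precisely the statement being proved; there is no independent machinery to quote (Gowers and Wolf established such statements only in special finite-field situations), and the generalised von Neumann theorem only gives control by $U^{C+1}$, where $C$ is the Cauchy--Schwarz complexity of $\Psi$, which may well exceed $s$. In Green and Tao's argument the regularity lemma is applied at degree $C$ (finite, since linear independence of the $(s+1)$-st powers forces the forms to be pairwise non-proportional), the uniform pieces are killed by the ordinary von Neumann inequality at level $C+1$ with no flag or true-complexity input, and the hypothesis $\min_i\|f_i\|_{U^{s+1}}\leq\delta$ enters only in the main term.

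The degree-$s$ choice also undermines your main-term step. If the minimal $j$ with $V_\Psi^j=\R^t$ equals $s+1$ (the generic case: the hypothesis only guarantees $V_\Psi^{s+1}=\R^t$), then in a degree-$s$ filtration one has $\g_{s+1}=0$, so the conclusion of Proposition \ref{p:gw-equid}, that $\g^\Psi(S_\bullet)\supseteq \g_j\otimes\R^t$, is vacuous and there is no fibre to integrate over. With the decomposition taken at degree $C$, the subgroup $\g_{s+1}$ of the degree-$C$ filtration is in general nontrivial, $\g^\Psi(S_\bullet)\supseteq \g_j\otimes\R^t\supseteq \g_{s+1}\otimes\R^t$, and integrating (say) $F_1$ over the $\g_{s+1}$-fibres produces a degree $\leq s$ nilsequence of complexity $O_M(1)$; the smallness of this fibre average then follows from the standard fact that bounded-complexity degree $\leq s$ nilsequences obstruct $U^{s+1}$-uniformity, applied to $f_{1,\nil}$, combined with the equidistribution supplied by Theorem \ref{t:main-quant} --- a mechanism you gesture at (``this averaging is along enough directions'') but do not actually supply. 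Your remaining points (the product-nilmanifold reduction, the need for a sublattice-averaged form of Theorem \ref{t:main-quant}, the order of quantifiers) are consistent with what the paper defers to \cite{GT10}, but as written the von Neumann step is circular and the fibre step can be empty, so the proof does not close.
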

\begin{proof}
Apply the argument from the proof of \cite[Theorem 1.13]{GT10} with our strongly-irrational arithmetic regularity lemma Theorem \ref{t:strong-arl} in place of \cite[Theorem 1.2]{GT10}, and use Proposition \ref{p:gw-equid} in place of the \cite[Theorem 1.11]{GT10}.
\end{proof}

\begin{appendix}
\section{Details on polynomial sequences in Lie groups and Lie algebras}\label{a:group-to-alg}
In this appendix we may use terminology from \cite[Appendix A]{GT10} and \cite[Appendix A]{GT12} without further introduction. As we have throughout this document, we assume here that the dimension $d$ and step $s$ of $\g$ are of size $O(1)$.

\subsection{Smoothness norms}\label{ss:smooth}
Green and Tao introduced the following `smoothness' norms.

\begin{definition}[{\cite[Definition 8.2]{GT12}}]
 Let $N_1,\ldots,N_D\geq 1$ be integers and let $[\vect N]$ denote the set $[N_1]\times \cdots \times [N_D]$. Let $f: \Z^D \to \R/\Z$ be a polynomial map with Taylor expansion  $f(\vect n) = \sum_{\vect j} \alpha_{\vect j} \binom{\vect n }{\vect j}$. Then define 
\[ ||f||_{C^\infty[\vect{N}]} := \sup_{\vect j \ne 0} \prod_{i=1}^D N_i^{j_i}||\alpha_{\vect j}||_{\R/\Z}.\]
\end{definition}

The main theorems in \cite{GT10} and \cite{GT12} are written with respect to these smoothness norms. Note that one may naturally extend this definition to polynomials $f:\R^D \to \R$. It will be convenient for us to work with a slightly different definition. Recall that we use $\cM$ to denote the set of `monomial maps' which take $\vect x = (x_1, \ldots, x_D)$ to a monomial in the variables $\{x_1,\ldots, x_D\}$. The monomial map which takes $\vect x$ to the constant $1$ will be denoted by $1$.

\begin{definition}[Smoothness norms with respect to monomial basis]\label{d:monomial-norms}
Let $f: \R^D \to \R$ have monomial expansion $f(\vect x) = \sum_{m \in \cM} \beta_m m(\vect x)$. Then define 
\[||f||_{C^\infty_\cM[N]^D} := \sup_{1 \ne m \in \cM}N^{\deg m}||\beta_m||_{\R/\Z}.\]
\end{definition}

\begin{lemma}\label{l:change-basis}
Let $f : \R^D \to \R$ be a multivariate polynomial of degree $s$. Then $||s!f||_{C^\infty_\cM[N]^D} \ll_{s,D} ||f||_{C^\infty[N]^D}$.
\end{lemma}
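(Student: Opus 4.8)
The plan is to relate the two smoothness norms by a change of basis between the binomial basis $\{\binom{\vect n}{\vect j}\}$ and the monomial basis $\{m(\vect x)\}_{m \in \cM}$, keeping careful track of the denominators that are introduced. First I would recall that each binomial coefficient $\binom{n_i}{j_i} = \frac{1}{j_i!}n_i(n_i-1)\cdots(n_i-j_i+1)$ is a polynomial in $n_i$ of degree $j_i$ with coefficients in $\frac{1}{j_i!}\Z$, and hence each multivariate binomial $\binom{\vect n}{\vect j} = \prod_i \binom{n_i}{j_i}$ expands as a $\Z$-linear combination of monomials $m$ with $\deg m \leq |\vect j|$, divided by $\prod_i j_i!$, which divides $s!$ when $|\vect j| \leq s$. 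Thus $s!\binom{\vect n}{\vect j}$ has integer coefficients in the monomial basis, with the monomials appearing all of degree at most $|\vect j|$, and the integer coefficients bounded by $O_{s,D}(1)$ (they are bounded Stirling-type numbers).

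Next I would write $s! f(\vect x) = \sum_{\vect j} (s!\,\alpha_{\vect j}) \binom{\vect x}{\vect j}$ and expand each $\binom{\vect x}{\vect j}$ into monomials using the previous paragraph. Collecting the coefficient $\beta_m$ of a fixed monomial $m$ with $\deg m = k \geq 1$, we see that $\beta_m$ is a $\Z$-linear combination (with $O_{s,D}(1)$ coefficients) of the numbers $s!\,\alpha_{\vect j}$ over those $\vect j$ with $|\vect j| \geq k$ whose binomial contributes to $m$. Since the change-of-basis coefficients are integers, $\|\beta_m\|_{\R/\Z} \leq \sum_{|\vect j| \geq k} O_{s,D}(1)\,\|s!\,\alpha_{\vect j}\|_{\R/\Z} \leq O_{s,D}(1)\sup_{|\vect j| \geq k}\|s!\,\alpha_{\vect j}\|_{\R/\Z}$ by the triangle inequality on $\R/\Z$, using also that $\|s!\,\alpha_{\vect j}\|_{\R/\Z} \le s!\|\alpha_{\vect j}\|_{\R/\Z}$. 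Multiplying by $N^k$ and using that $N \geq 1$ so $N^k \leq \prod_i N^{j_i}$ whenever $|\vect j| \geq k$ (here all $N_i = N$), each term $N^k\|\beta_m\|_{\R/\Z}$ is bounded by $O_{s,D}(1)\sup_{\vect j \neq 0}\prod_i N^{j_i}\|\alpha_{\vect j}\|_{\R/\Z} = O_{s,D}(1)\|f\|_{C^\infty[N]^D}$. Taking the supremum over $1 \neq m \in \cM$ gives $\|s! f\|_{C^\infty_\cM[N]^D} \ll_{s,D} \|f\|_{C^\infty[N]^D}$.

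The only mildly delicate point — and the one I would be most careful about — is the bookkeeping in passing from $\|s!\,\alpha_{\vect j}\|_{\R/\Z}$ back to something controlled by $\|\alpha_{\vect j}\|_{\R/\Z}$: one uses $\|c\,\theta\|_{\R/\Z}\le |c|\,\|\theta\|_{\R/\Z}$ for integers $c$, applied with $c = s!$, which is where the factor $s!$ on the left-hand side is essential (without it the two norms genuinely are not comparable, since $\alpha_{\vect j}$ may be a non-integer rational times an integer). Everything else is the routine observation that converting between the binomial and monomial bases in degree $\leq s$ involves only integer matrices once one clears the denominator $s!$, and that $N \geq 1$ makes the weights $N^{\deg m}$ monotone in the appropriate direction. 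I would present this as a short paragraph-style proof rather than spelling out the Stirling numbers explicitly.
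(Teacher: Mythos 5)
Your argument is essentially the paper's: the paper splits $f$ (in the binomial basis) into an integer part plus a part with coefficients of size at most $\|f\|_{C^\infty[N]^D}/N^{|\vect j|}$ and then observes that $s!$ clears the denominators in the binomial-to-monomial conversion; your direct coefficient comparison via the integrality and boundedness of the monomial coefficients of $s!\binom{\vect x}{\vect j}$, together with $N^{\deg m}\le N^{|\vect j|}$, is the same argument in different packaging, and it does prove the lemma.

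One sentence of your bookkeeping is off, though it is trivially repaired from your own first paragraph. Writing $s!f=\sum_{\vect j}\alpha_{\vect j}\bigl(s!\binom{\vect x}{\vect j}\bigr)$, the coefficient $\beta_m$ is a $\Z$-linear combination with $O_{s,D}(1)$ coefficients of the numbers $\alpha_{\vect j}$ (the integrality lives in $s!\binom{\vect x}{\vect j}$), \emph{not} of the numbers $s!\,\alpha_{\vect j}$: relative to $s!\,\alpha_{\vect j}$ the change-of-basis coefficients are $a_{m,\vect j}/\prod_i j_i!$, which need not be integers. Consequently the intermediate bound $\|\beta_m\|_{\R/\Z}\le\sum O_{s,D}(1)\,\|s!\,\alpha_{\vect j}\|_{\R/\Z}$ is false as literally stated: for $D=1$, $s=2$, $f(x)=\tfrac12\binom{x}{2}$ one has $s!f=\tfrac12 x^2-\tfrac12 x$, so $\|\beta_{x^2}\|_{\R/\Z}=\tfrac12$ while $\|s!\,\alpha_2\|_{\R/\Z}=0$. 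The inequality you actually need, $\|\beta_m\|_{\R/\Z}\ll_{s,D}\sup_{\vect j\ne 0}\|\alpha_{\vect j}\|_{\R/\Z}$, follows at once from the correct grouping (triangle inequality plus $\|c\theta\|_{\R/\Z}\le|c|\,\|\theta\|_{\R/\Z}$ for integer $c$), and the rest of your chain, including $N^{\deg m}\le\prod_i N^{j_i}$ for the contributing $\vect j$, then gives the stated conclusion; so drop the detour through $\|s!\,\alpha_{\vect j}\|_{\R/\Z}$ and the proof is complete.
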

\begin{proof}
Let $\eps = ||f||_{C^\infty[N]^D}$. 

By definition, we may write $f = f_n + f_\eps$, where, when developed with respect to the multinomial basis, the coefficients of $f_n$ are integers and the coefficients of the degree $d$ terms of $f_\eps$ are of magnitude at most $\eps /N^d$.  Writing $f_n$ with respect to the monomial basis yields coefficients in $\frac{1}{s!}\Z$ and so $s!f_n$ has integer valued coefficients (with respect to the monomial basis). Writing $s!f_\eps$ with respect to the monomial basis yields coefficients of degree $d$ terms with magnitude at most $O_{s,D}(\eps /N^d)$.
\end{proof}

\subsection{Equidistribution in the Lie algebra}\label{ss:lie-alg-equid}
The goal of this subsection is to explain what is meant by, and to justify, Theorems \ref{t:leibman-algebra} and \ref{t:gt-algebra}. We deduce them from theorems in \cite{Lei05b} and \cite{GT12} respectively. We will first deal with the qualitative setting (i.e. deducing Theorem \ref{t:leibman-algebra} from \cite{Lei05b}).

When we speak of qualitative equidistribition on a nilmanifold, we mean the following.

\begin{definition}[Qualitative nilmanifold equidistribution] \label{d:nilman-equid}
Let $G/\Gamma$ be a nilmanifold. Recall that this comes with a unique normalised Haar measure, with respect to which we will integrate. A sequence $(g(\vect{n})\Gamma)_{\vect{n} \in \Z^D}$ on $G/\Gamma$ \textit{equidistributes} if for all continuous $F: G/\Gamma \to \C$ we have
\[\lim_{N \to \infty} \E_{\vect n\in [N]^D}F(g(\vect n)\Gamma) \to \int_{G/\Gamma}F. \]
\end{definition}

Leibman's theorem says that a polynomial sequence equidistributes if and only if it does not lie in (a coset of) the kernel of a nontrivial horizontal character on $G$.

\begin{thm}[Leibman's criterion for qualitative equidistribution, \cite{Lei05b}]\label{t:leib}
Let $D$ be a positive integer, let $G/\Gamma$ be a nilmanifold and let $g:\Z^D \to G$ be a polynomial sequence. Then exactly one of the following is true:
\begin{enumerate}
\item $(g(\vect n)\Gamma)_{\vect n\in \Z^D}$ equidistributes in $G/\Gamma$, 
\item there exists a nontrivial horizontal character $\eta: G \to \R/\Z$ which maps $\Gamma$ to $\Z$ and such that $\eta \circ g$ is constant.
\end{enumerate}
\end{thm}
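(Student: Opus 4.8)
The plan is to obtain this statement as essentially a reformulation of the main equidistribution theorem of \cite{Lei05b}, the only genuinely new work being the translation between Leibman's formulation (the connected component of the orbit closure is a proper sub-nilmanifold) and the horizontal-character formulation above, together with the easy verification of mutual exclusivity. For the latter: since $\eta(\Gamma)\subseteq \Z$, any nontrivial horizontal character $\eta:G\to \R/\Z$ descends to a surjective continuous map $\bar\eta:G/\Gamma\to \R/\Z$, each of whose level sets is a closed subset of Haar measure zero. Thus if $\eta\circ g$ is constant then $(g(\vect n)\Gamma)_{\vect n\in\Z^D}$ is supported on a single such level set and cannot equidistribute; hence (1) and (2) are mutually exclusive, and the content of the theorem is that at least one of them always holds.

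For the substance, I would argue by induction on a parameter combining the step $s$ of $G$ and the degree of the polynomial sequence $g$, mirroring the proof of the quantitative analogue in \cite[Theorem 2.9]{GT12} but in the infinitary (limiting) regime. Fix a filtration $G=G_1\ge G_2\ge\cdots\ge G_{s+1}=\{1\}$ adapted to $g$ and let $T=G/(G_2\Gamma)$ be the horizontal torus, with projection $\pi$. Then $\pi\circ g$ is a polynomial sequence on $T$, so by classical Weyl equidistribution on tori either it equidistributes on $T$, or some nontrivial character $\chi$ of $T$ has $\chi\circ\pi\circ g$ constant; in the second case $\eta:=\chi\circ\pi$ is a nontrivial horizontal character with $\eta(\Gamma)\subseteq\Z$ and $\eta\circ g$ constant, giving alternative (2). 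In the first case one must upgrade equidistribution on $T$ to equidistribution on $G/\Gamma$: it suffices to show $\E_{\vect n\in[N]^D}F(g(\vect n)\Gamma)\to\int F$ for $F$ a vertical character with respect to a character $\xi$ of the central torus $G_s/(G_s\cap\Gamma)$. For $\xi$ trivial this reduces to the projection to $G/G_s$ and the inductive hypothesis on the step. For $\xi$ nontrivial one runs a van der Corput / differencing argument: $|\E_{\vect n}F(g(\vect n)\Gamma)|^2$ is controlled by averages of $F(g(\vect n+\vect h)\Gamma)\overline{F(g(\vect n)\Gamma)}$, which are averages of a vertical-character-type function along the polynomial sequence $\vect n\mapsto (g(\vect n+\vect h)\Gamma,\,g(\vect n)\Gamma)$ on $(G\times G)/(\Gamma\times\Gamma)$; for each fixed $\vect h$ this sequence has strictly smaller complexity in the relevant sense (the $\xi$-isotypic part now sees a lower-degree phase), so the inductive hypothesis forces these averages to vanish in the limit, unless it again produces a horizontal-character obstruction which unwinds to alternative (2).

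The main obstacle is the bookkeeping of the induction: choosing the induction parameter (a lexicographic combination of step, degree, and the dimension of the orbit closure) so that differencing strictly decreases it, and tracking how a character obstruction living on the differenced or doubled nilmanifold descends to a genuine horizontal character $\eta:G\to\R/\Z$ with $\eta(\Gamma)\subseteq\Z$ and $\eta\circ g$ constant on the original sequence. Since all of this is carried out in full in \cite{Lei05b}, for the purposes of this appendix I would simply cite that reference for the substance and record only the short mutual-exclusivity argument above.
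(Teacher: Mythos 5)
Your approach matches the paper's: Theorem~\ref{t:leib} is stated there purely as an imported result from \cite{Lei05b}, with no proof given, so deferring the substance to Leibman (as you ultimately do) is exactly what the paper does, and your short mutual-exclusivity observation is a correct, harmless addition. The inductive sketch you outline is not needed and has a minor imprecision (Weyl's criterion on the horizontal torus yields a character making $\eta\circ g$ rational-coefficient, hence constant only after multiplying $\eta$ by a suitable integer), but since you do not rely on it, this does not affect the proposal.
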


Given two lattices $\Gamma_0$ and $\Gamma_1$ in $G$, we say that they are \textit{commensurable} if $\Gamma_0\cap \Gamma_1$ is a lattice in $G$ of finite index in both $\Gamma_0$ and $\Gamma_1$. It is not difficult to check directly from the above definition  that if $\Gamma_0$ and $\Gamma_1$ are commensurable then a polynomial sequence $g$ equidistributes on $G/\Gamma_0$ if and only if it equidistributes on $G/\Gamma_1$. Therefore, we may extend the definition of equidistribution on a particular nilmanifold $G/\Gamma$ to consider simultaneously all lattices commensurable to $\Gamma$. This may be neatly understood in the Lie algebra $\g$ by the following result. 

\begin{thm}[{\cite[Theorem 5.1.12]{CG90}}]
Let $\Gamma_0,\Gamma_1$ be lattices in $G$. Then $\Gamma_0$ and $\Gamma_1$ determine the same rational structure on $\g$ (i.e. $\spa_\Q(\log \Gamma_0) = \spa_\Q(\log \Gamma_1)$) if and only if $\Gamma_0$ and $\Gamma_1$ are commensurable. 
\end{thm}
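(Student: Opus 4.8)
The plan is to prove both implications by reducing, via transitivity of commensurability, to a comparison with a single ``standard'' lattice attached to the rational structure $\g_\Q$.

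For the easy direction ($\Gamma_0,\Gamma_1$ commensurable $\Rightarrow$ same rational structure), write $\Gamma:=\Gamma_0\cap\Gamma_1$ and recall that in a simply connected nilpotent $G$ the restriction of $\exp$ to a one-parameter subgroup is a homomorphism, so $\log(g^m)=m\log g$ for all $g\in G$, $m\in\Z$. Trivially $\spa_\Q\log\Gamma\subseteq\spa_\Q\log\Gamma_0$. Conversely, given $\gamma\in\Gamma_0$, among the cosets $\gamma^i\Gamma$ with $0\le i\le[\Gamma_0:\Gamma]$ two coincide, so $\gamma^m\in\Gamma$ for some $1\le m\le[\Gamma_0:\Gamma]$; then $m\log\gamma=\log(\gamma^m)\in\log\Gamma$, hence $\log\gamma\in\spa_\Q\log\Gamma$. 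Thus $\spa_\Q\log\Gamma_0=\spa_\Q\log\Gamma$, and likewise $\spa_\Q\log\Gamma_1=\spa_\Q\log\Gamma$, giving $\spa_\Q\log\Gamma_0=\spa_\Q\log\Gamma_1$.

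For the substantive direction, assume $\g_\Q:=\spa_\Q\log\Gamma_0=\spa_\Q\log\Gamma_1$. Fix a strong Mal'cev basis $\cX=(X_1,\dots,X_n)$ of $\g$ passing through the lower central series $\g=\g_{(1)}\geq\g_{(2)}\geq\cdots$ with all $X_i\in\g_\Q$ (extract it from $\Q$-bases of the graded pieces of $\g_\Q$); after rescaling each $X_i$ by a positive integer we may assume the structure constants in this basis are integers, so by Mal'cev's criterion $\Gamma_\cX:=\exp(\Z X_1)\cdots\exp(\Z X_n)$ is a lattice in $G$ with $\spa_\Q\log\Gamma_\cX=\g_\Q$. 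It then suffices to show that an arbitrary lattice $\Gamma$ with $\spa_\Q\log\Gamma=\g_\Q$ is commensurable with $\Gamma_\cX$: applying this to $\Gamma_0$ and to $\Gamma_1$ and using that commensurability is transitive (the intersection of finitely many finite-index subgroups of a common group is again finite-index, and a finite-index subgroup of a lattice is a lattice) yields the result. I would prove this last claim by induction on the step $s$ of $G$. The key structural input is that for any lattice $\Gamma$ with $\spa_\Q\log\Gamma=\g_\Q$, each $\g_{(j)}$ is rational for $\g_\Q$ and $\Gamma\cap\exp(\g_{(j)})$ is a lattice in $\exp(\g_{(j)})$ spanning $\g_{(j)}\cap\g_\Q$ over $\Q$ --- the standard fact that a connected rational normal subgroup meets a lattice in a lattice (\cite[Theorem 5.1.11]{CG90}). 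Granting this, set $Z=\exp(\g_{(s)})$, a central vector subgroup, and pass to $\bar G=G/Z$ with its induced rational structure $\bar\g_\Q$; the images $\bar\Gamma,\bar\Gamma_\cX$ are lattices with $\spa_\Q\log\bar\Gamma=\bar\g_\Q=\spa_\Q\log\bar\Gamma_\cX$, hence commensurable by induction. On the central fibre, $\Gamma\cap Z$ and $\Gamma_\cX\cap Z$ are full lattices in the $\Q$-space $\g_{(s)}\cap\g_\Q$, hence commensurable (two full lattices in a finite-dimensional $\Q$-vector space always are). Chasing the short exact sequences $1\to\Gamma\cap Z\to\Gamma\to\bar\Gamma\to1$ and its analogue for $\Gamma_\cX$, and tracking indices, then shows $\Gamma\cap\Gamma_\cX$ has finite index in both, closing the induction.

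The main obstacle is precisely the structural lemma invoked at the start of that induction: controlling how an \emph{arbitrary} lattice --- one not given as the integer points of a Mal'cev basis --- intersects the rational pieces of the lower central series, together with the bookkeeping of indices through the central extension. This is the content of \cite[\S5.1]{CG90}; in a self-contained treatment one would reproduce it, and otherwise (as is done elsewhere in this document) simply cite it. Everything else in the argument is elementary group theory or linear algebra over $\Q$.
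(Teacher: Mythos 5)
The paper itself offers no proof of this statement --- it is quoted directly from \cite[Theorem 5.1.12]{CG90} --- so the only question is whether your argument stands on its own. Your easy direction is fine, and your overall strategy (compare each lattice with a standard lattice $\Gamma_\cX$ built from a rational strong Mal'cev basis, use transitivity of commensurability, and induct on the step via the central rational subgroup $Z=\exp(\g_{(s)})$, invoking \cite[Theorem 5.1.11]{CG90} for the fibres) is the standard route. The genuine gap is the closing step of the induction: you assert that commensurability of the images in $G/Z$ together with commensurability of $\Gamma\cap Z$ and $\Gamma_\cX\cap Z$ ``shows $\Gamma\cap\Gamma_\cX$ has finite index in both'' by ``tracking indices''. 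That implication is false as pure index bookkeeping: already in $G=\R^2$ with $Z=0\times\R$, the lattices $\Z^2$ and $\Z(1,\alpha)+\Z(0,1)$ with $\alpha$ irrational have identical images in $G/Z$ and identical intersections with $Z$, yet their intersection $0\times\Z$ has infinite index in both. (This is not a counterexample to the theorem, since the two rational structures differ; it shows that the gluing step cannot be completed without re-using the hypothesis $\spa_\Q\log\Gamma=\g_\Q$ at the top level, not only through the inductive hypothesis and the fibre comparison.)

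Concretely, what is missing is the following. After passing to the finite-index subgroup $\{\gamma\in\Gamma:\pi(\gamma)\in\pi(\Gamma_\cX)\}$, each such $\gamma$ factors as $\gamma=\gamma' z$ with $\gamma'\in\Gamma_\cX$ and $z\in Z$, and one must show that these central discrepancies cost only finite index. Rationality enters again here: since $\log\gamma,\log\gamma'\in\g_\Q$ and $\g_\Q$ is a $\Q$-subalgebra, Baker--Campbell--Hausdorff gives $\log z\in\g_{(s)}\cap\g_\Q$, hence $z^k\in\Gamma_\cX\cap Z$ for some $k\geq 1$ and so $\gamma^k=(\gamma')^k z^k\in\Gamma_\cX$ because $z$ is central. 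But ``every element has a power in the subgroup'' still does not formally yield finite index; one needs a further finiteness input, for instance the theorem that in a finitely generated nilpotent group the isolator of a subgroup contains it with finite index, or a uniform control on the denominators of the discrepancies, e.g.\ by sandwiching $\log\Gamma$ and $\log\Gamma_\cX$ between additive lattices of $\g_\Q$ in the spirit of \cite[Theorem 5.4.2]{CG90}. Until such an argument is supplied the induction does not close. (A minor secondary point: making the structure constants integral may not by itself guarantee that $\exp(\Z X_1)\cdots\exp(\Z X_n)$ is a subgroup --- some further rescaling is needed --- but this is harmless, since one can simply cite \cite[Theorem 5.1.8]{CG90} for the existence of a lattice spanning the given rational structure.)
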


We remark also that given any rational structure $\g_\Q$ on $\g$, there is a lattice $\Gamma$ in $G = \exp \g$ (invoke Lie's third theorem)  such that $\g_\Q = \spa_\Q(\log \Gamma)$ (\cite[Theorem 5.1.8]{CG90}). Finally, any polynomial sequence $G$ yields (under the logarithm map) a polynomial sequence in $\g$ and conversely. By the discussion above, we may define the property of equidistribution of a polynomial sequence in $\g$ with respect to a rational structure $\g_\Q$ as that of the corresponding polynomial sequence in $G$ with respect to any lattice $\Gamma$ such that $\spa_\Q(\log \Gamma) = \g_\Q$. 

\begin{definition}[Equidistribution with respect to a rational structure]
A polynomial sequence $p$ in $\g$ \textit{equidistributes in $(\g,\g_\Q)$} if $\exp p$ equidistributes in $G/\Gamma$, where $\Gamma$ is any lattice in $G$ such that $\spa_\Q \log \Gamma = \g_\Q$.
\end{definition}
By our discussion above, this definition is indeed well-defined. We are ready to prove the following. The proof is very straightforward, but we include it to reiterate what is meant by equidistribution in the Lie algebra. Recall that in the following (and indeed in all discussions of irrationality above) we have restricted to polynomial sequences with $p(0) =0$.

\begin{thm}[Leibman's criterion for qualitative equidistribution in the Lie algebra, Theorem \ref{t:leibman-algebra}]\label{t:pr-leibman-algebra}
Let $D$ be a positive integer, let $\g$ be a real, finite dimensional, nilpotent Lie algebra and let $\g_\Q$ be a rational structure on $\g$. Let $(p(\vect n))_{\vect n \in \N^D}$ be a polynomial sequence on $\g$. Then $(p(\vect n))_{{\vect n \in \Z^D}}$ equidistributes in $(\g,\g_\Q)$ if and only if $p$ is additively irrational in $(\g,\g_\Q)$. 

\end{thm}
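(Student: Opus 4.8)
The plan is to deduce Theorem \ref{t:pr-leibman-algebra} directly from Leibman's criterion on the nilmanifold (Theorem \ref{t:leib}), using the dictionary between the Lie group and Lie algebra set up in this appendix. First I would fix a lattice $\Gamma$ in $G = \exp \g$ with $\spa_\Q(\log \Gamma) = \g_\Q$; such a $\Gamma$ exists by \cite[Theorem 5.1.8]{CG90}, and by the discussion above the notion of equidistribution of $p$ in $(\g, \g_\Q)$ is independent of this choice among commensurable lattices. Set $g := \exp \circ\, p$, which is a polynomial sequence on $G$ with $g(0) = \exp(0) = e \in \Gamma$. By definition, $p$ equidistributes in $(\g,\g_\Q)$ if and only if $(g(\vect n)\Gamma)_{\vect n}$ equidistributes in $G/\Gamma$.

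\textbf{The equivalence of the negations.} By Theorem \ref{t:leib}, $(g(\vect n)\Gamma)$ fails to equidistribute if and only if there is a nontrivial horizontal character $\eta: G \to \R/\Z$ mapping $\Gamma$ to $\Z$ with $\eta \circ g$ constant; since $g(0) = e$, ``constant'' here means $\eta(g(\vect n)) = 0$ for all $\vect n$. A horizontal character is trivial on $[G,G]$, so it factors through $G/[G,G]$ and its logarithm is (a scalar multiple of) an element $l \in \g^\ast$ which is trivial on $[\g,\g]$ and satisfies $l(\log \Gamma) \subseteq \Z$, hence $l(\g_\Q) \subseteq \Q$ (as $\g_\Q = \spa_\Q \log \Gamma$); that is, $l \in \Hom_\Q(\g,\R)$. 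Conversely every nontrivial $l \in \Hom_\Q(\g,\R)$ arises this way up to clearing denominators: since $l(\log \Gamma) \subseteq \Q$, some integer multiple $kl$ maps $\log \Gamma$ to $\Z$ and descends to a nontrivial horizontal character on $G/\Gamma$. Thus the negation of equidistribution is equivalent to: there exists nontrivial $l \in \Hom_\Q(\g,\R)$ with $l(p(\vect n)) \in \Q$ for all $\vect n$ (using that $\eta \circ \exp$ on the relevant quotient is just reduction mod $\Z$ of $l \circ p$, and that $\exp$ restricted to the abelianization is the identity in suitable coordinates). By Definition \ref{d:additive-irrat}, this last statement is precisely the negation of additive irrationality of $p$ in $(\g, \g_\Q)$, which completes the proof.

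\textbf{Main obstacle.} The calculations are all routine, but the one point requiring care is the passage between horizontal characters on $G$ and rational linear functionals on $\g$ — specifically verifying that $\eta(g(\vect n)) = (l \circ p)(\vect n) \bmod \Z$ where $\eta$ is the character induced by $l$. This is immediate once one notes that $\eta$ kills $[G,G]$, so only the image of $g(\vect n)$ in the abelianization $G/[G,G]$ matters, and on this abelian quotient $\exp$ and $\log$ are mutually inverse linear isomorphisms (after choosing coordinates through a Mal'cev-type basis adapted to $\g_\Q$), so composing with $\exp$ contributes nothing. I would state this identification cleanly and leave the bookkeeping to the reader, as the paper already signals that the proof is ``very straightforward.''
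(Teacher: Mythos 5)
Your overall route is the same as the paper's: fix a lattice $\Gamma$ with $\spa_\Q\log\Gamma=\g_\Q$, pass to $g=\exp\circ p$, invoke Theorem \ref{t:leib}, and translate between horizontal characters on $G/\Gamma$ and elements of $\Hom_\Q(\g,\R)$ via $\eta_0\circ\exp=\exp\circ\,d\eta_0$. The direction ``not equidistributed $\Rightarrow$ not additively irrational'' is fine as you state it.

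There is, however, a gap in the converse direction as written. Given a nontrivial $l\in\Hom_\Q(\g,\R)$ with $l(p(\vect n))\in\Q$ for all $\vect n$, you choose $k$ only so that $kl(\log\Gamma)\subseteq\Z$ and let $\eta$ be the induced horizontal character. But then $\eta(g(\vect n))=kl(p(\vect n))\bmod\Z$ is merely $\Q$-valued in $\R/\Z$, and Leibman's criterion requires $\eta\circ g$ to be \emph{constant}; rational values mod $1$ do not give this. The fix is exactly the paper's choice of $\eta'$: enlarge the multiple so that it simultaneously clears the denominators of the coefficients of the polynomial $l\circ p$ — these coefficients are rational and finite in number by Lemma \ref{l:rat-poly-rat-coeffs}, since $l\circ p$ takes rational values on $\Z^D$ — so that $\eta'\circ p\subseteq\Z$ and hence $\eta\circ g\equiv 0$. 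Relatedly, the existence of a single $k$ with $kl(\log\Gamma)\subseteq\Z$ is not automatic from $l(\log\Gamma)\subseteq\Q$ alone, since $\log\Gamma$ is infinite; one needs that $\log\Gamma$ is contained in an additive lattice inside $\g_\Q$ (as in \cite[Theorem 5.4.2]{CG90}), which bounds the denominators. With these two points supplied, your argument coincides with the paper's proof.
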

\begin{proof}
Suppose $p$ is not additively irrational in $(\g,\g_\Q)$, so Definition \ref{d:additive-irrat}, there exists a nontrivial $\eta \in \Hom_\Q(\g,\R)$ such that $((\eta \circ p)(\vect n))_{\vect n \in \Z^D} \subset \Q$. Let $\Gamma$ be a lattice in $G = \exp \g$ such that $\spa_\Q \log \Gamma = \g_\Q$ (cf. \cite[Theorem 5.1.8]{CG90} for its existence). Then $\eta(\log \Gamma) \subset \Q$. In fact, since $\log \Gamma$ is contained in some additive lattice itself contained in $\g_\Q$ (\cite[Theorem 5.4.2]{CG90}), there is a positive integer $M$ such that $\eta(\log \Gamma) \subset \frac{1}{M}\Z$. Thus there exists $\eta'$,  a large integer multiple of $\eta$, so that $\eta'(\log \Gamma) \subset \Z$ and $((\eta' \circ p)(\vect n))_{\vect n \in \Z^D} \subset \Z$. By standard Lie theory there is a unique Lie group homomorphism $\eta_0: G \to \R/\Z$ such that $\eta' = d\eta_0$ and so $\eta_0 \circ \exp = \exp \circ \eta'$ (where here the second $\exp:\R \to \R/\Z$ is just the quotient map). Writing this out we have 
\[ \bar 0 = ((\exp \circ \eta' \circ p)(\vect n))_{\vect n \in \Z^D} =  ((\eta_0 \circ \exp \circ p)(\vect n))_{\vect n \in \Z^D},\]
where $\bar 0$ is the residue class of $0$ in $\R/\Z$.  Note $\eta_0$ is not trivial. Thus the polynomial sequence $\exp \circ p$ does not equidistribute in $G/\Gamma$ by Theorem \ref{t:leib}, and so $p$ does not equidistribute in $(\g,\g_\Q)$ by definition.

In the other direction, suppose that $(p(\vect n))_{{\vect n \in \Z^D}}$ does not equidistribute in $(\g,\g_\Q)$, so $\exp \circ p$ does not equidistribute in $G/\Gamma$ where $\Gamma$ is a lattice such that $\spa_\Q(\log \Gamma) = \g_\Q$. By Theorem \ref{t:leib}, there is a nontrivial horizontal character $\eta_0: G \to \R/\Z$ which maps $\Gamma$ to $\Z$ and such that $\eta_0 \circ \exp \circ p$ is constant in $\R/\Z$. Standard Lie theory yields that the differential $d \eta_0$ is a nontrivial Lie algebra homomorphism $\g \to \R$ and $\eta_0 \circ \exp = \exp \circ d\eta_0$. Set $\eta = d\eta_0$. Thus $\exp \circ \eta \circ p$ is constant (in fact, zero) in $\R/\Z$, so $\eta \circ p \subset \Z$, and $p$ is not additively irrational.

\end{proof}

Now we proceed to the matter of quantitative equidistribution. In the body of the document, we used the notation $\langle \cX \rangle$ to refer to the additive group generated by a basis $\cX$, that is, $\spa_\Z \cX$. In this appendix we will also have to deal with $\langle \cX \rangle_\ast$, the $\ast$-multiplicative group generated by $\cX$, and so we will generally use $\spa_\Z \cX$ to refer to the additive group in order to make the distinction more notationally clear. 

 In the following definition we refer to a Lipschitz function on $G/\Gamma$, which obviously presupposes the existence of a metric on this space. We refer the reader to \cite{GT12} for details on this; they are not important for the purposes of this document. 

\begin{definition}[Quantitative nilmanifold equidistribution]\label{d:quant-equid-grp}
Let $0 < \delta < 1/2$, let $G/\Gamma$ be a nilmanifold, let $N_1,\ldots,N_D\geq 1$ and let $[\vect N]$ denote the set $[N_1]\times \cdots \times [N_D]$. We say that a polynomial sequence $(g(\vect n)\Gamma)_{\vect n \in \Z^D}$ is \textit{$\delta$-equidistributed} in $G/ \Gamma$ if for all Lipschitz functions $F:G/\Gamma \to \C$ we have 
\[|\E_{\vect n\in [\vect N]} F(g(\vect n)\Gamma) - \int_{G/\Gamma}F| \leq \delta ||F||_{\text{Lip}}.\] 
\end{definition}

Green and Tao proved the following; \cite{GT12} may be consulted for terminology/notation which has not otherwise been introduced in this document. 

\begin{thm}[{\cite[Theorem p.3]{GT15}}]\label{t:gt}
Let $0<\delta<1/2$, let $N_1,\ldots,N_D\geq 1$ and let $[\vect N]$ denote the set $[N_1]\times \cdots \times [N_D]$. Let $G/\Gamma$  be a nilmanifold with dimension $d$, step $s$ and Mal'cev basis $\cX$ which is $1/\delta$-rational. Let $(g(\vect n))_{\vect n \in \Z^D}$ be a polynomial sequence in $G$.  Then either
\begin{enumerate}
\item there is some $N_i\ll \delta^{-O_{d,s,D}(1)}$, or
\item $(g(\vect n)\Gamma)_{\vect n \in [\vect N]}$ is $\delta$-equidistributed in $G/\Gamma$, or
\item there exists a nontrivial horizontal character $\eta$ on $G/\Gamma$ with $0<||\eta|| \ll \delta^{-O_{d,s,D}(1)}$ such that $||\eta \circ g||_{C^\infty[\vect N]} \ll \delta^{-O_{d,s,D}(1)}$. 
\end{enumerate}
Furthermore, when $N_1 = \cdots = N_D$, the first option above may be removed. 
\end{thm}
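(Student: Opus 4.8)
The plan is to follow, in outline, Green and Tao's proof of this theorem in \cite{GT12} (together with the refinement of \cite{GT15} accounting for boxes of unequal side lengths), arguing by induction on the dimension $d$ of $G$; the base case $d = 0$ is vacuous. Here and below, a quantity is \emph{bounded} if it is at most $\delta^{-O_{d,s,D}(1)}$, and implied constants may depend on $d,s,D$. For the inductive step, suppose $(g(\vect n)\Gamma)_{\vect n \in [\vect N]}$ is not $\delta$-equidistributed. Then there is a Lipschitz $F:G/\Gamma \to \C$ with $||F||_{\mathrm{Lip}} \leq 1$, $\int_{G/\Gamma}F = 0$, and $|\E_{\vect n \in [\vect N]}F(g(\vect n)\Gamma)| > \delta$. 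Expanding $F$ into vertical characters for the central action of the last nontrivial group $G_s$ of the filtration, $F = \sum_\xi F_\xi$ with $F_\xi(xz) = e(\xi(z))F_\xi(x)$ for $z \in G_s$ (writing $e(\theta) := e^{2\pi i\theta}$), the Lipschitz bound confines the non-negligible contribution to a bounded set of frequencies $\xi$, each of bounded size, so after replacing $\delta$ by a bounded power of itself we may assume $F = F_\xi$ for a single $\xi$.

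If $\xi = 0$, then $F$ is $G_s$-invariant and descends to a Lipschitz function on the nilmanifold $(G/G_s)/(\Gamma G_s/G_s)$, which has dimension $d - \dim G_s < d$ and a bounded Mal'cev structure, and on which $g$ induces a polynomial sequence; applying the inductive hypothesis there produces a nontrivial bounded horizontal character on $G/G_s$, which pulls back to the required character on $G$.

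If $\xi \neq 0$, apply van der Corput: $\delta^2 < |\E_{\vect n}F(g(\vect n)\Gamma)|^2$ is bounded, up to an acceptable boundary term, by $\E_{\vect h}|\E_{\vect n}\overline{F(g(\vect n)\Gamma)}F(g(\vect n + \vect h)\Gamma)|$, so for a $\gg \delta^{O(1)}$-proportion of shifts $\vect h$ the inner average is $\gg \delta^{O(1)}$. Because $F = F_\xi$ has a vertical frequency, the function $(x,y) \mapsto \overline{F(x)}F(y)$ is invariant under the central antidiagonal copy of $G_s$ in $G \times G$, so the shifted orbit $\vect n \mapsto (g(\vect n), g(\vect n + \vect h))$ feeds into an equidistribution question on a nilmanifold built by the fibre-product construction $G^{\square} \leq G \times G$ of \cite{GT12}, whose relevant filtered quotient has strictly smaller complexity. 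Invoking the inductive hypothesis there gives, for many $\vect h$, a nontrivial bounded horizontal character $\eta_{\vect h}$ with $||\eta_{\vect h} \circ (g, g(\cdot + \vect h))||_{C^\infty[\vect N]}$ bounded; horizontal characters of $G^{\square}$ decompose coordinatewise, so pigeonholing over the bounded list of possibilities yields a single nontrivial bounded horizontal character $\eta$ on $G/\Gamma$ for which, for a $\gg \delta^{O(1)}$-proportion of $\vect h \in [\vect N]$, the difference polynomial $\vect n \mapsto \eta(g(\vect n + \vect h)) - \eta(g(\vect n))$ (of degree $\leq s - 1$) has bounded $C^\infty[\vect N]$-norm. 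A quantitative polynomial-integration lemma --- if $\beta : \Z^D \to \R/\Z$ is a polynomial of degree $\leq s$ whose additive derivative in direction $\vect h$ has bounded $C^\infty[\vect N]$-norm for a positive-density set of $\vect h$, then $\beta$ has bounded $C^\infty[\vect N]$-norm, provided no side $N_i$ is too short (else one lands in the first alternative) --- then upgrades this to $||\eta \circ g||_{C^\infty[\vect N]}$ bounded, which is the third alternative and closes the induction; when all the $N_i$ are equal the caveat is vacuous, so the first alternative may be dropped.

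The main obstacle is the $\xi \neq 0$ case: arranging that the fibre-product nilmanifold genuinely has smaller complexity (so the induction on $d$ terminates), propagating polynomial control of all norms, heights and rationality constants through the descent, and --- the delicate analytic point, precisely what is corrected in \cite{GT15} --- executing the polynomial-integration step over a box of possibly unequal side lengths, which is what forces the first alternative into the statement and allows its removal in the equal-sides case. By contrast, the vertical Fourier expansion and the van der Corput manoeuvre are routine.
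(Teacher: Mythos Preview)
The paper does not prove this theorem; it is quoted verbatim from \cite{GT15} as a black-box input (and then transferred to the Lie algebra in Theorem~\ref{t:gt-algebra}), so there is no proof in the paper to compare against.

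Your sketch is a faithful high-level summary of the Green--Tao argument in \cite{GT12,GT15}: vertical Fourier decomposition, the $\xi=0$ case by descent to $G/G_s$, the $\xi\neq 0$ case by van der Corput and the fibre-product $G^\square$, pigeonholing to a single horizontal character, and polynomial integration with the short-side caveat from \cite{GT15}. One caution: the induction in \cite{GT12} is not literally on $\dim G$ but on a finer invariant (roughly, the dimensions along the filtration, or equivalently the ``degree'' data of the sequence); in the $\xi\neq 0$ branch the passage to $G^\square$ does not decrease $\dim G$, and it is this finer invariant that drops. Phrasing the scheme as induction on $d$ alone would not obviously terminate, so if you were to write this out in full you would need to track the correct well-ordered quantity.
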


Our first task in the quantitative setting is to define equidistribution of a polynomial sequence in the Lie algebra with respect to a choice of (rational) basis.

\begin{definition}[Definition \ref{d:quant-equid-alg}, Quantitative equidistribution in the Lie algebra]\label{d:quant-equid-alg-ap}
Let $\cX$ be a rational basis in $\g$. Then a polynomial sequence $(p(\vect n))_{\vect n \in [\vect N]}$ is $\delta$-equidistributed in $(\g, \cX)$ if $(\exp (p(\vect n)))_{\vect n \in [\vect N]}$ is $\delta$-equidistributed in $G/\Gamma$, where $\Gamma$ is the smallest multiplicative lattice containing $\spa_\Z \cX$.
\end{definition}

Let us check that this definition is sensible. Firstly, the intersection of two multiplicative lattices that contain $\cX$ is clearly a discrete multiplicative group which spans $\g$; this is enough to conclude \cite[Corollary 5.4.5]{CG90} that it is cocompact in $G$ and so indeed itself a multiplicative lattice; thus we may indeed speak of the smallest lattice containing $\spa_\Z \cX$.

Next, in the context of constructing nilsequences, a function on $\g$ is of course multiplicatively automorphic with respect to the set $\spa_\Z \cX$ if and only if it is multiplicatively automorphic with respect to $\langle \spa_\Z \cX \rangle_\ast$. 

The following lemma offers some further validation for the above definition.

\begin{lemma}\label{l:mult-closure}
Let $\Lambda$ be an additive lattice in $\g$. Then the smallest $\ast$-multiplicative subgroup $\langle \Lambda \rangle_\ast$ containing $\Lambda$ is a multiplicative lattice in $\g$. Furthermore, if $\cX$ is a $\Z$-basis for $\Lambda$ which has rational structure constants of height at most $M$, then we have 
\[\Lambda \subset \langle \Lambda \rangle_\ast \subset \frac{1}{M^{O(1)}}\Lambda, \]
where implicit constants may depend on $\dim \g$.
\end{lemma}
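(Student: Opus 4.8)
The statement has two parts: first that $\langle \Lambda \rangle_\ast$ is a multiplicative lattice, and second the quantitative sandwich bound. For the first part, I would argue as in the discussion immediately preceding the lemma: $\langle \Lambda \rangle_\ast$ is a discrete $\ast$-subgroup of $G = \exp \g$ which contains the additive lattice $\Lambda$, and hence spans $\g$ as a real vector space; by \cite[Corollary 5.4.5]{CG90} a discrete subgroup of a simply-connected nilpotent Lie group that spans the Lie algebra is automatically cocompact, so $\langle \Lambda \rangle_\ast$ is a multiplicative lattice. The work is therefore in establishing discreteness together with the explicit bound, and in fact the second (quantitative) claim implies discreteness, so everything reduces to showing $\langle \Lambda \rangle_\ast \subset \frac{1}{M^{O(1)}}\Lambda$.

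\textbf{The quantitative containment.} The plan is to bound, via Baker--Campbell--Hausdorff, the ``denominators'' introduced when one multiplies two elements of $\Lambda$. Write $\cX = \{X_1,\dots,X_d\}$, so $[X_i,X_j] = \sum_k c_{ij}^k X_k$ with all $c_{ij}^k$ rational of height at most $M$ (numerator and denominator at most $M$). By nilpotency, BCH is a finite sum
\[ x \ast y = x + y + \tfrac12[x,y] + \tfrac1{12}[x,[x,y]] - \tfrac1{12}[y,[y,x]] + \cdots, \]
with at most $O_s(1)$ terms, each an iterated bracket of length $\le s$ with a rational coefficient whose denominator is bounded by an absolute constant $D_s = O_s(1)$ (the BCH denominators). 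If $x,y \in \Lambda$, i.e. have integer coordinates in $\cX$, then each iterated bracket of length $\ell$ evaluates, using the structure constants $O_s(1)$ times, to a vector with coordinates in $\frac{1}{M^{O_s(1)}}\Z$ — the denominator accumulates at most one factor bounded by $M$ per application of a structure constant, and there are at most $\ell - 1 \le s-1$ such applications. Combining with the BCH coefficient denominators, $x \ast y$ has coordinates in $\frac{1}{D_s M^{O_s(1)}}\Z \subset \frac{1}{M^{O(1)}}\Z$ (absorbing $D_s$ into the constant, using $M \ge 2$). Thus $\Lambda \ast \Lambda \subset \frac{1}{M^{O(1)}}\Lambda$; the same estimate applies to inverses since $x^{-1} = -x \in \Lambda$ exactly.

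\textbf{Closing the induction.} A single product lands in $L_1 := \frac{1}{M^{c}}\Lambda$ for some $c = O(1)$; the subtlety is that iterating the argument naively would blow up the denominator geometrically. To avoid this I would instead observe that $L := \frac{1}{M^{c}}\Lambda$ is itself an additive lattice with $\Z$-basis $\frac{1}{M^c}\cX$, whose structure constants have height at most $M^{c} \cdot M = M^{O(1)}$ (rescaling the basis by $M^{-c}$ rescales $c_{ij}^k$ by $M^{c}$). Running the previous paragraph's bound once more with $L$ in place of $\Lambda$ shows $L \ast L \subset \frac{1}{(M^{O(1)})^{O(1)}} L = \frac{1}{M^{O(1)}}\Lambda$ — still a \emph{fixed} power of $M$. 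Hence the set $L' := \frac{1}{M^{c'}}\Lambda$ for a suitably large but fixed $c' = O(1)$ is closed under $\ast$ and under inversion, so it is a $\ast$-subgroup containing $\Lambda$; by minimality $\langle \Lambda \rangle_\ast \subseteq L' = \frac{1}{M^{O(1)}}\Lambda$. Since $L'$ is a lattice (it is $M^{-c'}$ times an additive lattice) it is in particular discrete, which retroactively justifies the cocompactness argument above. This gives both the sandwich $\Lambda \subset \langle \Lambda\rangle_\ast \subset \frac{1}{M^{O(1)}}\Lambda$ and the fact that $\langle \Lambda \rangle_\ast$ is a multiplicative lattice.

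\textbf{Main obstacle.} The only real subtlety — and the step I would be most careful about — is the one in the last paragraph: ensuring the exponent of $M$ does not grow under iteration. The resolution is to find a single rescaled lattice $\frac{1}{M^{c'}}\Lambda$ that is already $\ast$-closed, by noting the denominator produced from a product of elements of $\frac{1}{M^{c}}\Lambda$ depends only on the (polynomially bounded) height of \emph{its} structure constants, not on further iteration; one fixed-point-style application suffices. Everything else (discreteness from the bound, cocompactness from \cite[Corollary 5.4.5]{CG90}, the trivial inclusion $\Lambda \subset \langle \Lambda\rangle_\ast$) is routine.
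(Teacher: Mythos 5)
Your opening step ($\Lambda \ast \Lambda \subset \frac{1}{M^{O(1)}}\Lambda$ via Baker--Campbell--Hausdorff) and the soft part (discreteness plus cocompactness via \cite{CG90}) are fine, but the ``closing the induction'' step has a genuine gap, and in fact its conclusion is false: there need not exist \emph{any} $c'$, however large, for which $L' = \frac{1}{M^{c'}}\Lambda$ is closed under $\ast$. Take $\g$ the Heisenberg algebra with basis $X,Y,Z$, $[X,Y]=Z$, and $\Lambda = \Z X + \Z Y + \Z Z$. Then $\frac{1}{q}X \ast \frac{1}{q}Y = \frac{1}{q}X + \frac{1}{q}Y + \frac{1}{2q^2}Z$, so for every $q\geq 1$ the uniformly rescaled lattice $\frac{1}{q}\Lambda$ fails to be $\ast$-closed: the quadratic BCH term always produces denominators strictly worse than those of the inputs, no matter how much one shrinks uniformly. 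The flaw in the fixed-point reasoning is that the structure constants of $\frac{1}{M^{c}}\Lambda$ with respect to the basis $\frac{1}{M^{c}}\cX$ have height about $M^{c+1}$, so the denominator cost of a single product of elements of $\frac{1}{M^{c}}\Lambda$ is roughly $M^{2c+O(1)}$ relative to $\Lambda$ (already from the degree-two term); the required exponent strictly increases with $c$, there is no fixed point, and ``one more application'' never yields a $\ast$-subgroup. Since your bound on $\langle\Lambda\rangle_\ast$ rests entirely on exhibiting such a closed $L'$, the quantitative containment (and with it discreteness) is not established.

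The paper's proof gets around exactly this obstruction by rescaling \emph{anisotropically} rather than uniformly. One first replaces $\cX$ by a basis, consisting of rational combinations of $\cX$ of height $M^{O(1)}$ (via \cite[Proposition A.9]{GT12}), satisfying the nesting property $[\g, X_i] \subset \spa_\R\{X_{i+1},\ldots,X_d\}$; by BCH the $i$th coordinate of $a\ast b$ is then $a_i + b_i + \tilde P_i(a_1,\ldots,a_{i-1},b_1,\ldots,b_{i-1})$, a triangular structure in which the correction polynomial $\tilde P_i$ has rational coefficients of height $M^{O(1)}$ and depends only on \emph{earlier} coordinates. One then chooses integers $k_1,\ldots,k_d = M^{O(1)}$ inductively, $k_i$ depending on $\tilde P_i$ and on $k_1,\ldots,k_{i-1}$, so that with respect to $\{X_i/k_i\}$ all the multiplication polynomials have integer coefficients; the lattice $\spa_\Z\{X_i/k_i\}$ is then genuinely $\ast$-closed, discrete and cocompact (\cite[Theorem 5.1.6]{CG90}), and sandwiched between $\Lambda$ and $\frac{1}{M^{O(1)}}\Lambda$, which gives the lemma. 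In the Heisenberg example this produces $\Z X + \Z Y + \frac{1}{2}\Z Z$, scaling only the central direction. To repair your argument you would need precisely this triangularity and per-coordinate rescaling; no uniform rescaling can work.
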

\begin{proof}
Firstly, it is not difficult to construct a basis $\tilde \cX := \{\tilde X_1, \ldots, \tilde X_d\}$ for $\g$ consisting of rational linear combinations of $\cX$ with coefficients of height $M^{O(1)}$ which satisfies the nesting property 
\[[\g, \tilde X_i] \subset \spa_\R\{\tilde X_{i+1}, \ldots, \tilde X_d\};\]
see \cite[Proposition A.9]{GT12} for details. Thus we may assume without loss of generality that $\cX$ satisfies this nesting property; denote $\cX := \{ X_1, \ldots, X_d\}$. We claim that there are integers $k_1, \ldots, k_d$ each of size $M^{O(1)}$ such that upon setting $\cX':= \{\frac{1}{k_1}X_1, \ldots, \frac{1}{k_d}X_d\}$ we have that $\spa_\Z\cX'$ is a multiplicative lattice in $\g$. Then 
\[\Lambda \subset \langle \Lambda \rangle_\ast \subset \spa_\Z{\cX'} \subset \frac{1}{M^{O(1)}}\Lambda,\]
and so we are done.

It follows from Baker-Campbell-Hausdorff that multiplication in  coordinates with respect to $\cX$ is a polynomial map $\ast: \R^d \times \R^d \to \R^d$ of degree $O(1)$ and with rational coefficients of height $M^{O(1)}$. Let $P_i$ be polynomials such that $a \ast b = (P_1(a,b), \ldots, P_d(a,b))$, with respect to $\cX$. Then by Baker-Campbell-Hausdorff and the nesting property of $\cX$, we have that
\[ P_i(a,b) =  a_i + b_i + \tilde P_i(a_1, \ldots, a_{i-1},b_1,\ldots, b_{i-1}),\]
where $\tilde P_i$ is a polynomial with all terms of degree at least 2.  

 Let $k_1, \ldots, k_d$ be positive integers and form $\cX'$ as above. Then we have that 
\[P'_i(a,b) = k_iP_i(a_1/k_1,\ldots, a_d/k_d, b_1/k_1,\ldots,b_d/k_d),\]
where the $P_i'$ are the polynomials which define multiplication in $\g$ with respect to $\cX'$. In particular, defining $\tilde P_i'$ analgously, we have 
\[\tilde P'_i(a,b) = k_i\tilde P_i (a_1/k_1,\ldots, a_{i-1}/k_{i-1}, b_1/k_1,\ldots, b_{i-1}/k_{i-1}).\]
It follows that we may inductively choose $k_i$ of size $M^{O(1)}$ depending on $P_i$ and $(k_1, \ldots, k_{i-1})$ so that $\tilde P_i'$ has integer coefficients and thus so too does $P_i'$. Choosing the $k_i$ in this way we have $\spa_\Z \cX' \ast \spa_\Z \cX' \subset \spa_\Z \cX'$. Also, $\spa_\Z\cX'$ is clearly closed under inverses, and so is a multiplicative subgroup. Furthermore, $\spa_\Z \cX'$ is clearly discrete and its cocompactness in $\g$ follows from \cite[Theorem 5.1.6]{CG90}. This completes the proof.
\end{proof}

\begin{lemma}\label{l:near-malcev}
Let $\Lambda$ be an additive lattice in $\g$ with $\Z$-basis $\cX$, which has rational structure constants of height at most $M$. Then the multiplicative lattice $\langle \Lambda \rangle_\ast$ has a Mal'cev basis whose elements are rational linear combinations of $\cX$ with coefficients of height $M^{O(1)}$.
\end{lemma}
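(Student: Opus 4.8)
The plan is to combine the structural information already obtained in the proof of Lemma \ref{l:mult-closure} with a Hermite normal form computation. From that proof (which invokes \cite[Proposition A.9]{GT12}) we may extract a basis $\tilde\cX = \{\tilde X_1, \ldots, \tilde X_d\}$ of $\g$ consisting of rational linear combinations of $\cX$ with coefficients of height $M^{O(1)}$, which moreover can be taken adapted to the lower central series of $\g$ and which satisfies the nesting property $[\g, \tilde X_i] \subseteq \spa_\R\{\tilde X_{i+1}, \ldots, \tilde X_d\}$; thus each $\h_j := \spa_\R\{\tilde X_{j+1}, \ldots, \tilde X_d\}$ is an ideal of $\g$, and the flag $(\h_j)_{j=0}^{d}$ refines the lower central series. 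First I would observe that, since $\cX$ and $\tilde\cX$ are interchangeable by a change of basis of height $M^{O(1)}$ in both directions, and since Lemma \ref{l:mult-closure} gives $\Lambda \subseteq \langle\Lambda\rangle_\ast \subseteq \frac{1}{M^{O(1)}}\Lambda$, the multiplicative lattice $\langle\Lambda\rangle_\ast$, regarded merely as a lattice in the vector space $\g$, is commensurable with $\spa_\Z\tilde\cX$ with commensurability ratio $M^{O(1)}$. Concretely, there is a positive integer $q = M^{O(1)}$ such that in $\tilde\cX$-coordinates $q\langle\Lambda\rangle_\ast$ is a sublattice of $\Z^d$ of index $M^{O(1)}$.

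Next I would take $\{Z_1, \ldots, Z_d\}$ to be the Hermite normal form $\Z$-basis of $q\langle\Lambda\rangle_\ast$, ordered so that $Z_j$ lies in $\spa_\R\{\tilde X_j, \ldots, \tilde X_d\}$ with nonzero (positive) $\tilde X_j$-coordinate and all coordinates of absolute value $M^{O(1)}$ (possible because the index is $M^{O(1)}$), and set $Y_j := q^{-1}Z_j$. Then $\{Y_1, \ldots, Y_d\}$ is a $\Z$-module basis of $\langle\Lambda\rangle_\ast$; each $Y_j$ is a rational combination of $\tilde\cX$, hence of $\cX$, with coefficients of height $M^{O(1)}$; and the triangular shape of the Hermite form gives $Y_j \in \h_{j-1}$ and $Y_{j+1}, \ldots, Y_d \in \h_j$, with the $\tilde X_j$-coordinate of $Y_j$ nonzero. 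So $\{Y_1, \ldots, Y_d\}$ has the right ``shape'' to be a Mal'cev basis; it remains to confirm that it actually is one, adapted to the lower central series. Rationality of the structure constants with respect to $\tilde\cX$, hence with respect to any basis of rational combinations of it, is immediate, and the $\h_j$ are ideals refining the lower central series, so the only substantive point is that the second-kind exponential coordinate map $(t_1, \ldots, t_d) \mapsto (t_1 Y_1) \ast \cdots \ast (t_d Y_d)$ carries $\Z^d$ bijectively onto $\langle\Lambda\rangle_\ast$. For this I would run the classical Mal'cev induction: because $\h_1$ is an ideal, $a \ast b \equiv a + b \pmod{\h_1}$, so the $\tilde X_1$-coordinate is additive under $\ast$; given $v \in \langle\Lambda\rangle_\ast = \spa_\Z\{Y_1, \ldots, Y_d\}$, its $\tilde X_1$-coordinate is an integer multiple $n_1$ of that of $Y_1$, whence $(-n_1 Y_1) \ast v$ lies in $\h_1 \cap \langle\Lambda\rangle_\ast = \spa_\Z\{Y_2, \ldots, Y_d\}$, and one recurses inside the subalgebra $\h_1$ with its induced lattice, the flag $\h_1 \supseteq \h_2 \supseteq \cdots$, and the basis $\{Y_2, \ldots, Y_d\}$; uniqueness is obtained by the same peeling-off argument.

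I expect the main obstacle to be bookkeeping rather than a genuine difficulty. The two points needing care are: first, that the nested basis supplied by \cite[Proposition A.9]{GT12} can be taken simultaneously adapted to the lower central series while keeping the height of the change of basis at $M^{O(1)}$, so that the object produced genuinely satisfies \cite[Definition 2.1]{GT12} rather than being merely a basis with the nesting property; and second, the Mal'cev induction of the previous paragraph, which is what upgrades the Hermite normal form basis --- a priori only an additive generating set for $\langle\Lambda\rangle_\ast$ --- to a basis that parametrises $\langle\Lambda\rangle_\ast$ via exponential coordinates of the second kind.
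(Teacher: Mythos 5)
There is a genuine gap, and it sits at the foundation of your construction: you treat $\langle\Lambda\rangle_\ast$ as an additive lattice. You pass to ``$q\langle\Lambda\rangle_\ast$ is a sublattice of $\Z^d$'', take its Hermite normal form basis, assert that $\{Y_1,\ldots,Y_d\}$ is a ``$\Z$-module basis of $\langle\Lambda\rangle_\ast$'', and in the induction use $\h_1\cap\langle\Lambda\rangle_\ast=\spa_\Z\{Y_2,\ldots,Y_d\}$ and $v\in\langle\Lambda\rangle_\ast=\spa_\Z\{Y_1,\ldots,Y_d\}$. But $\langle\Lambda\rangle_\ast$ is only a group under $\ast$; it is a discrete subset of $\g$, not in general an additive subgroup. (Already in the Heisenberg algebra, the $\log$ of the standard integer lattice is $\{(a,b,c-\tfrac{ab}{2}):a,b,c\in\Z\}$, which is not closed under addition; for the special case of a multiplicative closure of an additive lattice, additivity happens to hold in step $\leq 2$, but in general it is an unproven — and dubious — claim, essentially of the same depth as the lemma itself.) Lemma \ref{l:mult-closure} only gives the set containments $\Lambda\subset\langle\Lambda\rangle_\ast\subset\frac{1}{M^{O(1)}}\Lambda$, not commensurability of additive lattices. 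Without additivity, the Hermite step must be applied to the additive span of $\langle\Lambda\rangle_\ast$, and then there is no reason the resulting $Y_j$ lie in $\langle\Lambda\rangle_\ast$ at all; consequently the peeling step $(-n_1Y_1)\ast v$ need not stay in the group, the recursion has no base, and neither direction of the claimed bijection $(t_1,\ldots,t_d)\mapsto(t_1Y_1)\ast\cdots\ast(t_dY_d)$ between $\Z^d$ and $\langle\Lambda\rangle_\ast$ follows. So the object you build may be a Mal'cev basis for a different lattice.

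The repair is to choose the basis vectors inside the group rather than via the additive Hermite form: with your nested basis $\tilde\cX$ in hand, the coordinate map onto $\tilde X_j$ is a homomorphism on $\langle\Lambda\rangle_\ast\cap\h_{j-1}$ modulo $\h_j$ (since $[\g,\h_{j-1}]\subset\h_j$), its image is a discrete subgroup of $\R$ squeezed between the corresponding coordinate groups of $\Lambda$ and $\frac{1}{M^{O(1)}}\Lambda$, hence of the form $\frac{a_j}{b_j}\Z$ with $a_j,b_j=M^{O(1)}$; take $Y_j\in\langle\Lambda\rangle_\ast\cap\h_{j-1}$ realising a generator and then your peeling induction is correct, because now every step multiplies by genuine group elements. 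This is exactly a quantitative rerun of the proof of \cite[Theorem 5.1.6]{CG90}, which — together with Lemma \ref{l:mult-closure} — is what the paper's (detail-omitting) proof appeals to; your Hermite shortcut replaces the one step of that argument that cannot be replaced.
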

\begin{proof}
The existence of a Mal'cev basis for $\langle \Lambda \rangle_\ast$ follows from Lemma \ref{l:mult-closure} and \cite[Theorem 5.1.6]{CG90}. It is not difficult to use (the proofs of) these results to recover the quantitative statement of this lemma. We omit the details. 
\end{proof}

Deducing the Lie algebra version of Green and Tao's criterion for quantitative equidistribution from its Lie group predecessor is similar to what we have already done for Leibman's criterion in the qualitative setting. We will only sketch the proof below.

\begin{thm}[Green and Tao's criterion for quantitative equidistribution in the Lie algebra, Theorem \ref{t:gt-algebra}]\label{t:pr-gt-algebra}
Let $0<\delta<1/2$, let $N_1,\ldots,N_D\geq 1$ and let $[\vect N]$ denote the set $[N_1]\times \cdots \times [N_D]$. Let $\g$  be a  real nilpotent Lie algebra with dimension $d$, step $s$ and basis $\cX$ which is $1/\delta$-rational. Let $(p(\vect n))_{\vect n \in \Z^D}$ be a polynomial sequence in $\g$.  There are positive constants $c_{d,s,D}, c'_{d,s,D}$ such that at least one of the following is true:
\begin{enumerate}
\item there is some $N_i< \delta^{-c'_{d,s,D}}$, 
\item $(p(\vect n))_{{\vect n \in [\vect N]}}$ is $\delta$-equidistributed in $(\g, \cX)$, 
\item $p$ is not $(\delta^{-c_{d,s,D}}, \vect N)$-additively irrational in $(\g,\cX)$. 
\end{enumerate}
Furthermore, when $N_1 = \cdots = N_D$, the first option above may be removed. 
\end{thm}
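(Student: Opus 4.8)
The plan is to deduce this from the Lie group version, Theorem \ref{t:gt}, essentially by transferring the polynomial sequence across the exponential map and matching up the relevant notions. First I would produce a good lattice: set $\Lambda := \spa_\Z \cX$, which is an additive lattice whose $\Z$-basis $\cX$ has rational structure constants of height at most $1/\delta$, and let $\Gamma := \langle \Lambda \rangle_\ast$ be its multiplicative closure. By Lemma \ref{l:mult-closure}, $\Gamma$ is a multiplicative lattice with $\Lambda \subset \Gamma \subset \delta^{-O(1)}\Lambda$, and by Lemma \ref{l:near-malcev} it admits a Mal'cev basis $\cX_\mal$ whose elements are rational linear combinations of $\cX$ with coefficients of height $\delta^{-O(1)}$; hence $\cX_\mal$ is $\delta^{-O(1)}$-rational. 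Now apply Theorem \ref{t:gt} to the polynomial sequence $g(\vect n) := \exp(p(\vect n))$ on $G/\Gamma$ with the Mal'cev basis $\cX_\mal$ and with $\delta$ replaced by a suitable $\delta^{O(1)}$ so that the rationality hypothesis of Theorem \ref{t:gt} is met. This yields one of the three alternatives there (with the first removed when the $N_i$ are equal).

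Next I would translate each alternative back. Alternative (1) of Theorem \ref{t:gt} gives some $N_i \ll \delta^{-O(1)}$, which is alternative (1) here after adjusting the exponent $c'_{d,s,D}$. Alternative (2) says $(g(\vect n)\Gamma)_{\vect n \in [\vect N]}$ is $\delta^{O(1)}$-equidistributed in $G/\Gamma$; by Definition \ref{d:quant-equid-alg-ap} this is precisely the statement that $(p(\vect n))$ is $\delta^{O(1)}$-equidistributed in $(\g,\cX)$, since $\Gamma$ is the smallest multiplicative lattice containing $\spa_\Z \cX$ — so after replacing $\delta$ by $\delta^{O(1)}$ this is alternative (2) here. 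For alternative (3), Theorem \ref{t:gt} produces a nontrivial horizontal character $\eta_0$ on $G/\Gamma$ with $0 < \|\eta_0\| \ll \delta^{-O(1)}$ and $\|\eta_0 \circ g\|_{C^\infty[\vect N]} \ll \delta^{-O(1)}$. Taking the differential, $\eta := d\eta_0$ is a nontrivial Lie algebra homomorphism $\g \to \R$ with $\eta_0 \circ \exp = \exp \circ \eta$ (as maps to $\R/\Z$), so $\eta \circ p$ agrees mod $\Z$ with $\eta_0 \circ g$ and hence $\|\eta \circ p\|_{C^\infty[\vect N]} \ll \delta^{-O(1)}$. Since $\eta_0$ maps $\Gamma$ (hence $\Lambda = \spa_\Z \cX$) into $\Z$, $\eta$ maps $\cX$ into $\Z$, i.e. $\eta \in \Hom_\cX(\g,\R)$; and the height bound on $\|\eta_0\|$ with respect to the dual of $\cX_\mal$, combined with the $\delta^{-O(1)}$-boundedness of the change of basis between $\cX_\mal$ and $\cX$, gives that the complexity of $\eta$ with respect to $\cX$ is $\delta^{-O(1)}$. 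Thus $p$ is not $(\delta^{-c_{d,s,D}}, \vect N)$-additively irrational for a suitable $c_{d,s,D}$, which is alternative (3) here. The final clause (removal of alternative (1) when all $N_i$ coincide) is inherited directly from the corresponding clause of Theorem \ref{t:gt}.

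The only genuinely non-routine point is bookkeeping the polynomial losses so that the single exponent $c_{d,s,D}$ (resp.\ $c'_{d,s,D}$) absorbs all the intermediate $\delta^{-O(1)}$ factors — the rationality upgrade needed to invoke Theorem \ref{t:gt}, the passage from $\Lambda$ to its multiplicative closure $\Gamma$ via Lemmas \ref{l:mult-closure} and \ref{l:near-malcev}, and the change of basis between $\cX$ and $\cX_\mal$ when comparing complexities of characters and smoothness norms. Everything else is a direct dictionary translation via the differential of a horizontal character, exactly parallel to the qualitative argument already carried out in the proof of Theorem \ref{t:pr-leibman-algebra}. Since the statement asserts only the existence of the constants $c_{d,s,D}, c'_{d,s,D}$, I would not track the exponents explicitly; I would simply remark that composing the finitely many polynomial bounds (all with exponents depending only on $d,s,D$) yields admissible constants, and that the dimension- and step-dependence of the implied constants in Lemmas \ref{l:mult-closure}, \ref{l:near-malcev} is harmless since $d,s = O(1)$ throughout.
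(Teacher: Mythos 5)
Your proposal is correct and follows essentially the same route as the paper: pass to $G/\Gamma$ with $\Gamma$ the multiplicative closure of $\spa_\Z\cX$ via Lemmas \ref{l:mult-closure} and \ref{l:near-malcev}, invoke Theorem \ref{t:gt} with respect to the resulting $\delta^{-O(1)}$-rational Mal'cev basis, and pull the horizontal character back through the differential, absorbing all polynomial losses into $c_{d,s,D}$, $c'_{d,s,D}$. The only (harmless) variation is that you get integrality of $\eta=d\eta_0$ on $\cX$ directly from $\spa_\Z\cX\subseteq\log\Gamma$, whereas the paper first works in $\Hom_{\cX_\mal}(\g,\R)$ and then multiplies by an integer $C=\delta^{-O(1)}$ to land in $\Hom_\cX(\g,\R)$.
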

\begin{proof}
Let all constants depend on $s, d, D$. Suppose that (1) and (2) do not hold and denote $G = \exp \g$ (as above, we are implicitly invoking Lie's third theorem) and $\Gamma  = \exp (\langle \spa_\Z \cX \rangle_\ast)$ so that, by definition, $\exp \circ p$ is not $\delta$-equidistributed in $G/\Gamma$. Let $\cX_{\mal}$ be the Mal'cev basis for $\Gamma$ which is produced by Lemma \ref{l:near-malcev}; it is $\delta^{-O(1)}$-rational. By Theorem \ref{t:gt}, there is a horizontal character $\eta_0$ on $G/\Gamma$ with $0< ||\eta_0|| < \delta^{O(1)}$ such that $||\eta_0 \circ g||_{C^\infty[\vect N]} < \delta^{-O(1)}$. Then one may argue using basic Lie theory as in the qualitative setting to find from $\eta_0$ some nontrivial $\eta \in \Hom_{\cX_\mal}(\g,\R)$ with complexity (as in our Definition \ref{d:comp-map}) $\delta^{-O(1)}$ such that $||\eta \circ p ||_{C^\infty[\vect N]} < \delta^{-O(1)}$. But the elements of $\cX_\mal$ are rational linear combinations of $\cX$ of height $\delta^{-O(1)}$, so there is some $C=\delta^{-O(1)}$ such that $\eta' := C\eta$ is a nontrivial element of $\Hom_\cX(\g,\R)$\footnote{Recall that $\Hom_\cX(\g,\R)$ denotes the set of Lie algebra homomorphisms from $\g$ to $\R$ which map $\cX$ to $\Z$.} of complexity $\delta^{-O(1)}$. Thus we have 
\[||\eta' \circ p ||_{C^\infty[\vect N]} \leq C||\eta \circ p ||_{C^\infty[\vect N]}< \delta^{-O(1)}, \]
and so $p$ is not $(\delta^{-O(1)}, \vect N)$-additively irrational in $(\g,\cX)$.
\end{proof}

\subsection{Rational and small polynomial sequences} 

\begin{lemma}[Smallness and rationality under products]\label{l:poly-products}
Let $\cX$ be a rational basis for $\g$ such that the Lie bracket  has rational structure constants of height at most $M$ with respect to $\cX$. Let $e, e'$ be polynomial sequences adapted to $\g_\bullet$ which are $(A,N)$-small and let $r, r'$ be polynomial sequences adapted to $\g_\bullet$ which are $A$-rational. Then $e \ast e'$ and $r \ast r'$ are polynomial sequences adapted to $\g_\bullet$, the former is $((AM)^{O(1)},N)$-small and the latter is $(AM)^{O(1)}$-rational.  
\end{lemma}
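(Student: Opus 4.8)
The plan is to reduce both assertions to the Baker--Campbell--Hausdorff formula combined with bounded-complexity bookkeeping, exploiting that $d$, $s$, $D$ are all $O(1)$ in this section. First I would dispose of the structural part: that $e \ast e'$ and $r \ast r'$ are again polynomial sequences adapted to $\g_\bullet$ is exactly Lemma \ref{l:polys-group}, so only the quantitative claims remain.

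The next step is to record a single lemma about iterated brackets in coordinates. Since $\g$ has step $s = O(1)$, BCH expresses $x \ast y$ as a \emph{finite} sum $x + y + \sum_w q_w [w]$, where $w$ ranges over $O(1)$ words of length between $2$ and $s$ in two letters, $[w]$ is the corresponding iterated bracket in $x$ and $y$ (brackets of length $>s$ vanish by nilpotency), and each BCH coefficient $q_w$ is rational of height $O(1)$. Expanding $[w]$ in the basis $\cX$, each coordinate is a sum of $O(1)$ monomials, each a product of at most $s-1$ structure constants of $\g$ and at most $s$ coordinates of $x$ or $y$. From this I would extract two facts: (i) if all coordinates of $x$ and $y$ lie in $\tfrac1B\Z$, then, since the structure constants have height at most $M$ (hence lie in $\tfrac1{M^{O(1)}}\Z$ with numerators of size $M^{O(1)}$), every coordinate of $[w]$ lies in $\tfrac1{(BM)^{O(1)}}\Z$ — using that a common denominator of $O(1)$ quantities with denominators of size $(BM)^{O(1)}$ is again $(BM)^{O(1)}$; and (ii) $||[w]||_\infty \leq M^{O(1)} \prod_j ||z_j||_\infty$, the product over the $k = |w|$ letters $z_j \in \{x,y\}$ of $w$.

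I would then prove rationality. Writing $r(n) = \sum_i a_i n^i$ and $r'(n) = \sum_i a_i' n^i$ with all $a_i, a_i'$ having coordinates in $\tfrac1A\Z$, the coefficient of $n^m$ in $(r \ast r')(n) = r(n) \ast r'(n)$ is, after expanding by BCH and collecting powers of $n$, a sum of $O(1)$ terms of the shape $q_w [z_{i_1},[z_{i_2},\dots]]$ with each $z_{i_\ell} \in \{a_{i_\ell}, a_{i_\ell}'\}$ and $i_1 + \dots + i_k = m$; fact (i) with $B = A$ together with the height-$O(1)$ bound on $q_w$ put this coefficient in $\tfrac1{(AM)^{O(1)}}\Z$, so $r \ast r'$ is $(AM)^{O(1)}$-rational. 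Smallness is the same computation with the magnitude bound: writing $e(n) = \sum_i c_i n^i$, $e'(n) = \sum_i c_i' n^i$ with $||c_i||_\infty, ||c_i'||_\infty \leq A/N^i$, fact (ii) bounds each term $q_w[z_{i_1},\dots,z_{i_k}]$ contributing to the coefficient of $n^m$ by $M^{O(1)} \prod_\ell (A/N^{i_\ell}) = M^{O(1)} A^{k}/N^{m} \leq (AM)^{O(1)}/N^m$ (using $k \leq s = O(1)$ and $A \leq AM$), and summing the $O(1)$ such terms together with the linear contribution $c_m + c_m'$ gives the claimed $(AM)^{O(1)}/N^m$-smallness of the $n^m$-coefficient, i.e.\ $e \ast e'$ is $((AM)^{O(1)},N)$-small.

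I do not expect a genuine obstacle here: the argument is entirely routine once facts (i) and (ii) are stated carefully. The only point requiring attention is precisely the interaction of the rationality/smallness hypotheses with structure constants of bounded height through nested brackets — that a product of $O(1)$ rationals each of height $\leq M$ with entries of $\tfrac1A\Z$ lands in $\tfrac1{(AM)^{O(1)}}\Z$, and that sums of $O(1)$ such quantities behave the same way — which is exactly what (i) and (ii) package, and which in turn relies on $d,s,D = O(1)$ to keep every count bounded.
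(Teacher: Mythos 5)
Your argument is correct and is exactly the routine BCH-plus-bookkeeping computation the paper's own proof alludes to and omits (``follows from definitions and some easy, but disproportionately tedious, arguments''), with the structural/group part supplied by Lemma \ref{l:polys-group} just as the paper intends. The one point to phrase carefully is the one your ``common denominator of $O(1)$ quantities'' remark already handles: bound the denominator of each coefficient entry by the product of the $O(1)$ individual denominators (each at most $(AM)^{O(1)}$), rather than trying to place all height-$\leq M$ structure constants into a single fixed $\tfrac{1}{M^{O(1)}}\Z$, which would need an lcm of size exponential in $M$.
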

\begin{proof}
That $e \ast e'$ and $r \ast r'$ are polynomial sequences adapted to $\g_\bullet$ is standard. The rest follows from definitions and some easy, but disproportionately tedious, arguments. We omit the details. 
\end{proof}

\begin{lemma}\label{l:rat-period}
Suppose that $r$ is a polynomial sequence in $\g$ which is $A$-rational with respect to $\cX$. Suppose that $\g$ has rational structure constants of height at most $M$. Then for any $\ast$-multiplicative subgroup $\Gamma$ of $\g$ which contains $\langle \cX \rangle$, the orbit $r(n) \ast \Gamma$ is $(AM)^{O(1)}$-periodic.
\end{lemma}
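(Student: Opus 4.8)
The plan is to show that the rationality of $r$ forces the orbit $r(n)\ast\Gamma$ to be periodic with a period that can be bounded polynomially in $A$ and $M$. The core observation is that periodicity of the orbit at $n$ is equivalent to $r(n+q)\ast\Gamma = r(n)\ast\Gamma$, i.e. $r(n)^{-1}\ast r(n+q) \in \Gamma$, so it suffices to find $q = (AM)^{O(1)}$ such that $r(n)^{-1}\ast r(n+q)\in\langle\cX\rangle$ for all $n$ (using $\langle\cX\rangle \subset \Gamma$).

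First I would observe that since $r$ is $A$-rational, all its coefficients lie in $\frac1A\langle\cX\rangle$, so $r$ is a polynomial sequence with coefficients in the lattice $\frac1A\langle\cX\rangle$. By Baker–Campbell–Hausdorff and the fact that $\g$ has rational structure constants of height at most $M$, the product $r(n)^{-1}\ast r(n+q)$ is again a polynomial sequence in $n$ (and $q$) whose coefficients, written with respect to $\cX$, are rational with denominators bounded by $(AM)^{O(1)}$; call this common denominator bound $K = (AM)^{O(1)}$. Thus $r(n)^{-1}\ast r(n+q)$ has all coefficients in $\frac1K\langle\cX\rangle$ for every $n,q$. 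Next I would use the fact (Lemma \ref{l:polys-group} and the group structure of polynomial sequences adapted to $\g_\bullet$, or a direct BCH computation) that the coefficients of $r(n)^{-1}\ast r(n+q)$, expanded in the binomial basis $\binom{n}{i}$, are themselves polynomial in $q$ with bounded-denominator rational coefficients; more precisely, each such coefficient is an integer combination of the (rational, denominator-$K$) coefficients of $r$ evaluated via shifts, so it lies in $\frac1K\langle\cX\rangle$ and depends polynomially on $q$. Taking $q = K!$ (or more economically $q = \operatorname{lcm}(1,\dots,s)\cdot K$, which is still $(AM)^{O(1)}$ since $s = O(1)$) kills all the denominators: every coefficient of $r(n)^{-1}\ast r(n+q)$ then lies in $\langle\cX\rangle$, hence $r(n)^{-1}\ast r(n+q)\in\langle\cX\rangle\subset\Gamma$ for all $n$.

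To make the denominator-clearing step precise, the key point is that if a univariate polynomial $P(n) = \sum_i c_i\binom{n}{i}$ of degree $\le s$ has the property that $P(n+q) - P(n)$ has binomial coefficients that are $\Z$-combinations of $c_1,\dots,c_s$ with coefficients polynomial in $q$ of degree $\le s$, then choosing $q$ divisible by a suitable $(AM)^{O(1)}$-bounded integer makes $P(n+q)-P(n)$ integer-valued on the lattice whenever $P$ has denominator $\le K$; here we apply this coordinatewise in the basis $\cX$ after passing through BCH. I would then conclude: with $q = (AM)^{O(1)}$ as above, $r(n+q)\ast\Gamma = r(n)\ast(r(n)^{-1}\ast r(n+q))\ast\Gamma = r(n)\ast\Gamma$ for all $n$, which is exactly the assertion that the orbit is $(AM)^{O(1)}$-periodic.

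The main obstacle I anticipate is bookkeeping the denominators through the Baker–Campbell–Hausdorff expansion of $r(n)^{-1}\ast r(n+q)$: one must check that iterated brackets of the (denominator-$A$) coefficients of $r$, multiplied by the height-$\le M$ structure constants and by the rational BCH coefficients (which have denominators depending only on $s = O(1)$), produce terms with denominators bounded by $(AM)^{O(1)}$ — and crucially that the *polynomial-in-$q$* structure is preserved so that a single $(AM)^{O(1)}$-bounded choice of $q$ works for all $n$ simultaneously. Since $d, s = O(1)$ the BCH series terminates after $O(1)$ terms and each term involves $O(1)$ brackets, so all these bounds are genuinely polynomial in $A$ and $M$; the argument is routine but, as the author notes elsewhere, "disproportionately tedious," and I would likely relegate the detailed verification to the same style of omission used in Lemma \ref{l:poly-products}.
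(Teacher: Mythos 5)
Your overall strategy is the same as the paper's: reduce the periodicity claim to finding a single $q=(AM)^{O(1)}$ with $r(n)^{-1}\ast r(n+q)\in\langle\cX\rangle\subset\Gamma$ for all $n$, and obtain this by expanding $r(n)^{-1}\ast r(n+q)$ via Baker--Campbell--Hausdorff and bookkeeping denominators. However, there is a genuine gap at the denominator-clearing step. Your ``key point'' concerns additive differences $P(n+q)-P(n)$, and it is true there only because of the Vandermonde structure $\binom{n+q}{i}-\binom{n}{i}=\sum_{j\geq 1}\binom{q}{j}\binom{n}{i-j}$, in which every $q$-dependent coefficient vanishes at $q=0$ (your stated hypothesis, ``$\Z$-combinations with coefficients polynomial in $q$,'' does not by itself imply the conclusion). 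The BCH correction terms $-\frac{1}{2}[r(n),r(n+q)]+\cdots$ are not additive differences, so ``apply this coordinatewise after passing through BCH'' does not cover them; and the property you single out as crucial --- that the coefficients are polynomial in $q$ with denominators bounded by $K=(AM)^{O(1)}$ --- is not sufficient: if some coefficient had a $q$-constant term such as $1/K$, no choice of $q$ whatsoever would clear it. What actually makes the argument work, and what the paper's proof isolates, is that \emph{every} term of the BCH expansion of $r(n)^{-1}\ast r(n+q)$ is divisible by $q$ as a polynomial in $(n,q)$: the whole expression vanishes identically at $q=0$, and concretely $[r(n),r(n+q)]=[r(n),r(n+q)-r(n)]=q[r(n),r'(n,q)]$, where $qr'(n,q)=r(n+q)-r(n)$, and likewise for the higher iterated brackets. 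Once each coefficient has the form $\sum_{j\geq 1}c_jq^j$ with $c_j\in\frac{1}{K}\Z$ (which your bookkeeping does give, with $K=(AM)^{O(1)}$), substituting any multiple of $K$ for $q$ yields integrality, and the lemma follows. You need to add this divisibility-by-$q$ observation; with it your argument closes and coincides with the paper's.

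A minor point on the choice of period: $q=K!$ is not $(AM)^{O(1)}$ (it is superpolynomial in $AM$), so only your parenthetical alternative $q=\operatorname{lcm}(1,\dots,s)\cdot K$ is admissible; in fact, once the divisibility by $q$ is in place, the bare choice $q=K$ already suffices, with no binomial-basis considerations needed.
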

\begin{proof}
We show that there is an integer $q$ of size $(AM)^{O(1)}$ such that $p(n)^{-1}\ast p(n+q)\in \spa_\Z \cX$ for all integers $n$. For the meantime let $q$ be undetermined. Note $p(n)^{-1} = -p(n)$ and let $p'$ be the polynomial sequence in 2 variables such that $qp'(n,q) = p(n+q)-p(n)$. By Baker-Campbell-Hausdorff we have 
\[p(n)^{-1}\ast p(n+q) = p(n+q) - p(n) - \frac{1}{2}[p(n),p(n+q)] + \cdots.\] 
We also have from definitions that $[p(n),p(n+q)] = q[p(n),p'(n,q)]$, and indeed every term in the Baker-Campbell-Hausdorff expansion will be a polynomial in $n$ and $q$ which is divisible by $q$ and which has coefficients in $(\frac{1}{(AM)^{O(1)}}\Z)^{\deg \g}$, so that ultimately the polynomial $p(n)^{-1}\ast p(n+q)$ has this property. Therefore, a specific value of $q$ may be chosen of size at most $(AM)^{O(1)}$ so that, for all $n \in \Z$, $p(n)^{-1}\ast p(n+q) \in \spa_\Z \cX$. 
\end{proof}

\subsection{Factorising for filtration irrationality} \label{a:fact}
In this section we sketch how to deduce Proposition \ref{p:irrational-factorise} from  \cite[Lemma A.10]{GT10}. It is mostly a matter of transferring between related notions of complexity whose details we largely omit. The main additional ingredient is the following.

\begin{lemma}\label{l:add-contains-mult}
Let $M>1$ be a positive integer. Let $\g$ have rational basis $\cX$ which has rational structure constants of height at most $M$. Then there exists a \textit{multiplicative} lattice $\Gamma \leq G$ such that $M^{O(1)} \spa_\Z \cX \subseteq \log \Gamma \subseteq \frac{1}{M^{O(1)}} \spa_\Z \cX$.\footnote{In the language of \cite{GT12}, $\cX$ is then a \textit{$M^{O(1)}$-rational weak basis} for $G/\Gamma$.}
\end{lemma}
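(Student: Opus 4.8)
\textbf{Proof proposal for Lemma \ref{l:add-contains-mult}.}

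The plan is to run essentially the same argument used in the proof of Lemma \ref{l:mult-closure}, but tracking the factors in both directions so as to sandwich a genuine multiplicative lattice between two scaled copies of $\spa_\Z \cX$. First I would invoke \cite[Proposition A.9]{GT12} to replace $\cX$ by a basis $\tilde\cX = \{\tilde X_1, \ldots, \tilde X_d\}$ whose elements are rational linear combinations of $\cX$ with coefficients of height $M^{O(1)}$ and which satisfies the nesting property $[\g, \tilde X_i] \subseteq \spa_\R\{\tilde X_{i+1}, \ldots, \tilde X_d\}$; since this change of basis (and its inverse) has height $M^{O(1)}$, it suffices to produce a multiplicative lattice caught between $M^{O(1)}\spa_\Z\tilde\cX$ and $\frac{1}{M^{O(1)}}\spa_\Z\tilde\cX$, and then rescale. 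So I may assume $\cX$ itself has the nesting property.

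Next I would write out multiplication in $\cX$-coordinates: by Baker-Campbell-Hausdorff it is a polynomial map $\ast:\R^d\times\R^d\to\R^d$ of degree $O(1)$ with rational coefficients of height $M^{O(1)}$, and by the nesting property the $i$th component has the triangular form $P_i(a,b) = a_i + b_i + \tilde P_i(a_1,\ldots,a_{i-1},b_1,\ldots,b_{i-1})$ with $\tilde P_i$ homogeneous of degree $\geq 2$ in the listed variables. Exactly as in Lemma \ref{l:mult-closure}, I would then choose positive integers $k_1, \ldots, k_d$, each of size $M^{O(1)}$, inductively: having fixed $k_1,\ldots,k_{i-1}$, pick $k_i$ so that $k_i\tilde P_i(a_1/k_1,\ldots,a_{i-1}/k_{i-1},b_1/k_1,\ldots,b_{i-1}/k_{i-1})$ has integer coefficients when expanded, which is possible because $\tilde P_i$ has rational coefficients of controlled height. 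Setting $\cX' := \{\frac{1}{k_1}X_1,\ldots,\frac{1}{k_d}X_d\}$, the polynomials defining $\ast$ in $\cX'$-coordinates have integer coefficients, so $\spa_\Z\cX'$ is closed under $\ast$; it is visibly closed under inverses (as $a^{-1}=-a$), discrete, and cocompact by \cite[Theorem 5.1.6]{CG90}, hence a multiplicative lattice. Taking $\Gamma := \exp(\spa_\Z\cX')$ and writing $K := \mathrm{lcm}(k_1,\ldots,k_d) = M^{O(1)}$, we get $\spa_\Z\cX \subseteq \spa_\Z\cX' \subseteq \frac{1}{K}\spa_\Z\cX$, so in particular $K\spa_\Z\cX \subseteq \log\Gamma \subseteq \frac{1}{K}\spa_\Z\cX$, which after undoing the initial change of basis (absorbing its $M^{O(1)}$ height into the constants) gives the claimed two-sided containment.

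I do not expect a serious obstacle here: the only mildly delicate point is bookkeeping the heights through the inductive choice of the $k_i$ and through the change of basis, ensuring all the implied constants remain $M^{O(1)}$ with exponents depending only on $\dim\g = O(1)$. This is the same routine BCH-and-triangularity calculation already carried out in Lemma \ref{l:mult-closure}, so I would simply remark that the details are identical and omit them.
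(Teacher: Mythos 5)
Your argument is correct, but it is not the route the paper takes. You replicate (essentially verbatim) the construction from Lemma \ref{l:mult-closure}: pass to a nested basis of height $M^{O(1)}$ via \cite[Proposition A.9]{GT12}, rescale its elements by integers $k_i = M^{O(1)}$ so that the Baker--Campbell--Hausdorff multiplication polynomials acquire integer coefficients, and conclude that the additive lattice $\spa_\Z \cX'$ is itself closed under $\ast$, so $\Gamma = \exp(\spa_\Z\cX')$ works. In fact you could shortcut the whole thing by citing Lemma \ref{l:mult-closure} directly with $\Lambda = \spa_\Z\cX$: its conclusion $\Lambda \subseteq \langle\Lambda\rangle_\ast \subseteq \frac{1}{M^{O(1)}}\Lambda$ is even stronger than the containment asked for (since $M^{O(1)}\spa_\Z\cX \subseteq \spa_\Z\cX$). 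The paper instead first produces a strong Mal'cev basis $\cX'$ of height $M^{O(1)}$, then invokes a quantitative reading of \cite[Theorem 5.1.8(b)]{CG90} to see that $\Z^d$ in Mal'cev coordinates of the second kind with respect to $K\cX'$ (for some integer $K=M^{O(1)}$) is a multiplicative lattice $\Gamma$, and finally uses \cite[Lemma A.2]{GT12} to compare $\log\Gamma$ with $\spa_\Z K\cX'$. What each approach buys: yours is more self-contained (no quantitative excavation of \cite{CG90} or appeal to \cite[Lemma A.2]{GT12}, and $\log\Gamma$ is literally an additive lattice containing $\spa_\Z\cX$), while the paper's construction hands you $K\cX'$ as an explicit Mal'cev basis for $\Gamma$ of controlled rationality, which is exploited later (see the footnote in the proof of Proposition \ref{p:pr-irrational-factorise}) to certify the complexity of $G/\Gamma$; with your lattice one would have to appeal to Lemma \ref{l:near-malcev} or the appendix of \cite{GT12} to manufacture such a basis afterwards. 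Do make sure, when ``undoing the initial change of basis,'' that the inverse change from the nested basis back to $\cX$ also has height $M^{O(1)}$ (it does, by Cramer's rule and the height bounds from \cite[Proposition A.9]{GT12}); this is the same point implicitly used in the paper's own proof of Lemma \ref{l:mult-closure}.
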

\begin{proof}
Firstly we argue that there is a strong Mal'cev basis\footnote{Here our usage of `strong Mal'cev basis' is as in \cite{CG90}: a basis $\{X_1, \ldots, X_d\}$ for $\g$ such that $\spa_\R\{X_i, \ldots, X_d\}$ is an ideal for all $i$.} $\cX'$ such that \begin{equation}\label{e:basis-contain}
M^{O(1)} \spa_\Z \cX' \subseteq \spa_\Z\cX \subseteq \frac{1}{M^{O(1)}} \spa_\Z \cX'.
\end{equation} 
This is equivalent to finding a strong Mal'cev basis whose elements may be written as rational linear combinations of $\cX$ with coefficients of height at most $M^{O(1)}$. Once one makes the observation that the elements of the descending central series have complexity $M^{O(1)}$ in $\g$ with respect to $\cX$ as a consequence on the bound on the height of the rational structure constants, the rest of the claim is established in the proof of \cite[Proposition A.9]{GT12}. 

Next, by working through the details of \cite[Theorem 5.1.8(b)]{CG90} (in particular, making note of quantitative bounds), one obtains that there is an integer $K = M^{O(1)}$ such the set $\Z^d \subset \R^d$ in \textit{Mal'cev coordinates}\footnote{Our usage of `Mal'cev coordinates' is as in \cite{GT12}. These coordinates are also known in the literature as \textit{coordinates of the second kind}.}  with respect to $K\cX'$ is a multiplicative lattice (and so in fact $K\cX'$ is a Mal'cev basis for this lattice). Let $\Gamma$ be this lattice. Then \cite[Lemma A.2]{GT12} yields that 
\[M^{O(1)}\spa_\Z K\cX' \subseteq \log \Gamma \subseteq \frac{1}{M^{O(1)}}\spa_\Z K\cX' .\]
Putting this together with Equation (\ref{e:basis-contain}) proves the lemma.

\end{proof}

Next we restate \cite[Lemma 2.10]{GT10} from which we will derive Proposition \ref{p:irrational-factorise}. The statement is copied verbatim (including notation) from \cite{GT10} and is included here for the convenience of the reader.

\begin{lemma}[{\cite[Lemma 2.10]{GT10}}]\label{l:gt-fact}
Let $(G/\Gamma, G_\bullet)$ be a degree $\leq s$ filtered nilmanifold of complexity $\leq M_0$, and let $g \in \poly(\Z,G_\bullet)$. For any growth function $\cF'$, we can find a quantity $M_0 \leq M \leq O_{M,\cF'}(1)$ and a factorisation $g=\beta g' \gamma$ where: 
\begin{itemize}
\item[] $\beta \in \poly(\Z,G_\bullet)$ is $(O_M(1),N)$-smooth;
\item[] $g' \in \poly(\Z,G_\bullet)$ is $(\cF'(M),N)$-irrational in a subnilmanifold $(G'/\Gamma',G_\bullet')$ of $(G/\Gamma,G_\bullet)$ of complexity $O_M(1)$, and
\item[] $\gamma \in \poly(\Z,G_\bullet)$ is $O_M(1)$-periodic.
\end{itemize}
\end{lemma}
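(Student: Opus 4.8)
Since Lemma \ref{l:gt-fact} is quoted verbatim from \cite[Lemma 2.10]{GT10}, in the paper itself one simply cites it; a genuine proof means reconstructing the Green--Tao argument, which I sketch here. The plan is an iterative \textbf{dimension reduction} driven by the quantitative equidistribution criterion Theorem \ref{t:gt}. One maintains, at each stage $k$, a filtered subnilmanifold $(G^{(k)}/\Gamma^{(k)}, G^{(k)}_\bullet)$ of complexity $O_{M_k}(1)$, a polynomial sequence $g_k \in \poly(\Z, G^{(k)}_\bullet)$ together with a factorisation $g = \beta_k g_k \gamma_k$ in which $\beta_k$ is $(O_{M_k}(1),N)$-smooth and $\gamma_k$ is $O_{M_k}(1)$-periodic, and a scale $M_k = O_{M_{k-1},\cF'}(1)$. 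If $g_k$ is already $\cF'(M_k)$-irrational in $G^{(k)}_\bullet$ one stops and outputs $M := M_k$, $\beta := \beta_k$, $g' := g_k$, $\gamma := \gamma_k$; otherwise one performs a single reduction step in which $\dim G^{(k+1)} < \dim G^{(k)}$. Since the dimension strictly decreases and is at most $\dim G \le M_0$, the process terminates after boundedly many stages, and iterating the bound $M_{k} = O_{M_{k-1},\cF'}(1)$ at most $M_0$ times gives the required $M = O_{M_0,\cF'}(1)$.

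The single step is the technical core. Suppose $g_k$ fails to be $\cF'(M_k)$-irrational in $G^{(k)}_\bullet$; unwinding the definition of irrationality (via the $i$-horizontal characters on the Taylor coefficients, equivalently via Theorem \ref{t:gt}), there is a nontrivial horizontal character $\eta$ on $G^{(k)}/\Gamma^{(k)}$ of complexity $\le Q := \cF'(M_k)$ witnessing this, with $\eta \circ g_k$ a polynomial $\Z \to \R/\Z$ of small smoothness norm: writing $\eta\circ g_k(n) = \sum_j \alpha_j \binom{n}{j}$ one has $\|\alpha_j\|_{\R/\Z} \le Q/N^j$ for $j \ge 1$. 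Split $\alpha_j = \alpha_j^{\mathrm{rat}} + \alpha_j^{\mathrm{sm}}$ with $\alpha_j^{\mathrm{rat}}$ rational of height $O_Q(1)$ and $\|\alpha_j^{\mathrm{sm}}\|_{\R/\Z} \le Q/N^j$; this manufactures a smooth horizontal polynomial $\epsilon$ and an $O_Q(1)$-periodic one $\gamma'$ such that $g_{k+1} := \epsilon^{-1} g_k \gamma'^{-1}$ is a polynomial sequence adapted to $G^{(k)}_\bullet$ all of whose relevant Taylor coefficients now lie in $\ker \eta$. Because $\eta$ has complexity $O_Q(1)$, $\ker\eta$ is a rational subgroup of complexity $O_Q(1)$ and codimension $\ge 1$; restricting $G^{(k)}_\bullet$ to the filtration it generates gives $(G^{(k+1)}/\Gamma^{(k+1)}, G^{(k+1)}_\bullet)$ of complexity $O_Q(1)$ to which $g_{k+1}$ is adapted. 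Setting $\beta_{k+1} := \beta_k \epsilon$, $\gamma_{k+1} := \gamma'\gamma_k$ and using that products of smooth (resp. periodic) polynomial sequences on a bounded-complexity filtered nilmanifold are again smooth (resp. periodic) with controlled bounds — the group analogue of Lemma \ref{l:poly-products}, via Baker--Campbell--Hausdorff — completes the step with $M_{k+1} = O_Q(1) = O_{M_k}(1)$. Lifting $\epsilon$ and $\gamma'$ from \emph{horizontal} polynomials to honest elements of $\poly(\Z, G^{(k)}_\bullet)$ without spoiling the bounds is the fiddly subpoint, handled exactly as in \cite[\S7]{GT12}.

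At termination the output satisfies all the required properties: $\beta = \beta_k, \gamma = \gamma_k \in \poly(\Z, G_\bullet)$ since at every stage the factors were adapted to $G^{(k)}_\bullet \le G_\bullet$; $\beta$ is $(O_M(1),N)$-smooth and $\gamma$ is $O_M(1)$-periodic by the accumulated product bounds; and $g'$ is $\cF'(M)$-irrational in $(G^{(k)}/\Gamma^{(k)}, G^{(k)}_\bullet)$ of complexity $O_M(1)$ by the stopping rule. The \textbf{main obstacle} is precisely the single-step factorisation: converting a horizontal character of bounded complexity into a genuine reduction to a subnilmanifold of strictly smaller dimension, with the filtration restricted correctly and the smooth and periodic pieces honestly living in $\poly(\Z, G_\bullet)$ with quantitatively controlled bounds; everything else (the termination count, the growth-function bookkeeping, the product estimates) is routine. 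This is why, in the present paper, the cleanest course is simply to cite \cite[Lemma 2.10]{GT10} as stated.
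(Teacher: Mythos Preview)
You correctly observe that the paper does not prove this lemma: it is quoted verbatim from \cite[Lemma 2.10]{GT10} (proven there as Lemma A.10) purely for the reader's convenience, so there is no proof in the paper to compare against.

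Your bonus sketch captures the essential iterative-descent structure, but it conflates two closely related factorisations. Failure of $(A,N)$-\emph{irrationality} in the sense of \cite[Definition A.6]{GT10} is witnessed by an $i$-horizontal character $\eta_i: G_i \to \R$ (vanishing on $G_{i+1}$ and on each $[G_j,G_{i-j}]$) applied to the $i$th Taylor coefficient $g_i$, not by a horizontal character $\eta$ on all of $G$ composed with the whole sequence $g_k$; the latter is the obstruction to \emph{equidistribution} and is what drives the factorisation of \cite[Theorem 1.19]{GT12}, which is what your sketch actually outlines. In the genuine \cite{GT10} argument one passes to a sub\emph{filtration} $G'_\bullet$ in which some $G'_i$ is strictly smaller than $G_i$, and the descent parameter is the total dimension $\sum_i \dim G_i$ rather than $\dim G$. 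Your sketch would go through with these adjustments, but since this paper is organised precisely around distinguishing filtration, additive, and linear irrationality, it is worth keeping the two arguments separate; your parenthetical ``equivalently via Theorem \ref{t:gt}'' is not an equivalence.
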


We are ready to prove Proposition \ref{p:irrational-factorise}.

\begin{prop}[Proposition \ref{p:irrational-factorise}]\label{p:pr-irrational-factorise}
Let $M_0>1$. Let $\g$  be a real nilpotent Lie algebra of dimension at most $M_0$  with filtration $\g_\bullet$ of length at most $M_0$. Let $\cX$ be a rational basis for $\g$ with structure constants of height at most $M_0$. Let $p$ be a polynomial sequence adapted to $\g_\bullet$  with $p(0)=0$. For any growth function $\cF: \R^+ \to \R^+$, there exists $M_0 \leq M \leq O_{M_0,\cF}(1)$ and a factorisation $p = e \ast p' \ast r$ where $e$ is $(O_M(1),N)$-small, where $r$ is $O_M(1)$-rational and where $p'$ is $(\cF(M),N)$-filtration irrational in a subfiltration $\g'_\bullet \leq \g_\bullet$ of complexity $O_M(1)$.\footnote{That is, for all $i$, $\g_i'$ is a subspace of $\g_i$ of complexity $O_M(1)$.} This filtration irrationality is measured with respect to $\cX'$, a Mal'cev basis\footnote{See \cite[Definition 2.1]{GT12}.} for $\g_\bullet'$ which may be written as a linear combination of $\cX$ with rational coefficients of height $O_M(1)$..
\end{prop}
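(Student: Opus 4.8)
The plan is to transfer the factorisation to the Lie group, invoke Green and Tao's factorisation theorem \cite[Lemma 2.10]{GT10} (Lemma \ref{l:gt-fact}), and transfer the conclusion back to the Lie algebra, checking that each of their group-level notions (complexity of the ambient filtered nilmanifold, smoothness, irrationality in a subnilmanifold, periodicity) corresponds, up to polynomial losses in $M$, to the Lie-algebra notion we want (complexity with respect to $\cX$, smallness, filtration irrationality in a subfiltration, rationality).

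First I would use Lemma \ref{l:add-contains-mult} to produce a multiplicative lattice $\Gamma \leq G := \exp\g$ with $M_0^{O(1)}\spa_\Z\cX \subseteq \log\Gamma \subseteq \tfrac{1}{M_0^{O(1)}}\spa_\Z\cX$, and then, as in the proof of \cite[Proposition A.9]{GT12}, pass to a Mal'cev basis $\widehat\cX$ adapted to $G_\bullet := \exp\g_\bullet$ whose elements are rational combinations of $\cX$ of height $M_0^{O(1)}$ (this is possible since the bound on the structure constants forces each $\g_i$ to have complexity $M_0^{O(1)}$ with respect to $\cX$). Thus $(G/\Gamma, G_\bullet)$ is a degree $\leq s$ filtered nilmanifold of complexity $O_{M_0}(1)$. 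Since $p$ is adapted to $\g_\bullet$ and $p(0)=0$, the sequence $g := \exp\circ p$ lies in $\poly(\Z,G_\bullet)$. I then fix a growth function $\cF'$, to be chosen sufficiently rapidly growing in terms of $\cF$ and $M_0$ so as to absorb the polynomial losses below, and apply Lemma \ref{l:gt-fact} to get $M_0 \leq M \leq O_{M_0,\cF'}(1) = O_{M_0,\cF}(1)$ and a factorisation $g = \beta g' \gamma$ with $\beta$ $(O_M(1),N)$-smooth, $g'$ $(\cF'(M),N)$-irrational in a subnilmanifold $(G'/\Gamma', G'_\bullet)$ of $(G/\Gamma,G_\bullet)$ of complexity $O_M(1)$, and $\gamma$ $O_M(1)$-periodic. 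Applying $\log$ and using $\log(\beta g'\gamma) = \log\beta \ast \log g' \ast \log\gamma$, I set $e := \log\beta$, $p' := \log g'$, $r := \log\gamma$, all adapted to $\g_\bullet$, so $p = e \ast p' \ast r$.

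It remains to verify the quantitative claims. For $e$: Green and Tao's smoothness bounds $\beta(n)$ and its discrete derivative in the metric on $G/\Gamma$; expanding $\log\beta$ first in the Taylor binomial basis and then the monomial basis (via Baker--Campbell--Hausdorff and the height bound on the structure constants) shows each degree-$i$ monomial coefficient is $O_M(1)/N^i$-small with respect to $\cX$, i.e.\ $e$ is $(O_M(1),N)$-small. For $r$: an $O_M(1)$-periodic polynomial sequence has coefficients in $\tfrac{1}{O_M(1)}\Z$ in Mal'cev coordinates, and the height-$O_M(1)$ change of basis to $\cX$ shows $r$ is $O_M(1)$-rational (cf.\ Lemma \ref{l:rat-period}). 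For $p'$: setting $\g'_\bullet := \log G'_\bullet$, the complexity-$O_M(1)$ Mal'cev basis for the subnilmanifold, expressed in terms of $\widehat\cX$ and hence $\cX$, shows each $\g'_i$ is a rational subspace of $\g_i$ of complexity $O_M(1)$ with respect to $\cX$, and furnishes a Mal'cev basis $\cX'$ for $\g'_\bullet$ with the stated height bound. Finally, Green and Tao's irrationality of $g'$ (\cite[Definition A.6]{GT10}), which concerns horizontal characters of $G'/\Gamma'$ vanishing on $G'_{i+1}$ and on $[G'_j, G'_{i-j}]$, passes under the differential to precisely the integer (with respect to $\cX'$) linear maps on $\g'_i$ vanishing on $\h_i$, with complexities and smoothness norms matching up to polynomial factors, exactly as in the proof of Theorem \ref{t:pr-gt-algebra}; hence $p'$ is $(\cF(M),N)$-filtration irrational in $\g'_\bullet$ provided $\cF'$ was chosen to grow fast enough.

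The main obstacle will be the bookkeeping in this last paragraph: systematically matching Green and Tao's group-level notions of smoothness, periodicity, and --- most delicately --- irrationality in a subnilmanifold against the Lie-algebra notions of smallness, rationality, and filtration irrationality, while keeping every complexity and every implied constant polynomial in $M$. Conceptually this is just the quantitative incarnation of the qualitative correspondence between Definitions \ref{d:qual-gti} and \ref{d:qual-fi}, but one must repeatedly pass between $\cX$ and the various Mal'cev bases ($\widehat\cX$, $\cX'$, the basis for the subnilmanifold) produced along the way, tracking the polynomial blow-ups in complexity at each step.
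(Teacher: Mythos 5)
Your proposal follows essentially the same route as the paper's own (sketched) proof: construct the multiplicative lattice $\Gamma$ from $\cX$ via Lemma \ref{l:add-contains-mult}, apply Green--Tao's factorisation (Lemma \ref{l:gt-fact}) to $g=\exp\circ p$, and then transfer smoothness, periodicity and irrationality in the subnilmanifold back to smallness, rationality and filtration irrationality in $\g'_\bullet=\log G'_\bullet$ with polynomially bounded losses, exactly as the paper does. One caveat: your claim that an $O_M(1)$-periodic polynomial sequence automatically has coefficients in $\tfrac{1}{O_M(1)}\Z$ in Mal'cev coordinates is false in general (a constant sequence at an irrational point of $G/\Gamma$ has a periodic orbit), and Lemma \ref{l:rat-period} gives only the converse implication; the paper avoids this by going inside the proof of \cite[Lemma A.10]{GT10}, where $\gamma$ is constructed as a genuinely rational sequence (and likewise the smallness of the smooth part is read off from the construction), so your argument should be adjusted to use the construction rather than the statement of periodicity alone.
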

\begin{proof}
We sketch the argument as a detailed proof is both straightforward and tedious. We refer the reader to \cite[Appendix A]{GT12} for more details on Mal'cev bases and quantitative aspects thereof. We will use terminology from there without further introduction.

Let $\Gamma$ be the multiplicative lattice produced by Lemma \ref{l:add-contains-mult} with input $\cX$ and $M_0$. Then $G/\Gamma$ will have complexity $M_0^{O(1)}$ as a nilmanifold (cf. \cite[Definition 1.4]{GT10}).\footnote{This is not entirely trivial as one needs to construct from $\cX$ a Mal'cev basis for $\Gamma$ to exhibit the complexity of $G/\Gamma$. This is done in \cite[Lemma A.8, Proposition A.9]{GT12} together with our Lemma \ref{l:add-contains-mult} which yields that $\cX$ is a $M^{O(1)}$-rational weak basis for $\Gamma$ as per \cite[Definition A.7]{GT12}. Alternatively, one may go inside the proof of Lemma \ref{l:add-contains-mult} to see that the basis $K\cX'$ is in fact an appropriate Mal'cev basis for $\Gamma$.} Invoke Lemma \ref{l:gt-fact} for the polynomial sequence $g = \exp \circ p$ and with lattice $\Gamma$. One may go inside the proof of \cite[Lemma A.10]{GT10} and also use \cite[Lemma A.2]{GT12} to show that the $(O_M(1),N)$-smooth (resp. $O_M(1)$-periodic) sequence produced by \cite[Lemma A.10]{GT10} may instead by taken to be $(O_M(1),N)$-small (resp. $O_M(1)$-rational) by our definition.\footnote{With respect to the Mal'cev basis used in \cite[Lemma A.10]{GT10}, but then the same is true with respect to $\cX$ since the two bases are related by rational linear combinations of height $M_0^{O(1)}$.} Inferring the complexity of $\g'_\bullet:= (\log G'_i)_{i=1}^s$ from that of $G'_\bullet$ is an easy exercise in expanding definitions. Furthermore, the $(\cF(M),N)$-filtration irrationality of $p'$ with respect to an appropriate basis in $\g'$ may be inferred from  \cite[Proposition A.10]{GT12} and the irrationality of $g'$ in $G'/(G'\cap \Gamma)$.
\end{proof}

\section{Some quantitative integer linear algebra}\label{a:lin-alg}
In this appendix we define what a Hermite basis is and collect some (very standard) results that one might use to prove Lemmas \ref{l:comp-lift-restrict} and \ref{l:comp-operations}, which are essentially all of the form `integer linear algebra engenders at most polynomial growth in the size of the integers with which we are computing'. 

\subsection{Hermite normal form}\label{ss:hnf}

Given a vector space $\g$ with basis $\cX$ and a rational subspace $S$, we want to define a basis $\cX'$ for $S$ such that $\spa_\Z \cX' = (\spa_\Z \cX) \cap S$. To do so, we will use the theory of the Hermite normal form. Everything in this subsection is completely standard and is included here for the reader's convenience.

Let $m,n$ be positive integers and let $M$ be a $m\times n$ integer valued matrix. 

\begin{thd}
The \textit{Hermite normal form} of $M$ is the unique matrix $H$ of the form $H=MU$ where $U$ is a unimodular matrix and $H$ has the following properties: 
\begin{enumerate}
\item $H$ is lower triangular,
\item for any column, the highest nonzero entry (the `pivot') is strictly below that of all columns to its left,
\item all pivots are positive,
\item for all rows with a pivot, all elements to the right of the pivot are zero and all elements to the left of the pivot are non-negative and strictly smaller than the pivot. 
\end{enumerate}
\end{thd}

For $\g,\cX, S$ as above, the rationality of $S$ in $\g$ with respect to the rational structure induced by $\cX$ yields that $\spa_\Z\cX \cap S$ is a full rank lattice in $S$. Let $\cZ$ be any $\Z$-basis for this lattice and let $M_\cZ$ be the $\dim \g \times \dim S$ integer matrix whose columns are the elements of $\cZ$ with respect to $\cX$. Let $H$ be the Hermite normal form of $M_\cZ$. The above Definition/Theorem yields that in fact $H$ is independent of the choice of basis $\cZ$, since any other $M_{\cZ'} = M_{\cZ}U$ for some unimodular $U$. Thus, to any $\cX, S$ we may associate a unique matrix $H_{\cX,S}$ in Hermite normal form whose columns are a $\Z$-basis for $\spa_\Z \cX \cap S$. 

\begin{definition}[Hermite basis]\label{d:hermite-basis}
Let $\g, \cX, S, H_{\cX,S}$ be as above. We define the \textit{Hermite basis} $\cX'_{\cX,S}$ to be the (ordered) set of columns of the Hermite normal form $H_{\cX,S}$ with respect to $\cX$.
\end{definition}

The following lemma is a corollary of any sensible algorithm to compute the Hermite normal form.

\begin{lemma}\label{l:herm-poly}
Let $m,n=O(1)$. Let $M$ be a $m\times n$ matrix with integer entries of size at most $C$. Then the Hermite normal form of $M$ has entries of size $C^{O(1)}$.
\end{lemma}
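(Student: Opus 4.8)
The plan is to reduce everything to bounding the pivot entries of $H$, and to control those by a Hadamard-type bound on the minors of $M$. Write $H=MU$ with $U$ unimodular, let $r=\operatorname{rank} M$, and let $H_0$ be the $m\times r$ matrix consisting of the nonzero columns of $H$, with pivots $p_1,\dots,p_r\ge 1$ occurring in rows $i_1<\dots<i_r$. By the defining properties of the Hermite normal form, the $r\times r$ submatrix $W'$ of $H_0$ on the rows $i_1,\dots,i_r$ is lower triangular with diagonal entries $p_1,\dots,p_r$ (for each $a$, the $a$th pivot sits in row $i_a$, and everything to the right of it in that row vanishes), so $\det W'=\pm p_1\cdots p_r$. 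Since $M=HU^{-1}$ with $U^{-1}$ integral, every column of $M$ is an integer combination of the columns of $H_0$; choosing $r$ linearly independent columns of $M$ yields $M'=H_0W$ with $W$ an $r\times r$ integer matrix of nonzero determinant, hence $|\det W|\ge 1$. Restricting to rows $i_1,\dots,i_r$ gives $\det(M'_{i_1,\dots,i_r})=\pm(p_1\cdots p_r)\det W$, so $p_1\cdots p_r$ divides this $r\times r$ minor of $M$. As $r\le\min(m,n)=O(1)$ and $M$ has entries of size at most $C$, that minor has absolute value at most $r!\,C^r=C^{O(1)}$, and therefore $p_i\le p_1\cdots p_r\le C^{O(1)}$ for every $i$.

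This already controls most of $H$: the entries of $H$ lying in a row that contains a pivot and to the left of that pivot are nonnegative and strictly smaller than the pivot, hence of size $C^{O(1)}$, while all entries above a pivot (and all entries of the zero columns) vanish. It remains to bound the entries of $H_0$ in rows carrying no pivot, and here one must genuinely use that $M$ — not merely $H$ — has small entries: the Hermite normalization alone does not prevent large entries in non-pivot rows. The cleanest route is to note that the projection $\pi$ of $\Z^m$ onto the $r$ pivot-row coordinates is injective on the column lattice $\Lambda=\spa_\Z\{\text{columns of }M\}$, since $\pi$ carries the basis $h_1,\dots,h_r$ of $\Lambda$ to the columns of the nonsingular matrix $W'$. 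Thus each column $h_j$ of $H_0$ is the unique element of $\Lambda$ with $\pi(h_j)$ equal to the $j$th column of $W'$, a vector of size $C^{O(1)}$. Writing $h_j=Mx$ we have $M_{i_1,\dots,i_r}\,x=\pi(h_j)$, and by injectivity of $\pi|_\Lambda$ any integer solution $x$ of this system produces the same $Mx=h_j$; by the standard bound on the size of integer solutions of an integer linear system with $O(1)$ rows and columns and coefficients of size $C^{O(1)}$ (e.g.\ via the maximal subdeterminant of the augmented matrix), there is such an $x$ with $\|x\|_\infty\le C^{O(1)}$, whence $\|h_j\|_\infty\le n\,C\,\|x\|_\infty=C^{O(1)}$.

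The main obstacle is exactly this last point — confirming that the whole of $H$, and not just its pivots, stays small, which forces one to exploit the hypothesis on $M$ rather than just the shape of the Hermite form. Alternatively, and more in keeping with the statement's provenance, one may sidestep it entirely by invoking that any polynomial-time algorithm for the Hermite normal form (for instance the Kannan–Bachem algorithm, or a modular variant) returns output whose entries are bounded polynomially in the bit-size of the input; specialised to $m,n=O(1)$ and input entries of size at most $C$, this is precisely the assertion that the entries of $H$ have size $C^{O(1)}$. Either way the implied constant depends only on the (bounded) dimensions $m$ and $n$.
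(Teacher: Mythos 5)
Your argument is correct, but it is genuinely more than what the paper does: the paper offers no proof at all, dismissing the lemma with the remark that it is ``a corollary of any sensible algorithm to compute the Hermite normal form'' --- which is exactly the alternative you sketch in your final paragraph (Kannan--Bachem or a modular variant, with output polynomially bounded in the input bit-size). Your main argument, by contrast, is a self-contained structural proof, and it holds up: the submatrix $W'$ on the pivot rows is triangular with determinant $p_1\cdots p_r$, the factorisation $M'=H_0W$ with $\lvert\det W\rvert\ge 1$ shows $p_1\cdots p_r$ divides a nonzero $r\times r$ minor of $M$, hence each pivot is at most $r!\,C^r$; the reduction condition then bounds all entries in pivot rows, and you correctly identify that the non-pivot rows are \emph{not} controlled by the normalisation alone and must be recovered from $M$, which you do via injectivity of the pivot-row projection on the column lattice together with the standard fact (von zur Gathen--Sieveking, or Schrijver's Corollary~5.3b) that a solvable integer linear system in $O(1)$ variables and equations with coefficients of size $C^{O(1)}$ has an integer solution of size $C^{O(1)}$. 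The trade-off is clear: your route is longer but makes the polynomial dependence explicit and needs only elementary lattice/determinant facts (indeed it dovetails with the paper's Lemma~\ref{l:lin-alg}), whereas the paper's route outsources the entire content to the analysis of an HNF algorithm. The only cosmetic caveat is to check your orientation conventions (which columns are zero, direction in which pivots descend) against the paper's Definition/Theorem, but your argument only uses the invariant features --- distinct pivot rows, zeros above/right of pivots, reduction to the left --- so nothing breaks.
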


As a result, given bounds on the complexity of a subspace, one may bound the size of the entries of its Hermite basis.

The following lemma is essentially all of the linear algebra that is needed to prove Lemmas \ref{l:comp-lift-restrict} and \ref{l:comp-operations}. We omit a proof. 

\begin{lemma}\label{l:lin-alg}
Let $m,n = O(1)$. Let $M$ be a $m\times n$ full rank matrix with integer entries of size at most $C$. Then $\ker M \cap \Z^n$ is a lattice which has a $\Z$-basis consisting of integer vectors whose elements have size at most $C^{O(1)}$.

Conversely, given a $\Z$-basis for a rank-$m$ integer lattice $L$ in $\R^n$ whose vectors have elements of size at most $C$, there is an $m\times n$ integer matrix $M$ whose entries have size at most $C^{O(1)}$ such that $\ker M \cap \Z^n = L$.
\end{lemma}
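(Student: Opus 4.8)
The plan is to reduce both halves of the lemma to one standard input: for matrices of bounded dimension, the Hermite normal form — together with a unimodular matrix effecting it — has all entries polynomially bounded in the entries of the input. The bound on the Hermite normal form itself is Lemma~\ref{l:herm-poly}; the accompanying bound on the transformation matrix follows from the same Kannan--Bachem-style analysis once $m,n$ are fixed, or, even more crudely, by tracking denominators through Gaussian elimination over $\Q$. I will take this as given; it is the only genuinely non-formal ingredient, and is precisely what makes the paper defer a detailed proof.

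For the first assertion we may assume $m\le n$ and $\mathrm{rank}\,M = m$, since otherwise $\ker M = \{0\}$ and there is nothing to prove. Using column operations I would compute a unimodular $U\in GL_n(\Z)$ with $MU = [\,H\mid 0\,]$, where $H$ is an $m\times m$ nonsingular block and $0$ is the $m\times(n-m)$ zero block; this is the usual Hermite-normal-form ``kernel and image'' computation, so by the input above the entries of $U$ have size $C^{O(1)}$. Since $U$ is unimodular, $x\in\ker M\cap\Z^n$ if and only if $x = Uy$ with $y = U^{-1}x\in\Z^n$ and $[\,H\mid 0\,]y = 0$; as $H$ is nonsingular this forces the first $m$ coordinates of $y$ to vanish, so $\ker M\cap\Z^n$ is exactly the $\Z$-span of the last $n-m$ columns of $U$. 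These are integer vectors of size $C^{O(1)}$, which proves the claim. (Alternatively one may use Cramer's rule to write down $n-m$ rational kernel vectors with entries bounded by $m\times m$ minors of $M$, hence of size $C^{O(1)}$, and then saturate the sublattice they span via a Hermite normal form; the column-Hermite argument simply bundles that saturation into the choice of $U$.)

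The converse is the same argument run dually, and is cleanest for a primitive (saturated) sublattice $L\le\Z^n$, which is the only case needed in the applications to Lemmas~\ref{l:comp-lift-restrict} and~\ref{l:comp-operations}; if $L$ fails to be primitive then $\ker M\cap\Z^n$ is necessarily the saturation of $L$, so the statement should be read with that proviso. Let $r = \mathrm{rank}\,L$, arrange a given basis of $L$ as the columns of an $n\times r$ integer matrix $B$ with entries of size at most $C$, and compute by row operations a unimodular $U\in GL_n(\Z)$ with $UB = \binom{H}{0}$, $H$ an $r\times r$ nonsingular block; again the entries of $U$ have size $C^{O(1)}$. Let $M$ be the $(n-r)\times n$ submatrix consisting of the last $n-r$ rows of $U$. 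For $x\in\Z^n$ one has $Mx = 0$ exactly when the last $n-r$ coordinates of $Ux$ vanish, i.e.\ when $x\in U^{-1}(\R^r\times\{0\}^{n-r})$; and $U^{-1}(\R^r\times\{0\}^{n-r})$ is the image of $B$, which is the real span of $L$. Combining this with primitivity of $L$ yields $\ker M\cap\Z^n = L$, with $M$ of the required size, so the main obstacle is indeed nothing internal to the lemma but only the transformation-matrix bound quoted at the outset — everything else being bookkeeping of the same flavour as Appendix~\ref{a:lin-alg}.
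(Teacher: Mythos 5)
The paper gives no argument for this lemma at all (it is stated with ``We omit a proof''), so there is nothing to compare your route against; judged on its own, your proof is correct and is exactly the kind of argument the omission presupposes. Your reduction of both halves to a column/row Hermite normal form computation is sound: in the first half the last $n-m$ columns of a unimodular $U$ with $MU=[\,H\mid 0\,]$ do form a $\Z$-basis of $\ker M\cap\Z^n$, and in the converse the last $n-r$ rows of $U$ with $UB=\begin{pmatrix}H\\0\end{pmatrix}$ cut out exactly $\operatorname{span}_\R(L)\cap\Z^n$. Two of your side remarks are genuine improvements on the literal statement: (i) since $\ker M\cap\Z^n$ is automatically saturated, the converse can only hold for primitive $L$ (or with the conclusion read as the saturation of $L$), and this is indeed the only case arising in the applications, where the lattices are of the form $S\cap\operatorname{span}_\Z\cX$ for a subspace $S$; (ii) the matrix in the converse should be $(n-r)\times n$ rather than $m\times n$ as printed. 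The one ingredient you take on faith --- that for fixed $m,n$ the unimodular transformation effecting the Hermite normal form can be chosen with entries $C^{O(1)}$ --- is a standard fact of exactly the same calibre as the paper's own unproved Lemma~\ref{l:herm-poly}, so citing it is acceptable here; but note that plain Gaussian elimination over $\Q$ does not by itself produce an integer unimodular $U$, so if you want the argument self-contained the cleaner fallback is the route you sketch parenthetically: Cramer's rule gives $n-m$ integer kernel vectors with entries bounded by $m\times m$ minors, the lattice they generate has index $C^{O(1)}$ in $\ker M\cap\Z^n$ (compare covolumes), and a bounded-size basis of the saturated lattice then follows from a Minkowski/Hermite-normal-form argument; saturation does require this extra index step and is not literally just ``take the HNF of the generators''.
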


\end{appendix}

\bibliographystyle{alpha}
%\nocite{*}
\bibliography{cl}

\begin{thebibliography}{Man21}

\bibitem[Alt22]{A21}
D.~Altman.
\newblock On a conjecture of {G}owers and {W}olf.
\newblock {\em Discrete Analysis}, 10:13, 2022.

\bibitem[CG90]{CG90}
L.~J. Corwin and F.~P. Greenleaf.
\newblock {\em Representations of nilpotent {L}ie groups and their
  applications. {P}art {I}}, volume~18 of {\em Cambridge Studies in Advanced
  Mathematics}.
\newblock Cambridge University Press, Cambridge, 1990.
\newblock Basic theory and examples.

\bibitem[Gre05]{G05}
B.~Green.
\newblock A {S}zemer\'{e}di-type regularity lemma in abelian groups, with
  applications.
\newblock {\em Geom. Funct. Anal.}, 15(2):340--376, 2005.

\bibitem[GT10]{GT10}
B.~Green and T.~Tao.
\newblock An arithmetic regularity lemma, an associated counting lemma, and
  applications.
\newblock In {\em An irregular mind}, volume~21 of {\em Bolyai Soc. Math.
  Stud.}, pages 261--334. J\'{a}nos Bolyai Math. Soc., Budapest, 2010.

\bibitem[GT12]{GT12}
B.~Green and T.~Tao.
\newblock The quantitative behaviour of polynomial orbits on nilmanifolds.
\newblock {\em Ann. of Math. (2)}, 175(2):465--540, 2012.

\bibitem[GT15]{GT15}
B.~Green and T.~Tao.
\newblock On the quantitative distribution of polynomial nilsequences -
  erratum, 2015.
\newblock \url{arxiv:1311.6170}.

\bibitem[GT20]{GT20}
B.~Green and T.~Tao.
\newblock An arithmetic regularity lemma, associated counting lemma, and
  applications, 2020.
\newblock \url{arXiv:1002.2028v3}.

\bibitem[GW10]{GW10}
W.~T. Gowers and J.~Wolf.
\newblock The true complexity of a system of linear equations.
\newblock {\em Proc. Lond. Math. Soc. (3)}, 100(1):155--176, 2010.

\bibitem[Lei02]{Lei02}
A.~Leibman.
\newblock Polynomial mappings of groups.
\newblock {\em Israel J. Math.}, 129:29--60, 2002.

\bibitem[Lei05]{Lei05b}
A.~Leibman.
\newblock Pointwise convergence of ergodic averages for polynomial actions of
  {${\Bbb Z}^d$} by translations on a nilmanifold.
\newblock {\em Ergodic Theory Dynam. Systems}, 25(1):215--225, 2005.

\bibitem[Man21]{M21}
F.~Manners.
\newblock True complexity and iterated {C}auchy--{S}chwarz, 2021.
\newblock \url{arXiv:2109.05731}.

\bibitem[Sze75]{Sz75}
E.~Szemer\'{e}di.
\newblock On sets of integers containing no {$k$} elements in arithmetic
  progression.
\newblock {\em Acta Arith.}, 27:199--245, 1975.

\bibitem[Tao20]{T20}
T.~Tao.
\newblock A correction to ``an arithmetic regularity lemma, an associated
  counting lemma, and applications'', 2020.
\newblock
  \url{https://terrytao.wordpress.com/2020/11/26/a-correction-to-an-arithmetic-regularity-lemma-an-associated-counting-lemma-and-applications/}.

\end{thebibliography}

\end{document}